\documentclass[11pt]{amsart}
\usepackage{amssymb,amsmath,amsthm,mathtools,mathrsfs}
\usepackage[T1]{fontenc}
\usepackage[utf8]{inputenc}
\usepackage{fourier}
\usepackage[english]{babel}
\usepackage{dsfont}
\usepackage{enumitem}
\usepackage{graphicx}
\usepackage{placeins}
\usepackage{float}
\usepackage[font=small]{caption}
\usepackage{tikz}
\usetikzlibrary{decorations.pathreplacing}
\usepackage[top=1.3in,bottom=1.3in,left=1.2in,right=1.2in]{geometry}
\usepackage{hyperref}
\hypersetup{colorlinks,linkcolor={red!65!black},citecolor={green!40!black},urlcolor={red!65!black},pdftitle={Poisson contingency tables, Gibbs permutations, and entropic optimal transport},pdfauthor={Danny Duan, Hanbaek Lyu, William Powell}}
\mathtoolsset{showonlyrefs}
\allowdisplaybreaks
\newif\ifshowchanges
\showchangestrue   
\newif\ifshownew
\shownewtrue       
\definecolor{revcolor}{rgb}{0.0,0.22,0.85}
\definecolor{newcolor}{rgb}{0.60,0.0,0.60}

\theoremstyle{plain}
\newtheorem{theorem}{Theorem}[section]
\newtheorem{lemma}[theorem]{Lemma}
\newtheorem{prop}[theorem]{Proposition}
\newtheorem{corollary}[theorem]{Corollary}
\theoremstyle{definition}
\newtheorem{definition}[theorem]{Definition}
\newtheorem{example}[theorem]{Example}
\newtheorem{remark}[theorem]{Remark}
\newtheorem{assumption}{Assumption}

\numberwithin{equation}{section}

\newcommand{\R}{\mathbb{R}}
\newcommand{\N}{\mathbb{N}}
\newcommand{\Z}{\mathbb{Z}}
\newcommand{\E}{\mathbb{E}}
\renewcommand{\P}{\mathbb{P}}
\newcommand{\1}{\mathds{1}}
\let\accentring\r
\let\accentcedilla\c
\renewcommand{\r}{\TextOrMath{\accentring}{\mathbf{r}}}
\renewcommand{\c}{\TextOrMath{\accentcedilla}{\mathbf{c}}}
\newcommand{\x}{\mathbf{x}}
\newcommand{\y}{\mathbf{y}}
\newcommand{\bLambda}{\boldsymbol{\Lambda}}
\newcommand{\balpha}{\boldsymbol{\alpha}}
\newcommand{\bbeta}{\boldsymbol{\beta}}
\newcommand{\bxi}{\boldsymbol{\xi}}
\newcommand{\eps}{\varepsilon}
\newcommand{\cA}{\mathcal{A}}
\newcommand{\cM}{\mathcal{M}}
\newcommand{\cS}{\mathcal{S}}
\newcommand{\cX}{\mathcal{X}}
\newcommand{\cY}{\mathcal{Y}}
\newcommand{\cZ}{\mathcal{Z}}
\newcommand{\cD}{\mathcal{D}}
\newcommand{\cH}{\mathcal{H}}
\newcommand{\cR}{\mathcal{R}}
\newcommand{\cL}{\mathcal{L}}
\newcommand{\cG}{\mathcal{G}}
\newcommand{\cK}{\mathcal{K}}
\newcommand{\cQ}{\mathcal{Q}}

\newcommand{\cC}{\mathcal{C}}
\newcommand{\cE}{\mathcal{E}}
\newcommand{\cU}{\mathcal{U}}
\newcommand{\cW}{\mathcal{W}}
\newcommand{\Tpoly}{\mathcal{T}}
\newcommand{\CT}{\mathrm{CT}}
\newcommand{\KL}{D_{\mathrm{KL}}}
\newcommand{\Ek}{\mathsf{E}}
\newcommand{\Gibbs}{\mathsf{P}}
\newcommand{\bD}{\mathbf{D}}
\newcommand{\bJ}{\mathbf{J}}
\newcommand{\FF}{\mathbb{F}}
\newcommand{\QQ}{\mathbb{Q}}
\newcommand{\Leb}{\mathrm{Leb}}

\DeclareMathOperator{\per}{per}
\DeclareMathOperator{\tr}{tr}
\DeclareMathOperator{\Var}{Var}
\DeclareMathOperator{\Cov}{Cov}
\DeclareMathOperator{\osc}{osc}

\DeclareMathOperator{\diag}{diag}
\DeclareMathOperator{\Lip}{Lip}
\DeclareMathOperator{\Ent}{Ent}
\DeclareMathOperator{\Poisson}{Poisson}
\DeclareMathOperator*{\argmin}{arg\,min}

\newcommand{\detF}{\operatorname{det}_{\mathrm{F}}}
\DeclarePairedDelimiter\norm{\lVert}{\rVert}

\title[Poisson contingency tables, Gibbs permutations, and entropic optimal transport]{Poisson contingency tables, Gibbs permutations, \\ and entropic optimal transport}

\author{Danny Duan}
\address{Danny Duan, Department of Mathematics, University of Wisconsin--Madison, Madison, WI 53706, USA}
\email{bduan5@wisc.edu}

\author{Hanbaek Lyu}
\address{Hanbaek Lyu, Department of Mathematics, University of Wisconsin--Madison, Madison, WI 53706, USA}
\email{hlyu@math.wisc.edu}

\author{William Powell}
\address{William Powell, Department of Mathematics, University of Wisconsin--Madison, Madison, WI 53706, USA}
\email{wgpowell@wisc.edu}

\subjclass[2020]{Primary 60B20, 60F05; Secondary 49Q22, 60C05, 60F17, 62H17, 05A16}
\keywords{Random contingency tables, Schr\"odinger bridge, entropic optimal transport, Mallows model, permutons, permanents, central limit theorem, Brownian pillow}

\begin{document}
	
	\begin{abstract}
		We study random contingency tables obtained by conditioning a matrix of independent Poisson entries on its row and column sums. We identify such a table with the block counts of a random permutation drawn from a Gibbs measure of Mallows type, an exact coupling whose normalizing constants recover Barvinok's permanent identity. We use this correspondence to study the table at three scales. For finite dimensions, a transference principle compares the table with a matrix of independent Poisson entries whose means form the static Schr\"odinger bridge, the solution of a discrete entropic optimal transport problem. As the dimensions grow and the margins and prior means converge, the normalized histogram of the table converges to a limiting Schr\"odinger bridge at a quantitative rate. The fluctuations of the table around the bridge are characterized in two complementary ways. Linear statistics satisfy a central limit theorem for bounded test functions, with limiting variance equal to the squared $L^2$-distance, under the bridge, of the test function to the subspace of additively separable functions; as a process they converge to white noise with intensity given by the bridge, conditioned to have vanishing marginals. Spectrally, the limiting spectral distribution of the centered table is governed by a Dyson equation whose variance profile is the density of the bridge. The key ingredient is an asymptotic formula for the partition function of the Gibbs measure in terms of a Fredholm determinant given by the canonical correlations of the bridge. For Mallows-type random permutations, our results prove Pal's conjecture on the limiting partition function and Mukherjee's conjecture on the central limit theorem for linear statistics, both in greater generality. 
	\end{abstract}
	
	\maketitle

	\section{Introduction}
	\label{sec: introduction}
	
	Let $m$ and $n$ be positive integers and let $(\r, \c) \in \N^{m} \times \N^{n}$ be positive integer vectors with total sum $N:=\sum_{i = 1}^{m} \r(i) = \sum_{j= 1}^{n} \c(j) $. The set
	\begin{align}
		\label{eq: def_CT}
		\Tpoly(\r, \c) := \left\{ \x \in \R_{\ge 0}^{m \times n} : \sum_{j= 1}^{n} \x_{ij} = \r(i), \quad \sum_{i = 1}^{m} \x_{ij} = \c(j)\right\}
	\end{align}
	of nonnegative matrices whose row and column sums are $\r$ and $\c$ is the \emph{transportation polytope}. Its integer points form the set $\CT(\r, \c) := \Tpoly(\r,\c) \cap \Z^{m\times n}$ of \emph{contingency tables} with margins $(\r,\c)$.
	
	\subsection{Poisson contingency tables and matrix scaling}
	\label{sec: intro_tables}
	
	In this paper, we consider the following model of a random contingency table. Let $\bLambda \in \R_{>0}^{m\times n}$,  which will be referred to as the \emph{prior mean} or \emph{reference}, and let $\Poisson(\bLambda)$ denote the distribution of a random matrix $\mathbf{X}^{\circ}$ with independent entries $\mathbf{X}^{\circ}_{ij} \sim \Poisson(\bLambda_{ij})$. Let $\lambda_{\r,\c;\bLambda}$ denote the law of the random contingency table $\mathbf{X}$ distributed as $\mathbf{X}^{\circ}$ conditioned to lie in $\CT(\r, \c)$, that is,
	\begin{align}
		\label{eq: conditioned_model}
		\lambda_{\r, \c; \bLambda}(\x) := \P\big(\mathbf{X}^{\circ} = \x \,\big|\, \mathbf{X}^{\circ} \in \CT(\r, \c)\big) = \frac{1}{\cZ_{\r,\c;\bLambda}} \prod_{i,j} \frac{\bLambda_{ij}^{\x_{ij}}}{\x_{ij}!}, \qquad \x \in \CT(\r,\c),
	\end{align}
	where $\cZ_{\r,\c;\bLambda} := \sum_{\y \in \CT(\r,\c)} \prod_{i,j} \bLambda_{ij}^{\y_{ij}}/\y_{ij}!$. If $\bLambda$ has rank one, then $\lambda_{\r, \c; \bLambda}$ is the multivariate hypergeometric (Fisher--Yates) distribution \cite{diaconis1995rectangular}
	\begin{align}
		\label{eq: fisher_yates_measure}
		\lambda_{\r, \c; \bLambda}(\x) = \frac{\prod_{i = 1}^{m} \r(i)! \prod_{j = 1}^{n} \c(j)!}{N! \prod_{i,j} \x_{ij}!},
	\end{align}
	the null distribution of the classical conditional tests of independence in two-way tables \cite{diaconis1985testing}. More generally, if $\log \bLambda_{ij} = \theta + a_i + b_j + \boldsymbol{\gamma}_{ij}$ is a Poisson log-linear model, then conditioning on the margins removes the main effects $(\theta, a, b)$, so that $\lambda_{\r,\c;\bLambda} = \lambda_{\r,\c;e^{\boldsymbol{\gamma}}}$ depends only on the interaction $\boldsymbol{\gamma}$ between rows and columns.

	For general $\bLambda$, we ask a fundamental question about this model, adding to an extensive line of research on random contingency tables \cite{barvinok2010does, barvinok2012matrices, dittmer2020phase, lyu2022number, wu2023asymptotic, lyu2024large}: what does a random contingency table $\mathbf{X} \sim \lambda_{\r, \c; \bLambda}$ look like? The answer is the solution of an entropic optimal transport (EOT) problem, the static Schr\"odinger bridge. This is perhaps unexpected. Entropic projections have long been used to \emph{fit} contingency tables to prescribed margins, through iterative proportional fitting \cite{deming1940least}, which is Sinkhorn's algorithm, and minimum discrimination information estimation \cite{ireland1968contingency}. However, the shape, fluctuations, and partition function of a \emph{random} table with prescribed margins have not been systematically connected to EOT. We show that they are governed by the static Schr\"odinger bridge, its potentials, and its canonical correlations.

	We will show that $\mathbf{X}$ is close to the solution $\mathbf{Z} = \mathbf{Z}^{\r,\c}$ of the following \emph{matrix scaling} problem:
	\begin{align}
		\label{eq: def_matrix_scaling}
		\mathbf{Z}^{\r,\c} := \argmin_{\x \in \Tpoly(\r, \c)} \sum_{i,j} \x_{ij}\log \frac{\x_{ij}}{\bLambda_{ij}}.
	\end{align}
	Matrix scaling is a classical problem \cite{deming1940least, sinkhorn1964relationship, sinkhorn1967concerning} with applications ranging from statistics to computational optimal transport; see \cite{idel2016review, cuturi2013sinkhorn, peyre2019computational}.

	Since the entries of $\bLambda$ are positive, \eqref{eq: def_matrix_scaling} has a unique solution, of the form
	\begin{align}
		\label{eq: typical_table_tilting}
		\mathbf{Z}^{\r,\c}_{ij} = \bLambda_{ij}e^{\balpha(i) + \bbeta(j)},
	\end{align}
	where $(\balpha, \bbeta) \in \R^{m} \times \R^{n}$ are known as \emph{Sinkhorn} or \emph{Schr\"odinger potentials}. They are unique up to transformations of the form $(\balpha, \bbeta) \mapsto (\balpha + t, \bbeta - t)$, $t \in \R$, and are the solutions of the dual problem
	\begin{align}
		\label{eq: kantorovich_dual}
		\sup_{\balpha \in \R^m, \bbeta\in \R^n} \Big( \langle \balpha,\r \rangle + \langle \bbeta, \c \rangle - \langle \exp(\balpha \oplus \bbeta), \bLambda \rangle \Big),
	\end{align}
	where $(\balpha\oplus\bbeta)_{ij} := \balpha(i) + \bbeta(j)$ and the exponential is taken entrywise. We call $\mathbf{Z}$ the \emph{static Schr\"odinger bridge} with margins $(\r,\c)$ and reference $\bLambda$ (see Section~\ref{sec: intro_EOT}). It is also the \emph{typical table} of the Poisson model in \cite{lyu2024large}. Statistically, $(\balpha,\bbeta)$ is the maximum likelihood estimator of the rank-one tilting model $\mathbf{Y}_{ij} \sim \Poisson(\bLambda_{ij}e^{\balpha(i)+\bbeta(j)})$ given the margins $(\r,\c)$ (Section~\ref{sec: bg_poisson}). The potentials can be computed efficiently by \emph{Sinkhorn's algorithm}, which alternately updates $\bbeta$ so that the column sums of $(\bLambda_{ij}e^{\balpha(i)+\bbeta(j)})_{i,j}$ equal $\c$, and $\balpha$ so that its row sums equal $\r$.
	
	One reason to expect $\mathbf{X}$ to be close to $\mathbf{Z}^{\r,\c}$ is that the latter is the mean of the distribution solving
	\begin{align}\label{eq:KL_min}
		\min_{\cH} \KL(\cH \Vert \cR) \quad \text{subject to } \E_{\cH}[\mathbf{Y}] \in \Tpoly(\r, \c),
	\end{align}
	where $\KL$ is the Kullback--Leibler divergence and the reference measure $\cR = \Poisson(\bLambda)$ is the law of the unconstrained prior. The minimizing measure $\cH$ is the law of an $m \times n$ matrix $\mathbf{Y}$ with independent entries $\mathbf{Y}_{ij} \sim \Poisson(\mathbf{Z}^{\r,\c}_{ij})$ (see Section~\ref{sec: bg_poisson}). Moreover, $\cH$ is proportional to $\Poisson(\bLambda)$ on $\CT(\r, \c)$, in the sense that
	\begin{align}
		\label{eq: same_conditional_law}
		\P\big(\mathbf{Y} = \x \,\big|\, \mathbf{Y} \in \CT(\r, \c)\big) = \P\big(\mathbf{X}^{\circ} = \x \,\big|\, \mathbf{X}^{\circ} \in \CT(\r, \c)\big), \qquad \x \in \CT(\r,\c).
	\end{align}
	Since $\E[\mathbf{Y}] \in \Tpoly(\r, \c)$, one expects $\P(\mathbf{Y} \in \CT(\r, \c))$ to be relatively large. Thus, $\mathbf{X} \sim \lambda_{\r, \c; \bLambda}$ should behave similarly to $\mathbf{Y} \sim \Poisson(\mathbf{Z}^{\r,\c})$.
	
	\subsection{Gibbs random permutations}
	\label{sec: intro_gibbs}
	
	A key observation of this work is that the Poisson contingency table is exactly a coarse-graining of a random permutation drawn from a Gibbs measure on the symmetric group $S_N$. To define this measure, for $k \in [N]$, let
	\begin{align}
		\label{eq: index_quantile}
		Q_{\r}(k) := \min \Big\{ i \in [m] :  \sum_{\tau = 1}^{i} \r(\tau) \geq k \Big\}
	\end{align}
	be the quantile function of the (unnormalized) measure on $[m]$ induced by $\r$, and define $Q_{\c}$ analogously. Let $\widetilde{\bLambda}$ be the $N \times N$ matrix with entries $\widetilde{\bLambda}_{k \ell} := \bLambda_{Q_{\r}(k), Q_{\c}(\ell)}$. It consists of $mn$ blocks: block $(i, j)$ has size $\r(i) \times \c(j)$, and all of its entries equal $\bLambda_{ij}$ (see Figure~\ref{fig: tilde_matrix}).
	
	\begin{figure}[t]
		\centering
		\begin{tikzpicture}[x=1.07cm, y=1.07cm, font=\small,
			blk/.style={draw=black!80, line width=0.9pt},
			grid/.style={draw=black!25, line width=0.3pt, step=1},
			pick/.style={draw=red!70!black, line width=1pt, fill=red!12},
			lab/.style={font=\footnotesize, text=black!75},
			brc/.style={decorate, decoration={brace, amplitude=3.5pt, raise=1pt}, black!65, line width=0.6pt}]
			\definecolor{tintA}{RGB}{226,234,244}
			\definecolor{tintB}{RGB}{246,246,246}
			\begin{scope}[shift={(0,-0.5)}]
				\foreach \i in {1,2,3}{\foreach \j in {1,2,3}{
						\pgfmathparse{int(mod(\i+\j,2))}
						\ifnum\pgfmathresult=0 \def\tint{tintA}\else\def\tint{tintB}\fi
						\fill[\tint] (\j-1,-\i+1) rectangle (\j,-\i);
				}}
				\draw[grid] (0,0) grid (3,-3);
				\draw[blk] (0,0) rectangle (3,-3);
				\foreach \i in {1,2,3}{\foreach \j in {1,2,3}{\node at (\j-0.5,-\i+0.5) {$\bLambda_{\i\j}$};}}
				\node[lab, below] at (1.5,-3.55) {(a) $\bLambda$, $m\times n$};
			\end{scope}
			\begin{scope}[shift={(5.3,0)}]
				\foreach \i/\ya/\yb in {1/0/2, 2/2/3, 3/3/4}{
					\foreach \j/\xa/\xb in {1/0/1, 2/1/3, 3/3/4}{
						\pgfmathparse{int(mod(\i+\j,2))}
						\ifnum\pgfmathresult=0 \def\tint{tintA}\else\def\tint{tintB}\fi
						\fill[\tint] (\xa,-\ya) rectangle (\xb,-\yb);
				}}
				\draw[grid] (0,0) grid (4,-4);
				\draw[blk] (0,-2) -- (4,-2) (0,-3) -- (4,-3) (1,0) -- (1,-4) (3,0) -- (3,-4);
				\draw[blk] (0,0) rectangle (4,-4);
				\foreach \k/\l in {1/2, 2/3, 3/4, 4/1}{
					\draw[pick] (\l-0.5,-\k+0.5) circle (0.36);
				}
				\foreach \k/\i in {1/1,2/1,3/2,4/3}{
					\foreach \l/\j in {1/1,2/2,3/2,4/3}{
						\node at (\l-0.5,-\k+0.5) {$\bLambda_{\i\j}$};
				}}
				\draw[brc] (-0.1,-2) -- (-0.1,0) node[midway, left=5pt, lab]{$\r(1)=2$};
				\draw[brc] (-0.1,-3) -- (-0.1,-2) node[midway, left=5pt, lab]{$\r(2)=1$};
				\draw[brc] (-0.1,-4) -- (-0.1,-3) node[midway, left=5pt, lab]{$\r(3)=1$};
				\draw[brc] (0,0.1) -- (1,0.1) node[midway, above=5pt, lab]{$\c(1)$};
				\draw[brc] (1,0.1) -- (3,0.1) node[midway, above=5pt, lab]{$\c(2)$};
				\draw[brc] (3,0.1) -- (4,0.1) node[midway, above=5pt, lab]{$\c(3)$};
				\node[lab, below] at (2,-4.05) {(b) $\widetilde{\bLambda}$, $N\times N$, and $\sigma$};
			\end{scope}
			\begin{scope}[shift={(10.7,-0.5)}]
				\foreach \i/\row in {1/{0,2,0}, 2/{0,0,1}, 3/{1,0,0}}{
					\foreach \x [count=\j] in \row {
						\pgfmathparse{int(mod(\i+\j,2))}
						\ifnum\pgfmathresult=0 \def\tint{tintA}\else\def\tint{tintB}\fi
						\fill[\tint] (\j-1,-\i+1) rectangle (\j,-\i);
				}}
				\draw[grid] (0,0) grid (3,-3);
				\draw[blk] (0,0) rectangle (3,-3);
				\foreach \i/\row in {1/{0,2,0}, 2/{0,0,1}, 3/{1,0,0}}{\foreach \x [count=\j] in \row {\node at (\j-0.5,-\i+0.5) {$\x$};}}
				\foreach \i/\s in {1/2,2/1,3/1}{\node[lab] at (3.45,-\i+0.5) {$\s$};}
				\foreach \j/\s in {1/1,2/2,3/1}{\node[lab] at (\j-0.5,-3.4) {$\s$};}
				\node[lab, below] at (1.5,-3.55) {(c) $\mathbf{X}(\sigma)$};
			\end{scope}
		\end{tikzpicture}
		\caption{Barvinok's block matrix for $m = n = 3$, $\r = (2,1,1)$ and $\c = (1,2,1)$, so that $N = 4$. (a)~The prior mean $\bLambda$. (b)~The $N\times N$ matrix $\widetilde{\bLambda}$, with its blocks in heavy lines. The circled entries are $(k,\sigma(k))$ for the permutation $\sigma = (2,3,4,1)$, which hits the $(1,2)$ block twice; its weight in \eqref{eq: gibbs_intro} is proportional to $\bLambda_{12}^2\bLambda_{23}\bLambda_{31}$. (c)~The block counts $\mathbf{X}(\sigma)$, the numbers of circled entries in the blocks, with row and column sums $\r$ and $\c$ (gray).}
		\label{fig: tilde_matrix}
	\end{figure}
	
	The \emph{Gibbs measure} $\Gibbs_{\r,\c;\bLambda}$ associated with $(\r,\c,\bLambda)$ is the probability measure on $S_N$
	\begin{align}
		\label{eq: gibbs_intro}
		\Gibbs_{\r,\c;\bLambda}(\sigma) := \frac{1}{\per(\widetilde{\bLambda})}\prod_{k=1}^{N}\widetilde{\bLambda}_{k\sigma(k)}, \qquad \sigma \in S_N,
	\end{align}
	whose partition function $\per(\widetilde{\bLambda}) = \sum_{\sigma\in S_{N}} \prod_{k=1}^{N}\widetilde{\bLambda}_{k\sigma(k)}$ is the permanent of $\widetilde{\bLambda}$. For $\sigma \in S_N$, let $\mathbf{X}(\sigma)$ be the $m \times n$ matrix of block counts
	\begin{align}\label{eq:construction_Gibbs_X}
		\mathbf{X}_{ij}(\sigma) := \#\{k \in [N] : Q_{\r}(k) = i,\ Q_{\c}(\sigma(k)) = j\}.
	\end{align}
	Its row and column sums are $\r$ and $\c$, so $\mathbf{X}(\sigma) \in \CT(\r,\c)$. A direct counting argument gives an exact coupling of the Poisson contingency table and the Gibbs permutation (Proposition~\ref{prop: permutation_CT_coupling}):
	\begin{align}\label{eq:coupling_Gibbs}
		\mathbf{X}(\sigma) \sim \lambda_{\r,\c;\bLambda} \quad \textup{if} \quad \sigma \sim \Gibbs_{\r,\c;\bLambda}. 
	\end{align}
	Comparing normalizing constants recovers Barvinok's identity \eqref{eq: barvinok_per_identity} \cite{barvinok2008enumerating}.  
	
	For unit margins $\r = \c = \mathbf{1}$, the block-counting matrix $\mathbf{X}(\sigma)$ in \eqref{eq:construction_Gibbs_X} is exactly the permutation matrix of $\sigma$. If, in addition, $\bLambda_{ij} = e^{-f(i/N,j/N)}$ for a function $f$ on $[0,1]^2$, then $\Gibbs_{\mathbf{1},\mathbf{1};\bLambda}(\sigma) \propto \exp\big(-\sum_{i=1}^N f\big(\tfrac{i}{N},\tfrac{\sigma(i)}{N}\big)\big)$ is a Mallows-type random permutation \cite{mallows1957non, diaconis1988group, mukherjee2016estimation}. Two conjectures on these measures motivate this work. Pal's conjecture \cite{pal2024limitingconjecture} concerns the asymptotics of their partition function, the permanent of $(e^{-f(i/N,j/N)})_{i,j}$. Mukherjee's conjecture \cite{mukherjee2016estimation, mukherjee2023inference} asserts that, for every $\theta \in \R$, the statistic $\Upsilon_N(\sigma) := \sum_{i=1}^N f\big(\tfrac{i}{N},\tfrac{\sigma(i)}{N}\big)$ satisfies a central limit theorem when $\sigma$ is drawn from the Gibbs measure above with $f$ replaced by $-\theta f$, i.e., with probability proportional to $e^{\theta \Upsilon_N(\sigma)}$. We address both conjectures in a general form (Sections~\ref{sec: bg_pal} and \ref{sec: bg_mukherjee}).
	
	\subsection{Entropic optimal transport and Schr\"odinger bridges}
	\label{sec: intro_EOT}
	
	After normalization, the matrix scaling problem \eqref{eq: def_matrix_scaling} is a discrete instance of the \emph{static Schr\"odinger bridge} problem \cite{leonard2013survey, nutz2021introduction}. Fix probability measures $\mu$ and $\nu$ on spaces $\cX$ and $\cY$, respectively, and a probability measure $\cR$ on $\cX \times \cY$. The static Schr\"odinger bridge is the solution of
	\begin{align}
		\label{eq: def_SSB}
		\min_{\pi \in \Pi(\mu, \nu)} \KL(\pi \Vert \cR),
	\end{align}
	where $\Pi(\mu, \nu)$ is the set of couplings of $\mu$ and $\nu$. A sufficient condition for the existence of a solution to \eqref{eq: def_SSB} is that $\cR$ and $\mu \otimes \nu$ are mutually absolutely continuous and $\KL(\mu \otimes \nu \Vert \cR) < \infty$ (see \cite[Theorem~2.1]{nutz2021introduction}). In that case there is a function $\kappa: \cX \times \cY \to \R$ with $\frac{d\cR}{d(\mu \otimes \nu)} = e^{-\kappa}$, which we call a \emph{cost function}, since \eqref{eq: def_SSB} is then equivalent to the EOT problem
	\begin{align}
		\label{eq: EOT_intro}
		\min_{\pi \in \Pi(\mu, \nu)} \left[\int_{\cX \times \cY} \kappa \, d\pi + \KL(\pi \Vert \mu \otimes \nu)\right].
	\end{align}
	
	To compare contingency tables of growing dimensions, we embed a matrix $\mathbf{A} \in \R_{\ge 0}^{m\times n}$ as the probability measure $\pi_{\mathbf{A}}$ on $[0,1]^2$ whose density is proportional to $\mathbf{A}_{ij}$ on the cell $\big(\tfrac{i-1}{m}, \tfrac{i}{m}\big] \times \big(\tfrac{j-1}{n},\tfrac{j}{n}\big]$, and we embed the margins $\r$ and $\c$ as probability measures $\rho_{\r}$ and $\rho_{\c}$ on $[0,1]$ in the same way (Definition~\ref{def: histogram}). If $\mathbf{X} \in \CT(\r,\c)$, then $\pi_{\mathbf{X}}$ is a coupling of $\rho_{\r}$ and $\rho_{\c}$.
	
	Our coupling \eqref{eq:coupling_Gibbs} makes EOT appear naturally for Gibbs random permutations, such as the Mallows-type measures in Pal's and Mukherjee's conjectures. Indeed, the block counts of $\sigma \sim \Gibbs_{\r,\c;\bLambda}$ form the Poisson contingency table $\mathbf{X} \sim \lambda_{\r,\c;\bLambda}$, which is close to the Schr\"odinger bridge $\mathbf{Z}$. In turn, its histogram $\pi_{\mathbf{Z}}$ is the static Schr\"odinger bridge between $\rho_{\r}$ and $\rho_{\c}$ with reference measure $\pi_{\bLambda}$ (Lemma~\ref{lem: typical_table_bridge}), that is, the solution of the EOT problem \eqref{eq: EOT_intro} with these marginals and the corresponding cost induced by $\pi_{\bLambda}$. 
	
	\subsection{Overview of results}
	\label{sec: intro_overview}
	
	We describe the conditioned table $\mathbf{X}$, equivalently the Gibbs random permutation, at three scales. For finite dimensions, a transference principle (Lemma~\ref{thm: transference}) compares $\mathbf{X}$ with the matrix $\mathbf{Y} \sim \Poisson(\mathbf{Z})$ of independent entries. For sequences of problems with $m,n,N \to \infty$ (Section~\ref{sec: assumptions}), the histogram $\pi_{\mathbf{X}}$ converges weakly to the static Schr\"odinger bridge $\pi_\infty$ that solves the limiting EOT problem, at a quantitative rate (Theorems~\ref{thm: conditioned_model_weak_as_limit_no_rate} and \ref{thm: conditioned_model_weak_as_limit}). The fluctuations of $\mathbf{X}$ around $\mathbf{Z}$ are described in two complementary ways. Linear statistics $\sqrt N\int g\,d(\pi_{\mathbf{X}} - \pi_{\mathbf{Z}})$ of bounded test functions are asymptotically Gaussian with variance $\norm{(I-P)g}^2_{L^2(\pi_\infty)}$, where $P$ is the orthogonal projection onto additively separable functions, and as a process 
	$\sqrt{N}(\pi_{\mathbf{X}} - \pi_{\mathbf{Z}})$ converges to white noise with intensity $\pi_\infty$ conditioned to have vanishing marginals (Theorems~\ref{thm: CLT} and \ref{thm: functional_limits}). Spectrally, the limiting spectral distribution of the centered table is governed by a Dyson equation whose variance profile is the density of $\pi_\infty$ (Theorem~\ref{thm: ESD_limit}). The key ingredient for the fluctuations is the asymptotics of the partition function of the Gibbs measure: normalized by the EOT value, it converges to $\prod_{i}(1-s_i^2)^{-1/2}$, the inverse square root of a Fredholm determinant, where $s_1 \ge s_2 \ge \cdots$ are the canonical correlations of $\pi_\infty$ (Theorem~\ref{thm: permuton_partition_det_limit}). It enters through an exact identity linking the Laplace transform of the fluctuations to partition functions with perturbed costs (Lemma~\ref{lem: laplace_identity}), together with the second-order expansion of the EOT value in the cost (Proposition~\ref{thm: smooth_cost_perturbations}). For unit margins, these results prove Pal's conjecture on the limiting partition function under weaker assumptions and Mukherjee's conjecture on the central limit theorem for the sufficient statistic of an exponential family of Gibbs permutations (Corollary~\ref{cor: mallows}). We also obtain asymptotics for Poisson-weighted counts of contingency tables (Corollary~\ref{cor: enumeration}).
	
	\subsection{Organization}
	
	Section~\ref{sec: main_results} states our results precisely, introducing the necessary definitions along the way. Section~\ref{sec: background} discusses them in the context of the literature, including their relation to Pal's conjecture and Mukherjee's conjecture on Mallows-type random permutations. Section~\ref{sec: preliminaries} proves the permutation representation, Barvinok's identity and the transference principle. Section~\ref{sec: proof_LLN} recalls the relevant facts on static Schr\"odinger bridges and proves the law of large numbers, and Section~\ref{sec: proof_rates} proves the rates of convergence. Section~\ref{sec: partition_function} proves the partition function asymptotics, and Section~\ref{sec: proof_fluctuations} proves the fluctuation results, which build on 
	the results of Section~\ref{sec: partition_function} and on the second-order expansion of the EOT value in the cost; the latter is proved in Section~\ref{sec: smooth_cost}. The spectral results are proved in Section~\ref{sec: proof_ESD}.
	
	\subsection{Notation}
	
	We write $[m] := \{1, \dots, m\}$ and $(a \oplus b)(x,y) := a(x) + b(y)$ for functions $a$ and $b$. For a matrix $\mathbf{A}$ (and similarly for vectors), $\norm{\mathbf{A}}_1 := \sum_{i,j}|\mathbf{A}_{ij}|$ and $\norm{\mathbf{A}}_\infty := \max_{i,j}|\mathbf{A}_{ij}|$. For probability measures $\mu$ and $\nu$, $\KL(\mu\Vert\nu) := \int \log\frac{d\mu}{d\nu}\,d\mu$ if $\mu \ll \nu$ and $\KL(\mu \Vert \nu) := +\infty$ otherwise. We write $\Leb$ for the Lebesgue measure on $[0,1]$ and $u := \Leb \otimes \Leb$ for the Lebesgue (uniform) measure on $[0,1]^2$. We use the same symbol for an absolutely continuous measure on $[0,1]$ or $[0,1]^2$ and its density with respect to Lebesgue measure. For a bounded measurable function $f$, $\osc(f) := \operatorname{ess\,sup} f - \operatorname{ess\,inf} f$. We write $\Rightarrow$ for weak convergence of probability measures and for convergence in distribution. For a measurable map $T: \Omega \to \Omega'$ and a measure $\mu$ on $\Omega$, $T_{\#}\mu := \mu \circ T^{-1}$ is the push-forward of $\mu$ by $T$.  Asymptotic statements refer to a sequence of problems indexed by $k \to \infty$; we write $m = m_k$, $n = n_k$, $N = N_k$, and so on, and we often suppress the index $k$.
	
	\section{Statement of results}
	\label{sec: main_results}
	
	We state our results for a sequence of problems of growing size. 
	
	\subsection{Setting and assumptions}
	\label{sec: assumptions}
	\label{sec: main_histograms}
	
	We first make precise the embedding of margin vectors and matrices as probability measures described in Section~\ref{sec: intro_EOT}.
	
	\begin{definition}[Histograms]
		\label{def: histogram}
		Given a nonzero vector $\bxi \in \R_{\ge 0}^{n}$, its \emph{histogram} is the probability measure $\rho_{\bxi}$ on $[0,1]$ with density
		\begin{align}
			\rho_{\bxi}(x) := \frac{n}{\norm{\bxi}_1}\,\bxi(\lceil nx \rceil), \qquad x \in (0,1].
		\end{align}
		For a nonzero matrix $\mathbf{A} \in \R_{\ge 0}^{m \times n}$, its \emph{histogram} is the probability measure $\pi_{\mathbf{A}}$ on $[0, 1]^2$ with density
		\begin{align}
			\varphi_{\mathbf{A}}(x, y) :=  \frac{mn}{\norm{\mathbf{A}}_{1}} \mathbf{A}_{\lceil mx \rceil, \lceil ny \rceil}, \qquad (x,y) \in (0,1]^2.
		\end{align}
	\end{definition}
	
	We use two partitions of the unit square, the $m \times n$ grid of cells and the $N \times N$ grid of squares:
	\begin{align}
		\label{eq: def_partitions}
		A_{ij} := \Big( \tfrac{i-1}{m}, \tfrac{i}{m} \Big] \times \Big(\tfrac{j-1}{n}, \tfrac{j}{n} \Big], \qquad B_{k\ell} := \Big( \tfrac{k-1}{N}, \tfrac{k}{N} \Big] \times \Big(\tfrac{\ell-1}{N}, \tfrac{\ell}{N} \Big].
	\end{align}
	A function on $[0,1]^2$ is called \emph{grid-step} if it is constant on each cell $A_{ij}$. For instance, $\varphi_{\mathbf{A}}$ is grid-step for every $\mathbf{A} \in \R_{\ge 0}^{m\times n}$.
	
	For Borel probability measures $\mu$, $\nu$ on $[0,1]$ and a cost $c \in L^\infty(\mu \otimes \nu)$, we write
	\begin{align}
		\label{eq: EOT_Problem_General}
		\Gamma_{\mu,\nu}(c) := \min_{\pi \in \Pi(\mu, \nu)} \left[ \int_{[0,1]^2} c \, d\pi + \KL(\pi \Vert \mu \otimes \nu) \right]
	\end{align}
	for the value of the EOT problem \eqref{eq: EOT_intro} with cost $c$, and $\pi^{c}_{\mu,\nu}$ for its minimizer.
	
	We recall the following standard facts; see \cite[Section~4]{nutz2021introduction} and \cite{csiszar1975divergence}. The problem \eqref{eq: EOT_Problem_General} has a unique minimizer, and there exist \emph{Schr\"odinger potentials} $a \in L^\infty(\mu)$ and $b \in L^\infty(\nu)$, unique up to the transformation $(a,b)\mapsto(a+t,b-t)$ for $t \in \R$, such that
	\begin{align}
		\label{eq: schrodinger_form}
		\frac{d\pi^c_{\mu,\nu}}{d(\mu\otimes\nu)}(x,y) = e^{a(x) + b(y) - c(x,y)}.
	\end{align}
	The marginal constraints are equivalent to the \emph{Schr\"odinger system}
	\begin{align}
		\label{eq: schrodinger_system}
		\int e^{a(x) + b(y) - c(x,y)}\,\nu(dy) = 1 \quad \text{for } \mu\text{-a.e. } x, \qquad \int e^{a(x) + b(y) - c(x,y)}\,\mu(dx) = 1 \quad \text{for } \nu\text{-a.e. } y.
	\end{align}
	
	For margins $(\r,\c)$ and a prior mean $\bLambda$, the associated \emph{cost function} is the grid-step function
	\begin{align}
		\label{eq: def_kappa}
		\kappa(x, y) := - \log \left( \frac{\varphi_{\bLambda}(x, y)}{\rho_{\r}(x)\rho_{\c}(y)} \right) = \log\frac{\r(i)\c(j)\norm{\bLambda}_1}{N^2\bLambda_{ij}} \qquad \text{for } (x,y) \in A_{ij}.
	\end{align}
	Since $e^{-\kappa}\,d(\rho_{\r}\otimes\rho_{\c}) = d\pi_{\bLambda}$, for $\pi \in \Pi(\rho_{\r},\rho_{\c})$ we have
	\begin{align}
		\label{eq: EOT_is_SB}
		\int \kappa \, d\pi + \KL(\pi \Vert \rho_{\r}\otimes\rho_{\c}) = \KL(\pi \Vert \pi_{\bLambda}),
	\end{align}
	so the Schr\"odinger bridge of Section~\ref{sec: intro_EOT} is $\pi_{\mathbf{Z}} = \pi^{\kappa}_{\rho_{\r},\rho_{\c}}$ (Lemma~\ref{lem: typical_table_bridge}).
	
	We consider a sequence of problems indexed by $k \in \N$: dimensions $(m_k,n_k)$, positive integer margins $(\r_k, \c_k) \in \N^{m_k}\times\N^{n_k}$ with $\sum_i \r_k(i) = \sum_j \c_k(j) = N_k$, and prior means $\bLambda_k \in \R^{m_k\times n_k}_{>0}$. We write $\rho_{\r_k}$ and $\rho_{\c_k}$ for the histograms of the margins, $\kappa_k$ for the cost function \eqref{eq: def_kappa}, $\mathbf{Z}_k$ for the Schr\"odinger bridge, $\mathbf{X}_k \sim \lambda_{\r_k,\c_k;\bLambda_k}$, $\Gibbs_k := \Gibbs_{\r_k,\c_k;\bLambda_k}$ for the Gibbs measure \eqref{eq: gibbs_intro}, and
	\begin{align}\label{eq:EOT_value}
		\Gamma_k(f) := \Gamma_{\rho_{\r_k},\rho_{\c_k}}(f), \qquad \pi_k(f) := \pi^{f}_{\rho_{\r_k},\rho_{\c_k}}
	\end{align}
	for the value and the minimizer of the EOT problem with the discrete margins and a bounded cost $f$. We will use the following assumptions.
	
	\begin{assumption}
		\label{assumption: bounded_cost}
		The cost functions $\kappa_k$ converge uniformly on $[0,1]^2$ to a continuous function $\kappa_{\infty}$.
	\end{assumption}
	
	\begin{assumption}
		\label{assumption: margins_weak_convergence}
		We have $m_k, n_k \to \infty$ and $m_k/n_k \to \gamma \in (0, \infty)$. Moreover, there exist probability measures $\rho_{\r_{\infty}}, \rho_{\c_{\infty}}$ on $[0,1]$ such that $\rho_{\r_k} \to \rho_{\r_{\infty}}$ and $\rho_{\c_k} \to \rho_{\c_{\infty}}$ weakly.
	\end{assumption}
	
	\begin{assumption}
		\label{assumption: L^1_limits_margins}
		There are positive probability densities $\rho_{\r_{\infty}}$, $\rho_{\c_{\infty}}$ on $[0,1]$ and $\varphi_{\bLambda_{\infty}}$ on $[0,1]^2$ with
		\begin{align}
			\rho_{\r_k}\rightarrow \rho_{\r_{\infty}} \ \text{ in } L^{1}[0, 1], \qquad \rho_{\c_k}\rightarrow \rho_{\c_{\infty}} \ \text{ in } L^{1}[0, 1], \qquad \varphi_{\bLambda_{k}}\rightarrow \varphi_{\bLambda_{\infty}} \ \text{ in } L^{1}([0, 1]^2).
		\end{align}
	\end{assumption}

	Assumption~\ref{assumption: bounded_cost} says that the prior means, normalized by the margins as in \eqref{eq: def_kappa}, converge to a limiting cost; in particular, $\sup_k \norm{\kappa_k}_\infty < \infty$. Continuity of $\kappa_\infty$ is needed because the margins converge only weakly: if $\kappa_\infty$ jumped across a set charged by the limiting margins, the cost seen by that mass could depend on the approximating sequence. Under Assumption~\ref{assumption: margins_weak_convergence}, $N_k \ge \max(m_k,n_k) \to \infty$, since the margins are positive integers, and the mesh of the grid $(A_{ij})$ tends to zero. Only weak convergence of the margin profiles is required: row sums alternating between $N_k/(2m_k)$ and $3N_k/(2m_k)$ give $\rho_{\r_k} \to \Leb$ weakly, although $\norm{\rho_{\r_k} - \Leb}_{L^1} = 1/2$ for all $k$, and the limit may have atoms, as when a single row carries a fixed fraction of the total mass. Assumption~\ref{assumption: L^1_limits_margins} strengthens this to $L^1$ convergence of the margins and of the histograms of the prior means; it is used only for the rates of convergence (Theorem~\ref{thm: conditioned_model_weak_as_limit}). Given margins satisfying Assumption~\ref{assumption: margins_weak_convergence}, discretizing a continuous cost gives prior means satisfying Assumptions~\ref{assumption: bounded_cost} and, under a mild condition, \ref{assumption: L^1_limits_margins}:
	
	\begin{example}
		\label{ex: continuous_kappa}
		Let $(\r_k,\c_k)$ satisfy Assumption~\ref{assumption: margins_weak_convergence}, and let $\kappa$ be continuous on $[0,1]^2$, normalized so that $\int e^{-\kappa}\,d(\rho_{\r_\infty}\otimes\rho_{\c_\infty}) = 1$. Set $\bLambda_{k,ij} := e^{-\kappa(i/m_k,\,j/n_k)}\,\r_k(i)\c_k(j)$. By the uniform continuity of $e^{-\kappa}$ and the weak convergence of $\rho_{\r_k}\otimes\rho_{\c_k}$, $\norm{\bLambda_k}_1/N_k^2 = N_k^{-2}\sum_{i,j} e^{-\kappa(i/m_k,\, j/n_k)} \r_k(i)\c_k(j) \to 1$. By \eqref{eq: def_kappa}, $\kappa_k(x,y) = \kappa\big(\lceil m_kx\rceil/m_k, \lceil n_ky\rceil/n_k\big) - \log\big(N_k^2/\norm{\bLambda_k}_1\big)$, so $\kappa_k \to \kappa$ uniformly and Assumption~\ref{assumption: bounded_cost} holds with $\kappa_\infty = \kappa$. If, moreover, the margins discretize densities bounded away from zero and infinity, for instance $\r_k(i) \approx N_k\int_{(i-1)/m_k}^{i/m_k}\rho_{\r_\infty}$ up to rounding and similarly for $\c_k$, with $N_k/m_k, N_k/n_k \to \infty$, then Assumption~\ref{assumption: L^1_limits_margins} holds as well. If $\kappa(x,y) = a(x) + b(y)$, then $\bLambda_k$ has rank one (Corollary~\ref{cor: product_case}). \hfill $\blacktriangle$
	\end{example}

	The limiting objects are defined in terms of the limiting EOT problem
	\begin{align}
		\label{eq: continous_EOT_problem}
		\Gamma_\infty := \min_{\pi \in \Pi(\rho_{\r_{\infty}}, \rho_{\c_{\infty}})} \left[ \int_{[0, 1]^2} \kappa_{\infty} \, d\pi + \KL(\pi \Vert \rho_{\r_{\infty}} \otimes \rho_{\c_{\infty}}) \right].
	\end{align}
	We write $\pi_\infty := \pi^{\kappa_\infty}_{\rho_{\r_\infty},\rho_{\c_\infty}}$ for its minimizer.

	\subsection{Scaling limit}
	\label{sec: main_LLN}
	\label{sec: permutons_prelim}
	
	The first-order behavior is most naturally described in permuton coordinates for the Gibbs measure. We use the theory of permutation limits \cite{hoppen2013permuton}, inspired by graph limits \cite{lovasz2012large}. A \emph{permuton} is a Borel probability measure on $[0,1]^2$ with uniform marginals; the set $\cM$ of permutons is compact in the weak topology. For a permutation $\sigma \in S_N$, the associated permuton is the measure $\mu_{\sigma}$ with density
	\begin{align}
		\frac{d\mu_\sigma}{du}(x,y) := N \sum_{k=1}^{N} \1_{B_{k\sigma(k)}}(x,y) = N\,\1\big(\sigma(\lceil Nx \rceil) = \lceil Ny \rceil\big),
	\end{align}
	that is, $\mu_\sigma$ is uniform on the union of the squares $B_{k \sigma(k)}$.
	
	For a probability measure $\rho$ on $[0,1]$ with distribution function $F_\rho(t) := \rho([0,t])$, let
	\begin{align}
		\label{eq: histogram_quantiles}
		q_{\rho}(s) := \inf \big\{ t \in [0,1] : F_\rho(t) \ge s \big\}, \qquad s \in [0,1],
	\end{align}
	be its quantile function, so that $q_\rho(0) = 0$. Then $q_\rho(s) \le t \iff s \le F_\rho(t)$, and $F_\rho(q_\rho(s)) \ge s$, with equality if $\rho$ has no atoms \cite[Lemma~21.1]{vandervaart1998asymptotic} (the cases $s \in \{0,1\}$ being immediate). Hence, $q_\rho$ is nondecreasing, and in particular Borel, and $(q_\rho)_{\#}\Leb = \rho$. For probability measures $\rho_1, \rho_2$ on $[0,1]$ we set $T := (q_{\rho_1}, q_{\rho_2})$, so that $T_{\#}u = \rho_1 \otimes \rho_2$. When $\rho_1 = \rho_{\r}$ and $\rho_2 = \rho_{\c}$ are histograms of positive vectors, their distribution functions $F_{\r}, F_{\c}$ are continuous, strictly increasing and piecewise linear, and $T$ is a homeomorphism of $[0,1]^2$ with inverse $(F_{\r}, F_{\c})$; we write $q_{\r} := q_{\rho_{\r}}$ and $q_{\c} := q_{\rho_{\c}}$, so that $T = (q_{\r}, q_{\c})$.
	
	For a bounded measurable function $f$ on $[0,1]^2$, let
	\begin{align}
		\label{eq: def_partition_function}
		\cZ(f) := \sum_{\sigma \in S_N} \exp \left( -N \int_{[0,1]^2} f \circ T\, d\mu_{\sigma} \right)
	\end{align}
	be the partition function with cost $f$. In permuton coordinates, the Gibbs measure $\Gibbs := \Gibbs_{\r,\c;\bLambda}$ in \eqref{eq: gibbs_intro} takes the form
	\begin{align}
		\label{eq: permutation_measure_continuous}
		\Gibbs(\sigma) = \frac{1}{\cZ(\kappa)}\exp\left( -N \int_{[0,1]^2} \kappa \circ T\, d\mu_{\sigma}\right), \qquad \sigma \in S_N,
	\end{align}
	by Lemma~\ref{lem: gibbs_permuton_form}. In these coordinates, Proposition~\ref{prop: permutation_CT_coupling} compares the histogram $\pi_{\mathbf{X}(\sigma)}$ of the block counts \eqref{eq:construction_Gibbs_X} with the push-forward $T_{\#}\mu_\sigma$ of the permuton of $\sigma$. For the sequence of problems, we write $T_k := (q_{\r_k}, q_{\c_k})$, $T_\infty := (q_{\rho_{\r_\infty}}, q_{\rho_{\c_\infty}})$ and $\cZ_k$ for the partition function \eqref{eq: def_partition_function} of the $k$th problem.
	
	By \eqref{eq:coupling_Gibbs}, $\mathbf{X}_k \overset{d}{=} \mathbf{X}(\sigma)$ with $\sigma \sim \Gibbs_k$, and by Proposition~\ref{prop: permutation_CT_coupling}(iii), $\int g\,d\pi_{\mathbf{X}(\sigma)}$ is close to $\int g\circ T_k\,d\mu_\sigma$. The Gibbs weight of $\sigma$ is $\exp(-N_k\int\kappa_k\circ T_k\,d\mu_\sigma)$. On the other hand, by the large deviation principle for uniformly random permutations \cite{trashorras2008large, mukherjee2016estimation}, the number of permutations whose permuton is close to a given permuton $\mu$ is roughly $N_k!\exp(-N_k\KL(\mu\Vert u))$. Both effects are exponential in $N_k$, so $\Gibbs_k$ concentrates near the minimizer of $\mu \mapsto \int\kappa_\infty\circ T_\infty\,d\mu + \KL(\mu\Vert u)$ over permutons. This is the limiting problem \eqref{eq: continous_EOT_problem} written in permuton coordinates, and $T_\infty$ maps its minimizer to $\pi_\infty$. Varadhan's lemma turns this heuristic into the following law of large numbers, which identifies the static Schr\"odinger bridge $\pi_\infty$ as the scaling limit of the conditioned table.
	
	\begin{theorem}[Scaling limit]
		\label{thm: conditioned_model_weak_as_limit_no_rate}
		Assume \ref{assumption: bounded_cost} and \ref{assumption: margins_weak_convergence}.
		\begin{description}
			\item[(i)] The log partition function of the Gibbs measure \eqref{eq: permutation_measure_continuous} converges to the limiting EOT value:
			\begin{align}
				\label{eq: log_parition_limit}
				\lim_{k \to \infty} \frac{1}{N_k} \log \frac{\cZ_k(\kappa_k)}{N_k!}
				=  -\Gamma_{\infty}.
			\end{align}
			\item[(ii)] The random measure $\pi_{\mathbf{X}_k}$ converges weakly in probability to $\pi_{\infty}$, that is, $\int g\,d\pi_{\mathbf{X}_k} \to \int g \,d\pi_\infty$ in probability for every $g \in C([0,1]^2)$.
		\end{description}
	\end{theorem}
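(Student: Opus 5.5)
We will prove both parts by a Varadhan-type argument in permuton coordinates, using the large deviation principle for the permuton of a uniformly random permutation: if $\sigma$ is uniform on $S_N$, then $\mu_\sigma$ satisfies an LDP on $\cM$ with speed $N$ and good rate function $\KL(\cdot\Vert u)$ \cite{trashorras2008large}. Write
\[
\frac{\cZ_k(\kappa_k)}{N_k!}=\E_{\mathrm{unif}}\Big[\exp\Big(-N_k\int\kappa_k\circ T_k\,d\mu_\sigma\Big)\Big].
\]
Varadhan's lemma would give the limit directly if the functional $\mu\mapsto\int\kappa_k\circ T_k\,d\mu$ were a fixed bounded continuous function on $\cM$. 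It is not: it depends on $k$, and $T_k$ need not converge uniformly to $T_\infty$, since $T_\infty$ may be discontinuous when the limiting margins have gaps in their support, and $\rho_{\r_\infty}$ may have atoms. The first step is therefore to replace the integrand by the fixed continuous function $F(\mu):=\int\kappa_\infty\circ T_\infty\,d\mu$, or a suitable substitute, on the set of permutons.

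The key observation is that $\int h\circ T_k\,d\mu=\int h\,d(T_k)_\#\mu$. By Assumption~\ref{assumption: bounded_cost}, $\norm{\kappa_k-\kappa_\infty}_\infty\to0$, so the error from replacing $\kappa_k$ by $\kappa_\infty$ is $o(N_k)$ in the exponent, uniformly in $\sigma$. This reduces matters to $\mu\mapsto\int\kappa_\infty\,d(T_k)_\#\mu$. Since $q_{\r_k}\to q_{\rho_{\r_\infty}}$ at every continuity point of the limit, hence Lebesgue-a.e., and permutons have uniform marginals, we get the following. For any sequence of permutons $\mu_k\to\mu$, the map $T_k$ converges to $T_\infty$ $\mu$-almost everywhere, so $(T_k)_\#\mu_k\Rightarrow(T_\infty)_\#\mu$.

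I would make this rigorous by approximating uniformly: given $\eps$, pick a continuous $\tilde q$ with $|\tilde q-q_\infty|<\eps$ outside a set of Lebesgue measure at most $\eps$. Then $|\int g\circ T_k\,d\mu-\int g\circ\tilde T\,d\mu|$ is small uniformly over $\mu\in\cM$ for large $k$, because the marginals are uniform. This uses only the uniform continuity of $\kappa_\infty$ together with the $L^1(\Leb)$ convergence $q_{\r_k}\to q_{\rho_{\r_\infty}}$, which follows from weak convergence since the quantile functions are bounded and monotone. So
\[
\sup_{\mu\in\cM}\Big|\int\kappa_k\circ T_k\,d\mu-F(\mu)\Big|\to0,
\]
and $F$ is continuous on $\cM$: it equals a limit, uniform on $\cM$, of the continuous maps $\mu\mapsto\int\kappa_\infty\circ\tilde T\,d\mu$. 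I expect this uniform approximation, which handles atoms and gaps of the limiting margins, to be the main technical obstacle.

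Varadhan's lemma on the compact space $\cM$ then gives
\[
\frac{1}{N_k}\log\frac{\cZ_k(\kappa_k)}{N_k!}\to-\inf_{\mu\in\cM}\big[F(\mu)+\KL(\mu\Vert u)\big].
\]
To identify this infimum with $\Gamma_\infty$, note that for $\mu\in\cM$ the image $\pi=(T_\infty)_\#\mu$ lies in $\Pi(\rho_{\r_\infty},\rho_{\c_\infty})$. Since $(T_\infty)_\#u=\rho_{\r_\infty}\otimes\rho_{\c_\infty}$, the data processing inequality gives $\KL(\pi\Vert\rho_{\r_\infty}\otimes\rho_{\c_\infty})\le\KL(\mu\Vert u)$, so the infimum is at least $\Gamma_\infty$. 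For the reverse inequality, lift $\pi_\infty$: set $\mu^*:=e^{a\circ q_1+b\circ q_2-\kappa_\infty\circ T_\infty}\,du$, which is a permuton by the Schr\"odinger system \eqref{eq: schrodinger_system} pushed through $q_1,q_2$. It satisfies $(T_\infty)_\#\mu^*=\pi_\infty$, and its relative entropy equals that of $\pi_\infty$, because its density is a function of $T_\infty$. This proves (i). It also shows that $\mu^*$ is the unique minimizer, by strict convexity of $\KL$ on the convex set $\cM$ with $F$ linear.

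For (ii), the upper-bound half of the LDP together with (i) gives $\Gibbs_k(\mu_\sigma\notin U)\le e^{-cN_k}$ for every weak neighborhood $U$ of $\mu^*$. This is the usual exponential concentration for tilted measures, using the goodness of the rate function and the uniqueness of the minimizer. Hence $\mu_\sigma\to\mu^*$ in probability under $\Gibbs_k$. By the uniform approximation above, $\int g\circ T_k\,d\mu_\sigma-\int g\,d(T_\infty)_\#\mu_\sigma\to0$ uniformly in $\sigma$, and $\int g\,d(T_\infty)_\#\mu_\sigma\to\int g\,d\pi_\infty$ by continuity of $\mu\mapsto\int g\circ T_\infty\,d\mu$ on $\cM$. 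Finally, Proposition~\ref{prop: permutation_CT_coupling}(iii) and the coupling \eqref{eq:coupling_Gibbs} transfer this statement to $\int g\,d\pi_{\mathbf{X}_k}$, since the discrepancy is controlled by the modulus of continuity of $g$ at the grid mesh, which tends to $0$.
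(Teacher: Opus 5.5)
Your proof is correct. It differs from the paper's mainly in how you handle the $k$-dependence of the functional $\mu\mapsto\int\kappa_k\circ T_k\,d\mu$.

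\textbf{The paper's route.} It proves an extension of Varadhan's lemma (Lemma~\ref{lem: Varadhan_generalization}) for functionals $F_k$ that converge only along convergent sequences $\mu_k\to\mu$. This is fed by Lemma~\ref{lem: convergence_of_pushfowards}, which splits the error into two parts.
\begin{itemize}
\item The term $\int(g\circ T_k-g\circ T_\infty)\,d\mu_k$ is controlled using the uniform marginals of $\mu_k$.
\item The convergence $\int g\circ T_\infty\,d\mu_k\to\int g\circ T_\infty\,d\mu$ follows from the Portmanteau theorem, since the discontinuities of $T_\infty$ lie on countably many lines that permutons do not charge.
\end{itemize}
The paper applies the generalized lemma in both parts. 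In (ii) it is needed because the complement of a neighborhood $\cU$ is encoded by setting $\widetilde F_k:=-\infty$ on $\cU$.

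\textbf{Your route.} You observe that the first estimate is in fact uniform over all of $\cM$, and you establish continuity of $F$ separately. With that, the standard Varadhan lemma suffices for (i). The LDP upper bound on the closed set $\cM\setminus U$ gives the exponential concentration in (ii).

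\textbf{What each buys.} Your approach is more elementary for this theorem. The paper's lemma is the more flexible tool, since it allows functionals that converge only along sequences or are unbounded, subject to the exponential moment condition. Bounded costs make that generality unnecessary here.

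\textbf{Two simplifications and one caveat.}
\begin{itemize}
\item Your continuity step does not need a Lusin-type approximant $\tilde q$. For positive integer margins each $T_k$ is a homeomorphism, so $\mu\mapsto\int\kappa_\infty\circ T_k\,d\mu$ is already weakly continuous. Your uniform estimate then exhibits $F$ as a uniform limit of continuous maps on $\cM$.
\item Your heuristic that ``$T_k\to T_\infty$ $\mu$-a.e.\ implies $(T_k)_\#\mu_k\Rightarrow(T_\infty)_\#\mu$'' is not a proof by itself, because the measures $\mu_k$ vary with $k$. Your uniform argument makes it unnecessary.
\end{itemize}

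\textbf{Identifying the variational value.} You use the data processing inequality for $(T_\infty)_\#$ together with the explicit lift $\mu^*$, and you get uniqueness from strict convexity. The paper (Lemma~\ref{lem: COV_permuton_to_EOT}) instead checks that the lift has Schr\"odinger form for the cost $\kappa_\infty\circ T_\infty$. It then invokes the Pythagorean identity, which yields the value and uniqueness of the minimizer in one step. Both identifications are valid.
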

	
	Theorem~\ref{thm: conditioned_model_weak_as_limit_no_rate} gives no rate; under the stronger Assumption~\ref{assumption: L^1_limits_margins}, we obtain one for every bounded test function. Under Assumptions~\ref{assumption: bounded_cost} and \ref{assumption: L^1_limits_margins}, passing to an almost everywhere convergent subsequence shows that $\kappa_\infty = -\log\big(\varphi_{\bLambda_\infty}/(\rho_{\r_\infty}\otimes\rho_{\c_\infty})\big)$ almost everywhere, so that $\pi_\infty$ is the static Schr\"odinger bridge between $\rho_{\r_\infty}$ and $\rho_{\c_\infty}$ with reference measure $\varphi_{\bLambda_\infty}\,du$. The error $\int g\,d(\pi_{\mathbf{X}_k} - \pi_\infty)$ splits into a random part, the deviation of $\mathbf{X}_k$ from the Schr\"odinger bridge $\mathbf{Z}_k$, and a deterministic part, the distance between the discrete Schr\"odinger bridge $\pi_{\mathbf{Z}_k}$ and its limit $\pi_\infty$. To state the result, we set $M := \exp\big(\sup_k \norm{\kappa_k}_\infty\big) < \infty$. For probability densities $p$ and $q$, let $d_{H}(p,q) := \big( \int (\sqrt{p} - \sqrt{q})^2 \big)^{1/2}$ denote the Hellinger distance.
	
	\begin{theorem}[Rates of convergence]
		\label{thm: conditioned_model_weak_as_limit}
		Assume \ref{assumption: bounded_cost} and \ref{assumption: L^1_limits_margins}, that $m_k + n_k \to \infty$, and that there is $c \in (0,1]$ such that $\norm{\bLambda_k}_1 \ge c\, m_kn_k\norm{\bLambda_k}_\infty$ for all $k$. Fix $\eps > 0$ and $g \in L^\infty([0,1]^2)$, and suppose that $N_k \geq (m_k + n_k)^{1 + \eps}\log N_k$ for all large $k$. Then there is a constant $C>0$, depending only on $c$, $M$ and $\norm{g}_{\infty}$, such that for all sufficiently large $k$, with probability at least $1 - 2\exp \big( -C(m_k + n_k)^{1+\eps} \log N_k\big)$,
		\begin{multline}
			\label{eq: rate_bound}
			\left| \int_{[0, 1]^2} g \, d(\pi_{\mathbf{X}_k} - \pi_{\infty}) \right| \le \sqrt{\frac{(m_k+n_k)^{1+\eps}\log N_k}{N_k}} \\
			+ 2\norm{g}_\infty \Big( 40 \log M \big( \norm{\rho_{\r_k} - \rho_{\r_\infty}}_{L^1} + \norm{\rho_{\c_k} - \rho_{\c_\infty}}_{L^1} \big) + 11 M^5 d_{H}(\varphi_{\bLambda_k}, \varphi_{\bLambda_{\infty}}) \Big)^{1/2}.
		\end{multline}
	\end{theorem}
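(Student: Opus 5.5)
The plan is to split the error with the triangle inequality at the discrete Schr\"odinger bridge $\mathbf{Z}_k$:
\begin{align}
\Big|\int g\,d(\pi_{\mathbf{X}_k}-\pi_\infty)\Big| \le \Big|\int g\,d(\pi_{\mathbf{X}_k}-\pi_{\mathbf{Z}_k})\Big| + \Big|\int g\,d(\pi_{\mathbf{Z}_k}-\pi_\infty)\Big|.
\end{align}
The first, random term will give the square-root term in \eqref{eq: rate_bound} together with the probability bound. The second, deterministic term will give the stability term. Since $\norm{\mathbf{X}_k}_1=\norm{\mathbf{Z}_k}_1=N_k$, the random term equals $N_k^{-1}\sum_{i,j}\bar g_{ij}(\mathbf{X}_{k,ij}-\mathbf{Z}_{k,ij})$. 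Here $\bar g_{ij}$ is the average of $g$ over the cell $A_{ij}$, so $|\bar g_{ij}|\le\norm{g}_\infty$.

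\textbf{Random term.} I would use the transference principle (Lemma~\ref{thm: transference}), i.e.\ \eqref{eq: same_conditional_law}. It gives, for any event $E$,
\begin{align}
\P(\mathbf{X}_k\in E)\le \P(\mathbf{Y}\in E)/\P(\mathbf{Y}\in\CT(\r_k,\c_k)), \qquad \mathbf{Y}\sim\Poisson(\mathbf{Z}_k).
\end{align}
The denominator needs a polynomial-type lower bound of the form $\P(\mathbf{Y}\in\CT(\r_k,\c_k))\ge \exp(-C_1(m_k+n_k)\log N_k)$. I expect this to come either from the transference lemma itself or from a direct argument. One direct argument: the row sums of $\mathbf{Y}$ are independent Poissons with means $\r_k(i)$, so they hit their means with probability at least $(e\sqrt{\r_k(i)})^{-1}$. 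Then conditionally on the rows, each row is multinomial, and the column constraints are handled by a Stirling estimate. Here the hypothesis $\norm{\bLambda_k}_1\ge c\,m_kn_k\norm{\bLambda_k}_\infty$ keeps all entries of $\mathbf{Z}_k$ comparable, which prevents degenerate cells.

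For $\mathbf{Y}$ itself, $\sum_{ij}\bar g_{ij}(\mathbf{Y}_{ij}-\mathbf{Z}_{k,ij})$ is a sum of independent centered Poisson variables with total variance at most $\norm{g}_\infty^2N_k$. Bernstein's inequality at level $N_kt$ with $t=\sqrt{(m_k+n_k)^{1+\eps}\log N_k/N_k}$ gives a tail $2\exp(-c'N_kt^2)=2\exp(-c'(m_k+n_k)^{1+\eps}\log N_k)$. The assumption $N_k\ge(m_k+n_k)^{1+\eps}\log N_k$ makes $t\le 1$, so we are in the sub-Gaussian regime. Because $\eps>0$, this exponent beats the loss $C_1(m_k+n_k)\log N_k$ from the denominator for large $k$, which yields the stated probability.

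\textbf{Deterministic term.} I would bound this term by $2\norm{g}_\infty d_{TV}(\pi_{\mathbf{Z}_k},\pi_\infty)$ and then by $\norm{g}_\infty\sqrt{2\KL(\pi_{\mathbf{Z}_k}\Vert\pi_\infty)}$ using Pinsker. This accounts for the outer square root. It then suffices to show
\begin{align}
\KL(\pi_{\mathbf{Z}_k}\Vert\pi_\infty)\lesssim \log M\,(\norm{\rho_{\r_k}-\rho_{\r_\infty}}_{L^1}+\norm{\rho_{\c_k}-\rho_{\c_\infty}}_{L^1})+M^5d_H(\varphi_{\bLambda_k},\varphi_{\bLambda_\infty}).
\end{align}
Write $\pi_\infty=e^{a\oplus b}\varphi_{\bLambda_\infty}$ and $\pi_{\mathbf{Z}_k}=e^{a_k\oplus b_k}\varphi_{\bLambda_k}$. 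The Schr\"odinger system \eqref{eq: schrodinger_system} shows that the potentials have oscillation $O(\log M)$. Expanding the log-ratio gives
\begin{align}
\KL(\pi_{\mathbf{Z}_k}\Vert\pi_\infty)=\int(a_k\oplus b_k-a\oplus b)\,d\pi_{\mathbf{Z}_k}+\int\log\frac{\varphi_{\bLambda_k}}{\varphi_{\bLambda_\infty}}\,d\pi_{\mathbf{Z}_k}.
\end{align}
Adding the symmetric quantity $\KL(\pi_\infty\Vert\pi_{\mathbf{Z}_k})$, the separable parts integrate only against marginals. They therefore combine into $\int(a_k-a)\,d(\rho_{\r_k}-\rho_{\r_\infty})$ plus the analogous column term, which is at most $O(\log M)$ times the $L^1$ margin errors. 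The remaining terms are $\int\log(\varphi_{\bLambda_k}/\varphi_{\bLambda_\infty})\,d(\pi_{\mathbf{Z}_k}-\pi_\infty)$. Both bridges have densities bounded by powers of $M$ relative to the references, and the references are bounded above and below (using $c$ and $M$). So I would write the integrand as $\log$ of a ratio of square roots and bound it by $M^{O(1)}\,|\sqrt{\varphi_{\bLambda_k}}-\sqrt{\varphi_{\bLambda_\infty}}|$, then apply Cauchy--Schwarz to obtain the Hellinger term.

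I expect the main obstacle to be this stability estimate. The symmetrization has to be arranged carefully, because the two bridges live on different marginals. One also needs explicit control of the potentials' oscillation and of the density ratios in terms of $M$ to reach constants like $40\log M$ and $11M^5$. The lower bound on $\P(\mathbf{Y}\in\CT)$ is the second delicate point, if it is not already supplied by the transference lemma.
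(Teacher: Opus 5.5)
Your treatment of the random term is correct and follows the paper. Lemma~\ref{thm: transference} already gives $\P(\mathbf{Y}\in\CT(\r_k,\c_k))\ge (eN_k)^{-(m_k+n_k)}m_k^{m_k}n_k^{n_k}$, so the multinomial argument is unnecessary. Your variance bound $\sum_{i,j}\bar g_{ij}^2\mathbf{Z}_{k,ij}\le\norm{g}_\infty^2N_k$ even dispenses with the hypothesis on $c$, which the paper uses only to bound $\max_{i,j}\mathbf{Z}_{k,ij}$.

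The gap is in the deterministic term: the route through $\KL(\pi_{\mathbf{Z}_k}\Vert\pi_\infty)$ and Pinsker cannot work under the stated assumptions. Assumption~\ref{assumption: L^1_limits_margins} only requires the limiting margin densities to be positive, not bounded below. Your claim that the references are bounded below ``using $c$ and $M$'' is false: the hypothesis on $c$ gives only $\varphi_{\bLambda_k}\le 1/c$, while $\varphi_{\bLambda_k}=e^{-\kappa_k}\rho_{\r_k}\otimes\rho_{\c_k}$ is as small as the margins.

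For a concrete counterexample, take rank-one priors $\bLambda_{k,ij}=\r_k(i)\c_k(j)$, $\rho_{\c_\infty}\equiv1$, and $\rho_{\r_\infty}(x)\propto e^{-1/x}$, discretized by $\r_k$ with entries at least $1$ and $N_k/m_k\to\infty$. All hypotheses of the theorem hold, $\pi_{\mathbf{Z}_k}=\rho_{\r_k}\otimes\rho_{\c_k}$ and $\pi_\infty=\rho_{\r_\infty}\otimes\rho_{\c_\infty}$. Since $\rho_{\r_k}\ge m_k/N_k$ on the first cell, $\KL(\pi_{\mathbf{Z}_k}\Vert\pi_\infty)\ge\KL(\rho_{\r_k}\Vert\rho_{\r_\infty})=\infty$ for every $k$. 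Yet the right-hand side of \eqref{eq: rate_bound} is finite and tends to zero.

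The same obstruction sits in your reference term $\int\log(\varphi_{\bLambda_k}/\varphi_{\bLambda_\infty})\,d(\pi_{\mathbf{Z}_k}-\pi_\infty)$. Its integrand behaves like $(x-y)\log(x/y)$, which is not dominated by a constant times $|\sqrt{x}-\sqrt{y}|$ without a lower bound. Even when the margins are bounded below by some $\delta$, this route yields constants depending on $\delta$ and $c$, not the $M$-only constants $40\log M$ and $11M^5$ of the statement.

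The missing idea is to never take logarithms of the densities and to bound the Hellinger distance directly through the dual problem, as in Proposition~\ref{prop: hellinger_stability}. Let $\cL,\cL'$ be the dual functionals of \eqref{eq: SSB_duality} for the two problems and $V,V'$ their optimal values. Write $\pi=e^{a\oplus b}R$ and $\pi'=e^{a'\oplus b'}R'$, and form $Q:=e^{a\oplus b}R'$ from the potentials of the \emph{other} problem.
\begin{itemize}
\item The duality gap satisfies $V'-\cL'(a,b)=\int Q\,\psi(\pi'/Q)\ge H(\pi',Q)^2$ with $\psi(x)=x\log x-x+1$.
\item This gap is at most $|\cE(a,b)|+|\cE(a',b')|$, where $\cE:=\cL'-\cL$. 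It only integrates the bounded potentials ($\norm{a}_\infty\le10\log M$, $\norm{b}_\infty\le5\log M$ after normalization) against $\mu-\mu'$, $\nu-\nu'$ and $e^{a\oplus b}(R'-R)$.
\item Finally, $H(Q,\pi)\le M^{5/2}H(R,R')$, the triangle inequality and $\norm{\pi-\pi'}_{L^1}\le 2H(\pi,\pi')$ give the second term of \eqref{eq: rate_bound}.
\end{itemize}
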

	
	The two terms in \eqref{eq: rate_bound} correspond to the random and the deterministic part of the error; the second tends to zero under Assumption~\ref{assumption: L^1_limits_margins}, and the first if $N_k/((m_k+n_k)^{1+\eps}\log N_k) \to \infty$. The first term is not sharp: the fluctuation results of Section~\ref{sec: main_fluctuations} show that $\int g\,d(\pi_{\mathbf{X}_k} - \pi_{\mathbf{Z}_k})$ is of order $N_k^{-1/2}$, and this holds uniformly over all axis-parallel rectangles (Corollary~\ref{cor: KS}).
	
	\subsection{Partition function asymptotics}
	\label{sec: main_partition}
	
	Theorem~\ref{thm: conditioned_model_weak_as_limit_no_rate}(i) identifies the exponential order of the partition function. Our next result identifies the subexponential correction. For a bounded grid-step function $f$ we write
	\begin{align}
		\label{eq: def_normalized_partition}
		\cD_k(f) := \frac{e^{N_k\Gamma_k(f)}}{N_k!}\,\cZ_k(f)
	\end{align}
	for the partition function \eqref{eq: def_partition_function} normalized by the corresponding discrete EOT value \eqref{eq:EOT_value}. 
	
	\begin{theorem}[Partition function asymptotics]
		\label{thm: permuton_partition_det_limit}
		Suppose that $N_k \to \infty$, $\rho_{\r_k} \to \rho_{\r_\infty}$ and $\rho_{\c_k}\to\rho_{\c_\infty}$ weakly. Let $f_k$ be grid-step functions with respect to the $m_k \times n_k$ grid that converge uniformly to a continuous function $f_\infty$ on $[0,1]^2$. Let $\xi_\infty := d\pi^{f_\infty}/d(\rho_{\r_\infty}\otimes\rho_{\c_\infty})$, where $\pi^{f_\infty} := \pi^{f_\infty}_{\rho_{\r_\infty},\rho_{\c_\infty}}$, and let $\cS : L^2(\rho_{\c_\infty}) \to L^2(\rho_{\r_\infty})$ be the integral operator
		\begin{align}
			\label{eq: def_S_operator}
			(\cS h)(x) := \int \big(\xi_\infty(x,y) - 1\big) h(y)\, \rho_{\c_\infty}(dy).
		\end{align}
		Then $\cS$ is a Hilbert--Schmidt operator with $\norm{\cS}_{\mathrm{HS}}^2 = \chi^2\big(\pi^{f_\infty} \,\Vert\, \rho_{\r_\infty}\otimes\rho_{\c_\infty}\big) := \int(\xi_\infty - 1)^2\,d(\rho_{\r_\infty}\otimes\rho_{\c_\infty})$ and $\norm{\cS}_{\mathrm{op}} \le 1 - e^{-2\osc(f_\infty)}$, and
		\begin{align}
			\label{eq: permuton_partition_limit_statement}
			\lim_{k \to \infty} \cD_k(f_k) = \detF(I - \cS^*\cS)^{-1/2} = \prod_{i \ge 1} (1 - s_i^2)^{-1/2},
		\end{align}
		where $s_1 \ge s_2 \ge \cdots \ge 0$ are the singular values of $\cS$, i.e., the canonical correlations of $\pi^{f_\infty}$, and $\detF$ is the Fredholm determinant.
	\end{theorem}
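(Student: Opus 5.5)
The plan is to reduce $\cD_k(f_k)$ to a lattice probability for a matrix of independent Poisson entries, evaluate that probability by a local central limit theorem in growing dimension, and then pass the resulting finite determinant to the Fredholm determinant. The operator claims about $\cS$ are routine. By \eqref{eq: schrodinger_form} with bounded potentials, $e^{-2\osc(f_\infty)} \le \xi_\infty \le e^{2\osc(f_\infty)}$, up to normalization of the potentials. Hence $\cS$ has a square-integrable kernel, and $\norm{\cS}_{\mathrm{HS}}^2$ equals the $\chi^2$-divergence. The operator-norm bound comes from a Doeblin-type argument. The Markov operator $h \mapsto \int \xi_\infty(\cdot,y)h(y)\,\rho_{\c_\infty}(dy)$ has a kernel bounded below by $e^{-2\osc(f_\infty)}$, so its restriction to mean-zero functions contracts by $1-e^{-2\osc(f_\infty)}$. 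Moreover, $\cS$ is exactly this Markov operator minus the projection onto constants.

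For the exact reduction, fix $k$ and apply Proposition~\ref{prop: permutation_CT_coupling} and Barvinok's identity with the prior $\bLambda^{f}$ whose cost \eqref{eq: def_kappa} is $f_k$. This gives $\cZ_k(f_k) = \prod_i \r(i)!\prod_j\c(j)!\,\sum_{\x\in\CT(\r,\c)}\prod_{i,j}\bLambda^f_{ij}{}^{\x_{ij}}/\x_{ij}!$. Let $\mathbf{Z}$ be the discrete Schr\"odinger bridge for this prior, with potentials $(\balpha,\bbeta)$, and let $\mathbf{Y}\sim\Poisson(\mathbf{Z})$. Then $\P(\mathbf{Y}=\x) = e^{-N+\langle\balpha,\r\rangle+\langle\bbeta,\c\rangle}\prod\bLambda^{f}_{ij}{}^{\x_{ij}}/\x_{ij}!$ on $\CT(\r,\c)$. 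By Lemma~\ref{lem: typical_table_bridge}, $N\Gamma_k(f_k)$ equals $\sum\mathbf{Z}_{ij}\log(\mathbf{Z}_{ij}N/(\r(i)\c(j)))$ up to the normalization constant built into $\kappa$. Combining this with Stirling's formula for $\r(i)!$, $\c(j)!$ and $N!$, the exponential terms cancel. I expect the identity
\begin{align}
	\cD_k(f_k) = (1+o(1))\,(2\pi)^{(m+n-1)/2}\Big(\frac{\prod_i\r(i)\prod_j\c(j)}{N}\Big)^{1/2}\,\P\big(\mathbf{Y}\in\CT(\r,\c)\big).
\end{align}
Care is needed here because margins $\r(i)$ may stay bounded. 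To avoid any error from Stirling's formula, I would keep the Stirling ratios exact and absorb them into the local limit comparison. That comparison is naturally stated with the Poisson probabilities of the marginal totals $\sum_j\mathbf{Y}_{ij}\sim\Poisson(\r(i))$, rather than with Gaussian approximations to them.

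The heart of the proof is a local limit theorem: $\P(\mathbf{Y}\in\CT(\r,\c)) = (1+o(1))(2\pi)^{-(m+n-1)/2}\det(\Sigma)^{-1/2}$. Here $\Sigma$ is the covariance of the lattice vector (all row sums, all column sums but the last), namely the block matrix $\left(\begin{smallmatrix}\diag\r & \mathbf{Z}'\\ \mathbf{Z}'^{\top} & \diag\c'\end{smallmatrix}\right)$. I would write the probability as a Fourier integral over $(\theta,\phi)\in[-\pi,\pi]^{m+n}$ of $\prod_{ij}\exp(\mathbf{Z}_{ij}(e^{i(\theta_i+\phi_j)}-1))$, with the usual one-dimensional degeneracy $(\theta+t,\phi-t)$ quotiented out. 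I would then split the domain into a Gaussian core and the remainder. On the core, a third-order expansion plus the bound $\mathbf{Z}_{ij}\asymp\r(i)\c(j)/N$ controls the error. On the remainder, $|\E e^{i\langle\cdot\rangle}|\le\exp(-c\sum\mathbf{Z}_{ij}(1-\cos(\theta_i+\phi_j)))$, and this bound is then compared with the Gaussian. This is the step I expect to be the main obstacle. The dimension $m+n$ grows, possibly comparably to $N$, and some margins may be $O(1)$. I would first try to handle those coordinates exactly, by leaving their one-dimensional Poisson factors un-approximated and matching them with the Stirling factors above. If that fails, I would use the Schur complement of $\Sigma$ to integrate out one block of variables explicitly before approximating.

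Finally, a Schur-complement computation gives
\begin{align}
	\frac{N\det\Sigma}{\prod_i\r(i)\prod_j\c(j)} = \det\nolimits'(I-S_k^\top S_k) = \prod_{i\ge1}(1-s_{k,i}^2).
\end{align}
Here $S_k = \diag(\r)^{-1/2}\mathbf{Z}\diag(\c)^{-1/2}$ has top singular value $1$, which is removed by the prime. Its remaining singular values $s_{k,i}$ are those of the discrete operator $\cS_k$ with kernel $\xi_k-1$, where $\xi_k := d\pi_k(f_k)/d(\rho_{\r_k}\otimes\rho_{\c_k})$. It then remains to show $\prod(1-s_{k,i}^2)\to\prod(1-s_i^2)$. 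The operator-norm bound holds uniformly in $k$, since $\osc(f_k)\to\osc(f_\infty)$. So it suffices to show $\cS_k^*\cS_k\to\cS^*\cS$ in trace norm after transport to a common space, because $\detF(I-\cdot)$ is continuous in trace norm on operators of norm bounded away from $1$. I would realize all operators on $L^2(u)$ via the quantile maps $T_k$, $T_\infty$. Stability of Schr\"odinger bridges under weak convergence of margins and uniform convergence of costs, as in Theorem~\ref{thm: conditioned_model_weak_as_limit_no_rate}, gives uniformly bounded potentials converging a.e. in these coordinates. Therefore $\xi_k\circ T_k\to\xi_\infty\circ T_\infty$ in $L^2(u)$, that is, $\cS_k\to\cS$ in Hilbert--Schmidt norm, which yields trace-norm convergence of $\cS_k^*\cS_k$.
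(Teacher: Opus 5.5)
\textbf{There is a genuine gap: the local limit theorem is not proved, and in the form you state it, it is false.}

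Your exact reduction is correct and needs no Stirling approximation. Write $p_\lambda := e^{-\lambda}\lambda^\lambda/\lambda!$. The computation in the proof of Lemma~\ref{thm: transference} gives the identity $\cD_k(f_k) = \P(\mathbf{Y}\in\CT(\r,\c))\,p_N\big/\big(\prod_i p_{\r(i)}\prod_j p_{\c(j)}\big)$. Your last step is also essentially the paper's: transport by $T_k$, Hilbert--Schmidt convergence, and trace-norm continuity of $\detF$.

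The local limit theorem, however, carries the whole content of the statement. The theorem imposes no growth on the margins. For unit margins ($m=n=N$, the Mallows case) the true value is $e^{-N}\per(\mathbf{Z}) = (1+o(1))\sqrt{2\pi N}\,e^{-2N}D_\infty$. Your Gaussian formula $(2\pi)^{-(2N-1)/2}\det(\Sigma)^{-1/2}$ gives $(2\pi)^{-(2N-1)/2}N^{1/2}D_\infty$, which is off by the factor $(2\pi/e^2)^N$.

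The Fourier core/remainder scheme cannot work in this regime. When $\r(i)=O(1)$, the relevant $\theta_i$ are of order one. The cubic terms $\sum_{i,j}\mathbf{Z}_{ij}|\theta_i+\phi_j|^3$ are then of order $m+n$, not $o(1)$. Nothing in your sketch controls the integrand on the rest of a torus of dimension $\asymp N$.

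The corrected version replaces the Gaussian densities by the exact point probabilities $p_{\r(i)}$, $p_{\c(j)}$, $p_N$. By the identity above, that version is literally $\cD_k(f_k) = (1+o(1))\prod_i(1-s_{k,i}^2)^{-1/2}$. That is the theorem itself before the limit passage, so the argument is circular at this step. Your two fallbacks (keeping some one-dimensional Poisson factors exact, or a Schur complement) give no way to handle the dependence between row and column sums. That dependence is exactly what produces the factor $\prod_i(1-s_{k,i}^2)^{-1/2}$.

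\textbf{The missing idea is to work with Barvinok's $N\times N$ matrix instead of the $m\times n$ table.} Lemma~\ref{lem: grid_step_identity} gives $\cD(f) = \frac{N^N}{N!}\per(\bD_f)$. Here $\bD_f = \frac1N\widetilde{\xi_f}$ is doubly stochastic with $N(\bD_f)_{k\ell}\in[e^{-2\osc(f)},e^{2\osc(f)}]$. Your $\P(\mathbf{Y}\in\CT(\r,\c))$ is this permanent up to the explicit factors above.

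After the lift, every margin equals one and every entry is uniformly comparable to $1/N$, however heterogeneous $(\r,\c)$ are. That is precisely the regime of McCullagh's expansion (Theorem~\ref{thm: mccullogh_permanent_expansion}). The paper makes it uniform over $\mathfrak{D}_N(K)$ by a subsequence argument (Corollary~\ref{cor: uniform_mccullagh}). It then identifies $\det\big(\mathbf{I}-(\bD_f-\bJ_N)^\top(\bD_f-\bJ_N)\big)$ with $\detF(I-\cS_f^*\cS_f)$, because $\cS_f$ acts as $\bD_f-\bJ_N$ on step functions (Lemma~\ref{lem: partition_fredholm}). Without this, or an equivalent uniform permanent asymptotic that you would have to prove yourself, your argument does not establish the limit.

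A smaller point concerns the $L^2(u)$ convergence $\xi_k\circ T_k\to\xi_\infty\circ T_\infty$. It does not follow from Theorem~\ref{thm: conditioned_model_weak_as_limit_no_rate}, which says nothing about potentials. The paper (Lemma~\ref{lem: operator_HS_convergence}) splits it into three pieces:
\begin{itemize}
\item a cost perturbation, handled by total-variation stability (Lemma~\ref{lem: TV_cost_stability}), since the $f_k$ are discontinuous;
\item a margin perturbation at the fixed continuous cost $f_\infty$, where the potentials converge uniformly;
\item the a.e.\ convergence $T_k\to T_\infty$.
\end{itemize}
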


	The \emph{Poisson-weighted number of contingency tables} is the normalizing constant of \eqref{eq: conditioned_model},
	\begin{align}
		\label{eq: poisson_weighted_count}
		\cZ_{\r,\c;\bLambda} = \sum_{\x\in\CT(\r,\c)}\prod_{i,j}\frac{\bLambda_{ij}^{\x_{ij}}}{\x_{ij}!} = e^{\norm{\bLambda}_1}\,\P\big(\mathbf{X}^{\circ}\in\CT(\r,\c)\big),\qquad \mathbf{X}^{\circ}\sim\Poisson(\bLambda).
	\end{align}
	For $\bLambda \equiv 1$, each table $\x$ is counted with weight $\prod_{i,j}1/\x_{ij}!$. This quantity is hard to compute: it is a sum over all of $\CT(\r,\c)$, whose cardinality grows exponentially with the dimensions, and by Barvinok's identity (see \eqref{eq: barvinok_per_identity}) it is the permanent of the $N\times N$ matrix $\widetilde{\bLambda}$, up to an explicit factor. As an approximation of $\cZ_{\r,\c;\bLambda}$, consider the explicit quantity
	\begin{align}
		\label{eq: def_enumeration_main_term}
		\widehat{\cZ}_{\r,\c;\bLambda} := \Big(\frac{\norm{\bLambda}_1}{N}\Big)^{N}e^{-N\Gamma_{\rho_{\r},\rho_{\c}}(\kappa)}\;\frac{N!}{N^{N}}\prod_{i=1}^{m}\frac{\r(i)^{\r(i)}}{\r(i)!}\prod_{j=1}^{n}\frac{\c(j)^{\c(j)}}{\c(j)!},
	\end{align}
	whose only nontrivial ingredient is the discrete EOT value, equivalently the value $N\Gamma_{\rho_{\r},\rho_{\c}}(\kappa) - N\log(\norm{\bLambda}_1/N)$ of the matrix scaling problem \eqref{eq: def_matrix_scaling}, which Sinkhorn's algorithm computes.
	
	\begin{corollary}[Asymptotic enumeration]
		\label{cor: enumeration}
		Under Assumptions~\ref{assumption: bounded_cost} and \ref{assumption: margins_weak_convergence},
		\begin{align}
			\label{eq: enumeration_identity}
			\lim_{k\to\infty}\frac{\cZ_{\r_k,\c_k;\bLambda_k}}{\widehat{\cZ}_{\r_k,\c_k;\bLambda_k}} = \detF(I-\cS^*\cS)^{-1/2} =: D_\infty,
		\end{align}
		where $\cS$ is the operator of Theorem~\ref{thm: permuton_partition_det_limit} with $f_\infty = \kappa_\infty$.
	\end{corollary}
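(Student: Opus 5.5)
The plan is to prove the exact identity $\cZ_{\r,\c;\bLambda}/\widehat{\cZ}_{\r,\c;\bLambda} = \cD_k(\kappa_k)$ for every fixed $k$, and then apply Theorem~\ref{thm: permuton_partition_det_limit} with $f_k = \kappa_k$. The identity is a chain of two exact rewritings of the permanent $\per(\widetilde{\bLambda})$; no asymptotic input is needed until the last step.

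\emph{Step 1 (Barvinok's identity).} Group the permutations $\sigma\in S_N$ according to their block-count table $\mathbf{X}(\sigma)=\x$. The number of $\sigma$ with $\mathbf{X}(\sigma)=\x$ is $\prod_i \r(i)!\prod_j\c(j)!/\prod_{i,j}\x_{ij}!$: for each row block $i$, split its $\r(i)$ rows among the column blocks with counts $\x_{i\cdot}$, and for each column block $j$, assign its $\c(j)$ columns bijectively to the incoming rows. Each such $\sigma$ has weight $\prod_{i,j}\bLambda_{ij}^{\x_{ij}}$. Hence
\begin{align}
\per(\widetilde{\bLambda}) = \prod_{i}\r(i)!\prod_j\c(j)!\;\cZ_{\r,\c;\bLambda}.
\end{align}
This is the counting behind Proposition~\ref{prop: permutation_CT_coupling} and \eqref{eq: barvinok_per_identity}.

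\emph{Step 2 (permanent versus $\cZ(\kappa)$).} Since $F_{\r}$ is piecewise linear with slope $\rho_{\r}$, the quantile map $q_{\r}$ sends $((k-1)/N,k/N]$ into $((i-1)/m,i/m]$ with $i=Q_{\r}(k)$, and similarly for $q_{\c}$. Therefore $T$ maps each square $B_{k\ell}$ into the cell $A_{Q_{\r}(k),Q_{\c}(\ell)}$. On that cell $\kappa$ is constant, equal to $\log\big(\r(i)\c(j)\norm{\bLambda}_1/(N^2\bLambda_{ij})\big)$. Because $\mu_\sigma$ puts mass $1/N$ on each $B_{k\sigma(k)}$,
\begin{align}
\exp\Big(-N\int\kappa\circ T\,d\mu_\sigma\Big)=\prod_{k=1}^N \frac{N^2\,\widetilde{\bLambda}_{k\sigma(k)}}{\norm{\bLambda}_1\,\r(Q_{\r}(k))\,\c(Q_{\c}(\sigma(k)))} .
\end{align}
The factors $\r(Q_{\r}(k))$ multiply over $k$ to $\prod_i\r(i)^{\r(i)}$, and the factors $\c(Q_{\c}(\sigma(k)))$ multiply to $\prod_j\c(j)^{\c(j)}$, independently of $\sigma$. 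Summing over $\sigma$ gives
\begin{align}
\cZ(\kappa)=\Big(\frac{N^2}{\norm{\bLambda}_1}\Big)^N\frac{\per(\widetilde{\bLambda})}{\prod_i\r(i)^{\r(i)}\prod_j\c(j)^{\c(j)}}.
\end{align}
This is essentially Lemma~\ref{lem: gibbs_permuton_form}.

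\emph{Step 3 (the ratio).} Combining Steps 1 and 2,
\begin{align}
\cZ_{\r,\c;\bLambda}=\Big(\frac{\norm{\bLambda}_1}{N^2}\Big)^N\cZ(\kappa)\prod_i\frac{\r(i)^{\r(i)}}{\r(i)!}\prod_j\frac{\c(j)^{\c(j)}}{\c(j)!}.
\end{align}
Dividing by \eqref{eq: def_enumeration_main_term}, all margin factors cancel, and $(\norm{\bLambda}_1/N^2)^N\big/\big((\norm{\bLambda}_1/N)^N N^{-N}\big)=1$. What remains is
\begin{align}
\frac{\cZ_{\r,\c;\bLambda}}{\widehat{\cZ}_{\r,\c;\bLambda}}=\frac{e^{N\Gamma_{\rho_{\r},\rho_{\c}}(\kappa)}}{N!}\,\cZ(\kappa)=\cD_k(\kappa_k),
\end{align}
as an exact identity for each $k$.

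\emph{Step 4 (limit).} The cost $\kappa_k$ is grid-step by \eqref{eq: def_kappa}. By Assumption~\ref{assumption: bounded_cost} it converges uniformly to the continuous function $\kappa_\infty$. Assumption~\ref{assumption: margins_weak_convergence} gives weak convergence of the margins and $N_k\ge\max(m_k,n_k)\to\infty$. Thus Theorem~\ref{thm: permuton_partition_det_limit} applies with $f_k=\kappa_k$ and $f_\infty=\kappa_\infty$, and yields $\cD_k(\kappa_k)\to\detF(I-\cS^*\cS)^{-1/2}$.

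The only delicate point is the bookkeeping in Step 2. One must check that $T$ maps each $B_{k\ell}$ into a single cell; this holds because positive integer margins make the breakpoints of $F_{\r}$ and $F_{\c}$ land exactly on multiples of $1/N$. Then the $\sigma$-independent products of the margins must be tracked correctly. Everything else is algebra.
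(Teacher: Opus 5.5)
Your proof is correct and is the paper's argument. The paper likewise derives the exact identity $\cZ_{\r,\c;\bLambda} = \cD(\kappa)\,\widehat{\cZ}_{\r,\c;\bLambda}$ for all margins, by summing the permuton form of the Gibbs weight (Lemma~\ref{lem: gibbs_permuton_form}) over $\sigma$ and combining it with Barvinok's identity \eqref{eq: barvinok_per_identity}, and then applies Theorem~\ref{thm: permuton_partition_det_limit} with $f_k = \kappa_k$; your Steps 1--2 simply re-derive those two ingredients.
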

	
	Thus the main term $\widehat{\cZ}$ captures $\cZ$ up to a factor that depends only on the limiting Schr\"odinger bridge, through its canonical correlations. This parallels the asymptotic formula of Barvinok and Hartigan for the number of contingency tables \cite{barvinok2012asymptotic}, in which the main term is given by the typical table of the finite problem and the correction by a Gaussian determinant. Unlike their formula, Corollary~\ref{cor: enumeration} requires no tameness condition on the margins; see Section~\ref{sec: bg_tables}.

	\subsection{Fluctuations}
	\label{sec: main_fluctuations}
	
	We now describe the fluctuations of $\pi_{\mathbf{X}_k}$ around the static Schr\"odinger bridge $\pi_{\mathbf{Z}_k}$, at scale $N_k^{-1/2}$. We center at $\pi_{\mathbf{Z}_k}$ rather than $\pi_\infty$, because $\pi_{\mathbf{Z}_k} - \pi_\infty$ only reflects how fast the data converge and may be much larger than $N_k^{-1/2}$. We record the fluctuations as a linear functional $\Phi_k$ of test functions and as a two-parameter process $\FF_k$, the analogue of the two-dimensional empirical process. Both are expressed through cell averages.
	
	\begin{definition}[Cell averages]
		\label{def: cell_average}
		For $g \in L^1([0,1]^2)$, the \emph{cell average} of $g$ is the grid-step function
		\begin{align}
			\Ek g := \sum_{i,j} \mathbf{G}_{ij}\,\1_{A_{ij}}, \qquad \mathbf{G}_{ij} := mn\int_{A_{ij}} g(x,y)\,dx\,dy.
		\end{align}
	\end{definition}
	\noindent For nonzero $\mathbf{A} \in \R_{\ge 0}^{m\times n}$, integrating $g$ against $\pi_{\mathbf{A}}$ only involves the integrals of $g$ over the cells:
	\begin{align}
		\label{eq: cell_average_invariance}
		\int_{[0,1]^2} g \, d\pi_{\mathbf{A}} = \int_{[0,1]^2} \Ek g \, d\pi_{\mathbf{A}} = \frac{1}{\norm{\mathbf{A}}_1}\sum_{i,j} \mathbf{G}_{ij}\mathbf{A}_{ij} \qquad \text{for all } g \in L^1([0,1]^2).
	\end{align}
	In particular, linear statistics of $\pi_{\mathbf{X}}$ are linear combinations of the entries of $\mathbf{X}$.
	\begin{definition}[Fluctuation process]
		\label{def: fluctuation_fields}
		For $g \in L^\infty([0,1]^2)$ and $(u,v) \in [0,1]^2$ set
		\begin{align}
			\label{eq: def_fluctuation_field}
			\Phi_k(g) := \sqrt{N_k}\int_{[0,1]^2} g\,d(\pi_{\mathbf{X}_k} - \pi_{\mathbf{Z}_k}) = \frac{1}{\sqrt{N_k}} \sum_{i,j} \mathbf{G}_{ij}\,(\mathbf{X}_{k,ij} - \mathbf{Z}_{k,ij}), \qquad \FF_k(u,v) := \Phi_k\big(\1_{[0,u]\times[0,v]}\big),
		\end{align}
		where $\mathbf{G}_{ij} = m_kn_k\int_{A_{ij}} g(x,y)\,dx\,dy$ as in Definition~\ref{def: cell_average}.
	\end{definition}
	
	By \eqref{eq: cell_average_invariance}, $\Phi_k(g) = \Phi_k(\Ek_k g)$, where $\Ek_k$ is the cell-average operator of the $m_k \times n_k$ grid. We identify $\Phi_k$ with the random function $\sqrt{N_k}(\varphi_{\mathbf{X}_k} - \varphi_{\mathbf{Z}_k}) \in L^2([0,1]^2)$, so that $\Phi_k(g) = \langle \Phi_k, g\rangle_{L^2}$. For a rectangle $B = (u_1,u_2]\times(v_1,v_2]$ and a function $F$ on $[0,1]^2$ we write $F(B) := F(u_2,v_2) - F(u_1,v_2) - F(u_2,v_1) + F(u_1,v_1)$, so that $\FF_k(B) = \Phi_k(\1_B)$.
	
	Two observations suggest the limit. In the tilted model $\mathbf{Y}_k \sim \Poisson(\mathbf{Z}_k)$, whose entries are independent, the statistic $N_k^{-1/2}\sum_{i,j}\mathbf{G}_{ij}(\mathbf{Y}_{k,ij} - \mathbf{Z}_{k,ij})$ has variance $\int(\Ek_kg)^2\,d\pi_{\mathbf{Z}_k} \approx \int g^2\,d\pi_\infty$, as for white noise with intensity $\pi_\infty$. But $\mathbf{X}_k$ and $\mathbf{Z}_k$ have the same margins, so $\Phi_k(a\oplus b) = 0$ for all bounded $a$ and $b$. As the conditioned model is the tilted model conditioned on its margins, the limit of $\Phi_k$ should be white noise with intensity $\pi_\infty$ conditioned to have vanishing marginals, with variance the squared distance from $g$ to the additively separable functions.
	
	To define this distance, note that the space $\cA = \{a\oplus b : a \in L^2(\rho_{\r_\infty}),\ b \in L^2(\rho_{\c_\infty})\}$ of additively separable functions (Definition~\ref{def: sum_space}) is closed in $L^2(\pi_\infty)$ for arbitrary limiting margins, possibly with atoms, by Lemmas~\ref{lem: density_bounds} and \ref{lem: sum_space}. With $P := P_{\pi_\infty}$ the projection onto $\cA$ in $L^2(\pi_\infty)$, let
	\begin{align}
		\label{eq: def_limiting_variance}
		V(g) := \norm{(I - P)g}^2_{L^2(\pi_\infty)} = \min_{a \oplus b \in \cA} \int (g - a\oplus b)^2 \, d\pi_\infty, \qquad g \in L^2(\pi_\infty).
	\end{align}

	\begin{definition}[Schr\"odinger-bridge white noise and pillow]
		\label{def: SB_white_noise}
		The \emph{Schr\"odinger-bridge white noise} is the centered Gaussian process $\Xi = \{\Xi(g)\}_{g \in L^2(\pi_\infty)}$ with covariance
		\begin{align}\label{eq:covariance_mx_Xi}
			\Cov\big(\Xi(g), \Xi(h)\big) = \big\langle (I-P_{\pi_{\infty}})g, (I-P_{\pi_{\infty}})h\big\rangle_{L^2(\pi_\infty)}.
		\end{align}
		The \emph{Schr\"odinger-bridge pillow} is the two-parameter Gaussian process $\FF(u,v) := \Xi(\1_{[0,u]\times[0,v]})$, $(u,v) \in [0,1]^2$, the restriction of $\Xi$ to indicators of rectangles anchored at the origin.
	\end{definition}
	
	Our next result confirms this picture; its proof rests on Section~\ref{sec: main_partition}.
	
	\begin{theorem}[Central limit theorem]
		\label{thm: CLT}
		Assume \ref{assumption: bounded_cost} and \ref{assumption: margins_weak_convergence}, and suppose that either (i) $g, g_1, \ldots, g_p$ are bounded Borel functions on $[0,1]^2$ that are continuous at $(\rho_{\r_\infty}\otimes\rho_{\c_\infty})$-almost every point, or (ii) $g, g_1, \ldots, g_p \in L^\infty([0,1]^2)$ and the families $\{\rho_{\r_k}\}_k$ and $\{\rho_{\c_k}\}_k$ are uniformly integrable (for instance, uniformly bounded). Then $\Phi_k(g) \Rightarrow \mathcal{N}(0, V(g))$ and, more generally,
		\begin{align}
			\big(\Phi_k(g_1), \ldots, \Phi_k(g_p)\big) \Longrightarrow \big(\Xi(g_1), \ldots, \Xi(g_p)\big),
		\end{align}
		with convergence of all moments. The same holds if each $g_i$ is replaced by bounded Borel functions $g_{i,k}$ with $\norm{g_{i,k} - g_i}_\infty \to 0$.
	\end{theorem}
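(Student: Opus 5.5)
We will prove the central limit theorem by convergence of Laplace transforms, via the exact identity linking moment generating functions of linear statistics to ratios of perturbed partition functions (Lemma~\ref{lem: laplace_identity}, announced in the overview). By Cram\'er--Wold it suffices to treat a single statistic $g = \sum_i t_i g_i$, since $V$ is the quadratic form of the covariance \eqref{eq:covariance_mx_Xi}. By Proposition~\ref{prop: permutation_CT_coupling}, $\sum_{i,j}\mathbf{G}_{ij}\mathbf{X}_{ij}$ equals $N\int (\Ek g)\circ T\,d\mu_\sigma$ for $\sigma\sim\Gibbs_k$. Hence, for $\theta\in\R$,
\begin{align}
\E\, e^{\theta\Phi_k(g)} = \frac{\cZ_k(\kappa_k - \theta N_k^{-1/2}\Ek_k g)}{\cZ_k(\kappa_k)}\, e^{-\theta\sqrt{N_k}\int \Ek_k g\, d\pi_{\mathbf{Z}_k}}
= \frac{\cD_k(\kappa_k - \theta N_k^{-1/2}\Ek_k g)}{\cD_k(\kappa_k)}\, \exp\Big(-N_k\big[\Gamma_k(\kappa_k-\theta N_k^{-1/2}\Ek_k g)-\Gamma_k(\kappa_k)\big]-\theta\sqrt{N_k}\textstyle\int \Ek_k g\,d\pi_{\mathbf{Z}_k}\Big).
\end{align}

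\textbf{Step 1: the determinant ratio tends to one.} The perturbed costs $f_k=\kappa_k-\theta N_k^{-1/2}\Ek_k g$ are grid-step functions and converge uniformly to $\kappa_\infty$, because $\norm{\Ek_k g}_\infty\le\norm{g}_\infty$. Theorem~\ref{thm: permuton_partition_det_limit}, applied to both $f_k$ and $\kappa_k$, gives the same limit $\detF(I-\cS^*\cS)^{-1/2}$, which is finite and positive since $\norm{\cS}_{\rm op}<1$. So the ratio of the two $\cD_k$ factors tends to $1$.

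\textbf{Step 2: second-order expansion of the EOT value.} Since $\pi_k(\kappa_k)=\pi_{\mathbf{Z}_k}$, the first variation $\partial_\epsilon\Gamma_k(\kappa_k+\epsilon h)|_0=\int h\,d\pi_{\mathbf{Z}_k}$ cancels the centering term exactly. The second variation is $-\Var_{\pi}(h)$ restricted to the complement of separable functions, that is $-\norm{(I-P_{\pi_{\mathbf{Z}_k}})h}^2_{L^2(\pi_{\mathbf{Z}_k})}$. This should be the content of Proposition~\ref{thm: smooth_cost_perturbations}. It yields
\begin{align}
-N_k\big[\Gamma_k(\kappa_k-\theta N_k^{-1/2}h_k)-\Gamma_k(\kappa_k)\big]-\theta\sqrt{N_k}\textstyle\int h_k\,d\pi_{\mathbf{Z}_k} = \tfrac{\theta^2}{2}\norm{(I-P_{\pi_{\mathbf{Z}_k}})h_k}^2_{L^2(\pi_{\mathbf{Z}_k})}+O(N_k^{-1/2}),
\end{align}
with $h_k=\Ek_k g$. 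The error term is uniform, because the third derivative is controlled by $\norm{h}_\infty^3$ and $M$.

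\textbf{Step 3: convergence of the variance.} This is the main obstacle: proving $\norm{(I-P_{\pi_{\mathbf{Z}_k}})\Ek_k g}^2_{L^2(\pi_{\mathbf{Z}_k})}\to V(g)$ when the margins converge only weakly and may have atoms. I would transport everything to permuton coordinates through $T_k$, where the densities of $\pi_{\mathbf{Z}_k}$ relative to product measure are bounded above and below uniformly (Lemma~\ref{lem: density_bounds}). I would then use the variational form $V=\min_{a\oplus b}\int(g-a\oplus b)^2d\pi$ to obtain a lower bound by $\Gamma$-convergence and an upper bound by plugging in approximate optimizers, which needs $P$-closedness (Lemma~\ref{lem: sum_space}). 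Case (i) is handled by weak convergence $\pi_{\mathbf{Z}_k}\Rightarrow\pi_\infty$ together with a.e.\ continuity, using Theorem~\ref{thm: conditioned_model_weak_as_limit_no_rate}-type stability of the bridges; this also requires $\Ek_k g\to g$ at continuity points. Case (ii) uses uniform integrability, so that $\Ek_k g\to g$ in $L^2$ of the relevant densities by Lebesgue differentiation and equi-integrability.

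\textbf{Conclusion.} Combining the three steps, $\E e^{\theta\Phi_k(g)}\to e^{\theta^2V(g)/2}$ for every real $\theta$. Convergence of moment generating functions on a neighborhood of the origin gives convergence in distribution and of all moments, and Cram\'er--Wold gives the joint statement. For perturbed $g_{i,k}$, the estimates in Steps 1 and 2 are uniform in $\norm{g_{i,k}-g_i}_\infty\to0$, and $V$ is continuous under uniform perturbations, so the same argument applies.
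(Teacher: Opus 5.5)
Your architecture matches the paper's. It uses the exact Laplace identity, Theorem~\ref{thm: permuton_partition_det_limit} to send the ratio of normalized partition functions to one, and Proposition~\ref{thm: smooth_cost_perturbations} for the quadratic term. It then uses convergence of moment generating functions, which, as you say, also gives moment convergence, and Cram\'er--Wold. Your uniform third-derivative bound in Step~2 is true (it follows from the Neumann-series bound in Lemma~\ref{lem: IPFT_Justification_Phi}), but you do not prove it. The paper avoids it by using the Lagrange remainder, with the second derivative taken at an intermediate cost $\kappa_k+\theta_kh_k$.

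\textbf{The gap is in Step~3.} As planned, Step~3 rests on weak convergence $\pi_{\mathbf{Z}_k}\Rightarrow\pi_\infty$, which only gives the easy half $\limsup_k\le V(g)$. In permuton coordinates, $W\mapsto V_W(h):=\inf_{w\in\cA}\int(h-w)^2W\,du$ is an infimum of functionals that are continuous under weak convergence. It is therefore upper but not lower semicontinuous. The liminf genuinely fails under weak convergence alone, even with densities bounded above and below. For example, let $W_k(x,y)=1+\epsilon\operatorname{sign}(\sin 2\pi kx)\,r(y)$ with $\int_0^1 r=0$ and $\epsilon\norm{r}_\infty<1$; these are permuton densities with $W_k\,du\Rightarrow u$. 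Take $h=\phi(x)r(y)$ with $\int_0^1\phi=0$, with norms in $L^2[0,1]$. Then $V_u(h)=\norm{\phi}^2\norm{r}^2$. But $\E_{W_k}[h\mid x]=\epsilon\,\phi(x)\operatorname{sign}(\sin 2\pi kx)\norm{r}^2$, which gives $\limsup_k V_{W_k}(h)\le(1-\epsilon^2\norm{r}^2)V_u(h)$.

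What is missing is strong convergence of the optimal-permuton densities, $\xi_k\circ T_k\to\xi_\infty\circ T_\infty$ in $L^2(u)$. This is Lemma~\ref{lem: operator_HS_convergence}, here applied with the costs $\kappa_k$ you expand around. The paper proves it by comparing with the intermediate problem that has cost $\kappa_\infty$ and the discrete margins. It combines three facts: TV stability in the cost (Lemma~\ref{lem: TV_cost_stability}); uniform convergence of the Schr\"odinger potentials for the continuous cost $\kappa_\infty$ under weak convergence of the margins \cite[Proposition~B.1]{nutz2023stability}; and continuity of $\xi_\infty$ together with $T_k\to T_\infty$ a.e. Once the densities converge a.e.\ along subsequences, the liminf follows as in Lemma~\ref{lem: variance_continuity}, where $\sqrt{W_k}\,v_k\rightharpoonup\sqrt W\,\tilde v$ comes from dominated convergence.

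You also need Lemma~\ref{lem: permuton_coordinates} to identify the resulting limit $\norm{(I-P_{\mu_\infty})(g\circ T_\infty)}^2_{L^2(\mu_\infty)}$ with $V(g)$. When $\rho_{\r_\infty}$ or $\rho_{\c_\infty}$ has atoms, $T_\infty$ is not injective. The additively separable space in permuton coordinates is then strictly larger than the pullback of $\cA\subset L^2(\pi_\infty)$, so the equality is not automatic. It holds because $g\circ T_\infty$ and the density are $\sigma(T_\infty)$-measurable, and conditioning on $\sigma(T_\infty)$ commutes with $P_{\mu_\infty}$.
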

	
	In case~(ii), the limiting margins are absolutely continuous by Lemma~\ref{lem: transported_cell_averages}(b), so that $V$ and $\Xi$ are defined on $L^\infty([0,1]^2)$.

	The variance $V(g)$ concerns the component of $g$ complementary to the one in the central limit theorems for discrete Schr\"odinger bridges with random margins of Harchaoui, Liu and Pal \cite{pal2024chaos} and Gonz\'alez-Sanz, Loubes and Niles-Weed \cite{gonzalezsanz2024weaklimits}. There, the margins are empirical measures of $N$ i.i.d.\ samples. In \cite{pal2024chaos}, the plan is averaged over the couplings,  while \cite{gonzalezsanz2024weaklimits} solves the empirical Schr\"odinger bridge problem directly. In either case 
	the fluctuations come from the margins; at order $N^{-1/2}$, they are driven by the additively separable part $Pg$, while $(I-P)g$ contributes only at order $N^{-1}$. Here, the margins are fixed and the fluctuations come from the randomness of the table within the transportation polytope,  or equivalently of the permutation; they are driven by $(I-P)g$ alone. See Section~\ref{sec: bg_random_margins} for a precise comparison.

	Theorem~\ref{thm: CLT} concerns finitely many test functions at a time. For a functional limit theorem, we regard $\Phi_k$ as a random distribution, and $\Xi$ through its distributional version (Definition~\ref{def: Xi_hat} below), since, like white noise, $\Xi$ is too rough to be a random function. Let $\mathrm{c}_0 :\equiv 1$, $\mathrm{c}_p(x) := \sqrt{2}\cos(p\pi x)$ for $p \ge 1$, and $e_j(x,y) := \mathrm{c}_{j_1}(x)\mathrm{c}_{j_2}(y)$ for $j = (j_1,j_2)\in \Z_{\ge 0}^2$. The \emph{tensor-product cosine basis} $(e_j)_{j\in\Z_{\ge 0}^2}$, the eigenfunctions of the Neumann Laplacian on $[0,1]^2$, is an orthonormal basis of $L^2([0,1]^2)$ with $\norm{e_j}_\infty \le 2$; write $\hat f_j := \langle f, e_j\rangle_{L^2}$ for $f \in L^2([0,1]^2)$. Test functions range over the space $H^\infty$ of $g \in L^2([0,1]^2)$ whose coefficients decay faster than any power, i.e., $\sup_j(1+|j|)^q|\hat g_j| < \infty$ for every $q \ge 0$, where $|j| := (j_1^2+j_2^2)^{1/2}$, with the topology in which $g_m \to g$ if $\sup_j(1+|j|)^q|\hat g_{m,j} - \hat g_j| \to 0$ for every $q$. Since $\sum_j|\hat g_j| < \infty$, every $g\in H^\infty$ agrees almost everywhere with the continuous function $\sum_{j} \hat{g}_{j}e_{j}$, which we use throughout, as $\pi_\infty$ may charge Lebesgue-null sets. 
	
	Let $H^{-\infty}$ denote the space of continuous linear functionals on $H^{\infty}$. It consists of formal series $f = \sum_j \hat f_je_j$ with $|\hat f_j| \le C(1+|j|)^q$ for some $C$ and $q$, acting by
	\begin{align}
		\label{eq: sobolev_pairing}
		\langle f, g\rangle := \sum_{j} \hat f_j\,\hat g_j,\qquad f\in H^{-\infty},\ g\in H^\infty. 
	\end{align}
	By Parseval's identity, \eqref{eq: sobolev_pairing} is $\int fg$ for $f\in L^2([0,1]^2)$, so $L^2([0,1]^2) \subset H^{-\infty}$; in particular, $\Phi_k \in H^{-\infty}$. We equip $H^{-\infty}$ with the strong dual topology: $f_m \to f$ if $\sup_{g\in G}|\langle f_m - f, g\rangle| \to 0$ for every bounded set $G \subset H^\infty$, that is, every $G$ with $\sup_{g\in G}\sup_j(1+|j|)^q|\hat g_j| < \infty$ for each $q$.
	
	\begin{definition}[Distributional version of $\Xi$]
		\label{def: Xi_hat}
		A \emph{distributional version} of $\Xi$ is a random element $\hat\Xi$ of $H^{-\infty}$ such that $\langle\hat\Xi, g\rangle = \Xi(g)$ almost surely for every $g \in H^\infty$.
	\end{definition}

	The next theorem shows that $\Phi_k$ converges as a random distribution: the convergence in Theorem~\ref{thm: CLT} holds jointly for all test functions in $H^\infty$ and uniformly over bounded families of them, so that continuous functionals of the whole fluctuation process, not only finitely many linear statistics, converge in law. Part~(ii) is the analogue of Donsker's theorem for the two-dimensional empirical process. It requires atomless limiting margins, as otherwise $\FF$ has no continuous version in general.
	
	\begin{theorem}[Functional limit theorems]
		\label{thm: functional_limits}
		Assume \ref{assumption: bounded_cost} and \ref{assumption: margins_weak_convergence}.
		\begin{enumerate}[label=(\roman*)]
			\item A distributional version $\hat\Xi$ of $\Xi$ exists and is almost surely unique, and $\Phi_k \Rightarrow \hat\Xi$ in $H^{-\infty}$ with its strong dual topology.
			\item If, in addition, $\rho_{\r_\infty}$ and $\rho_{\c_\infty}$ have no atoms, then the Schr\"odinger-bridge pillow $\FF$ has a continuous version, still denoted by $\FF$, and $\FF_k \Rightarrow \FF$ in $C([0,1]^2)$ with the uniform topology.
		\end{enumerate}
	\end{theorem}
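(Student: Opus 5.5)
For part (i), I would build $\hat\Xi$ directly from the cosine basis and then get convergence of $\Phi_k$ from tightness together with convergence of finite-dimensional distributions.

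\emph{Construction of $\hat\Xi$.} Set $\hat\Xi := \sum_j \Xi(e_j)\,e_j$. The covariance \eqref{eq:covariance_mx_Xi} is bounded by $\norm{g}_{L^2(\pi_\infty)}\norm{h}_{L^2(\pi_\infty)}$, and $\norm{e_j}_\infty \le 2$. Hence $\E\,\Xi(e_j)^2 \le 4$, and Gaussian maxima give $\sup_j (1+|j|)^{-q}|\Xi(e_j)| < \infty$ almost surely for any $q>2$. So $\hat\Xi \in H^{-\infty}$ almost surely.

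To check $\langle\hat\Xi,g\rangle = \Xi(g)$ for $g\in H^\infty$, use that $\sum_j|\hat g_j|<\infty$. Then $\sum_j \hat g_j e_j \to g$ uniformly, hence in $L^2(\pi_\infty)$. Since $\Xi$ is an $L^2(\pi_\infty)$-isometry into $L^2(\P)$ (a contraction after applying $I-P$), we get $\Xi(g) = \sum_j \hat g_j\,\Xi(e_j)$ in $L^2(\P)$, and therefore almost surely along the full sum by absolute convergence.

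Uniqueness also holds. Two distributional versions agree on each $e_j \in H^\infty$ almost surely, a countable family, and an element of $H^{-\infty}$ is determined by its coefficients.

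\emph{Convergence of $\Phi_k$.} I would use the standard Mitoma-type criterion for strong duals of nuclear Fréchet spaces. $H^\infty$ is isomorphic to the space of rapidly decreasing sequences, which is nuclear. By that criterion, it suffices to show:
\begin{itemize}
\item[(a)] $\langle\Phi_k,g\rangle \Rightarrow \Xi(g)$ jointly for finitely many $g\in H^\infty$;
\item[(b)] tightness, in the form of a uniform moment bound $\sup_k \E\langle\Phi_k,e_j\rangle^2 \le C$ for all $j$.
\end{itemize}
Indeed, (b) gives $\sup_k \E \sum_j (1+|j|)^{-2q}\langle\Phi_k,e_j\rangle^2 < \infty$, and the corresponding weighted balls are compact in $H^{-\infty}$.

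Step (a) is Theorem~\ref{thm: CLT}(i). Test functions in $H^\infty$ are continuous and bounded, so the hypothesis holds at every point.

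Step (b) is the main obstacle. It needs a second-moment bound for $\Phi_k(g)$ of order $\norm{g}_\infty^2$, uniform in $k$ and in $g$. I would take it from the moment convergence in Theorem~\ref{thm: CLT}, made uniform over the bounded family $\{e_j\}$. The route is the Laplace identity of Lemma~\ref{lem: laplace_identity}: $\E e^{t\Phi_k(g)}$ equals a ratio of partition functions with cost perturbed by $tg/\sqrt{N_k}$. Applying Theorem~\ref{thm: permuton_partition_det_limit} and Proposition~\ref{thm: smooth_cost_perturbations}, the bounds depend on $g$ only through $\norm{g}_\infty$ and $\osc$, which are uniform in $j$. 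This gives a uniform sub-Gaussian bound, for $|t|\le t_0$ and $k\ge k_0$, with constants depending only on $\norm{g}_\infty \le 2$.

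An alternative for (b) is a direct transference bound (Lemma~\ref{thm: transference}) comparing with independent Poisson entries. The variance there is $\int(\Ek g)^2\,d\pi_{\mathbf Z_k}\le 4$, and the conditioning costs only a polynomial factor, which may suffice after truncation.

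For part (ii), I would check Kolmogorov--Chentsov criteria for the rectangle increments.

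\emph{Continuous version of $\FF$.} For a rectangle $B$, $\E\,\FF(B)^2 \le \pi_\infty(B) \le M^2\,\rho_{\r_\infty}\otimes\rho_{\c_\infty}(B)$, by the density bounds of Lemma~\ref{lem: density_bounds}. Reparametrize by the continuous distribution functions of the atomless margins. Gaussianity then gives $\E\,\FF(B)^{2p} \lesssim |B'|^p$ in the new coordinates, and Kolmogorov--Chentsov (Bickel--Wichura form) yields a continuous version. Continuity transfers back through the monotone continuous reparametrization.

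\emph{Convergence of $\FF_k$.} Finite-dimensional convergence again follows from Theorem~\ref{thm: CLT}(i): indicators of rectangles are continuous off their boundaries, which are null for the atomless product measure. For tightness in $C([0,1]^2)$ (after a harmless linear interpolation, or working in $D$ with a continuous limit), I need the moment bound $\E\,\Phi_k(\1_B)^4 \le C\,\big(\rho_{\r_k}\otimes\rho_{\c_k}(B)\big)^{2}$ for blocks $B$ larger than the grid scale, plus control of oscillations within a cell. I would derive this fourth-moment bound from the same exponential-moment estimate as in (b), applied to $g=\1_B$, keeping track of the $\pi$-mass. This is delicate. An easier sufficient version would be an exponential bound of the form $\E e^{t\Phi_k(\1_B)} \le e^{Ct^2\pi_{\mathbf Z_k}(B)}$. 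Combined with the Bickel--Wichura chaining criterion in the reparametrized coordinates $F_{\r_k},F_{\c_k}$, which converge uniformly to continuous limits, this gives tightness.
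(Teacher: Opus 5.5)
\textbf{Part (i).} This part is sound, but it takes a different route from the paper's main argument. The paper's proof of (i) needs no tightness. Theorem~\ref{thm: CLT} gives $\E[e^{i\Phi_k(g)}]\to e^{-V(g)/2}$ for every $g\in H^\infty$, and this limit is continuous on $H^\infty$ because $V(g)^{1/2}\le\sup|g|\lesssim\norm{g}_{H^s}$ for $s>1$. L\'evy's continuity theorem on the dual of the nuclear Fr\'echet space $H^\infty$ (Fernique) then finishes the proof.

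Your route is the one sketched in Remark~\ref{rem: Hs_convergence}: uniform second moments of $\Phi_k(e_j)$, compactness of $H^{-s_1}$-balls in $H^{-s}\hookrightarrow H^{-\infty}$, and identification of the limit through the coordinates. It costs a moment bound, but it also gives convergence in every $H^{-s}$, $s>1$.

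Two remarks on your step (b):
\begin{itemize}
\item Theorem~\ref{thm: permuton_partition_det_limit} is only a limit statement, so uniformity in $j$ needs an argument. A contradiction argument along subsequences works, because for fixed $t$ the perturbation $tN_k^{-1/2}\Ek_k e_j$ tends to zero uniformly in $j$. The paper instead uses the uniform bound $1\le\cD(f)\le C_K$ of Proposition~\ref{prop: uniform_partition_bounds}.
\item Your transference alternative fails. The factor $(eN)^{m+n}m^{-m}n^{-n}$ is exponential in $m+n$, not polynomial, so it would only give $\E\,\Phi_k(g)^2=O((m_k+n_k)\log N_k)$.
\end{itemize}

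\textbf{Part (ii): the gap is tightness of $(\FF_k)$.} The bound you aim for, $\E\,\Phi_k(\1_B)^4\le C(\rho_{\r_k}\otimes\rho_{\c_k}(B))^2$ for blocks above the grid scale, is false in sparse regimes. Take unit margins and $\bLambda\equiv 1$, so that $\sigma$ is uniform and $T$ is the identity. A block of $a\times a$ cells has $\pi_{\mathbf{Z}}(B)=a^2/N^2$, but its count is nearly Poisson, so $\E\,\Phi(\1_B)^4\asymp\pi_{\mathbf{Z}}(B)/N+\pi_{\mathbf{Z}}(B)^2$. The bound therefore fails for $2\le a\ll\sqrt N$.

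Your ``easier'' bound $\E e^{t\Phi_k(\1_B)}\le e^{Ct^2\pi_{\mathbf{Z}_k}(B)}$ also cannot hold for all $t$. For a single cell it already fails at $t=\sqrt N\log N$, where the left side is about $2$ and the right side tends to $1$.

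More fundamentally, your exponential-moment estimate from (i) controls only bounded $t$, that is, vanishing perturbations of the cost. Moments of order $\pi(B)^{p/2}$ for small sets require $t\approx\pi(B)^{-1/2}$. Then $\kappa_k+f$ carries grid-step perturbations of size up to order one with no continuous limit, and Theorem~\ref{thm: permuton_partition_det_limit} says nothing about them.

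\textbf{How the paper closes this.}
\begin{itemize}
\item Proposition~\ref{prop: uniform_partition_bounds} bounds $\cD$ uniformly over all margins and all grid-step costs of bounded oscillation. It rests on Corollary~\ref{cor: uniform_mccullagh}, the lower bound on $\detF(I-\cS_f^*\cS_f)$ from the Hilbert--Schmidt and operator-norm bounds, and van der Waerden.
\item With Lemmas~\ref{lem: laplace_identity} and \ref{lem: quadratic_lower_bound}, this gives exponential moments for all $|t|\,\norm{\Ek_kg}_\infty\le\sqrt{N_k}$. Hence Corollary~\ref{cor: moments}: $\E|\Phi_k(g)|^p\le C_p(\norm{\Ek_kg}^p_{L^2(\pi_{\mathbf{Z}_k})}+N_k^{-p/2}\norm{\Ek_kg}_\infty^p)$, whose second term is exactly the Poisson correction above.
\item The paper then avoids two-dimensional blocks and works with point increments of $\mathbb{G}_k=\FF_k\circ T_k$. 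Here $g\circ T_k=\1_{\square_z}-\1_{\square_{z'}}$ lives on two strips of permuton mass at most $\delta=|z-z'|_1$.
\item Cell averages are $N_k$-Lipschitz in permuton coordinates, so $\norm{\Ek_kg}_\infty\le\min\{1,N_k\delta\}$ and $N_k^{-3}\norm{\Ek_kg}_\infty^6\le\delta^3$ at every scale.
\item This yields $\E|\mathbb{G}_k(z)-\mathbb{G}_k(z')|^6\le 2C_6\delta^3$ (Corollary~\ref{cor: pillow_increments}). Kolmogorov's criterion gives tightness of $\mathbb{G}_k$, and P\'olya's theorem transfers it to $\FF_k$.
\end{itemize}

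Your limit-based tools could be rescued with a cutoff. Uniformity over perturbations of size $\eta_k\to0$ controls increments down to scale $\eta_k^{-2}/N_k$. Below that scale, the deterministic bound $|\mathbb{G}_k(z)-\mathbb{G}_k(z')|\le 2\sqrt{N_k}\,|z-z'|_1$ suffices if $\eta_k\gg N_k^{-1/4}$. None of this is in your proposal.

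Your f.d.d.\ argument and the construction of the continuous version of $\FF$ are fine. Note that $\FF_k$ is already continuous, so no interpolation is needed.
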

	
	As for the classical empirical process, Theorem~\ref{thm: functional_limits} yields limit laws for goodness-of-fit statistics, here comparing the conditioned table with its static Schr\"odinger bridge. 
	
	\begin{corollary}[Kolmogorov--Smirnov and Cram\'er--von Mises statistics]
		\label{cor: KS}
		Under Assumptions~\ref{assumption: bounded_cost} and \ref{assumption: margins_weak_convergence}, $\sup_B |\pi_{\mathbf{X}_k}(B) - \pi_{\mathbf{Z}_k}(B)| = O_{\P}(N_k^{-1/2})$, where the supremum is over axis-parallel rectangles $B \subset [0,1]^2$. If, in addition, $\rho_{\r_\infty}$ and $\rho_{\c_\infty}$ have no atoms, then 
		\begin{align}
			\sqrt{N_k}\sup_{B}\big|\pi_{\mathbf{X}_k}(B) - \pi_{\mathbf{Z}_k}(B)\big| \Longrightarrow \sup_{B}|\FF(B)|, \qquad \int_{[0,1]^2}\FF_k(u,v)^2\,du\,dv \Longrightarrow \int_{[0,1]^2}\FF(u,v)^2\,du\,dv,
		\end{align}
		where $\int\FF_k^2 = N_k\int_{[0,1]^2}(\pi_{\mathbf{X}_k} - \pi_{\mathbf{Z}_k})([0,u]\times[0,v])^2\,du\,dv$ is the Cram\'er--von Mises statistic.
	\end{corollary}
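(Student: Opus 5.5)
The plan is to deduce Corollary~\ref{cor: KS} from Theorem~\ref{thm: functional_limits}(ii) by the continuous mapping theorem, with a separate tightness argument for the first claim when the limiting margins may have atoms.

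\textbf{Reduction to anchored rectangles.} For any axis-parallel rectangle $B=(u_1,u_2]\times(v_1,v_2]$ we have $\sqrt{N_k}(\pi_{\mathbf{X}_k}-\pi_{\mathbf{Z}_k})(B) = \FF_k(B)$. This is a signed combination of four values of $\FF_k$ at anchored corners, so
\begin{align}
\sqrt{N_k}\sup_B|\pi_{\mathbf{X}_k}(B)-\pi_{\mathbf{Z}_k}(B)| \le 4\sup_{(u,v)\in[0,1]^2}|\FF_k(u,v)|.
\end{align}
Closed or half-open rectangles are handled by monotone limits, since $\pi_{\mathbf{X}_k}$ and $\pi_{\mathbf{Z}_k}$ have densities.

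\textbf{The $O_{\P}$ bound.} It therefore suffices to show that $\sup|\FF_k|$ is tight. In the atomless case this follows at once from Theorem~\ref{thm: functional_limits}(ii), because $\FF_k\Rightarrow\FF$ in $C([0,1]^2)$ and $\sup|\cdot|$ is continuous.

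In general (atoms allowed) I would argue directly. Since $\varphi_{\mathbf{X}_k}$ and $\varphi_{\mathbf{Z}_k}$ are grid-step, $\FF_k(u,v)$ is bilinear in $(u,v)$ on each cell. Hence $\sup|\FF_k|$ is attained at grid corners $(i/m_k,j/n_k)$, where $\FF_k = N_k^{-1/2}\sum_{i'\le i,\,j'\le j}(\mathbf{X}_{i'j'}-\mathbf{Z}_{i'j'})$.

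The key step is a maximal inequality for this two-parameter partial-sum array. I would try the following route:
\begin{enumerate}[label=(\alph*)]
\item Use the transference principle (Lemma~\ref{thm: transference}) to compare with $\mathbf{Y}_k\sim\Poisson(\mathbf{Z}_k)$. Its partial sums have independent increments, so a Cairoli/Doob-type inequality for two-parameter martingales gives $\E\max_{i,j}\big(N_k^{-1/2}\sum(\mathbf{Y}-\mathbf{Z})\big)^2 \le 16\sum_{i,j}\mathbf{Z}_{ij}/N_k = 16$.
\item Transfer this to $\mathbf{X}_k$. The transference costs at most a polynomial factor $1/\P(\mathbf{Y}\in\CT)$, which could be fatal for a second-moment bound.
\end{enumerate}
Because of that polynomial factor, I would instead prefer an alternative: a dyadic chaining over rectangles using the uniform fourth-moment bounds from Theorem~\ref{thm: CLT}. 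That theorem gives convergence of all moments, and the underlying Laplace-transform identity (Lemma~\ref{lem: laplace_identity}) should supply uniform bounds $\E\,\FF_k(B)^4\lesssim \pi_{\mathbf{Z}_k}(B)^2 + N_k^{-1}\pi_{\mathbf{Z}_k}(B)$. Feeding these into a Bickel--Wichura moment criterion for two-parameter processes, with the control measure $\pi_{\mathbf{Z}_k}$, yields tightness of the supremum even when atoms are present, since only the control measure's mass matters.

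\textbf{Limit laws in the atomless case.} On $C([0,1]^2)$ the map $F\mapsto\sup_B|F(B)|$ equals $\sup_{u_1\le u_2,\,v_1\le v_2}|F(u_2,v_2)-F(u_1,v_2)-F(u_2,v_1)+F(u_1,v_1)|$, a supremum over a compact set of continuous functionals of $F$, and it is $4$-Lipschitz in the uniform norm. Likewise $F\mapsto\int F^2$ is continuous. For a continuous $\FF_k$ the rectangle supremum coincides with the supremum over the closure of the parameter set, so continuity of $\FF_k$ gives exactly the stated statistic. The continuous mapping theorem together with Theorem~\ref{thm: functional_limits}(ii) then gives both convergences. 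The identity $\int\FF_k^2 = N_k\int(\pi_{\mathbf{X}_k}-\pi_{\mathbf{Z}_k})([0,u]\times[0,v])^2\,du\,dv$ is just the definition of $\FF_k$.

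\textbf{Main obstacle.} The delicate step is the tightness of $\sup|\FF_k|$ without the atomless assumption, that is, obtaining uniform moment bounds for rectangle increments for the conditioned (not independent) table.
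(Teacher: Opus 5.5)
Your treatment of the limit laws in the atomless case matches the paper. That is: the bound $\sup_B|F(B)|\le 4\norm{F}_\infty$, continuity of $F\mapsto\sup_B|F(B)|$ and $F\mapsto\int F^2$ on $C([0,1]^2)$, and the continuous mapping theorem applied to Theorem~\ref{thm: functional_limits}(ii).

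For the $O_{\P}(N_k^{-1/2})$ bound with atoms allowed, you take a different and heavier route, and the paper has already resolved the obstacle you flag. The paper notes that $T_k$ is a bijection of $[0,1]^2$, so $\norm{\FF_k}_\infty=\norm{\mathbb{G}_k}_\infty$ with $\mathbb{G}_k:=\FF_k\circ T_k$. Tightness of $(\mathbb{G}_k)$ in $C([0,1]^2)$ is proved in Theorem~\ref{thm: functional_limits}(ii) \emph{before} the no-atoms hypothesis is used. Corollary~\ref{cor: pillow_increments} gives $\E|\mathbb{G}_k(z)-\mathbb{G}_k(z')|^6\le 2C_6|z-z'|_1^3$ uniformly in $k$, and Kolmogorov's criterion then applies.

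Permuton coordinates are what make atoms harmless. Since $\pi_{\mathbf{Z}_k}=(T_k)_{\#}\mu$ for a permuton $\mu$, increments are controlled by the single $k$-independent modulus $|z-z'|_1$. The $N_k^{-3}$ term is absorbed via $\norm{\Ek_kg}_\infty\le\min\{1,N_k|z-z'|_1\}$, using that pulled-back cells have sides at least $1/N_k$.

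Your plan instead stays in the original coordinates. It reduces to grid corners by bilinearity, which is correct, and then uses a Bickel--Wichura maximal inequality on the $m_k\times n_k$ grid with control measure $\pi_{\mathbf{Z}_k}$. This can be made to work, because a finite-grid maximal inequality only sees the total mass $\pi_{\mathbf{Z}_k}([0,1]^2)=1$. The cost is a heavier tool, and you get a bound on the supremum rather than the process-level tightness that the paper reuses.

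The moment input you need is Corollary~\ref{cor: moments}. It comes from Lemma~\ref{lem: laplace_identity}, the bounds $1\le\cD_k\le C_U$ of Proposition~\ref{prop: uniform_partition_bounds}, and Lemma~\ref{lem: quadratic_lower_bound}. It does not give your guessed Poisson-type bound. For a nonempty union $B$ of cells it yields only $\E|\FF_k(B)|^p\le C_p\big(\pi_{\mathbf{Z}_k}(B)^{p/2}+N_k^{-p/2}\big)$. Because $C_U$ enters multiplicatively, the second term does not shrink with $\pi_{\mathbf{Z}_k}(B)$.

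Using $\pi_{\mathbf{Z}_k}(A_{ij})=\mathbf{Z}_{k,ij}/N_k\ge e^{-2K_1}N_k^{-2}$, one gets $N_k^{-p/2}\le e^{pK_1}\pi_{\mathbf{Z}_k}(B)^{p/4}$. This exponent exceeds one only for $p>4$. With fourth moments your Bickel--Wichura condition would hold only with exponent exactly one, which is not enough, so take $p=6$ as the paper does.

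Also, the transference factor $(eN)^{m+n}m^{-m}n^{-n}$ is exponential in $m+n$, not polynomial. Discarding that route was nonetheless right.
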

	
	Finally, all limiting objects are explicit when the limiting Schr\"odinger bridge is a product measure, as for the Fisher--Yates distribution. This holds, for instance, if $\bLambda_k$ has rank one for every $k$, in which case $\mathbf{X}_k$ has the Fisher--Yates distribution \eqref{eq: fisher_yates_measure} and $\Gibbs_k$ is the uniform distribution on $S_{N_k}$.
	
	\begin{corollary}[Independence case]
		\label{cor: product_case}
		Assume \ref{assumption: bounded_cost} and \ref{assumption: margins_weak_convergence}, and suppose that $\kappa_\infty \in \cA$, i.e., $\kappa_\infty(x,y) = a(x) + b(y)$.  Then $\pi_\infty = \rho_{\r_\infty}\otimes\rho_{\c_\infty}$, $D_\infty = 1$, and $V(g)$ is given by 
		\begin{align}
			\int \Big(g(x,y) - \int g(x,y')\,\rho_{\c_\infty}(dy') - \int g(x',y)\,\rho_{\r_\infty}(dx') + \int g \, d(\rho_{\r_\infty}\otimes\rho_{\c_\infty})\Big)^2 \rho_{\r_\infty}(dx)\,\rho_{\c_\infty}(dy).
		\end{align}
		Moreover, $\FF$ has the finite-dimensional distributions of $\big(\mathbb{B}(R(u), C(v))\big)_{u,v}$, where $\mathbb{B}$ is the standard Brownian pillow, the continuous centered Gaussian process on $[0,1]^2$ with covariance $(s\wedge s' - ss')(t\wedge t'-tt')$, and $R$ and $C$ are the distribution functions of $\rho_{\r_\infty}$ and $\rho_{\c_\infty}$; if $R$ and $C$ are continuous, the continuous version of $\FF$ has the same law.
	\end{corollary}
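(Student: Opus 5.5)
The plan is to specialize the general results to the case where $\kappa_\infty$ is additively separable, and to compute each limiting object explicitly.

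\emph{Step 1: the bridge is the product measure.} If $\kappa_\infty = a\oplus b$, then for every $\pi\in\Pi(\rho_{\r_\infty},\rho_{\c_\infty})$ the cost term
\begin{align}
\int\kappa_\infty\,d\pi=\int a\,d\rho_{\r_\infty}+\int b\,d\rho_{\c_\infty}
\end{align}
does not depend on $\pi$. Hence \eqref{eq: continous_EOT_problem} reduces to minimizing $\KL(\pi\Vert\rho_{\r_\infty}\otimes\rho_{\c_\infty})$, and the minimizer is $\pi_\infty=\rho_{\r_\infty}\otimes\rho_{\c_\infty}$. Equivalently, potentials $(-a,-b)$ plus a constant solve the Schr\"odinger system \eqref{eq: schrodinger_system}, so \eqref{eq: schrodinger_form} gives density $\xi_\infty\equiv 1$. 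Then the operator $\cS$ of Theorem~\ref{thm: permuton_partition_det_limit}, with $f_\infty=\kappa_\infty$, vanishes. All singular values are zero, so $D_\infty=\detF(I)^{-1/2}=1$.

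\emph{Step 2: the variance.} Under a product measure $\mu\otimes\nu$, the projection onto $\cA$ is the two-way ANOVA projection
\begin{align}
Pg=\int g(\cdot,y')\,\nu(dy')+\int g(x',\cdot)\,\mu(dx')-\int g\,d(\mu\otimes\nu).
\end{align}
To verify this, one checks that $g-Pg$ is orthogonal to every $a(x)$ and every $b(y)$. By Fubini, $g-Pg$ has vanishing conditional means in each coordinate, which gives the orthogonality. Moreover $Pg\in\cA$. This identifies $(I-P)g$, and squaring in $L^2(\pi_\infty)$ yields the displayed formula for $V(g)$.

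\emph{Step 3: the covariance of the pillow.} Write $\mu=\rho_{\r_\infty}$, $\nu=\rho_{\c_\infty}$, $R$ and $C$ for their distribution functions, and take $g=\1_{[0,u]}(x)\1_{[0,v]}(y)$. Then
\begin{align}
(I-P)g=(\1_{[0,u]}(x)-R(u))(\1_{[0,v]}(y)-C(v)),
\end{align}
a tensor product of centered functions. By \eqref{eq:covariance_mx_Xi}, the covariance of $\FF(u,v)$ and $\FF(u',v')$ factorizes as
\begin{align}
\Cov_\mu(\1_{[0,u]},\1_{[0,u']})\,\Cov_\nu(\1_{[0,v]},\1_{[0,v']})=(R(u\wedge u')-R(u)R(u'))(C(v\wedge v')-C(v)C(v')).
\end{align}
Since $R$ is monotone, $R(u\wedge u')=R(u)\wedge R(u')$. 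This is exactly the covariance of $\mathbb{B}(R(u),C(v))$. Both processes are centered Gaussian, so their finite-dimensional distributions agree.

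\emph{Step 4: continuous versions.} If $R$ and $C$ are continuous, then $(u,v)\mapsto\mathbb{B}(R(u),C(v))$ is continuous. The continuous version of $\FF$ from Theorem~\ref{thm: functional_limits}(ii) applies here, since the margins are atomless. Two continuous processes with the same finite-dimensional distributions have the same law on $C([0,1]^2)$.

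The only point needing care is the rank-one remark made before the corollary. If $\bLambda_k$ has rank one, then $\kappa_k$ is additively separable. Additive separability is preserved under uniform limits, which is what Step 1 uses. I expect no serious obstacle here: the main point is verifying the ANOVA projection formula when the margins may have atoms, and the Fubini argument works for arbitrary probability measures.
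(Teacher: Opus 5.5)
Your proposal is correct and follows the paper's proof essentially step by step. The steps are: the product bridge from the separable cost (so $\xi_\infty\equiv 1$, $\cS=0$, $D_\infty=1$), the ANOVA form of $P$ checked through the normal equations, the factorized pillow covariance with $R(u\wedge u')=R(u)\wedge R(u')$, and equality of laws of continuous processes with the same finite-dimensional distributions.

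One small slip in your aside in Step~1: the potentials that give density one are $(a,b)$ themselves, since $e^{a\oplus b-\kappa_\infty}\equiv 1$ (exactly the form the paper feeds into Lemma~\ref{lem: pythagorean}), not $(-a,-b)$. Your main argument, that the cost term does not depend on $\pi$, is unaffected.
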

	
	The results of this section are discussed further in Sections~\ref{sec: bg_fluctuations} and \ref{sec: bg_mukherjee}.

	\subsection{Mallows-type random permutations}
	\label{sec: main_mallows}
	
	For unit margins, by \eqref{eq:coupling_Gibbs}, $\mathbf{X} \sim \lambda_{\mathbf{1},\mathbf{1};\bLambda}$ is the permutation matrix of $\sigma \sim \Gibbs_{\mathbf{1},\mathbf{1};\bLambda}$, and for $\bLambda_{ij} = e^{-f(i/N,j/N)}$, $\Gibbs_{\mathbf{1},\mathbf{1};\bLambda}$ is the Gibbs measure
	\begin{align}
		\label{eq: mallows_measure}
		\Gibbs^f_N(\sigma) := \frac{1}{N!\,L_N(f)}\exp\Big(-\sum_{i=1}^N f\big(\tfrac{i}{N},\tfrac{\sigma(i)}{N}\big)\Big), \qquad L_N(f) := \frac{1}{N!}\sum_{\sigma\in S_N}\exp\Big(-\sum_{i=1}^N f\big(\tfrac{i}{N},\tfrac{\sigma(i)}{N}\big)\Big),
	\end{align}
	which belongs to the Mallows models of random permutations \cite{mallows1957non} and includes, for instance, the models based on Spearman's footrule and Spearman's rank correlation \cite{mukherjee2016estimation, pal2024limitingconjecture}. We write $\Gamma^{\mathrm{u}} := \Gamma_{\Leb,\Leb}$ and $f_N(x,y) := f\big(\lceil Nx\rceil/N, \lceil Ny \rceil/N\big)$.
	
	Let $f \in C([0,1]^2)$, let $W := d\pi^f_{\Leb,\Leb}/du$ be the continuous density of the Schr\"odinger bridge with cost $f$ and uniform marginals, and let $\cS$ be the integral operator on $L^2[0,1]$ with kernel $W - 1$.  The following are consequences of Theorems~\ref{thm: permuton_partition_det_limit} and \ref{thm: CLT} specialized to unit margins. 
	
	\begin{corollary}[Mallows-type permutations]
		\label{cor: mallows}
		Let $f$, $W$ and $\cS$ be as above. Then Assumptions~\ref{assumption: bounded_cost} and \ref{assumption: margins_weak_convergence} hold for the unit margins and the prior means $\bLambda_N := (e^{-f(i/N,j/N)})_{i,j\in[N]}$, with $\pi_\infty = \pi^f_{\Leb,\Leb}$ and $\pi_{\mathbf{Z}_N} = \pi^{f_N}_{\Leb,\Leb}$. Moreover:
		\begin{enumerate}[label=(\roman*)]
			\item \emph{(Partition function.)}
			\begin{align}
				\label{eq: mallows_partition}
				\lim_{N\to\infty} e^{N\Gamma^{\mathrm{u}}(f_N)}L_N(f) = \detF(I - \cS^*\cS)^{-1/2}.
			\end{align}
			For $f \in C^1([0,1]^2)$ and Schr\"odinger potentials $(a,b)$ of $\pi^f_{\Leb,\Leb}$, defined on $[0,1]$ by \eqref{eq: schrodinger_system},
			\begin{align}
				\label{eq: mallows_pal}
				\lim_{N\to\infty} e^{N\Gamma^{\mathrm{u}}(f)}L_N(f) = e^{-\frac12[a(1) - a(0) + b(1) - b(0)]}\,\detF(I - \cS^*\cS)^{-1/2},
			\end{align}
			and
			\begin{align}
				\label{eq: mallows_tripathi}
				\lim_{N\to\infty}\frac{1}{N!}\sum_{\sigma\in S_N}\prod_{i=1}^N W\big(\tfrac{i}{N},\tfrac{\sigma(i)}{N}\big) = \detF(I - \cS^*\cS)^{-1/2}.
			\end{align}
			\item \emph{(Linear statistics.)} Let $\sigma_N \sim \Gibbs^f_N$. If $\mathbf{g}=(g_{1},\dots,g_{L}):[0,1]^{2}\rightarrow \R^{L}$ is continuous and $\Upsilon_N(\mathbf{g}) := \sum_{i=1}^N \mathbf{g}\big(\tfrac{i}{N},\tfrac{\sigma_N(i)}{N}\big)$, then
			\begin{align}
				\label{eq: mallows_CLT_continuous}
				N^{-1/2} \left( \Upsilon_N(\mathbf{g}) - \E\,\Upsilon_N(\mathbf{g}) \right) \Longrightarrow \mathcal{N}\big(\mathbf{0}, V(\mathbf{g})\big) \qquad\text{and}\qquad N^{-1} \Cov\big(\Upsilon_N(\mathbf{g})\big) \longrightarrow V(\mathbf{g}),
			\end{align}
			where $V(\mathbf{g})$ is the $L\times L$ matrix with entries $V(\mathbf{g})_{\ell\ell'}=\big\langle (I-P)g_{\ell}, (I-P)g_{\ell'}\big\rangle_{L^2(\pi_\infty)}$.
			If $f$ and $\mathbf{g}$ are Lipschitz, then
			\begin{align}
				\label{eq: mallows_CLT}
				\sqrt{N}\Big(\frac{\Upsilon_N(\mathbf{g})}{N} - \int W \mathbf{g}\,du\Big) \Longrightarrow \mathcal{N}\big(\mathbf{0}, V(\mathbf{g})\big).
			\end{align}
		\end{enumerate}
	\end{corollary}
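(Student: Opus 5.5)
Corollary~\ref{cor: mallows} is the specialization of Theorems~\ref{thm: permuton_partition_det_limit} and~\ref{thm: CLT} to unit margins, so the plan is to verify the assumptions and then translate each conclusion into the notation of $\Gibbs^f_N$.

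\textbf{Assumptions and identification of the objects.} With $\r=\c=\mathbf{1}$ we have $m=n=N$ and $\rho_{\r}=\rho_{\c}=\Leb$, so Assumption~\ref{assumption: margins_weak_convergence} holds trivially with $\gamma=1$. Moreover $T$ is the identity. By \eqref{eq: def_kappa}, $\kappa_N=f_N+\log(\norm{\bLambda_N}_1/N^2)$. The Riemann sum $N^{-2}\sum e^{-f(i/N,j/N)}$ converges to $\int e^{-f}\,du$, so $\kappa_N\to f+\log\int e^{-f}du$ uniformly by uniform continuity of $f$. Adding a constant to the cost changes neither the EOT minimizer nor $W$, which gives $\pi_\infty=\pi^f_{\Leb,\Leb}$ and $\pi_{\mathbf{Z}_N}=\pi^{f_N}_{\Leb,\Leb}$ (Lemma~\ref{lem: typical_table_bridge}).

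\textbf{Partition function.} Since $\mu_\sigma$ is uniform on the squares $B_{k\sigma(k)}$ and $f_N$ is constant on them, $N\int f_N\,d\mu_\sigma=\sum_i f(i/N,\sigma(i)/N)$. Hence $\cZ(f_N)=N!\,L_N(f)$, and Theorem~\ref{thm: permuton_partition_det_limit} with $f_k=f_N\to f$ gives \eqref{eq: mallows_partition}. For \eqref{eq: mallows_tripathi}, write $W=e^{a\oplus b-f}$ with continuous potentials, so $\prod_i W(i/N,\sigma(i)/N)=e^{\sum_i a(i/N)+\sum_j b(j/N)}e^{-\sum_i f(i/N,\sigma(i)/N)}$, where the prefactor does not depend on $\sigma$. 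Equivalently, apply \eqref{eq: mallows_partition} to the cost $\tilde f:=f-a\oplus b$, which has $\pi^{\tilde f}=\pi^f$ and hence the same $\cS$, and note that $e^{N\Gamma^{\mathrm u}(\tilde f_N)}\to1$. This last limit is the main obstacle, and \eqref{eq: mallows_pal} raises the same issue.
\begin{itemize}
\item For \eqref{eq: mallows_tripathi}, I would show $N\Gamma^{\mathrm u}(\tilde f_N)\to0$. Since $\Gamma^{\mathrm u}(\tilde f)=0$, use the first-order variation $\Gamma(f+h)-\Gamma(f)=\int h\,d\pi^f+O(\norm{h}_\infty^2)$. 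If this is only $O(\norm h_\infty^2)$ with $\norm h_\infty$ of order $\omega(1/N)$, it is not enough. So the first attempt is the exact dual bound $\Gamma(c)\ge\int(a_N\oplus b_N)\,d\Leb^{\otimes2}-\log\int e^{a_N\oplus b_N-c}$, choosing $a_N,b_N$ as the step versions of $a,b$.
\item For \eqref{eq: mallows_pal}, with $f\in C^1$, compute $N(\Gamma^{\mathrm u}(f_N)-\Gamma^{\mathrm u}(f))\to\frac12\int\ldots$ via the second-order expansion in Proposition~\ref{thm: smooth_cost_perturbations}. The first-order term is $N\int(f_N-f)\,d\pi^f$. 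Since $f_N(x,y)-f(x,y)\approx\partial_xf\,(\lceil Nx\rceil/N-x)+\partial_yf\,(\lceil Ny\rceil/N-y)$ and the average offset is $1/(2N)$, this term tends to $\frac12\int(\partial_xf+\partial_yf)W\,du$. Differentiating the Schrödinger system gives $\int\partial_xf\,W\,dy=a'(x)$, so the limit is $\frac12[a(1)-a(0)+b(1)-b(0)]$. The quadratic term is $O(N\norm{f_N-f}_\infty^2)=O(1/N)$, which is negligible.
\end{itemize}

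\textbf{Linear statistics.} For unit margins, $\mathbf{X}$ is the permutation matrix, so $\Upsilon_N(g)/N=\int g_N\,d\pi_{\mathbf X}$ with $g_N$ the grid-point step function, and $\norm{g_N-g}_\infty\to0$. Theorem~\ref{thm: CLT}(i), applied with the replacement clause, gives joint convergence of $\sqrt N\int g_N\,d(\pi_{\mathbf X}-\pi_{\mathbf Z_N})$ to $\Xi(\mathbf g)$ with all moments. Convergence of moments gives $\E$ and $\Cov$ asymptotics, hence \eqref{eq: mallows_CLT_continuous} after recentering at $\E\Upsilon_N$. For \eqref{eq: mallows_CLT}, it remains to show $\sqrt N\big|\int g_N\,d\pi^{f_N}-\int gW\big|\to0$, which may be delicate. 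Lipschitz $g$ gives $\norm{g_N-g}_\infty=O(1/N)$. For $\pi^{f_N}-\pi^f$, stability of EOT in the cost gives a density difference of $O(\norm{f_N-f}_\infty)=O(1/N)$ when $f$ is Lipschitz. Together these give an $O(N^{-1/2})$ error.
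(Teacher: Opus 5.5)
Your verification of the assumptions, the proof of \eqref{eq: mallows_partition}, and part~(ii) follow the paper's proof, and your derivation of \eqref{eq: mallows_pal} is correct. It differs only in that you expand $\Gamma^{\mathrm{u}}$ around $f$, where $W$ is continuous and the $\tfrac{1}{2N}$ average offset holds up to $o(1/N)$ by uniform continuity. The paper instead expands around $f_N$, where $\pi^{f_N}_{\Leb,\Leb}$ has grid-step density so the offset average is exact, and then passes to $\pi^f_{\Leb,\Leb}$ with Lemma~\ref{lem: TV_cost_stability}.

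The gap is \eqref{eq: mallows_tripathi}. Via $\tilde f := f - a\oplus b = -\log W$ and \eqref{eq: mallows_partition}, you correctly reduce it to $N\Gamma^{\mathrm{u}}(\tilde f_N) \to 0$, but you do not prove this.
\begin{itemize}
\item The dual bound you propose only bounds $\Gamma^{\mathrm{u}}(\tilde f_N)$ from below.
\item Your reasoning in terms of $\omega(1/N)$ suggests you are aiming at merely continuous $f$. In that generality, already the first-order term $N\int(\tilde f_N - \tilde f)\,W\,du$ is only $O(N\omega(1/N))$ and has no reason to vanish. This is why \eqref{eq: mallows_tripathi} is stated only for $f \in C^1([0,1]^2)$.
\end{itemize}
Under that hypothesis, your own computation for \eqref{eq: mallows_pal} closes the gap. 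Since $a'(x) = \int_0^1 \partial_x f(x,y)\,W(x,y)\,dy$, and similarly for $b$, the function $\tilde f$ is $C^1$. Its Schr\"odinger potentials are $(0,0)$ and $\Gamma^{\mathrm{u}}(\tilde f) = 0$ by Lemma~\ref{lem: pythagorean}. Hence
\begin{align}
N\Gamma^{\mathrm{u}}(\tilde f_N) = N\big[\Gamma^{\mathrm{u}}(\tilde f_N) - \Gamma^{\mathrm{u}}(\tilde f)\big] \to \tfrac12\int(\partial_x\tilde f + \partial_y\tilde f)\,W\,du = -\tfrac12\int(\partial_x W + \partial_y W)\,du = 0,
\end{align}
where the last integral vanishes because $W$ has uniform marginals at every point. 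This is the paper's route: \eqref{eq: mallows_tripathi} is \eqref{eq: mallows_pal} applied to $\tilde f$, for which the boundary factor equals one.

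Equivalently, you can start from your prefactor identity. Write $\frac{1}{N!}\sum_{\sigma}\prod_i W(\tfrac iN,\tfrac{\sigma(i)}N) = e^{E_N}\,e^{N\Gamma^{\mathrm{u}}(f)}L_N(f)$ with $E_N := \sum_i a(\tfrac iN) + \sum_j b(\tfrac jN) - N\Gamma^{\mathrm{u}}(f)$. Since $\Gamma^{\mathrm{u}}(f) = \int a + \int b$ and $a,b \in C^1$, right-endpoint Riemann sums give $E_N \to \tfrac12[a(1)-a(0)+b(1)-b(0)]$. This cancels the boundary factor in \eqref{eq: mallows_pal}.
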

	
	If $f \in C^1([0,1]^2)$ is symmetric and $f(1-x,1-y) = f(x,y)$, then the potentials can be chosen with $b = a$ and $a(0) = a(1)$, so that \eqref{eq: mallows_pal} becomes $\lim_{N} e^{N\Gamma^{\mathrm{u}}(f)}L_N(f) = \detF(I - \cS^2)^{-1/2}$. This is Pal's conjecture \cite{pal2024limitingconjecture}, which was proved by Tripathi \cite{tripathi2026partitionfunctionclassmallows} for $C^2$ costs in the form \eqref{eq: mallows_tripathi}.
	
	Part~(ii) proves Mukherjee's conjecture for the exponential family of Gibbs permutations $\Gibbs^{-\boldsymbol{\theta}^{\top} \mathbf{f}}_N$, $\boldsymbol{\theta}\in\R^L$, where $\mathbf{f} = (f_1,\dots,f_L)\colon[0,1]^2\to\R^L$ is continuous \cite[Section~1.5]{mukherjee2016estimation}, \cite[Remark~3]{mukherjee2023inference}. Applying part~(ii) with the cost $-\boldsymbol{\theta}^{\top} \mathbf{f}$ and $\mathbf{g} = \mathbf{f}$ shows that its sufficient statistic $\Upsilon_N := \Upsilon_N(\mathbf{f})$ satisfies, for every $\boldsymbol{\theta} \in \R^{L}$, 
	\begin{align}
		\label{eq: mukherjee_CLT}
		N^{-1/2}(\Upsilon_N - \E_{\boldsymbol{\theta}} \Upsilon_N )\Longrightarrow \mathcal{N}\big(\mathbf{0}, \Sigma^{\boldsymbol{\theta}}\big) \qquad\text{and}\qquad N^{-1}\Cov_{\boldsymbol{\theta}}(\Upsilon_N) \longrightarrow \Sigma^{\boldsymbol{\theta}},
	\end{align}
	where $\Sigma^{\boldsymbol{\theta}}_{ij} := \langle (I - P_{\mu_{\boldsymbol{\theta}}}) f_i, (I - P_{\mu_{\boldsymbol{\theta}}}) f_j \rangle_{L^2(\mu_{\boldsymbol{\theta}})}$ with  $\mu_{\boldsymbol{\theta}} := \pi^{-\boldsymbol{\theta}^{\top} \mathbf{f}}_{\Leb,\Leb}$. 
	
	For $\boldsymbol{\theta}= \mathbf{0}$, the projection $P_{\mu_{\boldsymbol{\theta}}}$ can be dropped from the limiting covariance matrix and we recover \cite[Theorem~1.7(b)]{mukherjee2023inference}, which is a consequence of Hoeffding's combinatorial central limit theorem \cite{hoeffding1951combinatorial}. Indeed, 
	as in \cite{mukherjee2016estimation} and \cite{mukherjee2023inference}, cost functions, without loss of generality, can be restricted to the class $\cC$ of functions $f$ with $\int_0^1 f(\cdot, y)\,dy = \int_0^1 f(x, \cdot)\,dx = 0$, 
	since the Gibbs measure is invariant under perturbations of the cost by additively separable functions in $\mathcal{A}$. Since $\cC$ is the orthogonal complement of $\cA$ in $L^2(u)$ and $\mu_{\mathbf{0}} = u$, for $\mathbf{f} \in \cC^{L}$ we have $\Sigma^{\mathbf{0}}_{ij} = \int f_i f_j\,du$. 
	However, for $\boldsymbol{\theta} \ne \mathbf{0}$, centering $f$ in $\mathcal{C}$ does not remove the projection, since $\mathcal{C}$ is not neccesarily orthogonal to $\mathcal{A}$ in $L^2(\mu_{\boldsymbol{\theta}})$.
	See Sections~\ref{sec: bg_pal} and \ref{sec: bg_mukherjee} for further discussion.

	\subsection{Limiting spectral distribution}
	\label{sec: main_spectral}

	We now describe the spectrum of the conditioned table. Following \cite{lyu2024large,duan2026scaling}, we consider the Gram (sample covariance) matrix of the centered and normalized table. Fix margins $(\r,\c)$ and a prior mean $\bLambda$, let $\mathbf{Z} = \mathbf{Z}^{\r,\c}$, $\mathbf{X} \sim \lambda_{\r,\c;\bLambda}$ and $\mathbf{Y} \sim \Poisson(\mathbf{Z})$, and let $\bar s := N/(mn)$ be the average entry of $\mathbf{Z}$. The \emph{fluctuation matrices} are
	\begin{align}
		\label{eq: def_fluctuation_matrices}
		\mathbf{A}_{\mathbf{X}} := \frac{\mathbf{X} - \mathbf{Z}}{\sqrt{(m+n)\bar s}}, \qquad \mathbf{A}_{\mathbf{Y}} := \frac{\mathbf{Y} - \mathbf{Z}}{\sqrt{(m+n)\bar s}}.
	\end{align}
	For an $m\times m$ symmetric matrix $\mathbf{H}$, let $\mu_{\mathbf{H}} := \frac1m\sum_{i=1}^m\delta_{\lambda_i(\mathbf{H})}$ be its empirical spectral distribution. The entries of $\mathbf{A}_{\mathbf{Y}}$ are independent and centered, with variance profile
	\begin{align}
		\label{eq: variance_profile}
		\mathbf{S}_{ij} := \Var\big((\mathbf{A}_{\mathbf{Y}})_{ij}\big) = \frac{\mathbf{Z}_{ij}}{(m+n)\bar s} = \frac{w_{ij}}{m+n}, \qquad w_{ij} := \frac{\mathbf{Z}_{ij}}{\bar s},
	\end{align}
	where $w_{ij}$ is the value on $A_{ij}$ of the density $\varphi_{\mathbf{Z}}$ of $\pi_{\mathbf{Z}}$, the discrete Schr\"odinger bridge. The deterministic approximation for $\mu_{\mathbf{A}_{\mathbf{Y}}\mathbf{A}_{\mathbf{Y}}^\top}$ is the measure $\nu_{\mathbf{S}}$ on $[0,\infty)$ given by the Dyson equation \cite{alt2017local}
	\begin{align}
		\label{eq: dyson_discrete}
		-\frac{1}{G_i(z)} = z - \sum_{l=1}^n \frac{\mathbf{S}_{il}}{1 + \sum_{j=1}^m \mathbf{S}_{jl}\,G_j(z)}, \qquad i \in [m],\ z \in \mathbb{H},
	\end{align}
	where $\mathbb{H}$ is the complex upper half-plane. This equation has a unique holomorphic solution $(G_i)_{i\in[m]} : \mathbb{H}\to\mathbb{H}^m$, and $\frac1m\sum_{i}G_i(z)$ is the Stieltjes transform of $\nu_{\mathbf{S}}$ \cite{alt2017local}.

	In the scaling limit, the variance profile converges to the density of the limiting Schr\"odinger bridge. For an aspect-ratio parameter $\theta \in (0,1)$, which plays the role of the fraction $m/(m+n)$ of rows (and equals $\gamma/(1+\gamma)$ in the limit), a measurable function $W : [0,1]^2 \to [0,\infty)$ and $z \in \mathbb{H}$, we consider the continuum Dyson equation
	\begin{align}
		\label{eq: dyson_continuum}
		-\frac{1}{G(x,z)} = z - (1-\theta)\int_0^1 \frac{W(x,y)\,dy}{1 + \theta\int_0^1 W(x',y)G(x',z)\,dx'}, \qquad x \in [0,1].
	\end{align}
	If $W$ is grid-step with values $w_{ij}$ and $\theta = m/(m+n)$, then \eqref{eq: dyson_continuum} is exactly \eqref{eq: dyson_discrete} with $\mathbf{S}_{ij} = w_{ij}/(m+n)$, in the sense that $G(x,z) = G_{\lceil mx\rceil}(z)$. If $W$ is bounded, then for $\operatorname{Im}z \ge 2\norm{W}_\infty + 1$ the equation \eqref{eq: dyson_continuum} has a unique solution $G(\cdot,z)$ with $\operatorname{Im}G \ge 0$ and $|G| \le (\operatorname{Im}z)^{-1}$ (Lemma~\ref{lem: dyson_continuity}).

	\begin{figure}[H]
		\centering
		\includegraphics[width=1\textwidth]{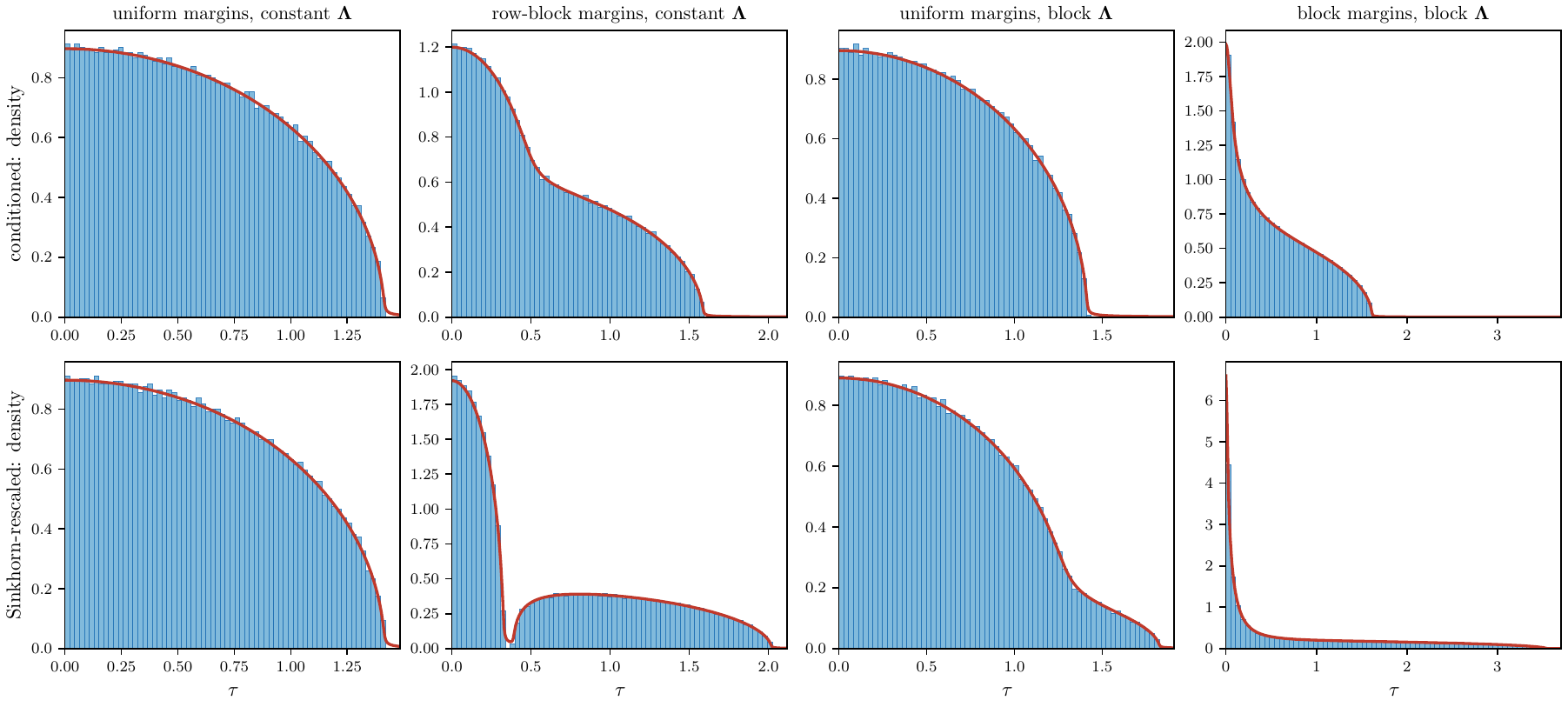}
		\vspace{-0.7cm}
		\caption{Singular values of the fluctuation matrix (histograms) and the density predicted by the Dyson equation (red), for $m = n = 5000$, $N = 0.3n^2$ and $\norm{\bLambda}_1 = N$. Columns: (1)~uniform margins and constant $\bLambda$; (2)~row sums $0.1n$ and $0.5n$ and constant $\bLambda$; (3)~uniform margins and a $2\times 2$ block prior with blocks of sizes $n/4$ and $3n/4$ and entry ratio $5$; (4)~row and column sums $0.15n$ and $0.45n$ and an aligned block prior with entry ratio $5$. Top: the conditioned table, sampled exactly via Proposition~\ref{prop: permutation_CT_coupling}, with the profile $\mathbf{S}_{ij} = \mathbf{Z}_{ij}/((m+n)\bar s)$ in \eqref{eq: dyson_discrete}. Bottom: $\Poisson(\bLambda)$ rescaled to the same margins by Sinkhorn's algorithm, with the profile $e^{\balpha(i)+\bbeta(j)}\mathbf{Z}_{ij}/((m+n)\bar s)$ (Section~\ref{sec: bg_sinkhorn}).}
		\label{fig: ESD_conditioned}
	\end{figure}
	
	Theorem~\ref{thm: ESD_limit} below identifies the limiting spectral distribution of the conditioned table in the dense regime; its proof transfers the global law of the tilted model by a concentration inequality for linear spectral statistics (Proposition~\ref{prop: spectral_concentration}; see Section~\ref{sec: bg_spectral}).
	
	\begin{theorem}[Limiting spectral distribution]
		\label{thm: ESD_limit}
		Assume \ref{assumption: bounded_cost} and \ref{assumption: margins_weak_convergence}, suppose that $\delta \le \rho_{\r_k}, \rho_{\c_k} \le \delta^{-1}$ for some $\delta \in (0,1]$ and all $k$, that $\rho_{\r_k} \to \rho_{\r_\infty}$ and $\rho_{\c_k} \to \rho_{\c_\infty}$ in $L^1[0,1]$, and that there is $c_0 > 0$ such that $\bar s_k \ge c_0$ for all $k$ and $\log\bar s_k = o(m_k)$. Let $W_\infty := d\pi_\infty/du$ and $\theta := \gamma/(1+\gamma)$. Then there is a unique probability measure $\nu_\infty$ on $[0,\infty)$ whose Stieltjes transform coincides with $\int_0^1 G(x,z)\,dx$ for all $z \in \mathbb{H}$ with $\operatorname{Im}z \ge 2\norm{W_\infty}_\infty + 1$, where $G(\cdot,z)$ is the solution of \eqref{eq: dyson_continuum} with $W = W_\infty$. Moreover,
		\begin{align}
			\mu_{\mathbf{A}_{\mathbf{X}_k}\mathbf{A}_{\mathbf{X}_k}^\top} \Longrightarrow \nu_\infty \qquad\text{and}\qquad \mu_{\mathbf{A}_{\mathbf{Y}_k}\mathbf{A}_{\mathbf{Y}_k}^\top} \Longrightarrow \nu_\infty \qquad \text{weakly in probability}.
		\end{align}
	\end{theorem}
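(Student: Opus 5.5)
We prove Theorem~\ref{thm: ESD_limit} by first establishing the global law for the tilted model $\mathbf{A}_{\mathbf{Y}_k}$ and then transferring it to $\mathbf{A}_{\mathbf{X}_k}$. The transfer uses the transference principle (Lemma~\ref{thm: transference}) together with a concentration inequality for linear spectral statistics (Proposition~\ref{prop: spectral_concentration}). We work with Stieltjes transforms at spectral parameters with $\operatorname{Im}z \ge 2\sup_k\norm{W_k}_\infty + 1$, where $W_k := \varphi_{\mathbf{Z}_k}$. This suffices for weak convergence, because all the measures live on $[0,\infty)$ and are tight. Tightness of $\mu_{\mathbf{A}\mathbf{A}^\top}$ in probability follows from $\E\frac1m\tr\mathbf{A}_{\mathbf{Y}}\mathbf{A}_{\mathbf{Y}}^\top = \frac{1}{m}\sum_{ij}\mathbf{S}_{ij} = O(1)$, and for $\mathbf{X}$ from the analogous trace bound via transference. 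A Stieltjes transform is analytic, so agreement on a half-plane region determines the measure; this gives uniqueness of $\nu_\infty$.

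\emph{Step 1: deterministic part.} We first show $\sup_k\norm{W_k}_\infty<\infty$ and $W_k \to W_\infty$ in $L^1([0,1]^2)$. Boundedness comes from the density bounds of Lemma~\ref{lem: density_bounds}: since $\rho_{\r_k},\rho_{\c_k}\in[\delta,\delta^{-1}]$ and $\norm{\kappa_k}_\infty\le\log M$, the Schrödinger potentials are bounded uniformly in $k$. For convergence, the weak convergence $\pi_{\mathbf{Z}_k}\to\pi_\infty$ (stability of EOT, as in the proof of Theorem~\ref{thm: conditioned_model_weak_as_limit_no_rate}) combines with the $L^1$ convergence of the margins. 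The densities are $e^{a_k\oplus b_k-\kappa_k}\rho_{\r_k}\otimes\rho_{\c_k}$, with $\kappa_k\to\kappa_\infty$ uniformly. The potentials $a_k,b_k$ are determined by the Schrödinger system~\eqref{eq: schrodinger_system}, which expresses them as uniformly continuous integral transforms of the margins, so they converge uniformly (after normalization) along subsequences to the limiting potentials. Next we prove a stability lemma for \eqref{eq: dyson_continuum}. In the region $\operatorname{Im}z\ge 2\norm{W}_\infty+1$, the map defining $G$ is a contraction on $\{|G|\le(\operatorname{Im}z)^{-1}\}$ (Lemma~\ref{lem: dyson_continuity}). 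Hence $W\mapsto\int G(x,z)dx$ is Lipschitz in $\norm{\cdot}_{L^1}$ on bounded sets, and it is also continuous in $\theta$. Since $m_k/(m_k+n_k)\to\theta$, the discrete solutions $\nu_{\mathbf{S}_k}$ have Stieltjes transforms converging to $\int G(x,z)\,dx$.

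\emph{Step 2: tilted model.} The matrix $\mathbf{A}_{\mathbf{Y}}$ has independent centered entries with variance profile $\mathbf{S}_{ij}=w_{ij}/(m+n)$, which is bounded by $C/(m+n)$. Its entries are rescaled Poisson variables, so their fourth moments are controlled by $\mathbf{Z}_{ij}^{-1}$ times the variance scale; this is where $\bar s_k\ge c_0$ and the density lower bound $w_{ij}\ge c$ enter. In this setting the standard global law for Gram matrices with a variance profile (resolvent expansion or the cumulant method of \cite{alt2017local}) gives $\frac1m\tr(\mathbf{A}_{\mathbf{Y}}\mathbf{A}_{\mathbf{Y}}^\top-z)^{-1}-\int G_k\to0$ in probability, where $G_k$ solves \eqref{eq: dyson_discrete}. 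Combined with Step~1, this yields the second convergence $\mu_{\mathbf{A}_{\mathbf{Y}_k}\mathbf{A}_{\mathbf{Y}_k}^\top}\Rightarrow\nu_\infty$.

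\emph{Step 3: transfer to $\mathbf{X}$ (main obstacle).} By~\eqref{eq: same_conditional_law}, $\mathbf{X}_k$ is $\mathbf{Y}_k$ conditioned on the event $\{\mathbf{Y}_k\in\CT(\r_k,\c_k)\}$. The transference principle gives $\P(\mathbf{X}_k\in E)\le \P(\mathbf{Y}_k\in E)/\P(\mathbf{Y}_k\in\CT)$, and $\P(\mathbf{Y}_k\in\CT)\ge e^{-C(m+n)\log \bar s}$ by local CLT-type bounds on Poisson margins. It therefore suffices to show that for fixed $z$ the bad event $E=\{|s_{\mathbf{Y}}(z)-\E s_{\mathbf{Y}}(z)|>\eps\}$ has probability $\le e^{-c\,m(m+n)\eps^2/\ldots}$, which beats $e^{C(m+n)\log\bar s}$. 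This is exactly the role of the hypothesis $\log\bar s_k=o(m_k)$. Proposition~\ref{prop: spectral_concentration} supplies such a bound. The Stieltjes transform $\frac1m\tr(\mathbf{A}\mathbf{A}^\top-z)^{-1}$ is Lipschitz in $\mathbf{A}$ with respect to the Frobenius norm, with constant $O(m^{-1/2}(\operatorname{Im}z)^{-3/2}\norm{\mathbf{A}})$. Poisson entries satisfy a modified log-Sobolev or Talagrand-type concentration after truncation at scale $\log N$, giving Gaussian tails at rate $m(m+n)\bar s\cdot(\text{scale})^{-1}$, which is of order $m^2$. The delicate point I expect is the truncation: Poisson tails are only subexponential, so one controls $\max_{ij}|\mathbf{Y}_{ij}-\mathbf{Z}_{ij}|$ separately and uses the conditioning bound again for that event. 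Concentration of $s_{\mathbf{Y}}(z)$ at the rate needed to absorb $e^{C(m+n)\log\bar s}$ then shows that $s_{\mathbf{X}_k}(z)$ has the same limit in probability, which yields the first convergence.
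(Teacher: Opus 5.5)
Your overall architecture is the paper's. You get the global law for $\mathbf{A}_{\mathbf{Y}}$ from \cite{alt2017local}, use $L^1$ convergence of the bridge densities plus stability of the Dyson equation, and transfer to $\mathbf{X}$ via Lemma~\ref{thm: transference} and exponential concentration. Your route to $\varphi_{\mathbf{Z}_k}\to W_\infty$ through asymptotic equicontinuity of the Schr\"odinger potentials is a legitimate substitute for the paper's Hellinger stability estimate. The gap is in Step~3, which is the heart of the proof.

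First, $\P(\mathbf{Y}\in\CT)\ge e^{-C(m+n)\log\bar s}$ is false for bounded $\bar s$: already $\P(\mathbf{Y}\in\CT)\le\prod_i\P(\Poisson(\r(i))=\r(i))=e^{-\frac m2\log(\bar s n)+O(m)}$. The correct exponent is $\tau_k=m\log(eN/m)+n\log(eN/n)\asymp(m+n)(\log(m+n)+\log\bar s)$, so the concentration exponent must also beat $(m+n)\log(m+n)$.

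Second, the mechanism you sketch cannot achieve this. To absorb $\{\max_{ij}|\mathbf{Y}_{ij}-\mathbf{Z}_{ij}|>K\}$ by transference, its $\mathbf{Y}$-probability must be $o(e^{-\tau_k})$. A single entry, with mean of order $\bar s$, deviates by $K$ with probability at least $e^{-CK^2/\bar s}$ (or $e^{-CK\log(K/\bar s)}$ if $K\gg\bar s$). This forces $K^2\gtrsim\bar s\,\tau_k$ up to logarithms. A Lipschitz or Talagrand bound for the truncated matrix then gives an exponent of order $t^2m(m+n)\bar s/K^2$, i.e.\ $O(t^2m)$ up to logarithms, far below $\tau_k$. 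Truncation at scale $\log N$ is worse, since that event is nowhere near $o(e^{-\tau_k})$.

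The missing idea is that no truncation is needed. The modified log-Sobolev inequality for Poisson products (Lemma~\ref{lem:LSI_concentration} with \eqref{eq: poisson_MLSI}) handles the sub-exponential tails through $\beta$, the bound on single-coordinate increments. It produces Gaussian behavior through $\alpha^2$, a bound on $\sum_{i,j}|F(\mathbf{B}+\mathbf{E}_{ij})-F(\mathbf{B})|^2$, the sum of squared \emph{discrete} increments. Summing a single-coordinate bound over the $mn$ entries, whether it comes from Lipschitz continuity or from the rank-two estimate, yields at best an exponent of order $mt^2$.

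The paper instead bounds this sum by the Frobenius norm of the gradient plus a second-order Taylor remainder $4mnL_2^2\eta^4\lesssim\bar s^{-2}$, where $\eta=((m+n)\bar s)^{-1/2}$ (Lemma~\ref{lem: spectral_gradient}). This is where $\bar s\ge c_0$ is needed in the transfer, not just in the moment condition. The resulting exponent $\gtrsim\min\{m^{3/2}\sqrt{\bar s}\,t,\,m^2t^2\}$ dominates $\tau_k$ exactly when $\log\bar s_k=o(m_k)$.

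You do cite Proposition~\ref{prop: spectral_concentration}, which packages this. As stated, however, it requires $f'$ to vanish on $[R^2,\infty)$, so it does not cover $x\mapsto(x-z)^{-1}$. You can note that its proof only uses $\norm{h'}_\infty,\norm{h''}_\infty$ for $h(x)=f(x^2)$, which are finite for the resolvent. Alternatively, test against $f\in C^2_c([0,\infty))$ as the paper does, keeping Stieltjes transforms for the deterministic $\nu_{\mathbf{S}_k}$.

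Finally, tightness for $\mathbf{X}$ cannot come from transferring a trace bound, since Lemma~\ref{thm: transference} inflates expectations by $e^{\tau_k}$. It is unnecessary anyway: convergence to the Stieltjes transform of a probability measure already excludes loss of mass.
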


	Figure~\ref{fig: ESD_conditioned} illustrates Theorem~\ref{thm: ESD_limit} for four choices of margins and prior means. In each case the spectrum of the conditioned table follows the Dyson equation, and with uniform margins it is the same Marchenko--Pastur law regardless of the prior (first and third columns). By contrast, the Sinkhorn-rescaled Poisson matrix in the bottom row has the same margins and, to first order, the same mean, but a different variance profile, and its spectrum depends on the prior (third column).

	\begin{remark}[Spectra of Gibbs permutations]
		\label{rem: ESD_permutations}
		By \eqref{eq:coupling_Gibbs}, Theorem~\ref{thm: ESD_limit} is equivalently a statement about Gibbs random permutations. Namely, with the block counts $\mathbf{X}(\sigma)$ of \eqref{eq:construction_Gibbs_X}, the conclusion of Theorem~\ref{thm: ESD_limit} holds for $\mathbf{A}_{\mathbf{X}(\sigma_k)}$ with $\sigma_k \sim \Gibbs_k$. Thus, at the level of the global spectrum, the block counts of a Gibbs permutation behave like independent Poisson variables with means $\mathbf{Z}_{k,ij}$, and the variance profile $W_\infty$ is the density of $\pi_\infty = (T_\infty)_{\#}\mu_\infty$, where $\mu_\infty$ is the limiting optimal permuton. If the limiting margins are uniform, then $\nu_\infty$ is the same rescaled Marchenko--Pastur law for every limiting cost (Section~\ref{sec: bg_spectral}). For instance, let $\bLambda_{ij} = e^{-f(i/m,j/m)}$ for a continuous function $f$ on $[0,1]^2$ and let all margins be equal to $d$, so that $\sigma$ is a Gibbs permutation of $[md]$ whose kernel is constant on $d \times d$ blocks. If $m \to \infty$, $d/m$ is bounded below and $\log d = o(m)$, then the $m \times m$ matrix of block counts has this Marchenko--Pastur limit for every $f$; see Figure~\ref{fig: perm_ESD}. The theorem does not apply to unit margins, for which $\bar s = 1/N$.
	\end{remark}

	\begin{figure}[H]
		\centering
		\includegraphics[width=\textwidth]{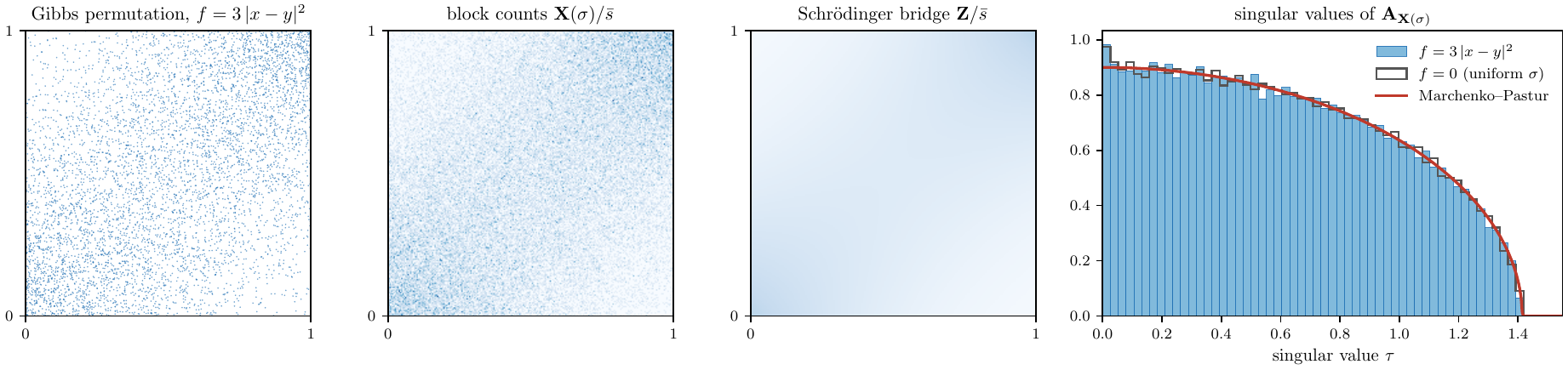}
		\vspace{-0.7cm}
		\caption{Block counts of Gibbs permutations with equal block sizes (Remark~\ref{rem: ESD_permutations}): $m = n = 500$ blocks of size $500$, so that $N = 250{,}000$ and $\bar s = 1$, with $\bLambda_{ij} = e^{-f(i/m,j/m)}$. From left: a sample of $\sigma \sim \Gibbs_{\r,\c;\bLambda}$ for the quadratic cost $f(x,y) = 3|x-y|^2$, shown as the points $(k/N,\sigma(k)/N)$; its block counts $\mathbf{X}(\sigma)/\bar s$; the Schr\"odinger bridge $\mathbf{Z}/\bar s$; and the singular values of $\mathbf{A}_{\mathbf{X}(\sigma)}$ for this $f$ (filled) and $f = 0$, i.e., uniform $\sigma$ (outlined), with the Marchenko--Pastur density $\frac{2}{\pi}\sqrt{2-\tau^2}$ (red). The histograms pool $24$ samples from two Metropolis chains of random transpositions.}
		\label{fig: perm_ESD}
	\end{figure}

	\FloatBarrier
	\section{Background and discussion}
	\label{sec: background}
	
	\subsection{Random contingency tables and the Poisson model}
	\label{sec: bg_tables}
	\label{sec: bg_poisson}
	
	Random contingency tables have been studied extensively, motivated in part by the problem of counting $\CT(\r,\c)$ \cite{barvinok2008enumerating, barvinok2010approximation, barvinok2010number, branden2023lower, chatterjee2010properties}. Barvinok \cite{barvinok2010does} showed that a uniformly random contingency table concentrates around its \emph{typical table}, the mean of the maximum entropy distribution of independent geometric entries with expected margins $(\r,\c)$. Lyu and Mukherjee \cite{lyu2024large} extended this to random matrices with i.i.d.\ entries from a base measure $\mu$ conditioned on their margins: the typical table is the mean of the solution of \eqref{eq:KL_min} with $\cR = \mu^{\otimes(m\times n)}$, a product of exponential tilts whose parameters $\balpha(i)+\bbeta(j)$ solve a dual maximum likelihood problem, and a transference principle shows that the conditioned matrix concentrates around it.
	
	For the Poisson model with prior mean $\bLambda$, this framework is explicit. Since the Kullback--Leibler divergence of $\Poisson(x)$ from $\Poisson(\lambda)$ is $x\log(x/\lambda) + \lambda - x$ and $\sum_{i,j}\x_{ij} = N$ on $\Tpoly(\r,\c)$, the minimizer for the reference measure $\cR = \Poisson(\bLambda)$ is the law of $\mathbf{Y} \sim \Poisson(\mathbf{Z})$ \cite{csiszar1975divergence}, whose mean solves the matrix scaling problem \eqref{eq: def_matrix_scaling}; thus $\mathbf{Z}/N$ is a static Schr\"odinger bridge, and a general cost requires the inhomogeneous prior $\bLambda$ (i.i.d.\ $\Poisson(1)$ entries give an additively separable cost, hence the independent coupling). In the rank-one tilting model $\mathbf{Y}_{ij} \sim \Poisson(\bLambda_{ij}e^{\balpha(i)+\bbeta(j)})$, the log-likelihood of a table with margins $(\r,\c)$ is $\langle\balpha,\r\rangle + \langle\bbeta,\c\rangle - \langle\exp(\balpha\oplus\bbeta),\bLambda\rangle$ plus a term free of $(\balpha,\bbeta)$, so the maximum likelihood estimator solves \eqref{eq: kantorovich_dual}, i.e., it is the pair of Schr\"odinger potentials, and Sinkhorn's algorithm is alternating maximization of the likelihood. Moreover, for $\x \in \CT(\r,\c)$, $\P(\mathbf{Y} = \x) = e^{\langle\balpha,\r\rangle+\langle\bbeta,\c\rangle - \norm{\mathbf{Z}}_1}\prod_{i,j}\bLambda_{ij}^{\x_{ij}}/\x_{ij}!$, which proves \eqref{eq: same_conditional_law}.
	
	Two further features distinguish the Poisson model. First, it is uniformly ``tame'': EOT potentials are bounded whenever the cost is (Lemma~\ref{lem: density_bounds}), so $\mathbf{Z}_{ij}$ is of order $\r(i)\c(j)/N$ for all margins. For uniform tables, the potentials of Barvinok's typical table may be unbounded, with a sharp phase transition in the ratio of the largest to the smallest margin \cite{barvinok2010does, dittmer2020phase}; tameness (cf.\ \cite[Definition~1.8]{lyu2024large}) is therefore assumed in the Barvinok--Hartigan formula \cite{barvinok2012asymptotic} and in the transference principles of \cite{lyu2024large}, whose factors are $\exp(C(m+n)\log(m+n))$ or larger with $C$ depending on the tameness \cite[Theorems~1.13 and 1.14]{lyu2024large}. Second, the combinatorial counterpart of uniform tameness is Barvinok's identity \eqref{eq: barvinok_per_identity}, equivalently our coupling \eqref{eq:coupling_Gibbs}. It gives the transference factor $(eN)^{m+n}m^{-m}n^{-n}$ of Lemma~\ref{thm: transference} for every prior mean and all margins, and the enumeration of Corollary~\ref{cor: enumeration} without tameness. Beyond concentration, the coupling identifies the table with the block counts of a Gibbs permutation, which gives access to fluctuations and sharp partition function asymptotics.
	
	\vspace{-0.2cm}
	\subsection{Entropic optimal transport and fluctuations}
	\label{sec: bg_EOT}
	\label{sec: bg_fluctuations}
	
	For EOT and the Schr\"odinger problem we refer to \cite{leonard2013survey, nutz2021introduction, peyre2019computational}. The convergence in Theorem~\ref{thm: conditioned_model_weak_as_limit_no_rate}(ii) is equivalent to $d_{\mathrm{BL}}(\pi_{\mathbf{X}_k},\pi_\infty) \to 0$ in probability, where $d_{\mathrm{BL}}$ is the bounded Lipschitz metric; in the dense regime, it cannot be strengthened to total variation, since the entries of $\mathbf{X}_k$ are then of order one. The deterministic term in Theorem~\ref{thm: conditioned_model_weak_as_limit} comes from the stability of Schr\"odinger bridges \cite{ghosal2022stability, nutz2023stability, eckstein2022quantitative, deligiannidis2024quantitative, carlier2020differential}; under density bounds, \cite[Theorem~2.15]{duan2026scaling} improves its dependence on $d_H(\varphi_{\bLambda_k},\varphi_{\bLambda_\infty})$ to linear order.
	
	The fluctuation results need neither smooth test functions, nor dense tables, nor convergence of the margins beyond weak convergence: the proofs pass to permuton coordinates, where the optimal permutons have densities in $[e^{-2\osc(\kappa)}, e^{2\osc(\kappa)}]$ for arbitrary margins (Lemma~\ref{lem: permuton_coordinates}). Some continuity of $g$ is needed when the limiting margins have atoms, since, for an atom $x_0$ of $\rho_{\r_\infty}$, the indicator of $\{x_0\}\times[0,v]$ has vanishing fluctuations but in general positive limiting variance. The variance $V(g)$ vanishes exactly for additively separable $g$, whose fluctuations are frozen by the margin constraints. In the Fisher--Yates case with uniform margins, $\FF$ is the standard Brownian pillow of the Blum--Kiefer--Rosenblatt test \cite{blum1961distribution}, and for Lipschitz $g$ the central limit theorem also follows from Hoeffding's combinatorial central limit theorem \cite{hoeffding1951combinatorial} if $N_k = o(n_k^2)$. Unlike the Gaussian free field, whose precision operator is the Laplacian, the precision structure of $\Xi$ has no spatial locality, since cells interact only through the margins; accordingly, $\Xi$ has the regularity of white noise (Remark~\ref{rem: Hs_convergence}).
	
	\vspace{-0.3cm}
	\subsection{Gibbs permutations and Pal's conjecture}
	\label{sec: bg_pal}
	
	Mukherjee \cite{mukherjee2016estimation} showed that the empirical measure of $\sigma$ drawn with probability proportional to $\exp(\theta\sum_i f(i/N,\sigma(i)/N))$ converges to the solution of an EOT problem over permutons, and that $\frac1N\log L_N(f) \to -\Gamma^{\mathrm{u}}(f)$; see also \cite{mukherjee2023inference, trashorras2008large, kenyon2020permutations, borga2024large}. Our Gibbs measures \eqref{eq: gibbs_intro} have kernels that are constant on the blocks of $\widetilde{\bLambda}$. Pal \cite{pal2024limitingconjecture} conjectured that, for symmetric $C^2$ costs with $f(x,x) = 0$, $f(1-x,1-y) = f(x,y)$ and a spectral gap, $e^{N\Gamma^{\mathrm{u}}(f)}L_N(f) \to \detF(I - \cS^2)^{-1/2}$; since $N!L_N(f)$ is a permanent, this is also called Pal's permanent conjecture \cite{ottolini2026permanent}. Pal proved the limit for truncated eigenfunction expansions; Tripathi \cite{tripathi2026partitionfunctionclassmallows} proved the equivalent form \eqref{eq: mallows_tripathi} for $C^2$ kernels with a spectral gap, by an approximate doubly stochastic rescaling and McCullagh's expansion \cite{McCullagh2014asymptotic}; and Ottolini and Starr \cite{ottolini2026permanent} proved it for block-uniform matrices.
	
	Corollary~\ref{cor: mallows} gives the limit for every continuous cost, without symmetry, smoothness or spectral assumptions, with $\Gamma^{\mathrm{u}}(f_N)$ in place of $\Gamma^{\mathrm{u}}(f)$; for $C^1$ costs, replacing $\Gamma^{\mathrm{u}}(f_N)$ by $\Gamma^{\mathrm{u}}(f)$ introduces the boundary factor $e^{-\frac12[a(1)-a(0)+b(1)-b(0)]}$, which equals one under Pal's assumptions and cannot be dropped in general (for $f(x,y) = x$ it equals $e^{-1/2}$). It thus contains Pal's conjecture and Tripathi's theorem for $C^1$ costs and extends them to non-symmetric kernels, with $\cS^*\cS$ in place of $\cS^2$. The difference from \cite{tripathi2026partitionfunctionclassmallows} is that for grid-step costs the doubly stochastic rescaling is exact, given by the discrete Schr\"odinger potentials (Lemma~\ref{lem: grid_step_identity}), so no discretization errors arise. In particular, $\cD_k(f) \ge 1$ by the van der Waerden bound \cite{Falikman1981vanderwaerden}, which also underlies Lemma~\ref{thm: transference}, and the limit $D_\infty = \prod_i(1-s_i^2)^{-1/2}$ is given by the canonical correlations $s_i$ of the limiting bridge. Since $\mathbf{B}\otimes\mathbf{1}_n\mathbf{1}_n^\top$ is Barvinok's matrix of tables with all margins $n$ and prior $\mathbf{B}$, Lemma~\ref{lem: partition_fredholm} recovers \cite[Theorem~2.1]{ottolini2026permanent} and allows unequal and growing blocks.
	
	\subsection{Random margins and Mukherjee's conjecture}
	\label{sec: bg_random_margins}
	\label{sec: bg_mukherjee}
	
	Harchaoui, Liu and Pal \cite{pal2024chaos} consider i.i.d.\ samples $X_i \sim \rho_{\r}$, $Y_i \sim \rho_{\c}$ and the Gibbs average $\hat\mu_N$ of the empirical couplings $\frac1N\sum_i\delta_{(X_i,Y_{\sigma(i)})}$, which converges to $\pi := \pi^\kappa_{\rho_{\r},\rho_{\c}}$. For $\int g\,d\pi = 0$ and $Pg = a\oplus b$ with $\int a\,d\rho_{\r} = 0$, the first-order chaos of $\int g\,d\hat\mu_N$ is $\frac1N\sum_i(a(X_i) + b(Y_i))$, since the system determining it is the system of normal equations \eqref{eq: normal_equations}; the limiting variance is $\norm{a}^2_{L^2(\rho_{\r})} + \norm{b}^2_{L^2(\rho_{\c})}$ \cite[Theorem~4 and Corollary~5]{pal2024chaos}, and the same holds for the empirical EOT plan \cite{gonzalezsanz2024weaklimits}. The remainder $(I-P)g$ contributes only at order $N^{-1}$, through a second-order Gaussian chaos \cite[Theorem~6 and Remark~3]{pal2024chaos}, whereas with fixed margins $\Phi_k(Pg) = 0$ and the fluctuations come from $(I-P)g$. Both pictures come from the linearization of the Schr\"odinger system: the derivative of the potentials in the direction of a cost perturbation $g$ is $Pg$, and the second derivative of the EOT value is $-\norm{(I-P)g}^2_{L^2(\pi)}$ (Proposition~\ref{thm: smooth_cost_perturbations} and its proof).
	
	Mukherjee \cite{mukherjee2016estimation} asked for the limiting distribution of the sufficient statistic $\Upsilon_N$ of the exponential family $(\Gibbs^{-\boldsymbol{\theta}^{\top}\mathbf{f}}_N)_{\boldsymbol{\theta}}$. Hoeffding's theorem covers $\boldsymbol{\theta} = \mathbf{0}$, which Mukherjee and Tagami \cite{mukherjee2023inference} used for the maximum likelihood estimator at $\boldsymbol{\theta} = \mathbf{0}$; they suggested that the techniques of \cite{pal2024chaos} 
	may be used to cover the case of $\boldsymbol{\theta} \ne \mathbf{0}$ \cite[Remark~3]{mukherjee2023inference}. 
	We instead derive the central limit theorem from the partition function asymptotics, via the exact Laplace identity (Lemma~\ref{lem: laplace_identity}), so that Pal's and Mukherjee's conjectures have a common source. Moreover, $N^{-1}\Cov(\Upsilon_N) \to \Sigma^{\boldsymbol{\theta}}$, which, by the multivariate analogue of Proposition~\ref{thm: smooth_cost_perturbations}, is the Hessian of the limit $-\Gamma^{\mathrm{u}}(-\boldsymbol{\theta}^{\top} \mathbf{f})$ of $\frac1N\log L_N(-\boldsymbol{\theta}^{\top}\mathbf{f})$. 
	We expect these results to give asymptotic normality of the maximum likelihood estimator at every $\boldsymbol{\theta}$, at rate $N^{-1/2}$ with limiting covariance $(\Sigma^{\boldsymbol{\theta}})^{-1}$.
	
	\subsection{Spectral properties and Sinkhorn rescaling}
	\label{sec: bg_spectral}
	\label{sec: bg_sinkhorn}
	
	The limiting spectral distribution in Theorem~\ref{thm: ESD_limit} depends only on the density $W_\infty$ of the limiting bridge and on $\gamma$. With uniform limiting margins, $W_\infty$ has uniform marginals, the Dyson equation has a constant solution, and $\nu_\infty$ is a rescaled Marchenko--Pastur law \cite{marchenko1967distribution} for every cost, also for block counts of Gibbs permutations (Remark~\ref{rem: ESD_permutations}). The $L^1$ convergence of the margins cannot be dropped: rows with alternating sums $n$ and $2n$ give a generalized Marchenko--Pastur law with two population variances, whereas their weak limit predicts the Marchenko--Pastur law. Lyu and Mukherjee \cite[Theorem~1.33]{lyu2024large} proved spectral limits for conditioned matrices with identical base measures, using transference under tameness and sub-Gaussian concentration \cite{guionnet2000concentration, klochkov2020uniform}; for Poisson entries we use the modified logarithmic Sobolev inequality \cite{bobkov1998modified} and Lemma~\ref{thm: transference}, with no tameness. The local law \cite{alt2017local} gives bulk rigidity for $\mathbf{Y}$ only with probability $1 - O(m^{-D})$, which does not survive transference; this is why we work with the global law. For bounded average margins, as for Mallows-type permutations, transference is no longer effective, and the permutation representation is a natural substitute: in the Fisher--Yates case, $\mathbf{X}(\sigma)$ is the biadjacency matrix of a bipartite configuration model, linear spectral statistics concentrate by \cite{maurey1979construction}, and the resolvent method for locally tree-like graphs \cite{bordenave2010resolvent} should identify the limit.
	
	Rescaling $\mathbf{X}^{\circ} \sim \Poisson(\bLambda)$ to the margins $(\r,\c)$ by Sinkhorn's algorithm gives a different way to impose margins \cite{landa2022scaling, duan2026scaling}. Rescaling and conditioning agree to first order, but the rescaled fluctuations have variance profile $e^{\balpha(i)+\bbeta(j)}\mathbf{Z}_{ij}$ instead of $\mathbf{Z}_{ij}$, so their spectra differ unless the potentials are constant (Figure~\ref{fig: ESD_conditioned}). Central limit theorems for the rescaled matrix seem harder: its linear statistics depend nonlinearly on $m+n$ random potentials, and the quadratic remainder of the delta method is of order $\max(m,n)/N$, comparable to $N^{-1/2}$ in the dense regime. For the conditioned table, the exponential family structure makes the Laplace transform an exact ratio of partition functions, and the second-order effects of the potentials appear as the determinant $D_\infty$, which cancels in this ratio.
	
	\section{The permutation representation and transference}
	\label{sec: preliminaries}
	
	We first prove the permutation representation \eqref{eq:coupling_Gibbs}, which also yields Barvinok's identity, and then deduce the transference principle.
	
	\subsection{The permutation representation}
	\label{sec: proof_coupling}
	
	For $\bLambda \in \R^{m\times n}_{>0}$, Barvinok \cite{barvinok2008enumerating} proved that
	\begin{align}\label{eq: barvinok_per_identity}
		\sum_{\x \in \CT(\r, \c)} \prod_{i,j} \frac{\bLambda_{ij}^{\x_{ij}}}{\x_{ij}!} = \frac{\per(\widetilde{\bLambda})}{\prod_{i=1}^{m} \r(i)!  \prod_{j = 1}^n \c(j)!}.
	\end{align}
	Here $\widetilde{\bLambda}$ is the block matrix of Section~\ref{sec: intro_gibbs}; see Figure~\ref{fig: tilde_matrix}. We write $\widetilde{\mathbf{A}}$ for the same construction applied to any $m \times n$ matrix $\mathbf{A}$. 
	
	The following proposition makes the coupling \eqref{eq:coupling_Gibbs} precise in permuton coordinates; recall $Q_{\r}, Q_{\c}$ from \eqref{eq: index_quantile} and $T = (q_{\r}, q_{\c})$ from Section~\ref{sec: permutons_prelim}.
	
	\begin{prop}
		\label{prop: permutation_CT_coupling}
		Let $\sigma \sim \Gibbs$ and let $\mathbf{X} = \mathbf{X}(\sigma)$ be its block counts \eqref{eq:construction_Gibbs_X}. Then the following hold.
		\begin{enumerate}[label=(\roman*)]
			\item $\mathbf{X} \in \CT(\r,\c)$ and $\mathbf{X} \sim \lambda_{\r, \c; \bLambda}$, that is, \eqref{eq:coupling_Gibbs} holds. Moreover, Barvinok's identity \eqref{eq: barvinok_per_identity} holds.
			\item For every $g \in L^1([0,1]^2)$, $\displaystyle\int_{[0,1]^2} g \, d\pi_{\mathbf{X}} = \int_{[0,1]^2} (\Ek g)\circ T \, d\mu_{\sigma}$.
			\item The measures $\pi_{\mathbf{X}}$ and $T_{\#}\mu_\sigma$ assign the same mass $\mathbf{X}_{ij}/N = \mu_\sigma\big(T^{-1}(A_{ij})\big)$ to each cell $A_{ij}$. Consequently, for every $g \in C([0,1]^2)$ with modulus of continuity $\omega_g$,
			\begin{align}
				\left| \int_{[0, 1]^2} g \, d\pi_{\mathbf{X}} - \int_{[0, 1]^2} g \circ T \, d\mu_{\sigma} \right| \le \omega_g\big(\sqrt{m^{-2} + n^{-2}}\big).
			\end{align}
		\end{enumerate}
	\end{prop}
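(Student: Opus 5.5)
For (i), I would first note that $\mathbf{X}(\sigma)\in\CT(\r,\c)$: the row block $\{k:Q_{\r}(k)=i\}$ has $\r(i)$ elements, each sent by $\sigma$ into exactly one column block, and symmetrically for columns. The Gibbs weight $\prod_k\widetilde{\bLambda}_{k\sigma(k)}=\prod_{i,j}\bLambda_{ij}^{\mathbf{X}_{ij}(\sigma)}$ depends on $\sigma$ only through $\mathbf{X}(\sigma)$. So it suffices to count the fiber $\#\{\sigma:\mathbf{X}(\sigma)=\x\}$ for a given $\x\in\CT(\r,\c)$. For each row block $i$, choose an ordered partition of its $\r(i)$ indices into subsets of sizes $\x_{i1},\dots,\x_{in}$; there are $\r(i)!/\prod_j\x_{ij}!$ ways. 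Similarly, for each column block $j$, partition its $\c(j)$ targets into subsets of sizes $\x_{1j},\dots,\x_{mj}$, in $\c(j)!/\prod_i\x_{ij}!$ ways. Then match the $(i,j)$ source set bijectively to the $(i,j)$ target set, in $\x_{ij}!$ ways. Each $\sigma$ in the fiber arises exactly once, so the fiber has size $\prod_i\r(i)!\prod_j\c(j)!/\prod_{i,j}\x_{ij}!$. Hence $\Gibbs(\mathbf{X}(\sigma)=\x)\propto\prod_{i,j}\bLambda_{ij}^{\x_{ij}}/\x_{ij}!$, which is \eqref{eq: conditioned_model}. Summing over $\x$ gives $\per(\widetilde{\bLambda})=\prod_i\r(i)!\prod_j\c(j)!\,\cZ_{\r,\c;\bLambda}$, which is Barvinok's identity \eqref{eq: barvinok_per_identity}.

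For (ii) and (iii), the key geometric fact is that $T^{-1}(A_{ij})=(F_{\r},F_{\c})(A_{ij})$ is the rectangle $\big(R_{i-1}/N,R_i/N\big]\times\big(C_{j-1}/N,C_j/N\big]$, where $R_i,C_j$ are partial sums of the margins. This holds because $F_{\r}(i/m)=R_i/N$ and $F_{\r}$ is an increasing homeomorphism. This rectangle is exactly the union of the squares $B_{k\ell}$ with $Q_{\r}(k)=i$ and $Q_{\c}(\ell)=j$. Therefore
\[
\mu_\sigma\big(T^{-1}(A_{ij})\big)=\tfrac1N\#\{k:(k,\sigma(k))\text{ in that union}\}=\mathbf{X}_{ij}/N,
\]
and $\pi_{\mathbf{X}}(A_{ij})=\mathbf{X}_{ij}/N$ by Definition~\ref{def: histogram}. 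Since $\Ek g$ is constant, equal to $\mathbf{G}_{ij}$, on $A_{ij}$, we have $(\Ek g)\circ T=\mathbf{G}_{ij}$ on $T^{-1}(A_{ij})$. This gives
\[
\int(\Ek g)\circ T\,d\mu_\sigma=\tfrac1N\sum_{i,j}\mathbf{G}_{ij}\mathbf{X}_{ij}=\int g\,d\pi_{\mathbf{X}}
\]
by \eqref{eq: cell_average_invariance}, which proves (ii).

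For the estimate in (iii), write $\int g\circ T\,d\mu_\sigma=\int g\,d(T_\#\mu_\sigma)$. The measures $\pi_{\mathbf{X}}$ and $T_\#\mu_\sigma$ give equal mass to each cell $A_{ij}$, and each cell has diameter $\sqrt{m^{-2}+n^{-2}}$. Fix a point $p_{ij}\in\overline{A_{ij}}$. On each cell, $|g-g(p_{ij})|\le\omega_g(\sqrt{m^{-2}+n^{-2}})$. Replacing $g$ by $g(p_{ij})$ on $A_{ij}$ in both integrals changes each by at most $\tfrac12\omega_g$ in the appropriate sense, or more simply gives
\[
\Big|\int g\,d\pi_{\mathbf{X}}-\int g\,d(T_\#\mu_\sigma)\Big|\le\sum_{i,j}\Big|\int_{A_{ij}}(g-g(p_{ij}))\,d\pi_{\mathbf{X}}-\int_{A_{ij}}(g-g(p_{ij}))\,dT_\#\mu_\sigma\Big|.
\]
To get the exact constant $\omega_g$ rather than $2\omega_g$, I would instead note that both restricted integrals lie in $[\min_{\overline{A_{ij}}}g,\max_{\overline{A_{ij}}}g]\cdot\mathbf{X}_{ij}/N$. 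Their difference is therefore at most the oscillation times $\mathbf{X}_{ij}/N$, and summing over cells gives the bound.

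The only delicate point I expect is boundary bookkeeping: the half-open cells and $T$ being a homeomorphism mapping grid lines to the block boundaries $R_i/N$. This is routine given the piecewise linearity of $F_{\r}$ and $F_{\c}$.
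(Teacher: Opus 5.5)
Your proposal is correct and follows the paper's proof essentially step by step. Part (i) uses the same fiber count and normalization argument. Part (ii) uses the same identification of $T^{-1}(A_{ij})$ with the union of the squares $B_{k\ell}$, $Q_{\r}(k)=i$, $Q_{\c}(\ell)=j$, which the paper isolates as a separate lemma. Part (iii) uses the same equal-mass argument, in which the two integrals over each cell differ by at most the cell mass times the oscillation of $g$ (the aside about $\tfrac12\omega_g$ is unnecessary; the min--max bound you settle on is exactly the paper's).
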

	
	\noindent Figure~\ref{fig: coupling} illustrates the two measures in Proposition~\ref{prop: permutation_CT_coupling} on a single cell. 
	
	\begin{figure}[t]
		\centering
		\begin{tikzpicture}[font=\small, >=latex]
			\def\cellW{5.0}\def\cellH{3.5}\def\nsubx{4}\def\nsuby{3}
			\pgfmathsetmacro{\dw}{\cellW/\nsubx}
			\pgfmathsetmacro{\dh}{\cellH/\nsuby}
			\definecolor{picolor}{RGB}{52,101,164}
			\definecolor{Tcolor}{RGB}{206,92,0}
			\fill[picolor!15] (0,0) rectangle (\cellW,\cellH);
			\draw[picolor, line width=1.4pt] (0,0) rectangle (\cellW,\cellH);
			\node[picolor!80!black] at (\cellW/2,\cellH/2) {$\displaystyle\frac{mn}{N}\,\mathbf{X}_{ij}$};
			\node[below left, picolor] at (0,0) {$A_{ij}$};
			\node[above, picolor] at (\cellW/2,\cellH+0.1) {$\pi_{\mathbf{X}}$};
			\draw[<->, picolor!60!black, thin] (0,-0.45) -- (\cellW,-0.45) node[midway, below, font=\scriptsize] {$1/m$};
			\draw[<->, picolor!60!black, thin] (-0.5,0) -- (-0.5,\cellH) node[midway, left, font=\scriptsize] {$1/n$};
			\def\ox{7.5}
			\draw[Tcolor, line width=1.4pt] (\ox,0) rectangle (\ox+\cellW,\cellH);
			\foreach \kk in {1,...,\nsubx}{
				\pgfmathsetmacro{\xpos}{\ox+\kk*\dw}
				\draw[Tcolor!40, very thin] (\xpos,0) -- (\xpos,\cellH);
			}
			\foreach \kk in {1,...,\nsuby}{
				\pgfmathsetmacro{\ypos}{\kk*\dh}
				\draw[Tcolor!40, very thin] (\ox,\ypos) -- (\ox+\cellW,\ypos);
			}
			\foreach \col/\row in {0/1, 1/2, 2/0}{
				\pgfmathsetmacro{\xl}{\ox+\col*\dw}
				\pgfmathsetmacro{\xr}{\ox+\col*\dw+\dw}
				\pgfmathsetmacro{\yl}{\row*\dh}
				\pgfmathsetmacro{\yt}{\row*\dh+\dh}
				\fill[Tcolor!55] (\xl,\yl) rectangle (\xr,\yt);
				\draw[Tcolor!80, very thin] (\xl,\yl) rectangle (\xr,\yt);
				\node[white, font=\tiny] at ({(\xl+\xr)/2},{(\yl+\yt)/2}) {$\tfrac{1}{N}$};
			}
			\node[above, Tcolor] at (\ox+\cellW/2,\cellH+0.1) {$T_{\#}\mu_{\sigma}$};
			\node[below left, Tcolor] at (\ox,0) {$A_{ij}$};
			\draw[<->, Tcolor!60!black, thin] (\ox,-0.45) -- (\ox+\dw,-0.45) node[midway, below, font=\scriptsize] {$\tfrac{1}{m\,\r(i)}$};
			\draw[<->, Tcolor!60!black, thin] (\ox+\cellW+0.5,0) -- (\ox+\cellW+0.5,\dh) node[midway, right, font=\scriptsize] {$\tfrac{1}{n\,\c(j)}$};
			\draw[<->, Tcolor!60!black, thin] (\ox,-1.05) -- (\ox+\cellW,-1.05) node[midway, below, font=\scriptsize] {$1/m$};
			\draw[<->, Tcolor!60!black, thin] (\ox-0.5,0) -- (\ox-0.5,\cellH) node[midway, left, font=\scriptsize] {$1/n$};
		\end{tikzpicture}
		\caption{The two measures in Proposition~\ref{prop: permutation_CT_coupling} on a single cell $A_{ij}$, with $\r(i) = 4$, $\c(j) = 3$ and $\mathbf{X}_{ij} = 3$. Left: $\pi_{\mathbf{X}}$ has constant density $\frac{mn}{N}\mathbf{X}_{ij}$ on $A_{ij}$. Right: $T$ maps each square $B_{k\ell}$ with $Q_{\r}(k) = i$ and $Q_{\c}(\ell) = j$ onto one of the $\r(i)\c(j)$ sub-rectangles of $A_{ij}$, and $T_{\#}\mu_\sigma$ puts mass $1/N$ on the $\mathbf{X}_{ij}$ sub-rectangles hit by $\sigma$, at most one in each row and each column of the sub-grid. Both measures give mass $\mathbf{X}_{ij}/N$ to $A_{ij}$.}
		\label{fig: coupling}
	\end{figure}
	
	Throughout this subsection we fix positive integer margins $(\r, \c) \in \N^m\times\N^n$ with $\sum_{i} \r(i) = \sum_{j}\c(j) = N$ and a positive matrix $\bLambda \in \R^{m\times n}_{>0}$. Let $\bar\r(i) := \sum_{\tau \le i}\r(\tau)$ and $\bar\c(j) := \sum_{\tau \le j}\c(\tau)$ denote the cumulative sums, with $\bar\r(0) = \bar\c(0) = 0$. We first establish the basic properties of $T = (q_{\r}, q_{\c})$ with respect to the two grids \eqref{eq: def_partitions}.
	
	\begin{lemma}
		\label{lem: quantile_properties}
		For all $i \in [m]$ and $j \in [n]$,
		\begin{align}
			\label{eq: preimage_of_cell}
			T^{-1}(A_{ij}) = \Big( \tfrac{\bar\r(i-1)}{N}, \tfrac{\bar\r(i)}{N}\Big] \times \Big( \tfrac{\bar\c(j-1)}{N}, \tfrac{\bar\c(j)}{N}\Big] = \bigcup_{k,\ell\,:\, Q_{\r}(k) = i,\ Q_{\c}(\ell) = j} B_{k\ell}.
		\end{align}
		In particular, $T(B_{k\ell}) \subset A_{Q_{\r}(k), Q_{\c}(\ell)}$ for all $k, \ell \in [N]$, and for every $\sigma \in S_N$,
		\begin{align}
			\label{eq: permuton_mass_of_cell}
			\mu_{\sigma}\big(T^{-1}(A_{ij})\big) = \frac{1}{N} \,\#\big\{k \in [N] : Q_{\r}(k) = i,\ Q_{\c}(\sigma(k)) = j \big\}.
		\end{align}
	\end{lemma}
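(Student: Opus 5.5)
The proof reduces to computing the quantile function $q_{\r}$ of the histogram $\rho_{\r}$ explicitly. The coordinates of $T$ are handled separately.

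\textbf{The distribution function and its inverse.} The histogram has density $\frac{m}{N}\r(\lceil mx\rceil)$. Hence its distribution function $F_{\r}$ is continuous, strictly increasing and piecewise linear. On $[\frac{i-1}{m},\frac{i}{m}]$ it interpolates linearly between $F_{\r}(\frac{i-1}{m}) = \bar\r(i-1)/N$ and $F_{\r}(\frac{i}{m}) = \bar\r(i)/N$. As recalled in Section~\ref{sec: permutons_prelim}, $q_{\r}$ is then the genuine inverse of $F_{\r}$. So $q_{\r}$ is an increasing homeomorphism of $[0,1]$ with $q_{\r}(s)\le t \iff s\le F_{\r}(t)$. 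From this equivalence,
\begin{align}
	q_{\r}(s) \in \Big(\tfrac{i-1}{m},\tfrac{i}{m}\Big] \iff F_{\r}\big(\tfrac{i-1}{m}\big) < s \le F_{\r}\big(\tfrac{i}{m}\big) \iff s \in \Big(\tfrac{\bar\r(i-1)}{N},\tfrac{\bar\r(i)}{N}\Big].
\end{align}
The same argument applies to $q_{\c}$. Since $T=(q_{\r},q_{\c})$ acts coordinatewise, the preimage of the product cell $A_{ij}$ is the product of the two preimages, which gives the first equality in \eqref{eq: preimage_of_cell}. One should check the endpoint $s=0$: there $q_{\r}(0)=0$ lies in no cell, and $0$ also lies in no interval $(\bar\r(i-1)/N,\bar\r(i)/N]$, so both sides are consistent.

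\textbf{Decomposition into squares.} For the second equality, note that the interval $(\bar\r(i-1)/N,\bar\r(i)/N]$ is the disjoint union of the intervals $(\frac{k-1}{N},\frac{k}{N}]$ over integers $k$ with $\bar\r(i-1)<k\le\bar\r(i)$. By the definition \eqref{eq: index_quantile} of $Q_{\r}$, these are exactly the $k$ with $Q_{\r}(k)=i$, because $Q_{\r}(k)=i$ iff $\bar\r(i-1)<k\le\bar\r(i)$. Taking products with the analogous decomposition in the $\ell$ coordinate gives the union of the squares $B_{k\ell}$ with $Q_{\r}(k)=i$ and $Q_{\c}(\ell)=j$.

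\textbf{Consequences.} The inclusion $T(B_{k\ell})\subset A_{Q_{\r}(k),Q_{\c}(\ell)}$ follows directly, since $B_{k\ell}$ lies in $T^{-1}(A_{Q_{\r}(k),Q_{\c}(\ell)})$. For \eqref{eq: permuton_mass_of_cell}, recall that $\mu_\sigma$ gives mass $1/N$ to each square $B_{k\sigma(k)}$ and zero to every other square $B_{k\ell}$. The squares are disjoint, so
\begin{align}
	\mu_\sigma\big(T^{-1}(A_{ij})\big) = \sum_{Q_{\r}(k)=i,\,Q_{\c}(\ell)=j}\mu_\sigma(B_{k\ell}) = \frac1N\#\{k : Q_{\r}(k)=i,\ Q_{\c}(\sigma(k))=j\}.
\end{align}

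No step is a real obstacle. The only point needing care is the half-open endpoint conventions, which the equivalence $q_{\r}(s)\le t\iff s\le F_{\r}(t)$ handles cleanly.
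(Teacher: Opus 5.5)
Your proposal is correct and follows essentially the same route as the paper's proof: both compute $F_{\r}(i/m)=\bar\r(i)/N$ and invert the continuous increasing bijection $F_{\r}$ to get the strip preimages. Both then decompose via $Q_{\r}(k)=i \iff \bar\r(i-1)<k\le\bar\r(i)$ and read off the mass of $\mu_\sigma$ square by square.
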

	
	\begin{proof}
		Since $\rho_{\r} = m\r(\tau)/N$ on $\big(\frac{\tau-1}{m}, \frac{\tau}{m}\big]$, we have $F_{\r}(i/m) = \bar\r(i)/N$ for all $i$. As $F_{\r}$ is a continuous increasing bijection of $[0,1]$ with inverse $q_{\r}$, it follows that $q_{\r}(s) \in \big(\frac{i-1}{m}, \frac{i}{m}\big]$ if and only if $s \in \big(\frac{\bar\r(i-1)}{N}, \frac{\bar\r(i)}{N}\big]$. The same holds for $q_{\c}$, which proves the first equality in \eqref{eq: preimage_of_cell}. The second equality holds because $Q_{\r}(k) = i$ if and only if $\bar\r(i-1) < k \le \bar\r(i)$, by the definition \eqref{eq: index_quantile}. Finally, $\mu_\sigma$ assigns mass $1/N$ to each square $B_{k\sigma(k)}$ and no mass to the squares $B_{k\ell}$ with $\ell \ne \sigma(k)$, which gives \eqref{eq: permuton_mass_of_cell}.
	\end{proof}

	\begin{proof}[\textbf{Proof of Proposition~\ref{prop: permutation_CT_coupling}}]
		(i) Let $R_i := Q_{\r}^{-1}(i)$ and $C_j := Q_{\c}^{-1}(j)$, so that $|R_i| = \r(i)$ and $|C_j| = \c(j)$. The $i$th row sum of $\mathbf{X}(\sigma)$ is $|R_i|$ and, since $\sigma$ is a bijection, its $j$th column sum is $|C_j|$, so $\mathbf{X}(\sigma) \in \CT(\r,\c)$. Since $\widetilde{\bLambda}$ equals $\bLambda_{ij}$ on $R_i \times C_j$, the weight of $\sigma$ in \eqref{eq: gibbs_intro} depends on $\sigma$ only through its block counts:
		\begin{align}
			\label{eq: block_weight}
			\prod_{k=1}^{N}\widetilde{\bLambda}_{k\sigma(k)} = \prod_{i,j}\bLambda_{ij}^{\mathbf{X}_{ij}(\sigma)}.
		\end{align}
		
		Fix $\x \in \CT(\r,\c)$. The permutations $\sigma$ with $\mathbf{X}(\sigma) = \x$ correspond bijectively to the following choices: for each $i$, a partition of $R_i$ into labeled sets $S_{i1}, \ldots, S_{in}$ of sizes $\x_{i1}, \ldots, \x_{in}$ (the elements mapped into $C_1, \ldots, C_n$), for each $j$, a partition of $C_j$ into labeled sets $S'_{1j}, \ldots, S'_{mj}$ of sizes $\x_{1j}, \ldots, \x_{mj}$ (their images), and bijections $S_{ij} \to S'_{ij}$. Therefore
		\begin{align}
			\label{eq: permutation_count}
			\#\{\sigma \in S_N : \mathbf{X}(\sigma) = \x\} = \prod_{i=1}^m\frac{\r(i)!}{\prod_{j}\x_{ij}!}\prod_{j=1}^n\frac{\c(j)!}{\prod_{i}\x_{ij}!}\prod_{i,j}\x_{ij}! = \frac{\prod_{i}\r(i)!\prod_{j}\c(j)!}{\prod_{i,j}\x_{ij}!}.
		\end{align}
		Combining \eqref{eq: block_weight} and \eqref{eq: permutation_count}, for every $\x \in \CT(\r,\c)$,
		\begin{align}
			\label{eq: block_count_law}
			\P\big(\mathbf{X}(\sigma) = \x\big) = \frac{1}{\per(\widetilde{\bLambda})}\sum_{\sigma' :\, \mathbf{X}(\sigma') = \x}\ \prod_{i,j}\bLambda_{ij}^{\x_{ij}} = \frac{\prod_{i}\r(i)!\prod_{j}\c(j)!}{\per(\widetilde{\bLambda})}\prod_{i,j}\frac{\bLambda_{ij}^{\x_{ij}}}{\x_{ij}!}.
		\end{align}
		Thus $\P(\mathbf{X}(\sigma) = \x) \propto \prod_{i,j}\bLambda_{ij}^{\x_{ij}}/\x_{ij}!$ on $\CT(\r,\c)$, which is \eqref{eq: conditioned_model}. Moreover, since the probabilities in \eqref{eq: block_count_law} sum to one over $\x \in \CT(\r,\c)$, we obtain Barvinok's identity \eqref{eq: barvinok_per_identity}.
		
		(ii) By \eqref{eq: permuton_mass_of_cell} and \eqref{eq: cell_average_invariance},
		\begin{align}
			\int (\Ek g) \circ T\,d\mu_\sigma = \sum_{i,j} \mathbf{G}_{ij}\,\mu_\sigma\big(T^{-1}(A_{ij})\big) = \frac{1}{N}\sum_{i,j}\mathbf{G}_{ij}\mathbf{X}_{ij} = \int g\,d\pi_{\mathbf{X}}.
		\end{align}
		
		(iii) By \eqref{eq: permuton_mass_of_cell}, $(T_{\#}\mu_\sigma)(A_{ij}) = \mathbf{X}_{ij}/N = \pi_{\mathbf{X}}(A_{ij})$. If two measures have the same mass $w$ on a set $A$, then the difference of the integrals of $g$ over $A$ is at most $w \cdot \sup_{p, p' \in A}|g(p) - g(p')|$. Summing over the cells, whose diameter is $\sqrt{m^{-2} + n^{-2}}$, proves the first bound.
	\end{proof}
	
	\subsection{Transference}
	\label{sec: proof_transference}
	
	The following transference principle bounds probabilities under the conditioned model by those under the tilted model.
	
	\begin{lemma}[Transference]
		\label{thm: transference}
		Fix positive integer margins $(\r, \c) \in \N^{m} \times \N^{n}$ with $\norm{\r}_{1} = \norm{\c}_1 = N$ and a prior mean $\bLambda \in \R_{>0}^{m \times n}$. Let $\mathbf{X} \sim \lambda_{\r, \c; \bLambda}$ and $\mathbf{Y} \sim \Poisson(\mathbf{Z}^{\r,\c})$. Then for measurable $h : \R^{m \times n} \to [0,\infty)$,
		\begin{align}
			\E[h(\mathbf{X})] \leq \frac{(eN)^{m+n}}{m^m n^n} \E[h(\mathbf{Y})].
		\end{align}
		In particular, $\P(\mathbf{X} \in B) \leq (eN)^{m+n} m^{-m} n^{-n}\, \P(\mathbf{Y} \in B)$ for every Borel set $B \subset \R^{m \times n}$.
	\end{lemma}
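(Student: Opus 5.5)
By \eqref{eq: same_conditional_law}, $\mathbf{X}$ has the law of $\mathbf{Y}$ conditioned on $\{\mathbf{Y}\in\CT(\r,\c)\}$. Since $h \ge 0$, this gives
\[
\E[h(\mathbf{X})] = \frac{\E[h(\mathbf{Y})\1(\mathbf{Y}\in\CT(\r,\c))]}{\P(\mathbf{Y}\in\CT(\r,\c))} \le \frac{\E[h(\mathbf{Y})]}{\P(\mathbf{Y}\in\CT(\r,\c))}.
\]
So the whole task is the lower bound
\[
\P(\mathbf{Y}\in\CT(\r,\c)) \ge m^m n^n (eN)^{-(m+n)}.
\]
The statement for sets $B$ then follows by taking $h = \1_B$.

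To get this lower bound I will compute $\P(\mathbf{Y}\in\CT(\r,\c))$ exactly using Barvinok's identity, Proposition~\ref{prop: permutation_CT_coupling}(i), applied with prior mean $\mathbf{Z}$. Since $\mathbf{Z}\in\Tpoly(\r,\c)$, we have $\norm{\mathbf{Z}}_1 = N$, and hence
\[
\P(\mathbf{Y}\in\CT(\r,\c)) = e^{-N}\sum_{\x\in\CT(\r,\c)}\prod_{i,j}\frac{\mathbf{Z}_{ij}^{\x_{ij}}}{\x_{ij}!} = \frac{e^{-N}\per(\widetilde{\mathbf{Z}})}{\prod_i \r(i)!\prod_j \c(j)!}.
\]

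Next I bound the permanent from below by van der Waerden. Let $\mathbf{D}$ be the $N\times N$ matrix with entries $\mathbf{D}_{k\ell} := \widetilde{\mathbf{Z}}_{k\ell}/(\r(Q_{\r}(k))\,\c(Q_{\c}(\ell)))$.
\begin{itemize}
\item Row $k$ of $\mathbf{D}$, with $Q_{\r}(k)=i$, sums to $\sum_j \c(j)\mathbf{Z}_{ij}/(\r(i)\c(j)) = 1$, using the row sums of $\mathbf{Z}$. The same argument with the column sums shows the columns sum to $1$, so $\mathbf{D}$ is doubly stochastic.
\item Multilinearity of the permanent gives $\per(\widetilde{\mathbf{Z}}) = \prod_i \r(i)^{\r(i)}\prod_j \c(j)^{\c(j)}\,\per(\mathbf{D})$.
\item The Falikman--Egorychev (van der Waerden) bound gives $\per(\mathbf{D})\ge N!/N^N$.
\end{itemize}
Combining these,
\[
\P(\mathbf{Y}\in\CT(\r,\c)) \ge e^{-N}\frac{N!}{N^N}\prod_i\frac{\r(i)^{\r(i)}}{\r(i)!}\prod_j\frac{\c(j)^{\c(j)}}{\c(j)!}.
\]
This matches the structure of $\widehat{\cZ}$ in \eqref{eq: def_enumeration_main_term}.

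It remains to simplify this expression with Stirling-type bounds; this is the only step needing care with constants. I would use:
\begin{itemize}
\item $N!\ge (N/e)^N$, so $e^{-N}N!/N^N\ge e^{-2N}$;
\item $r!\le e\sqrt{r}\,(r/e)^r$, so $r^r/r! \ge e^{r}/(e\sqrt r)\ge e^{r}/(er)$.
\end{itemize}
The products then contribute at least $e^{2N}\big(e^{m+n}\prod_i\r(i)\prod_j\c(j)\big)^{-1}$, since $\sum_i\r(i)=\sum_j\c(j)=N$. The factors $e^{\pm 2N}$ cancel. By AM--GM, $\prod_i \r(i)\le (N/m)^m$ and $\prod_j\c(j)\le (N/n)^n$. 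This yields exactly $\P(\mathbf{Y}\in\CT(\r,\c))\ge m^m n^n (eN)^{-(m+n)}$. The only point to double-check is the form of Stirling's upper bound used, $r!\le e\sqrt r(r/e)^r$, valid for all $r\ge1$.
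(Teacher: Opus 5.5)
Your proof is correct and follows the paper's argument step for step: the conditional-law reduction to a lower bound on $\P(\mathbf{Y}\in\CT(\r,\c))$, Barvinok's identity with prior $\mathbf{Z}$, the doubly stochastic rescaling with the van der Waerden bound, and the Stirling and AM--GM estimates. The only cosmetic difference is that you use the slightly sharper bound $r!\le e\sqrt{r}\,(r/e)^r$ (valid for all $r\ge 1$, with equality at $r=1$) before weakening it to the paper's $e^{-r}r^r/r!\ge 1/(er)$.
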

	
	\begin{proof}
		By \eqref{eq: same_conditional_law} (see Section~\ref{sec: bg_poisson}), $\mathbf{X}$ has the conditional law of $\mathbf{Y}$ given $\{\mathbf{Y} \in \CT(\r,\c)\}$. Since $h \ge 0$,
		\begin{align}
			\E[h(\mathbf{X})] = \E\big[h(\mathbf{Y}) \,\big|\, \mathbf{Y} \in \CT(\r, \c)\big] \leq \frac{\E[h(\mathbf{Y})]}{\P(\mathbf{Y} \in \CT(\r, \c))},
		\end{align}
		so it suffices to show that $\P(\mathbf{Y} \in \CT(\r, \c)) \geq (eN)^{-(m+n)}m^{m}n^{n}$. Since $\sum_{i,j} \mathbf{Z}_{ij} = N$, the definition of $\mathbf{Y}$ and Barvinok's identity \eqref{eq: barvinok_per_identity} give
		\begin{align}
			\label{eq: ind_model_lower_bd_pf_1}
			\P\big(\mathbf{Y} \in \CT(\r, \c)\big) = \sum_{\x \in \CT(\r, \c)} \prod_{i, j}e^{-\mathbf{Z}_{ij}} \frac{\mathbf{Z}_{ij}^{\x_{ij}}}{\x_{ij}!} = e^{-N} \sum_{\x \in \CT(\r, \c)} \prod_{i,j} \frac{\mathbf{Z}_{ij}^{\x_{ij}}}{\x_{ij}!} = e^{-N}\frac{\per(\widetilde{\mathbf{Z}})}{\prod_{i = 1}^{m}\r(i)!  \prod_{j = 1}^n \c(j)!}.
		\end{align}
		Let $\mathbf{B}_{ij} := \mathbf{Z}_{ij}/(\r(i)\c(j))$. Since $\mathbf{Z} \in \Tpoly(\r,\c)$,  $\widetilde{\mathbf{B}}$ is doubly stochastic, and the van der Waerden lower bound for permanents of doubly stochastic matrices \cite{Falikman1981vanderwaerden} gives $\per(\widetilde{\mathbf{B}}) \geq N!/N^{N}$. Since $\widetilde{\mathbf{Z}}$ is obtained from $\widetilde{\mathbf{B}}$ by multiplying the rows of the $i$th block row by $\r(i)$ and the columns of the $j$th block column by $\c(j)$,
		\begin{align}
			\per(\widetilde{\mathbf{Z}}) = \prod_{i = 1}^{m}\r(i)^{\r(i)} \prod_{j = 1}^{n} \c(j)^{\c(j)}\,\per(\widetilde{\mathbf{B}}) \geq \frac{N!}{N^{N}}\prod_{i = 1}^{m}\r(i)^{\r(i)} \prod_{j = 1}^{n} \c(j)^{\c(j)}.
		\end{align}
		Combining this with \eqref{eq: ind_model_lower_bd_pf_1} and using $\sum_i \r(i) + \sum_j\c(j) = 2N$ and $N! \ge (N/e)^N$, we obtain
		\begin{align}
			\P\big(\mathbf{Y} \in \CT(\r, \c)\big) &\geq e^{-N}\frac{N!}{N^{N}} \prod_{i = 1}^{m} \frac{\r(i)^{\r(i)}}{\r(i)!}\prod_{j = 1}^{n} \frac{\c(j)^{\c(j)}}{\c(j)!}
			= \frac{e^{N}N!}{N^{N}} \prod_{i = 1}^{m}\frac{e^{-\r(i)}\r(i)^{\r(i)}}{\r(i)!}\prod_{j = 1}^{n} \frac{e^{-\c(j)}\c(j)^{\c(j)}}{\c(j)!} \\
			& \geq \prod_{i = 1}^{m}\frac{e^{-\r(i)}\r(i)^{\r(i)}}{\r(i)!}\prod_{j = 1}^{n} \frac{e^{-\c(j)}\c(j)^{\c(j)}}{\c(j)!}.
		\end{align}
		By Stirling's bound $k! \le e\,k^{k+1}e^{-k}$, valid for all $k \ge 1$, we have $e^{-k}k^k/k! \geq 1/(ek)$. Hence the last display is at least $e^{-(m+n)}\prod_{i}\r(i)^{-1}\prod_{j}\c(j)^{-1}$. Finally, by the AM--GM inequality, $\prod_{i} \r(i) \le (N/m)^m$ and $\prod_{j} \c(j) \le (N/n)^n$, which yields the required lower bound.
	\end{proof}
	
	\section{Proof of the scaling limit}
	\label{sec: proof_LLN}
	
	We recall the facts on static Schr\"odinger bridges used throughout, relate the discrete Schr\"odinger bridge to permutons, and then prove Theorem~\ref{thm: conditioned_model_weak_as_limit_no_rate}.  In Section~\ref{sec: EOT_prelim}, $\mu$ and $\nu$ are as in Section~\ref{sec: main_histograms}.
	
	\subsection{Static Schr\"odinger bridges}
	\label{sec: EOT_prelim}

	We shall repeatedly use the following elementary facts.
	
	\begin{lemma}[Pythagorean identity]
		\label{lem: pythagorean}
		Let $c \in L^\infty(\mu\otimes\nu)$ and let $\pi^* \in \Pi(\mu,\nu)$ be of the form $d\pi^* = e^{a\oplus b - c}\,d(\mu\otimes\nu)$ for some $a \in L^\infty(\mu)$ and $b \in L^\infty(\nu)$. Then for every $\pi \in \Pi(\mu,\nu)$,
		\begin{align}
			\label{eq: pythagorean}
			\int c\, d\pi + \KL(\pi \Vert \mu \otimes\nu) = \int c\,d\pi^* + \KL(\pi^*\Vert \mu\otimes\nu) + \KL(\pi \Vert \pi^*).
		\end{align}
		In particular, $\pi^* = \pi^c_{\mu,\nu}$ is the unique minimizer of \eqref{eq: EOT_Problem_General}, and $\Gamma_{\mu,\nu}(c) = \int a\,d\mu + \int b\,d\nu$.
	\end{lemma}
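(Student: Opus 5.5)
The plan is to compute $\log(d\pi/d\pi^*)$ directly and integrate it against $\pi$. Since $\pi^*$ is equivalent to $\mu\otimes\nu$, with a density bounded above and below because $a$, $b$, $c$ are bounded, we have $\pi \ll \pi^*$ if and only if $\pi \ll \mu\otimes\nu$. If $\pi \not\ll \mu\otimes\nu$, both sides of \eqref{eq: pythagorean} are $+\infty$, since $\int c\,d\pi$ and $\int c\,d\pi^* + \KL(\pi^*\Vert\mu\otimes\nu)$ are finite. So assume $\pi \ll \mu\otimes\nu$. Then $\frac{d\pi}{d\pi^*} = \frac{d\pi}{d(\mu\otimes\nu)}\, e^{c - a\oplus b}$, and
\begin{align}
\log\frac{d\pi}{d(\mu\otimes\nu)} = \log\frac{d\pi}{d\pi^*} + a\oplus b - c \quad \pi\text{-a.e.}
\end{align}

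Integrate this identity against $\pi$. The term $a\oplus b - c$ is bounded, so there are no integrability issues: the left side integrates to $\KL(\pi\Vert\mu\otimes\nu)\in[0,\infty]$, and the right side to $\KL(\pi\Vert\pi^*) + \int a\,d\mu + \int b\,d\nu - \int c\,d\pi$. This step uses that $\pi$ has marginals $\mu$ and $\nu$, so $\int a\oplus b\,d\pi = \int a\,d\mu + \int b\,d\nu$. Rearranging gives
\begin{align}
\int c\,d\pi + \KL(\pi\Vert\mu\otimes\nu) = \int a\,d\mu + \int b\,d\nu + \KL(\pi\Vert\pi^*).
\end{align}

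Now apply this identity with $\pi = \pi^*$, which is admissible since $\pi^*\in\Pi(\mu,\nu)$. Because $\KL(\pi^*\Vert\pi^*)=0$, we get $\int c\,d\pi^* + \KL(\pi^*\Vert\mu\otimes\nu) = \int a\,d\mu + \int b\,d\nu$. Substituting this back into the general identity yields \eqref{eq: pythagorean}.

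Since $\KL(\pi\Vert\pi^*)\ge0$, with equality if and only if $\pi=\pi^*$, the measure $\pi^*$ is the unique minimizer of \eqref{eq: EOT_Problem_General}. The minimal value is $\Gamma_{\mu,\nu}(c)=\int a\,d\mu+\int b\,d\nu$. The only delicate point is the integrability bookkeeping when $\KL(\pi\Vert\mu\otimes\nu)$ is infinite, which the boundedness of $a$, $b$, $c$ handles.
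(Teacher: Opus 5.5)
Your proof is correct and follows the paper's argument essentially line by line. You rewrite $\log\frac{d\pi}{d\pi^*}$ via the bounded density $e^{a\oplus b-c}$, integrate against $\pi$ using its marginals, specialize to $\pi=\pi^*$ to identify $\int a\,d\mu+\int b\,d\nu$, and conclude uniqueness from $\KL(\pi\Vert\pi^*)>0$ for $\pi\neq\pi^*$. The only cosmetic difference is that you split cases on absolute continuity rather than on finiteness of the divergence. This is equally valid, since on $\{\pi\ll\mu\otimes\nu\}$ the two divergences differ by a bounded integrand.
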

	
	\begin{proof}
		Since $\frac{d\pi^*}{d(\mu\otimes\nu)}$ is bounded above and below by positive constants, $\KL(\pi\Vert\mu\otimes\nu) = \infty$ if and only if $\KL(\pi \Vert \pi^*) = \infty$, in which case both sides of \eqref{eq: pythagorean} are infinite. Otherwise $\pi \ll \pi^*$ and $\log\frac{d\pi}{d\pi^*} = \log\frac{d\pi}{d(\mu\otimes\nu)} - a(x) - b(y) + c(x,y)$ holds $\pi$-a.s. Integrating with respect to $\pi$ and using that $\pi$ has marginals $\mu$ and $\nu$, we get
		\begin{align}
			\KL(\pi\Vert \pi^*) = \KL(\pi \Vert \mu\otimes\nu) + \int c\,d\pi - \int a\,d\mu - \int b\,d\nu.
		\end{align}
		Applying this with $\pi = \pi^*$ gives $\int c\,d\pi^* + \KL(\pi^* \Vert\mu\otimes\nu) = \int a\,d\mu + \int b\,d\nu$, and subtracting the two identities yields \eqref{eq: pythagorean}. Since $\KL(\pi\Vert\pi^*) > 0$ for $\pi \ne \pi^*$, the minimizer is unique.
	\end{proof}
	
	\begin{lemma}[Density bounds]
		\label{lem: density_bounds}
		For every $c \in L^\infty(\mu\otimes\nu)$,
		\begin{align}
			e^{-2\osc(c)} \le \frac{d\pi^c_{\mu,\nu}}{d(\mu\otimes\nu)} \le e^{2\osc(c)} \qquad (\mu\otimes\nu)\text{-a.e.}
		\end{align}
	\end{lemma}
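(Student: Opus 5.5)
We will use the Schrödinger system \eqref{eq: schrodinger_system} to bound the oscillation of each potential by $\osc(c)$, and then combine the two bounds in the exponent of the density. Write $\xi := d\pi^c_{\mu,\nu}/d(\mu\otimes\nu) = e^{a\oplus b - c}$ with bounded potentials $(a,b)$, which exist by the standard facts recalled in Section~\ref{sec: main_histograms}. After modifying $a$ and $b$ on null sets, we may assume that the system holds for every $x$ and every $y$. Indeed, we can replace $a$ by $a(x) := -\log\int e^{b(y)-c(x,y)}\nu(dy)$ and symmetrically for $b$. This changes them only on null sets, since the system already holds a.e. The only care needed is with versions of $c$ defined a.e.: we choose a version of $c$ with values in $[\operatorname{ess\,inf} c, \operatorname{ess\,sup} c]$.

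\textbf{Step 1 (oscillation of $a$).} From the first equation of the system, $e^{-a(x)} = \int e^{b(y) - c(x,y)}\,\nu(dy)$. For two points $x, x'$, the integrands differ pointwise by the factor $e^{c(x',y) - c(x,y)} \in [e^{-\osc(c)}, e^{\osc(c)}]$. Hence $|a(x) - a(x')| \le \osc(c)$, that is, $\osc(a) \le \osc(c)$. By the same argument applied to the second equation, $\osc(b) \le \osc(c)$.

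\textbf{Step 2 (locating the level).} It remains to show that $a(x) + b(y) - c(x,y)$ lies within $2\osc(c)$ of $0$; bounds on the oscillations alone do not fix its level. Since $\int \xi(x,y)\,\nu(dy) = 1$ for each $x$, the function $y \mapsto a(x)+b(y)-c(x,y)$ cannot be strictly positive $\nu$-a.e., nor strictly negative $\nu$-a.e. So for each $x$ there exist points $y_+$ and $y_-$ where this function is $\ge 0$ and $\le 0$, respectively. More robustly, $\operatorname{ess\,sup}_y$ of the function is $\ge 0$ and $\operatorname{ess\,inf}_y$ is $\le 0$.

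As a function of $(x,y)$, the exponent $h := a\oplus b - c$ has essential oscillation at most $\osc(a)+\osc(b)+\osc(c)$. That gives $3\osc(c)$, which is too weak, so we refine: in $y$ alone, for fixed $x$, the function $b - c(x,\cdot)$ has oscillation at most $\osc(b)+\osc(c) \le 2\osc(c)$. This variable-$y$ function takes both signs (in the essential sense), so it lies in $[-2\osc(c), 2\osc(c)]$ for a.e. $y$. Doing this for a.e. $x$ gives $|h| \le 2\osc(c)$ a.e., which is the claim $e^{-2\osc(c)} \le \xi \le e^{2\osc(c)}$.

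The main point to check is Step 2: we need the $y$-section argument to hold jointly almost everywhere, which Fubini provides. We also need $\osc(b)$ in the $\nu$-essential sense to be controlled, which Step 1 gives uniformly in $x$. No other earlier result is needed beyond existence of bounded potentials.
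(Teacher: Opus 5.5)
Your proof is correct and is essentially the paper's argument with the roles of the two Schr\"odinger equations interchanged. The paper uses the first equation to show that $\xi(x,y)$ is within a factor $e^{\pm\osc(c)}$ of $e^{b(y)}/B$, where $B=\int e^{b}\,d\nu$, and then integrates in $x$ using the second equation to pin $e^{b(y)}/B$ within $e^{\pm\osc(c)}$ of $1$; you instead use the second equation to get $\osc(b)\le\osc(c)$ and the normalization in the first to fix the level of $a\oplus b-c$. Two minor points that do not affect correctness: your bound on $\osc(a)$ is not used in Step~2, and the paper avoids choosing everywhere-defined versions by working with a.e.\ inequalities throughout.
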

	
	\begin{proof}
		Let $\xi := e^{a\oplus b - c}$ with Schr\"odinger potentials $(a,b)$, and set $B := \int e^{b}\,d\nu$. By the first equation in \eqref{eq: schrodinger_system}, $e^{-a(x)} = \int e^{b(y) - c(x,y)}\,\nu(dy)$ lies between $e^{-\operatorname{ess\,sup} c}B$ and $e^{-\operatorname{ess\,inf} c}B$. Hence
		\begin{align}
			e^{-\osc(c)}\, \frac{e^{b(y)}}{B} \le \xi(x,y) = e^{a(x)}e^{b(y)}e^{-c(x,y)} \le e^{\osc(c)}\,\frac{e^{b(y)}}{B}.
		\end{align}
		Integrating in $x$ with respect to $\mu$ and using the second equation in \eqref{eq: schrodinger_system} shows that $e^{-\osc(c)} \le e^{b(y)}/B \le e^{\osc(c)}$ for $\nu$-a.e.\ $y$. Combining this with the display proves the claim.
	\end{proof}

	\begin{definition}[Additively separable functions]
		\label{def: sum_space}
		The space of additively separable functions is
		\begin{align}
			\label{eq: def_sum_space}
			\cA := \big\{ a \oplus b \,:\, a \in L^2(\mu),\; b \in L^2(\nu) \big\}.
		\end{align}
		For $\pi \in \Pi(\mu,\nu)$, we have $\cA \subset L^2(\pi)$, and we let $P_\pi$ denote the orthogonal projection of $L^2(\pi)$ onto the closure of $\cA$ in $L^2(\pi)$.
	\end{definition}
	
	\begin{lemma}[Projection onto additively separable functions]
		\label{lem: sum_space}
		Let $\pi \in \Pi(\mu,\nu)$ satisfy $c_0 \le \frac{d\pi}{d(\mu\otimes\nu)} \le C_0$ for some constants $0 < c_0 \le C_0 < \infty$. Then $\cA$ is a closed subspace of $L^2(\pi)$, so that $P_\pi$ is the orthogonal projection onto $\cA$, and for $h \in L^2(\pi)$, $P_\pi h$ is the unique $w \in \cA$ such that
		\begin{align}
			\label{eq: normal_equations}
			\E_\pi[h - w \mid X] = 0 \quad\text{and}\quad \E_\pi[h - w \mid Y] = 0 \qquad \pi\text{-a.s.},
		\end{align}
		where $(X,Y) \sim \pi$.
	\end{lemma}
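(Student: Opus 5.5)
The plan is to establish closedness of $\cA$ by a Pythagoras-type lower bound. Write $\rho := d\pi/d(\mu\otimes\nu) \in [c_0, C_0]$. Every $a\oplus b\in\cA$ has a representation with $\int a\,d\mu = 0$ (shift the constant into $b$), and in $L^2(\mu\otimes\nu)$ the functions $a(x)$ (centered) and $b(y)$ are orthogonal. Hence
\begin{align}
	\norm{a\oplus b}^2_{L^2(\pi)} \ge c_0\norm{a\oplus b}^2_{L^2(\mu\otimes\nu)} = c_0\big(\norm{a}^2_{L^2(\mu)} + \norm{b}^2_{L^2(\nu)}\big).
\end{align}
In the other direction, the norm is at most $C_0(\norm{a}^2+\norm{b}^2)$. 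So the linear map $(a,b)\mapsto a\oplus b$ from the Hilbert space $L^2_0(\mu)\times L^2(\nu)$ into $L^2(\pi)$ is bounded and bounded below, i.e.\ an isomorphism onto its range. Its range $\cA$ is therefore complete, hence closed.

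It follows that $P_\pi$ is the orthogonal projection onto $\cA$ itself, and $P_\pi h$ is the unique $w\in\cA$ with $h-w\perp \cA$ in $L^2(\pi)$. To match this with \eqref{eq: normal_equations}, I use that $\pi$ has marginals $\mu$ and $\nu$. For $a\in L^2(\mu)$,
\begin{align}
	\int (h-w)\,a(x)\,d\pi = \E_\pi\big[a(X)\,\E_\pi[h-w\mid X]\big].
\end{align}
Since $\E_\pi[h-w\mid X]$ is a function of $X$ lying in $L^2(\mu)$, orthogonality to all such $a$ is equivalent to $\E_\pi[h-w\mid X]=0$ a.s. The same argument applies in $y$. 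Because $\cA$ is spanned by functions of $x$ alone and of $y$ alone, $h-w\perp\cA$ is equivalent to the two conditional-expectation identities.

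Uniqueness needs a short separate check. If $w,w'\in\cA$ both satisfy \eqref{eq: normal_equations}, then $w-w'\in\cA$ is orthogonal to $\cA$, so $w=w'$ $\pi$-a.s. Equality $\pi$-a.s.\ agrees with equality $(\mu\otimes\nu)$-a.s.\ because $\rho\ge c_0>0$.

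The main obstacle is the lower bound in the first paragraph, and it is mild. The one subtle point is choosing the normalization $\int a\,d\mu=0$ so that the cross term vanishes under $\mu\otimes\nu$. With that normalization the comparison of measures $c_0\,\mu\otimes\nu\le\pi$ does the rest. I also need that $a\oplus b\in L^2(\pi)$ for $a\in L^2(\mu)$ and $b\in L^2(\nu)$; this follows directly from the marginal constraints, even without the density bounds.
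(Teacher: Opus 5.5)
Your proposal is correct and follows essentially the same route as the paper. The paper gets closedness from the equivalence of the $L^2(\pi)$ and $L^2(\mu\otimes\nu)$ norms together with the orthogonal splitting $\R\oplus L^2_0(\mu)\oplus L^2_0(\nu)$ in $L^2(\mu\otimes\nu)$, which your normalization $\int a\,d\mu=0$ and bounded-below parametrization make explicit. It then identifies $h-w\perp\cA$ with $\E_\pi[h-w\mid X]=0$ and $\E_\pi[h-w\mid Y]=0$ via the marginals of $\pi$, exactly as you do (your separate uniqueness check is harmless but already implied by uniqueness of the orthogonal projection onto a closed subspace).
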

	
	\begin{proof}
		The norms of $L^2(\pi)$ and $L^2(\mu\otimes\nu)$ are equivalent, so it suffices to show that $\cA$ is closed in $L^2(\mu\otimes\nu)$. There, $\cA = \R \oplus L^2_0(\mu) \oplus L^2_0(\nu)$ is an orthogonal direct sum of closed subspaces, where $L^2_0$ denotes mean-zero functions of one coordinate; hence $\cA$ is closed. Since $\cA$ is closed, $P_\pi h$ is the unique $w \in \cA$ such that $h - w$ is orthogonal in $L^2(\pi)$ to $a(X)$ and $b(Y)$ for all $a \in L^2(\mu)$ and $b \in L^2(\nu)$. Since $X \sim \mu$ under $\pi$, the functions $a(X)$, $a \in L^2(\mu)$, are the $\sigma(X)$-measurable elements of $L^2(\pi)$, onto which $\E_\pi[\,\cdot\mid X]$ is the orthogonal projection; hence $h - w \perp a(X)$ for all $a$ if and only if $\E_\pi[h - w \mid X] = 0$, and similarly for $Y$.
	\end{proof}

	Finally, the histogram of the solution of \eqref{eq: def_matrix_scaling} is a static Schr\"odinger bridge.
	
	\begin{lemma}[Histogram of the Schr\"odinger bridge]
		\label{lem: typical_table_bridge}
		Let $\mathbf{Z} = \mathbf{Z}^{\r,\c}$ solve \eqref{eq: def_matrix_scaling}. Then $\pi_{\mathbf{Z}} = \pi^{\kappa}_{\rho_{\r},\rho_{\c}}$ and
		\begin{align}
			\label{eq: typical_table_density}
			\frac{d\pi_{\mathbf{Z}}}{d(\rho_{\r}\otimes\rho_{\c})} = \frac{N \mathbf{Z}_{ij}}{\r(i)\c(j)} \quad \text{on } A_{ij}, \qquad \Gamma_{\rho_{\r},\rho_{\c}}(\kappa) = \frac{1}{N}\sum_{i,j} \mathbf{Z}_{ij}\log\frac{\mathbf{Z}_{ij}\norm{\bLambda}_1}{N\bLambda_{ij}}.
		\end{align}
	\end{lemma}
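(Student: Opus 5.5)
The plan is to verify that $\pi_{\mathbf{Z}}$ already has the Schr\"odinger form \eqref{eq: schrodinger_form} relative to $\rho_{\r}\otimes\rho_{\c}$ with cost $\kappa$ and grid-step bounded potentials, and then invoke the Pythagorean identity (Lemma~\ref{lem: pythagorean}). That lemma gives both the identification $\pi_{\mathbf{Z}} = \pi^{\kappa}_{\rho_{\r},\rho_{\c}}$ and the value $\Gamma = \int a\,d\rho_{\r} + \int b\,d\rho_{\c}$.

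First I will compute the density. Since $\norm{\mathbf{Z}}_1 = N$, on $A_{ij}$ we have $\varphi_{\mathbf{Z}} = mn\mathbf{Z}_{ij}/N$. Also $\rho_{\r}(x)\rho_{\c}(y) = mn\,\r(i)\c(j)/N^2$ there. Dividing gives $d\pi_{\mathbf{Z}}/d(\rho_{\r}\otimes\rho_{\c}) = N\mathbf{Z}_{ij}/(\r(i)\c(j))$, which is the first formula. Because $\mathbf{Z}\in\Tpoly(\r,\c)$, the measure $\pi_{\mathbf{Z}}$ gives mass $\r(i)/N$ to the strip over row $i$. It is also uniform in $x$ within each strip, so its first marginal is $\rho_{\r}$; the same argument gives second marginal $\rho_{\c}$. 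Hence $\pi_{\mathbf{Z}}\in\Pi(\rho_{\r},\rho_{\c})$.

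Next I will insert \eqref{eq: typical_table_tilting}, $\mathbf{Z}_{ij}=\bLambda_{ij}e^{\balpha(i)+\bbeta(j)}$, together with \eqref{eq: def_kappa}, which gives $e^{-\kappa}=N^2\bLambda_{ij}/(\r(i)\c(j)\norm{\bLambda}_1)$ on $A_{ij}$. This yields
\[
\frac{N\mathbf{Z}_{ij}}{\r(i)\c(j)} = e^{\balpha(i)+\bbeta(j)}\,\frac{\norm{\bLambda}_1}{N}\,e^{-\kappa}.
\]
So the density has the form $e^{a\oplus b-\kappa}$ with the grid-step functions $a(x)=\balpha(\lceil mx\rceil)+\log(\norm{\bLambda}_1/N)$ and $b(y)=\bbeta(\lceil ny\rceil)$. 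Both are bounded, so Lemma~\ref{lem: pythagorean} applies and gives $\pi_{\mathbf{Z}}=\pi^\kappa_{\rho_{\r},\rho_{\c}}$.

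For the value, Lemma~\ref{lem: pythagorean} gives $\Gamma=\int a\,d\rho_{\r}+\int b\,d\rho_{\c} = \frac1N\sum_i\r(i)\balpha(i)+\frac1N\sum_j\c(j)\bbeta(j)+\log(\norm{\bLambda}_1/N)$. Using the margins of $\mathbf{Z}$ and $\sum\mathbf{Z}_{ij}=N$, this rewrites as $\frac1N\sum_{i,j}\mathbf{Z}_{ij}\big(\balpha(i)+\bbeta(j)+\log(\norm{\bLambda}_1/N)\big)$. Substituting $\balpha(i)+\bbeta(j)=\log(\mathbf{Z}_{ij}/\bLambda_{ij})$ then gives exactly the stated expression. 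As a cross-check, one can instead compute $\int\kappa\,d\pi_{\mathbf{Z}}+\KL(\pi_{\mathbf{Z}}\Vert\rho_{\r}\otimes\rho_{\c})$ cellwise, where the $\r(i)\c(j)$ factors cancel.

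No step is a real obstacle; the only place needing care is tracking the normalizing constants $N$, $mn$ and $\norm{\bLambda}_1$ across the three histograms. The existence of the potentials $(\balpha,\bbeta)$ is taken from \eqref{eq: typical_table_tilting}, which relies on positivity of $\bLambda$.
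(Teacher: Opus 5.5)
Your proof is correct and follows the paper's argument. Like the paper, you compute the cellwise density, exhibit it in Schr\"odinger form $e^{a\oplus b-\kappa}$ with bounded grid-step potentials (absorbing $\log(\norm{\bLambda}_1/N)$ into $a$), check that $\pi_{\mathbf{Z}}\in\Pi(\rho_{\r},\rho_{\c})$, and invoke Lemma~\ref{lem: pythagorean}. The only minor difference is in the value formula: you obtain it from the dual expression $\int a\,d\rho_{\r}+\int b\,d\rho_{\c}$ together with the margin constraints, whereas the paper evaluates the primal objective directly as $\KL(\pi_{\mathbf{Z}}\Vert\pi_{\bLambda})$ via \eqref{eq: EOT_is_SB}, which is exactly your cross-check.
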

	
	\begin{proof}
		Since $\norm{\mathbf{Z}}_1 = N$, we have $\varphi_{\mathbf{Z}} = mn\mathbf{Z}_{ij}/N$ and $\rho_{\r}(x)\rho_{\c}(y) = mn\,\r(i)\c(j)/N^2$ on $A_{ij}$, which gives the first formula in \eqref{eq: typical_table_density}. By \eqref{eq: typical_table_tilting} and \eqref{eq: def_kappa}, on $A_{ij}$,
		\begin{align}
			\frac{N \mathbf{Z}_{ij}}{\r(i)\c(j)} = \frac{N\bLambda_{ij}}{\r(i)\c(j)}e^{\balpha(i)+\bbeta(j)} = \exp\Big(\balpha(i) + \log\frac{\norm{\bLambda}_1}{N} + \bbeta(j) - \kappa(x,y)\Big),
		\end{align}
		so $\pi_{\mathbf{Z}}$ has the form required in Lemma~\ref{lem: pythagorean} with bounded potentials. Since $\mathbf{Z} \in \Tpoly(\r,\c)$, $\pi_{\mathbf{Z}} \in \Pi(\rho_{\r},\rho_{\c})$, and Lemma~\ref{lem: pythagorean} shows that $\pi_{\mathbf{Z}}$ is the minimizer. The formula for the value follows from \eqref{eq: EOT_is_SB}, because $\pi_{\mathbf{Z}}$ and $\pi_{\bLambda}$ assign masses $\mathbf{Z}_{ij}/N$ and $\bLambda_{ij}/\norm{\bLambda}_1$ to $A_{ij}$ and both have grid-step densities.
	\end{proof}
	
	\subsection{Permuton coordinates}
	
	The following lemma shows that EOT problems between arbitrary marginals on $[0,1]$ can be pulled back to EOT problems between uniform marginals, that is, to optimization problems over permutons. Recall that we use the notation $\cM = \Pi(\textup{Leb}, \textup{Leb})$ to refer to the set of permutons.
	
	\begin{lemma}
		\label{lem: COV_permuton_to_EOT}
		Let $\rho_1, \rho_2$ be probability measures on $[0,1]$, $T = (q_{\rho_1},q_{\rho_2})$, and $c \in L^\infty(\rho_1 \otimes \rho_2)$. Then
		\begin{align}
			\label{eq: pushforward_EOT_equivalence}
			\min_{\mu \in \cM} \left[ \int_{[0, 1]^2} c \circ T \, d\mu + \KL(\mu \Vert u) \right] =  \min_{\pi \in \Pi(\rho_1, \rho_2)} \left[ \int_{[0, 1]^2} c \, d\pi + \KL(\pi \Vert \rho_1 \otimes \rho_2) \right].
		\end{align}
		Moreover, if $\mu^*$ and $\pi^*$ denote the minimizers of the left- and right-hand sides, then $\pi^* = T_{\#}\mu^*$ and $\frac{d\mu^*}{du} = \xi \circ T$, where $\xi := \frac{d\pi^*}{d(\rho_1\otimes\rho_2)}$.
	\end{lemma}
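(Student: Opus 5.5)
The plan is to construct the candidate minimizers explicitly from the Schr\"odinger potentials and then compare both problems to them through the Pythagorean identity of Lemma~\ref{lem: pythagorean}. Let $(a,b)$ be potentials for $\pi^* := \pi^c_{\rho_1,\rho_2}$, so that $\xi = e^{a\oplus b - c}$ is bounded above and below by Lemma~\ref{lem: density_bounds}. Define $\mu^*$ by $d\mu^*/du := \xi\circ T = e^{(a\circ q_{\rho_1})\oplus(b\circ q_{\rho_2}) - c\circ T}$. Since $(q_{\rho_1})_\#\Leb = \rho_1$, the first marginal of $\mu^*$ has density $\int \xi(q_{\rho_1}(s), q_{\rho_2}(t))\,dt = \int \xi(q_{\rho_1}(s),y)\,\rho_2(dy) = 1$ by the Schr\"odinger system \eqref{eq: schrodinger_system}. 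The only care needed is that the system holds $\rho_1$-a.e., and the exceptional null set pulls back to a $\Leb$-null set under $q_{\rho_1}$. The second marginal is handled the same way. Hence $\mu^* \in \cM$, and it has the exponential form required by Lemma~\ref{lem: pythagorean} with cost $c\circ T$, bounded potentials $a\circ q_{\rho_1}, b\circ q_{\rho_2}$, and reference $u = \Leb\otimes\Leb$. That lemma then says $\mu^*$ is the unique minimizer on the left, and that the left value equals $\int a\circ q_{\rho_1}\,d\Leb + \int b\circ q_{\rho_2}\,d\Leb = \int a\,d\rho_1 + \int b\,d\rho_2$. By the same lemma, this is the right-hand value $\Gamma_{\rho_1,\rho_2}(c)$, so the two minima agree.

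It remains to show $T_\#\mu^* = \pi^*$. For bounded measurable $h$, we have $\int h\,dT_\#\mu^* = \int (h\xi)\circ T\,du = \int h\xi\,d(T_\#u) = \int h\xi\,d(\rho_1\otimes\rho_2) = \int h\,d\pi^*$, using $T_\#u = \rho_1\otimes\rho_2$. This gives $\pi^* = T_\#\mu^*$ and, by construction, $d\mu^*/du = \xi\circ T$.

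The main subtlety is measure-theoretic. The cost $c$ is defined only $\rho_1\otimes\rho_2$-a.e., and $T$ need not be injective when the $\rho_i$ have atoms or gaps, so we must check that $c\circ T$ is well defined $u$-a.e. This follows because $T_\#u = \rho_1\otimes\rho_2$: the preimage of a $(\rho_1\otimes\rho_2)$-null set is $u$-null. No injectivity of $T$ is used anywhere, since both identities above only pass integrals through $T_\#$. This avoids having to compare KL divergences directly under a non-invertible map, which would otherwise be the obstacle (there one only has $\KL(T_\#\mu\Vert\rho_1\otimes\rho_2)\le\KL(\mu\Vert u)$).
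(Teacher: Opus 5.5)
Your proof is correct and follows the paper's argument essentially step by step. You construct $\mu^*$ with density $\xi\circ T$, check its uniform marginals via the Schr\"odinger system (pulling the $\rho_i$-null exceptional sets back through $q_{\rho_i}$), invoke Lemma~\ref{lem: pythagorean} for both uniqueness and the common value $\int a\,d\rho_1+\int b\,d\rho_2$, and identify $T_{\#}\mu^*=\pi^*$ by integrating through $T_{\#}u=\rho_1\otimes\rho_2$, with no injectivity of $T$ needed.
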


	\begin{proof}
		Since $T_{\#}u = \rho_1\otimes\rho_2$, the function $c \circ T$ is well defined $u$-a.e.\ and bounded. Let $(a,b)$ be Schr\"odinger potentials for $\pi^*$, so that $\xi = e^{a\oplus b - c}$, and let $\hat\mu$ be the measure with density $\xi\circ T = e^{(a \circ q_{\rho_1}) \oplus (b \circ q_{\rho_2}) - c\circ T}$ with respect to $u$. The set of $x$ for which the first equation in \eqref{eq: schrodinger_system} fails is $\rho_1$-null, so its preimage under $q_{\rho_1}$ is Lebesgue-null. Hence, for Lebesgue-a.e.\ $s$,
		\begin{align}
			\int_0^1 \xi\big(q_{\rho_1}(s), q_{\rho_2}(t)\big)\,dt = \int \xi\big(q_{\rho_1}(s), y\big)\,\rho_2(dy) = 1,
		\end{align}
		and similarly with the roles of the coordinates exchanged. Thus $\hat\mu \in \cM$, and $\hat\mu$ has the form required in Lemma~\ref{lem: pythagorean} for the cost $c \circ T$ and the marginals $(\Leb, \Leb)$, with the bounded potentials $a\circ q_{\rho_1}$ and $b \circ q_{\rho_2}$. By Lemma~\ref{lem: pythagorean}, $\hat\mu = \mu^*$ is the minimizer of the left-hand side of \eqref{eq: pushforward_EOT_equivalence}, and the minimum equals $\int a\circ q_{\rho_1}\,d\Leb + \int b\circ q_{\rho_2}\,d\Leb = \int a\,d\rho_1 + \int b\,d\rho_2$, which is the right-hand side by Lemma~\ref{lem: pythagorean}. Finally, $T_{\#}\mu^* = \pi^*$, since $\int \phi\, d(T_{\#}\mu^*) = \int (\phi\,\xi)\circ T\,du = \int \phi\,\xi\,d(\rho_1\otimes\rho_2)$ for bounded measurable $\phi$.
	\end{proof}

	We call the minimizer of the left-hand side of \eqref{eq: pushforward_EOT_equivalence} the \emph{optimal permuton} for the cost $c\circ T$. The next lemma transfers the projection onto $\cA$ to permuton coordinates. If $\rho_1$ or $\rho_2$ has atoms, then $T$ is not injective, and conditional expectations take the place of its inverse.
	
	\begin{lemma}[Permuton coordinates]
		\label{lem: permuton_coordinates}
		Let $\rho_1$, $\rho_2$, $T$ and $c$ be as in Lemma~\ref{lem: COV_permuton_to_EOT}, let $\mu$ be the optimal permuton for the cost $c\circ T$, and let $\pi := \pi^c_{\rho_1,\rho_2}$. Then $\frac{d\mu}{du}$ takes values in $[e^{-2\osc(c)}, e^{2\osc(c)}]$, and for all $g, h \in L^2(\pi)$,
		\begin{align}
			\label{eq: permuton_coordinates}
			\begin{split}
				P_\mu(h\circ T) &= (P_\pi h)\circ T, \\
				\big\langle(I-P_\pi)g, (I-P_\pi)h\big\rangle_{L^2(\pi)} &= \big\langle(I-P_\mu)(g\circ T), (I-P_\mu)(h\circ T)\big\rangle_{L^2(\mu)}.
			\end{split}
		\end{align}
	\end{lemma}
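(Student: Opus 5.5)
The density bounds come first. By Lemma~\ref{lem: COV_permuton_to_EOT}, $\frac{d\mu}{du} = \xi\circ T$ with $\xi = \frac{d\pi}{d(\rho_1\otimes\rho_2)}$. Lemma~\ref{lem: density_bounds} gives $\xi \in [e^{-2\osc(c)}, e^{2\osc(c)}]$ $(\rho_1\otimes\rho_2)$-a.e. Since $T_{\#}u = \rho_1\otimes\rho_2$, the exceptional set pulls back to a $u$-null set, so the same bounds hold for $d\mu/du$ $u$-a.e. Hence both $\mu$ and $\pi$ satisfy the hypotheses of Lemma~\ref{lem: sum_space}. Their additively separable spaces $\cA_\mu$ (built on $(\Leb,\Leb)$) and $\cA_\pi$ are therefore closed, and the projections are characterized by the normal equations \eqref{eq: normal_equations}.

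Next comes the isometry. Since $T_{\#}\mu = \pi$, the map $h \mapsto h\circ T$ is an isometry from $L^2(\pi)$ into $L^2(\mu)$. It maps $\cA_\pi$ into $\cA_\mu$, because $(a\oplus b)\circ T = (a\circ q_{\rho_1})\oplus(b\circ q_{\rho_2})$ and $a\circ q_{\rho_1}\in L^2(\Leb)$ as $(q_{\rho_1})_{\#}\Leb = \rho_1$. To prove $P_\mu(h\circ T) = (P_\pi h)\circ T$, set $w := P_\pi h = a\oplus b$. It suffices to check that $w\circ T\in\cA_\mu$ satisfies the normal equations for $h\circ T$ under $\mu$. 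Equivalently, $\langle (h-w)\circ T, \alpha(s)\rangle_{L^2(\mu)} = 0$ for every $\alpha\in L^2(\Leb)$, and similarly in the second coordinate.

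This orthogonality is the main point, since $T$ need not be injective when $\rho_1$ has atoms, so $\alpha$ need not be of the form $a'\circ q_{\rho_1}$. I would handle it with the product structure of the density. Write
\[
\int (h-w)(T(s,t))\,\alpha(s)\,\xi(T(s,t))\,ds\,dt = \int_0^1 \alpha(s)\,\Psi(q_{\rho_1}(s))\,ds,
\]
where
\[
\Psi(x) := \int (h-w)(x,y)\,\xi(x,y)\,\rho_2(dy).
\]
This uses the substitution $t\mapsto q_{\rho_2}(t)$ in the inner integral, justified by Fubini and $(q_{\rho_2})_{\#}\Leb=\rho_2$. Now $\Psi(x) = \E_\pi[h-w\mid X=x]$, because the conditional density of $Y$ given $X=x$ under $\pi$ is $\xi(x,\cdot)$ with respect to $\rho_2$; the first equation of \eqref{eq: schrodinger_system} ensures it integrates to one. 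By \eqref{eq: normal_equations} for $\pi$, $\Psi = 0$ $\rho_1$-a.e., so $\Psi\circ q_{\rho_1} = 0$ Lebesgue-a.e. and the integral vanishes. The symmetric argument handles the second coordinate. Uniqueness in Lemma~\ref{lem: sum_space} then yields the first identity in \eqref{eq: permuton_coordinates}.

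The second identity follows from the first together with the isometry:
\[
(I-P_\mu)(g\circ T) = \big((I-P_\pi)g\big)\circ T,
\]
so
\[
\big\langle (I-P_\mu)(g\circ T), (I-P_\mu)(h\circ T)\big\rangle_{L^2(\mu)} = \int \big((I-P_\pi)g\big)\big((I-P_\pi)h\big)\circ T\,d\mu = \big\langle (I-P_\pi)g, (I-P_\pi)h\big\rangle_{L^2(\pi)}.
\]
The only delicate step is the conditional-expectation computation above, where the test functions $\alpha(s)$ need not factor through $q_{\rho_1}$. The factorization $\frac{d\mu}{du} = \xi\circ T$ reduces that step to the normal equations for $\pi$.
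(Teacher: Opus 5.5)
Your proof is correct, but it runs in the opposite direction from the paper's.

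The paper's route goes through conditioning on $\sigma(T)$, with $Uh := h\circ T$:
\begin{enumerate}
\item It introduces the conditional expectation $E_T$ onto $\sigma(T) = \sigma(q_{\rho_1})\otimes\sigma(q_{\rho_2})$.
\item Since the density $\xi\circ T$ is $\sigma(T)$-measurable, it identifies $E_T$ with $\E_u[\,\cdot\mid\sigma(T)]$.
\item Using Fubini on product sets, a $\pi$--$\lambda$ argument and the Doob--Dynkin lemma, it shows $E_T(\cA_\mu)\subset U(\cA_\pi)$.
\item Self-adjointness then gives $E_TP_\mu = P_\mu E_T$.
\item Hence $P_\mu Uh = E_TP_\mu Uh$ lies in $U(\cA_\pi)$, and testing against $U(\cA_\pi)$ identifies it with $UP_\pi h$.
\end{enumerate}

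You instead start from the candidate $UP_\pi h$ and check directly that $U(h - P_\pi h)\perp\cA_\mu$. Integrating out $t$ turns the pairing with an arbitrary $\alpha(s)$ into $\int_0^1\alpha\,(\Psi\circ q_{\rho_1})\,ds$. Then $\Psi = 0$ $\rho_1$-a.e.\ by the normal equations for $\pi$, so you never have to analyze the part of $\alpha$ that does not factor through $q_{\rho_1}$.

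Your route is shorter and more elementary. It uses only Fubini and the push-forward identities. The integrability needed for Fubini comes from Cauchy--Schwarz in $L^2(\mu)$ together with $\norm{\alpha(s)}_{L^2(\mu)} = \norm{\alpha}_{L^2(\Leb)}$, which holds because $\mu$ has uniform marginals.

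The paper's argument additionally gives the structural fact that $P_\mu$ commutes with conditioning on $\sigma(T)$. That is more than the lemma requires.
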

	
	\begin{proof}
		By Lemmas~\ref{lem: density_bounds} and \ref{lem: COV_permuton_to_EOT}, $\pi = T_{\#}\mu$ and $w := \xi\circ T$ is a density of $\mu$ with respect to $u$, where $\xi$ is a Borel version of $\frac{d\pi}{d(\rho_1\otimes\rho_2)}$ with values in $[e^{-2\osc(c)}, e^{2\osc(c)}]$. In particular, $L^2(\mu) = L^2(u)$ with equivalent norms, and, by Lemma~\ref{lem: sum_space}, $P_\pi$ and $P_\mu$ are the orthogonal projections onto the spaces $\cA_\pi \subset L^2(\pi)$ and $\cA_\mu := \{a'\oplus b' : a', b' \in L^2(\Leb)\} \subset L^2(\mu)$ of additively separable functions. Since $T_{\#}\mu = \pi$ and $(q_{\rho_i})_{\#}\Leb = \rho_i$, the map $U h := h\circ T$ is a linear isometry from $L^2(\pi)$ into $L^2(\mu)$ with $U(\cA_\pi) \subset \cA_\mu$.
		
		Let $\sigma(T)$ and $\sigma(q_{\rho_i})$ be the $\sigma$-algebras generated by $T$ and $q_{\rho_i}$, and $\mathcal{B}([0, 1]^2)$ the Borel $\sigma$-algebra of $[0, 1]^2$. Since $T^{-1}(B_1\times B_2) = q_{\rho_1}^{-1}(B_1)\times q_{\rho_2}^{-1}(B_2)$ and the Borel rectangles generate $\mathcal{B}([0, 1]^2)$,
		we have $\sigma(T) = \sigma(q_{\rho_1})\otimes\sigma(q_{\rho_2})$. Let $E_T$ denote the conditional expectation given $\sigma(T)$ in $L^2(\mu)$, which is the orthogonal projection onto the $\sigma(T)$-measurable elements. Since $w$ is $\sigma(T)$-measurable, $E_Tf = \E_u[f\mid\sigma(T)]$ for $f \in L^2(\mu)$: indeed, 
		\begin{align}
			\int_A f\,d\mu = \int f\,\1_Aw\,du = \int \E_u[f\mid\sigma(T)]\,\1_Aw\,du = \int_{A} \E_{u}[f\mid\sigma(T)]d\mu
		\end{align}
		for $A \in \sigma(T)$. Now let $a' \in L^2(\Leb)$ and $\alpha := \E_{\Leb}[a'\mid\sigma(q_{\rho_1})]$. By Fubini's theorem, \begin{align}
			\int_{A_1\times A_2}a'(s)\,u(ds,dt) = \int_{A_1\times A_2}\alpha(s)\,u(ds,dt)
		\end{align} 
		for $A_i \in \sigma(q_{\rho_i})$, and by Dynkin's $\pi$-$\lambda$ theorem, this identity extends to all $A \in \sigma(T)$. Hence $E_T(a'\oplus 0) = \alpha\oplus 0$. By the Doob--Dynkin lemma \cite{kallenberg2002foundations}, $\alpha = a\circ q_{\rho_1}$ for a Borel function $a$ with $\int a^2\,d\rho_1 = \int\alpha^2\,d\Leb < \infty$. Treating the second coordinate in the same way, we obtain $E_T(\cA_\mu) \subset U(\cA_\pi) \subset \cA_\mu$. Since $E_T$ is self-adjoint and maps $\cA_\mu$ into itself, it maps $\cA_\mu^\perp$ into itself, so $E_TP_\mu = P_\mu E_T$.
		
		Let $h \in L^2(\pi)$. Since $Uh$ is $\sigma(T)$-measurable, $P_\mu Uh = P_\mu E_TUh = E_TP_\mu Uh \in U(\cA_\pi)$, say $P_\mu Uh = Ua_0$ with $a_0 \in \cA_\pi$. For every $a \in \cA_\pi$, we have $Ua \in \cA_\mu$ and hence $\langle h - a_0, a\rangle_{L^2(\pi)} = \langle Uh - P_\mu Uh, Ua\rangle_{L^2(\mu)} = 0$. Thus $a_0 = P_\pi h$, that is, $P_\mu Uh = UP_\pi h$, and \eqref{eq: permuton_coordinates} follows because $U$ is an isometry.
	\end{proof}

	\subsection{Large deviations for random permutations}
	
	By \eqref{eq: permutation_measure_continuous}, the Gibbs measure is an exponential tilt of the uniform distribution on $S_N$ by a functional of the permuton $\mu_\sigma$. We therefore derive Theorem~\ref{thm: conditioned_model_weak_as_limit_no_rate} from the following large deviation principle for uniformly random permutations; see \cite[Supplement, Theorem~1.1]{mukherjee2016estimation}, and \cite{trashorras2008large, kenyon2020permutations} for related results.
	
	\begin{theorem}
		\label{thm: permutations_LDP}
		Let $\sigma$ be uniformly distributed on $S_{N}$ and let $\QQ_{N}$ be the law of $\mu_{\sigma}$ on $\cM$. The sequence $(\QQ_{N})_{N\ge 1}$ satisfies a large deviation principle on $\cM$, equipped with the weak topology, with speed $N$ and good rate function $\mu \mapsto \KL(\mu \Vert u)$. 
	\end{theorem}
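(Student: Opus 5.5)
The plan is to prove the large deviation principle for $\QQ_N$ in two stages. First we prove the LDP for the law of a discretized permuton, namely the matrix of box counts on a fixed $L\times L$ grid. Then we pass to the weak topology on $\cM$ by the Dawson--Gärtner projective limit theorem, using that $\cM$ is compact.

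\emph{Step 1: finite grids.} Fix $L$ and let $D_{ab}$ be the dyadic-type squares $((a-1)/L,a/L]\times((b-1)/L,b/L]$. For $\sigma$ uniform on $S_N$, the vector $\mathbf{K}^{(L)} := (\mu_\sigma(D_{ab}))_{a,b}$ is, up to $O(1/N)$ boundary effects when $L\nmid N$, a normalized contingency table with all margins $N/L$. Its law is exactly the Fisher--Yates law \eqref{eq: fisher_yates_measure} divided by the number of tables. This follows from the counting in \eqref{eq: permutation_count} with $\r=\c=(N/L,\dots,N/L)$ and $\bLambda\equiv1$. Stirling's formula then gives
\[
\frac1N\log\P\big(\mathbf{K}^{(L)}=\mathbf{x}/N\big) = -\sum_{a,b}\frac{\mathbf{x}_{ab}}{N}\log\frac{L^2\mathbf{x}_{ab}}{N}+O\Big(\frac{L^2\log N}{N}\Big).
\]
The number of tables is at most $(N+1)^{L^2}$, which is subexponential. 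So $\mathbf{K}^{(L)}$ satisfies an LDP on the finite-dimensional simplex of $L\times L$ doubly stochastic (times $1/L$) matrices, with rate $I_L(\mathbf{p})=\sum_{a,b}\mathbf{p}_{ab}\log(L^2\mathbf{p}_{ab})$. This rate is the KL divergence of $\mathbf{p}$ from the uniform cell masses. Continuity of $I_L$ on the closed polytope, together with approximating interior points by lattice points, gives matching upper and lower bounds.

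\emph{Step 2: projective limit.} For $L\mid L'$ the maps $\mathbf{K}^{(L')}\mapsto\mathbf{K}^{(L)}$ are continuous and consistent. Dawson--Gärtner, applied along $L=2^j$, yields an LDP on the projective limit with rate $\sup_L I_L(\mu)$. By the martingale convergence theorem for KL divergence over refining partitions generating the Borel $\sigma$-algebra, this supremum equals $\KL(\mu\Vert u)$.

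\emph{Step 3: identifying the topology.} The projective-limit topology must be identified with the weak topology on $\cM$. The main obstacle is that cell masses of a permuton are not weakly continuous functionals in general. Permutons, however, have uniform marginals, so the boundaries of $D_{ab}$ are $\mu$-null for every $\mu\in\cM$. Hence $\mu\mapsto\mu(D_{ab})$ is weakly continuous on $\cM$. The embedding of $\cM$ into the projective limit is then a continuous injection of a compact space into a Hausdorff space, hence a homeomorphism onto its image. Injectivity holds because dyadic boxes determine $\mu$. The LDP transfers to $\cM$, and goodness of the rate function follows from compactness of $\cM$ and lower semicontinuity of $\KL$.

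The $O(1/N)$ mismatch when $L\nmid N$ is handled by exponential equivalence. The deterministic perturbation of each cell mass is at most $2/N$, which is negligible at speed $N$.
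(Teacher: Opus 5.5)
The paper does not prove this theorem. It imports it from the supplement of \cite{mukherjee2016estimation}, citing \cite{trashorras2008large, kenyon2020permutations} for related results, and uses it only as input to Lemma~\ref{lem: Varadhan_generalization}. Your argument is a correct, self-contained alternative, and it runs as follows.
\begin{enumerate}
\item For uniform $\sigma$, the $L\times L$ block counts have the Fisher--Yates law. This is exactly the counting identity \eqref{eq: permutation_count} with $\bLambda\equiv 1$.
\item Stirling's formula, the polynomial bound on $|\CT(\r,\c)|$ and lattice approximation of the transportation polytope then give matching bounds with the continuous rate $\sum_{a,b}\mathbf{p}_{ab}\log(L^2\mathbf{p}_{ab})$.
\item Dawson--G\"artner along $L=2^j$ gives the LDP on the projective limit.
\item The map from the compact space $\cM$ into the projective limit is a continuous injection, continuous because grid lines are null for permutons. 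It therefore transfers the LDP to the weak topology, with rate $\sup_j I_{2^j}=\KL(\cdot\Vert u)$.
\end{enumerate}

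Compared with the citation, your route makes the paper's first-order theory rest on the same block-count combinatorics as Proposition~\ref{prop: permutation_CT_coupling}. It also gives an explicit $O(L^2\log N/N)$ error at each fixed grid level. The citation buys brevity.

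A few minor points to tidy:
\begin{itemize}
\item The block counts have the Fisher--Yates law itself, not that law divided by the number of tables. Your Stirling expansion is already that of the correct law, and the extra factor would be subexponential anyway.
\item When $L\nmid N$, do the lattice approximation in $\CT(\r,\c)$ with margins in $\{\lfloor N/L\rfloor,\lceil N/L\rceil\}$. Note also that $\mathbf{K}^{(L)}$ itself always has margins exactly $1/L$ because $\mu_\sigma\in\cM$; this is what makes the projective system well defined for every $N$.
\item The identity $\sup_j I_{2^j}(\mu)=\KL(\mu\Vert u)$ follows from martingale convergence only when $\mu\ll u$. When $\mu\not\ll u$ you need a short extra argument, for instance outer regularity plus data processing, to show the supremum is infinite.
\end{itemize}
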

	
	To handle the dependence of the cost on $k$, we prove and use the following extension of Varadhan's lemma \cite[Theorem~4.3.1]{dembo1998large}, in which the functional $F_k$ may change along the sequence and converges only along convergent sequences of permutons. For $F_k \equiv F$ continuous, it reduces to Varadhan's lemma for the large deviation principle of Theorem~\ref{thm: permutations_LDP}. Here, $(N_k)_{k\ge1}$ is any sequence of positive integers with $N_k \to \infty$, and for functionals $F_k, F : \cM \to [-\infty,\infty)$ we write
	\begin{align}
		\ell_k := \frac{1}{N_k} \log \E_{\QQ_{N_k}} \big[ e^{N_kF_{k}(\mu_{\sigma})} \big], \qquad \ell := \sup_{\mu \in \cM} \big( F(\mu) - \KL(\mu \Vert u) \big).
	\end{align}
	
	\begin{lemma}
		\label{lem: Varadhan_generalization}
		Let $(N_k)_{k \ge 1}$ be positive integers with $N_k \to \infty$, let $F_k, F : \cM \to [-\infty, \infty)$, $k \ge 1$, be measurable, and let $\ell_k$ and $\ell$ be as above.
		\begin{enumerate}[label=(\roman*)]
			\item If $\liminf_{k} F_{k}(\mu_k) \geq F(\mu)$ whenever $\mu_k \to \mu$ in $\cM$ and $\KL(\mu \Vert u) < \infty$, then $\liminf_{k\to\infty} \ell_k \ge \ell$.
			\item If $\limsup_{k} F_{k}(\mu_k) \leq F(\mu)$ whenever $\mu_k \to \mu$ in $\cM$ and $\KL(\mu \Vert u) < \infty$, and if there exists $\eta > 1$ such that
			\begin{align}
				\label{eq: exponential_UI}
				\limsup_{k \to \infty} \frac{1}{N_k} \log \E_{\QQ_{N_k}} \big[ e^{\eta N_kF_{k}(\mu_{\sigma})} \big] < \infty,
			\end{align}
			then $\limsup_{k\to\infty} \ell_k \le \ell$.
		\end{enumerate}
		In particular, if $F_k(\mu_k) \to F(\mu)$ whenever $\mu_k \to \mu$ and $\KL(\mu\Vert u) < \infty$, and \eqref{eq: exponential_UI} holds, then $\lim_{k \to \infty} \ell_k = \ell$.
	\end{lemma}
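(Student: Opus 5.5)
We follow the two halves of the standard proof of Varadhan's lemma \cite[Theorem~4.3.1]{dembo1998large}. The continuity of a single functional is replaced by the sequential semicontinuity hypotheses, which we use only at the limit point. Throughout we rely on Theorem~\ref{thm: permutations_LDP}, compactness of $\cM$, and the fact that the rate function $I(\mu) := \KL(\mu\Vert u)$ is good.

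\emph{Lower bound (i).} It suffices to show $\liminf_k \ell_k \ge F(\mu) - I(\mu) - \eps$ for a fixed $\mu$ with $I(\mu) < \infty$, $F(\mu) > -\infty$, and $\eps > 0$. We first claim there is an open neighbourhood $G$ of $\mu$ and $k_0$ such that $F_k \ge F(\mu) - \eps$ on $G$ for all $k \ge k_0$. Suppose not. Take a countable decreasing neighbourhood base $G_j$ of $\mu$; $\cM$ is metrizable, e.g.\ by the bounded Lipschitz metric. Then for each $j$ there are infinitely many $k$ and points $\nu \in G_j$ with $F_k(\nu) < F(\mu) - \eps$. Choosing increasing $k_j$ and $\nu_{k_j} \in G_j$, and filling the remaining indices with $\mu_k := \mu$, we obtain a sequence $\mu_k \to \mu$ along which $\liminf_k F_k(\mu_k) \le F(\mu) - \eps$. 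This contradicts the hypothesis. Applying the hypothesis along the full sequence is legitimate because we may define $\mu_k$ for every $k$. Given $G$, we have $\ell_k \ge F(\mu) - \eps + N_k^{-1}\log \QQ_{N_k}(G)$. The LDP lower bound gives $\liminf_k N_k^{-1}\log\QQ_{N_k}(G) \ge -\inf_G I \ge -I(\mu)$. Note that the LDP holds along any subsequence $N_k \to \infty$.

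\emph{Upper bound (ii).} Fix $\eps > 0$ and $\mu \in \cM$. If $I(\mu) < \infty$, the same contradiction argument, now with the hypothesis on $\limsup$, produces a closed neighbourhood $\bar G_\mu$ and $k_\mu$ such that $F_k \le F(\mu) + \eps$ on $\bar G_\mu$ for $k \ge k_\mu$. We interpret this as an arbitrary large bound $1/\eps$ when $F(\mu) = -\infty$. If $I(\mu) = \infty$, we cannot control $F_k$ near $\mu$. Instead, since $I$ is lower semicontinuous, we choose a closed neighbourhood $\bar G_\mu$ with $\inf_{\bar G_\mu} I$ as large as we like, say larger than $L$. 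On that piece we use Hölder's inequality with exponents $\eta$ and $\eta/(\eta-1)$:
\begin{align}
	\E\big[e^{N_kF_k}\1_{\bar G_\mu}\big] \le \E\big[e^{\eta N_kF_k}\big]^{1/\eta}\,\QQ_{N_k}(\bar G_\mu)^{1-1/\eta}.
\end{align}
By \eqref{eq: exponential_UI} and the LDP upper bound, this contributes at most $C/\eta - (1-1/\eta)L$ to the exponential rate, which is below $\ell$ once $L$ is large. Compactness of $\cM$ lets us cover it by finitely many interiors of such neighbourhoods. For $k$ larger than all the relevant $k_\mu$, we bound $\E[e^{N_kF_k}]$ by the sum over the pieces. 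On a piece of the first kind, the LDP upper bound gives rate at most $F(\mu) + \eps - \inf_{\bar G_\mu} I$. We then shrink $\bar G_\mu$ beforehand so that $\inf_{\bar G_\mu} I \ge I(\mu) - \eps$, using lower semicontinuity of $I$ again. Each such rate is therefore at most $\ell + 2\eps$. The largest exponential term dominates a finite sum, so $\limsup_k \ell_k \le \ell + 2\eps$.

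The main obstacle is the uniformity-in-$k$ step: upgrading the sequential hypothesis, which is stated only at limit points of finite entropy, into a neighbourhood bound valid for all large $k$. We handle this with the diagonal contradiction argument above, which uses metrizability of $\cM$. At points of infinite entropy, the hypothesis is replaced entirely by the exponential moment condition \eqref{eq: exponential_UI} through Hölder's inequality. The final claim then follows by combining (i) and (ii).
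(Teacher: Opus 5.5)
Your proof is correct and follows essentially the same route as the paper: the diagonal argument upgrades the sequential semicontinuity hypotheses to neighbourhood bounds that hold uniformly for large $k$. Part (i) then follows from the LDP lower bound on an open neighbourhood. For part (ii), the paper covers the compact sublevel set $\cC(\omega)$ and applies H\"older's inequality with the exponential moment condition once, to the closed complement. You instead cover all of the compact space $\cM$ and apply H\"older on each neighbourhood of a point of infinite entropy; this is an equivalent reorganisation. The only loose end is the degenerate case: where $F(\mu)=-\infty$ you mean the bound $F_k\le -1/\eps$, and if $\ell=-\infty$ the comparisons ``below $\ell$'' and ``at most $\ell+2\eps$'' must be replaced by letting $\eps\to0$ and $L\to\infty$. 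The paper sidesteps this by first replacing $F_k,F$ with $\max\{F_k,-t\}$ and $\max\{F,-t\}$.
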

	
	\begin{proof}
		Both parts use the following consequence of a diagonal argument, which relies on the metrizability of $\cM$: if $\liminf_k F_k(\mu_k) \ge F(\mu)$ for every sequence $\mu_k \to \mu$, then for every $\delta > 0$ there is an open neighborhood $\cU$ of $\mu$ with $\liminf_k \inf_{\nu \in \cU} F_k(\nu) \ge F(\mu) - \delta$, and similarly for upper limits.
		
		(i) Let $s < \ell$, choose $\mu^* \in \cM$ with $\KL(\mu^* \Vert u) < \infty$ and $F(\mu^*) - \KL(\mu^* \Vert u) > s$, and fix $\delta > 0$. By the assumption of (i) and the diagonal argument above, $\mu^*$ has an open neighborhood $\cU^*$ with $\liminf_{k} \inf_{\cU^*} F_{k} \geq F(\mu^*) - \delta$. Since $\ell_k \ge \inf_{\cU^*} F_k + \frac{1}{N_k}\log \QQ_{N_k}(\cU^*)$, Theorem~\ref{thm: permutations_LDP} gives
		\begin{align}
			\liminf_{k \to \infty} \ell_k \geq F(\mu^*) - \delta - \inf_{\mu \in \cU^*} \KL(\mu \Vert u)  \geq  F(\mu^*) - \KL(\mu^* \Vert u) - \delta > s - \delta.
		\end{align}
		Letting $\delta \downarrow 0$ and $s \uparrow \ell$ proves (i).
		
		(ii) Replacing $F_k$ and $F$ by $\max\{F_k,-t\}$ and $\max\{F,-t\}$ preserves the assumptions of (ii), does not decrease $\ell_k$, and replaces $\ell$ by $\max\{\ell,-t\}$, since $\KL(u\Vert u) = 0$; letting $t \to \infty$ at the end, we may therefore assume that $F > -\infty$. For $\omega < \infty$, let $\cC(\omega) := \{ \mu \in \cM : \KL(\mu \Vert u) \leq \omega \}$ be a sublevel set of the rate function; it is compact, since the rate function is good. Fix $\delta > 0$. By the assumption of (ii), the diagonal argument above and the lower semicontinuity of $\mu \mapsto \KL(\mu \Vert u)$, every $\mu \in \cC(\omega)$ has an open neighborhood $\cU_{\mu}$ with
		\begin{align}
			\label{eq: pf_varadhan_generalization_2}
			\limsup_{k \to \infty} \sup_{\nu \in \overline{\cU_{\mu}}} F_{k}(\nu) \leq F(\mu) + \delta \quad\text{and}\quad  \inf_{\nu \in \overline{\cU_{\mu}}} \KL(\nu \Vert u) \geq \KL(\mu \Vert u)  - \delta.
		\end{align}
		By compactness, there are $\mu_{1}, \ldots, \mu_{L} \in \cC(\omega)$ such that $\cU_{\mu_1}, \ldots, \cU_{\mu_L}$ cover $\cC(\omega)$. The set $\cW := \cM \setminus \bigcup_{l = 1}^{L}\cU_{\mu_l}$ is closed and disjoint from $\cC(\omega)$, so $\inf_{\cW} \KL(\cdot\Vert u) \ge \omega$. Let $\ell_\eta$ be the left-hand side of \eqref{eq: exponential_UI} and $\eta' := \eta/(\eta-1)$. By H\"older's inequality,
		\begin{align}
			\E_{\QQ_{N_k}} \big[ e^{N_k F_{k}(\mu_{\sigma})} \big] &\leq \sum_{l = 1}^{L} \exp\Big(N_k\sup_{\overline{\cU_{\mu_l}}} F_k\Big)\,\QQ_{N_k}(\overline{\cU_{\mu_l}}) + \E_{\QQ_{N_k}} \big[ e^{\eta N_k F_{k}(\mu_{\sigma})}\big]^{1/\eta} \QQ_{N_k}(\cW)^{1/\eta'}.
		\end{align}
		By Theorem~\ref{thm: permutations_LDP} and \eqref{eq: pf_varadhan_generalization_2}, the exponential growth rate of the $l$th term in the sum is at most $F(\mu_l) + \delta - \KL(\mu_l\Vert u) + \delta \le \ell + 2\delta$, and that of the last term is at most $\ell_\eta/\eta - \omega/\eta'$. Hence
		\begin{align}
			\limsup_{k \to \infty} \ell_k \leq \max \Big\{ \ell + 2\delta,\ \frac{\ell_\eta}{\eta} - \frac{\omega}{\eta'} \Big\}.
		\end{align}
		Letting $\delta \downarrow 0$ and then $\omega \to \infty$ proves (ii).
	\end{proof}
	
	\subsection{Proof of Theorem~\ref{thm: conditioned_model_weak_as_limit_no_rate}}
	
	The proof rests on the following form of the Gibbs measure in permuton coordinates, stated in \eqref{eq: permutation_measure_continuous}.
	
	\begin{lemma}[Gibbs measure in permuton coordinates]
		\label{lem: gibbs_permuton_form}
		The Gibbs measure \eqref{eq: gibbs_intro} satisfies \eqref{eq: permutation_measure_continuous}.
	\end{lemma}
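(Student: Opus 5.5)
The proof is a direct computation of the exponent $N\int \kappa\circ T\,d\mu_\sigma$. We show that it equals $-\sum_k \log\widetilde{\bLambda}_{k\sigma(k)}$ up to an additive constant independent of $\sigma$; that constant cancels after normalization. The main tool is Lemma~\ref{lem: quantile_properties}.

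\emph{Step 1: the integral is a sum over squares.} Since $\mu_\sigma$ is uniform with mass $1/N$ on each square $B_{k\sigma(k)}$, we have
\[
N\int \kappa\circ T\,d\mu_\sigma = N^2\sum_{k=1}^N \int_{B_{k\sigma(k)}} \kappa\circ T\,du .
\]

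\emph{Step 2: $\kappa\circ T$ is constant on each square.} By Lemma~\ref{lem: quantile_properties}, $T(B_{k\ell})\subset A_{Q_{\r}(k),Q_{\c}(\ell)}$. The cost $\kappa$ is grid-step, equal on $A_{ij}$ to $\log\frac{\r(i)\c(j)\norm{\bLambda}_1}{N^2\bLambda_{ij}}$. Hence, on $B_{k\ell}$,
\[
\kappa\circ T = -\log\widetilde{\bLambda}_{k\ell} + \log\r(Q_{\r}(k)) + \log\c(Q_{\c}(\ell)) + \log\frac{\norm{\bLambda}_1}{N^2}.
\]
This holds pointwise on the half-open squares, since $T$ maps them into the half-open cells. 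No almost-everywhere issues arise, because $\mu_\sigma\ll u$ in any case.

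\emph{Step 3: the extra terms do not depend on $\sigma$.} Each square has Lebesgue measure $N^{-2}$, so Step 1 gives
\[
N\int\kappa\circ T\,d\mu_\sigma = -\sum_k\log\widetilde{\bLambda}_{k\sigma(k)} + \sum_k\log\r(Q_{\r}(k)) + \sum_k \log\c(Q_{\c}(\sigma(k))) + N\log\frac{\norm{\bLambda}_1}{N^2}.
\]
Because $\sigma$ is a bijection, $\sum_k\log\c(Q_{\c}(\sigma(k))) = \sum_\ell \log\c(Q_{\c}(\ell))$. The second and third sums, together with the last term, therefore form a constant $K$ independent of $\sigma$.

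\emph{Step 4: conclusion.} It follows that
\[
\exp\Big(-N\int\kappa\circ T\,d\mu_\sigma\Big) = e^{-K}\prod_k\widetilde{\bLambda}_{k\sigma(k)}.
\]
Summing over $\sigma\in S_N$ gives $\cZ(\kappa)=e^{-K}\per(\widetilde{\bLambda})$. Dividing the previous display by this identity, the factor $e^{-K}$ cancels and we obtain \eqref{eq: permutation_measure_continuous}, which is \eqref{eq: gibbs_intro}.

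The only step that needs care is Step 2, namely that $T$ maps each square $B_{k\ell}$ into the cell indexed by $(Q_{\r}(k),Q_{\c}(\ell))$. This is supplied directly by Lemma~\ref{lem: quantile_properties}, so there is no real obstacle.
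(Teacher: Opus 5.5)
Your proof is correct and follows essentially the same route as the paper. Both use Lemma~\ref{lem: quantile_properties} to see that $\kappa\circ T$ is constant on each $B_{k\ell}$, reduce $N\int\kappa\circ T\,d\mu_\sigma$ to a sum over $k$, and observe that the row and column factors $\r(Q_{\r}(k))$ and $\c(Q_{\c}(\sigma(k)))$ give a $\sigma$-independent constant. The only cosmetic differences are that the paper phrases this through the block counts $\mathbf{X}(\sigma)$ and their margins and concludes because both sides are probability measures, whereas you use the bijectivity of $\sigma$ directly and write the normalization $\cZ(\kappa)=e^{-K}\per(\widetilde{\bLambda})$ explicitly.
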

	
	\begin{proof}
		By Lemma~\ref{lem: quantile_properties}, $\kappa \circ T$ is equal to the constant $\mathbf{K}_{Q_{\r}(k)Q_{\c}(\ell)}$ on $B_{k\ell}$, where $\mathbf{K}_{ij} := \log\frac{\r(i)\c(j)\norm{\bLambda}_1}{N^2\bLambda_{ij}}$ is the value of $\kappa$ on $A_{ij}$. Since $\mu_\sigma$ is uniform on the squares $B_{k\sigma(k)}$, each of mass $1/N$,
		\begin{align}
			N\int \kappa \circ T\, d\mu_\sigma = \sum_{k=1}^{N} \mathbf{K}_{Q_{\r}(k)Q_{\c}(\sigma(k))} = \sum_{i,j} \mathbf{K}_{ij}\mathbf{X}_{ij}(\sigma).
		\end{align}
		Using that $\mathbf{X}(\sigma)$ has row sums $\r$ and column sums $\c$ (Proposition~\ref{prop: permutation_CT_coupling}(i)), we obtain
		\begin{align}
			\label{eq: gibbs_weight_product}
			e^{-N\int \kappa\circ T\,d\mu_\sigma} = \prod_{i,j}\Big(\frac{N^2 \bLambda_{ij}}{\norm{\bLambda}_1\r(i)\c(j)}\Big)^{\mathbf{X}_{ij}(\sigma)} = \frac{(N^2/\norm{\bLambda}_1)^N}{\prod_{i}\r(i)^{\r(i)}\prod_{j}\c(j)^{\c(j)}}\prod_{i,j}\bLambda_{ij}^{\mathbf{X}_{ij}(\sigma)},
		\end{align}
		and $\prod_{i,j}\bLambda_{ij}^{\mathbf{X}_{ij}(\sigma)} = \prod_{k=1}^{N}\widetilde{\bLambda}_{k\sigma(k)}$ by \eqref{eq: block_weight}. Hence the right-hand side of \eqref{eq: permutation_measure_continuous} is proportional to $\prod_{k}\widetilde{\bLambda}_{k\sigma(k)}$. Since both sides of \eqref{eq: permutation_measure_continuous} are probability measures on $S_N$, they coincide.
	\end{proof}
	
	The following lemma will be used to verify the conditions of Lemma~\ref{lem: Varadhan_generalization}.
	
	\begin{lemma}
		\label{lem: convergence_of_pushfowards}
		Let $\rho^{0}_k$ and $\rho^{1}_k$ be probability measures on $[0,1]$ such that $\rho^{0}_k \to \rho^{0}$ and $\rho^{1}_k \to \rho^{1}$ weakly. Let $T_k := (q_{\rho_k^{0}}, q_{\rho_k^{1}})$ and $T := (q_{\rho^0},q_{\rho^1})$. If $\mu_k \in \cM$ and $\mu_k \to \mu$ weakly, then
		\begin{align}
			\lim_{k \to \infty} \int_{[0, 1]^2} g \circ T_k \, d\mu_k  = \int_{[0, 1]^2} g \circ T \, d\mu \qquad \text{for every } g \in C([0, 1]^2).
		\end{align}
		Equivalently, the push-forwards $(T_k)_{\#} \mu_k$ converge weakly to $T_{\#} \mu$.
	\end{lemma}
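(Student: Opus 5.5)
Since $\mu_k\to\mu$ weakly and $\mu$ has uniform marginals, the natural plan is to write
\begin{align}
\int g\circ T_k\,d\mu_k - \int g\circ T\,d\mu = \int (g\circ T_k - g\circ T)\,d\mu_k + \Big(\int g\circ T\,d\mu_k - \int g\circ T\,d\mu\Big)
\end{align}
and show both terms vanish. The difficulty is that $T$ need not be continuous: quantile functions of measures with atoms or gaps jump. I will use the following two facts. First, weak convergence $\rho_k^0\to\rho^0$ implies $q_{\rho^0_k}(s)\to q_{\rho^0}(s)$ at every continuity point $s\in(0,1)$ of $q_{\rho^0}$ (the standard quantile convergence, e.g.\ \cite[Lemma~21.2]{vandervaart1998asymptotic}). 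Second, $q_{\rho^0}$ is monotone, so its discontinuity set $D_0$ is countable, and the same holds for $D_1$ for $q_{\rho^1}$.

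For the second term, $g\circ T$ is bounded and continuous off the set $E:=(D_0\cup\{0,1\})\times[0,1]\cup[0,1]\times(D_1\cup\{0,1\})$. Because $\mu$ is a permuton, its marginals are Lebesgue, so $\mu(E)\le \Leb(D_0\cup\{0,1\})+\Leb(D_1\cup\{0,1\})=0$. The portmanteau/continuous mapping theorem then gives $\int g\circ T\,d\mu_k\to\int g\circ T\,d\mu$.

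For the first term, I will exploit the uniform marginals of the $\mu_k$ to avoid any uniformity issue in $k$. Since $g$ is uniformly continuous with modulus $\omega_g$,
\begin{align}
\Big|\int (g\circ T_k - g\circ T)\,d\mu_k\Big| \le \int \omega_g\big(|q_{\rho^0_k}(s)-q_{\rho^0}(s)| + |q_{\rho^1_k}(t)-q_{\rho^1}(t)|\big)\,\mu_k(ds,dt).
\end{align}
Using subadditivity of a concave majorant of $\omega_g$ (or simply bounding $\omega_g(a+b)\le\omega_g(2a)+\omega_g(2b)$), the right side splits into $\int_0^1 \omega_g(2|q_{\rho^0_k}-q_{\rho^0}|)\,ds$ plus the analogous term in $t$, because the first marginal of $\mu_k$ is $\Leb$. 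Each integrand is bounded by $2\norm{g}_\infty$ (or by $\omega_g(2)$) and tends to $0$ Lebesgue-a.e. by the quantile convergence, so dominated convergence finishes the argument. The step requiring the most care is the a.e.\ quantile convergence at continuity points under weak convergence, including the endpoint conventions ($q(0)=0$); I will prove it directly from $q_\rho(s)\le t\iff s\le F_\rho(t)$ and convergence of $F_{\rho_k}$ at continuity points of $F_\rho$.

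The equivalent push-forward statement follows because $\int g\,d(T_k)_\#\mu_k=\int g\circ T_k\,d\mu_k$, and because weak convergence on the compact space $[0,1]^2$ is tested by continuous $g$.
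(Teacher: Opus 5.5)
Your proposal is correct and follows essentially the same route as the paper. The paper uses the same splitting, handles the second term by the Portmanteau theorem with the discontinuity set covered by countably many lines, and handles the first term via a.e.\ quantile convergence together with the uniform marginals of $\mu_k$. The only difference is cosmetic: the paper bounds the first term by $\eps + 2\norm{g}_\infty\,\mu_k(E_k)$ and uses convergence in Lebesgue measure, whereas you split the modulus of continuity across coordinates and apply dominated convergence.
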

	
	\begin{proof}
		Note that $\mu \in \cM$, since $\cM$ is weakly closed. By the triangle inequality,
		\begin{align}
			\left|\int g \circ T_k \, d\mu_k  - \int g \circ T \, d\mu \right| \leq \left| \int (g \circ T_k - g \circ T)\,d\mu_k \right| + \left | \int g \circ T \, d\mu_k - \int g \circ T \, d\mu \right|.
		\end{align}
		For the second term, the set of discontinuity points of $g \circ T$ has $\mu$-measure zero, so
		\begin{align}
			\label{eq: pushfwd_weak_convergence_pf_2}
			\int g \circ T \, d\mu_k \to \int g \circ T \, d\mu
		\end{align}
		by the Portmanteau theorem. To prove this claim, note that the discontinuity points of $g\circ T$ are discontinuity points of $T$. Since $q_{\rho^{0}}$ and $q_{\rho^{1}}$ are non-decreasing, each of them has at most countably many discontinuity points, say $\{x_i\}$ and $\{y_i\}$. The discontinuity set of $T$ is then contained in $D := \bigcup_{i} (\{x_i\} \times [0, 1]) \cup \bigcup_{i}  ([0, 1] \times \{y_i\})$, and $\mu(D) = 0$ as $\mu$ has uniform marginals. This proves \eqref{eq: pushfwd_weak_convergence_pf_2}.
		
		For the first term, recall that $q_{\rho_k^{0}} \to q_{\rho^{0}}$ and $q_{\rho_k^{1}} \to q_{\rho^{1}}$ at every continuity point of the limits \cite[Lemma~21.2]{vandervaart1998asymptotic}, hence Lebesgue-almost everywhere, and therefore in Lebesgue measure. Fix $\eps > 0$. By uniform continuity of $g$, there is $\eta > 0$ such that $|g\circ T_k - g \circ T| < \eps$ outside the set
		\begin{align}
			E_{k} := \{ |T_k - T| \geq \eta\} \subset \big(\{|q_{\rho_k^{0}} - q_{\rho^{0}}| \geq \eta/2\} \times[0, 1]\big) \cup \big([0, 1] \times \{|q_{\rho_k^{1}} - q_{\rho^{1}}| \geq \eta/2\}\big).
		\end{align}
		Since $\mu_k \in \cM$,
		\begin{align}
			\mu_k(E_k) \leq \Leb\big(|q_{\rho_k^{0}} - q_{\rho^{0}}| \geq \eta/2\big) +  \Leb\big(|q_{\rho_k^{1}} - q_{\rho^{1}}| \geq \eta/2\big) \to 0.
		\end{align}
		Hence
		\begin{align}
			\left| \int (g \circ T_k - g \circ T)\,d\mu_k \right| \leq \eps + 2\norm{g}_{\infty} \mu_k(E_k) = \eps + o(1).
		\end{align}
		Since $\eps$ was arbitrary, the proof is complete.
	\end{proof}

	\begin{proof}[\textbf{Proof of Theorem~\ref{thm: conditioned_model_weak_as_limit_no_rate}}]
		(i) By \eqref{eq: def_partition_function},
		\begin{align}
			\frac{1}{N_k!}\cZ_k(\kappa_k) = \frac{1}{N_k!} \sum_{\sigma \in S_{N_k}} \exp \left( -N_k \int \kappa_k \circ T_k\, d\mu_{\sigma} \right) = \E_{\QQ_{N_k}} \left[ e^{ -N_k \int \kappa_k \circ T_k\, d\mu_{\sigma}} \right].
		\end{align}
		Let $F_k(\mu) := -\int \kappa_k \circ T_k\,d\mu$ and $F(\mu) := -\int \kappa_\infty\circ T_\infty\,d\mu$. The moment condition \eqref{eq: exponential_UI} holds trivially, since $|F_k| \le \sup_k\norm{\kappa_k}_\infty < \infty$. If $\mu_k \to \mu$ in $\cM$, then
		\begin{align}
			|F_k(\mu_k) - F(\mu)| \le \norm{\kappa_k - \kappa_\infty}_\infty + \Big|\int \kappa_\infty\circ T_k\,d\mu_k - \int \kappa_\infty\circ T_\infty\,d\mu\Big| \to 0
		\end{align}
		by Assumptions~\ref{assumption: bounded_cost} and \ref{assumption: margins_weak_convergence} and Lemma~\ref{lem: convergence_of_pushfowards}. Thus Lemma~\ref{lem: Varadhan_generalization} gives
		\begin{align}
			\label{eq: varadhan_limit}
			\lim_{k \to \infty}\frac{1}{N_k} \log \frac{\cZ_k(\kappa_k)}{N_k!} = \sup_{\mu \in \cM} \left( -\int \kappa_{\infty} \circ T_{\infty}\, d\mu - \KL(\mu \Vert u) \right) = -\Gamma_\infty,
		\end{align}
		where the last equality is Lemma~\ref{lem: COV_permuton_to_EOT}. This proves \eqref{eq: log_parition_limit}.
		
		(ii) Let $\mu^*_\infty$ be the minimizer of $\mu \mapsto \int \kappa_\infty\circ T_\infty\,d\mu + \KL(\mu\Vert u)$ over $\cM$, so that $\pi_\infty = (T_\infty)_{\#}\mu^*_\infty$ by Lemma~\ref{lem: COV_permuton_to_EOT}. We first show that $\mu_\sigma \to \mu^*_\infty$ in probability when $\sigma \sim \Gibbs_k$. Fix an open neighborhood $\cU$ of $\mu^*_\infty$ in $\cM$ and let $\widetilde F_k := F_k$ and $\widetilde F := F$ on $\cM\setminus\cU$, and $\widetilde F_k := \widetilde F := -\infty$ on $\cU$. By Lemma~\ref{lem: gibbs_permuton_form},
		\begin{align}
			\Gibbs_k(\mu_{\sigma} \notin \cU) = \frac{\E_{\QQ_{N_k}} \big[ e^{N_k \widetilde F_k(\mu_{\sigma})} \big]}{\E_{\QQ_{N_k}} \big[ e^{N_k F_k(\mu_{\sigma})} \big]}.
		\end{align}
		If $\mu_k \to \mu \in \cU$, then $\mu_k \in \cU$ for all large $k$ and hence $\widetilde F_k(\mu_k) = -\infty$; if $\mu_k \to \mu \notin \cU$, then $\limsup_k \widetilde F_k(\mu_k) \le \lim_k F_k(\mu_k) = F(\mu)$. Applying Lemma~\ref{lem: Varadhan_generalization}(ii) to the numerator (the moment condition holds as $\widetilde F_k \le F_k$) and \eqref{eq: varadhan_limit} to the denominator, we get
		\begin{align}
			\label{eq: pf_permuton_weak_convergence_2}
			\limsup_{k \to \infty} \frac{1}{N_k} \log \Gibbs_k(\mu_{\sigma} \notin \cU) \leq \sup_{\mu \in \cM\setminus\cU} \big(F(\mu) - \KL(\mu \Vert u)\big) - \sup_{\mu \in \cM} \big( F(\mu) - \KL(\mu \Vert u) \big).
		\end{align}
		The functional $F - \KL(\cdot\Vert u)$ is upper semicontinuous on $\cM$ ($F$ is continuous by Lemma~\ref{lem: convergence_of_pushfowards}), so it attains its supremum on the compact set $\cM\setminus\cU$. Its unique maximizer over $\cM$ is $\mu^*_\infty \in \cU$. Therefore the right-hand side of \eqref{eq: pf_permuton_weak_convergence_2} is strictly negative, and $\Gibbs_k(\mu_\sigma \notin \cU) \to 0$. Since $\cU$ was arbitrary, $\mu_{\sigma} \to \mu^*_\infty$ in probability.
		
		Now fix $g \in C([0,1]^2)$. By Lemma~\ref{lem: convergence_of_pushfowards} and a diagonal argument, for every $\eps > 0$ there are $k_0$ and a neighborhood $\cU$ of $\mu^*_\infty$ such that $|\int g\circ T_k\,d\mu - \int g\circ T_\infty\,d\mu^*_\infty| < \eps$ for all $k \ge k_0$ and $\mu \in \cU$. Hence $\int g\circ T_k\,d\mu_\sigma \to \int g\circ T_\infty\,d\mu^*_\infty = \int g\,d\pi_\infty$ in probability. Finally, by Proposition~\ref{prop: permutation_CT_coupling}(i) and (iii), $\mathbf{X}_k$ can be realized as $\mathbf{X}(\sigma)$ with $\sigma \sim \Gibbs_k$, and then $|\int g\,d\pi_{\mathbf{X}_k} - \int g\circ T_k\,d\mu_\sigma| \le \omega_g\big(\sqrt{m_k^{-2}+n_k^{-2}}\big) \to 0$ by Assumption~\ref{assumption: margins_weak_convergence}. This proves (ii).
	\end{proof}

	\section{Proof of the rate of convergence}
	\label{sec: proof_rates}
	
	In this section we prove Theorem~\ref{thm: conditioned_model_weak_as_limit} on the rate of convergence. The random part of the error is controlled in Section~\ref{sec: rates_concentration} and the deterministic part in Section~\ref{sec: rates_stability}.
	
	\subsection{Concentration around the Schr\"odinger bridge}
	\label{sec: rates_concentration}
	
	Fix a prior mean $\bLambda \in \R^{m \times n}_{>0}$ and positive integer margins $(\r, \c)$ with $\sum_i\r(i) = \sum_j \c(j) = N$. Let $\mathbf{Z} = \mathbf{Z}^{\r,\c}$ be the Schr\"odinger bridge \eqref{eq: def_matrix_scaling}, let $\mathbf{Y} \sim \Poisson(\mathbf{Z})$ be the tilted model, and let $\mathbf{X} \sim \lambda_{\r,\c;\bLambda}$.
	
	\begin{lemma}[Bernstein inequality for Poisson random variables]
		\label{lem: Poisson_Berstein}
		Let $\xi_1, \ldots, \xi_d$ be independent centered Poisson random variables, i.e., $\xi_i + \lambda_i \sim \Poisson(\lambda_i)$, and let $\lambda_{\max} := \max_i \lambda_i$. Then for every $a = (a_1, \ldots, a_d) \in \R^d \setminus\{0\}$ and $t \ge 0$,
		\begin{align}
			\P\left( \Big| \sum_{i = 1}^{d} a_i \xi_i \Big| \geq \norm{a}_2^2\,t \right) \leq 2 \exp \left( - \left( \frac{\norm{a}_2^2t^2}{4\lambda_{\max}} \wedge \frac{3\norm{a}_2^2 t}{4\norm{a}_{\infty}}\right) \right).
		\end{align}
	\end{lemma}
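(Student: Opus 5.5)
The plan is the standard Chernoff argument applied to $S := \sum_i a_i\xi_i$, using the explicit Poisson cumulant generating function. For $\theta \in \R$,
\begin{align}
	\log \E e^{\theta S} = \sum_{i=1}^d \lambda_i\big(e^{\theta a_i} - 1 - \theta a_i\big).
\end{align}
For $|x| \le 3$ we have the elementary bound $e^{x} - 1 - x \le \frac{x^2}{2}\sum_{k\ge 0}(|x|/3)^k = \frac{x^2}{2(1-|x|/3)}$, since $k! \ge 2\cdot 3^{k-2}$ for $k \ge 2$. Summing over $i$ and using $\lambda_i \le \lambda_{\max}$ gives
\begin{align}
	\log \E e^{\theta S} \le \frac{\lambda_{\max}\norm{a}_2^2\theta^2}{2(1 - |\theta|\norm{a}_\infty/3)}, \qquad |\theta| < 3/\norm{a}_\infty.
\end{align}
This is the usual Bernstein moment bound, with variance proxy $v := \lambda_{\max}\norm{a}_2^2$ and scale $b := \norm{a}_\infty/3$.

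Next, Markov's inequality at level $s := \norm{a}_2^2 t$ gives, for $0 < \theta < 1/b$,
\begin{align}
	\P(S \ge s) \le \exp\Big(-\theta s + \frac{v\theta^2}{2(1-b\theta)}\Big).
\end{align}
Rather than optimizing exactly, I will choose $\theta$ to produce the two regimes directly. Take $\theta := \min\{ s/(2v),\ 1/(2b)\}$. Then $1 - b\theta \ge 1/2$, so the quadratic term is at most $v\theta^2$. We also have $v\theta \le s/2$, hence $v\theta^2 \le s\theta/2$. The exponent is therefore at most $-s\theta/2$.

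There are two cases.
\begin{itemize}
	\item If $\theta = s/(2v)$, the exponent is at most $-s^2/(4v) = -\norm{a}_2^2 t^2/(4\lambda_{\max})$.
	\item If $\theta = 1/(2b) = 3/(2\norm{a}_\infty)$, the exponent is at most $-s/(4b) = -3\norm{a}_2^2 t/(4\norm{a}_\infty)$.
\end{itemize}
In either case the exponent is at most minus the minimum of these two quantities, which is exactly the bound in the statement.

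The lower tail follows by applying the same argument to $-S = \sum_i (-a_i)\xi_i$. The elementary bound on $e^{x}-1-x$ holds for negative $x$ too, since it uses $|x|$ throughout, and $\norm{-a}_2 = \norm{a}_2$, $\norm{-a}_\infty = \norm{a}_\infty$. A union bound over the two tails gives the factor $2$. The case $t = 0$ is trivial.

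There is no real obstacle. The only point needing care is the constant bookkeeping in the choice of $\theta$ and in the series bound for $e^{x} - 1 - x$. The precise constants $1/4$ and $3/4$ come out naturally from the choice $\theta = \min\{s/(2v), 3/(2\norm{a}_\infty)\}$ made above.
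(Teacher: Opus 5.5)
Your proof is correct and follows the same route as the paper: the same cumulant bound $\lambda_i(e^{s}-1-s)\le \lambda_i s^2/(2(1-|s|/3))$ gives sub-gamma tails with variance factor $v=\lambda_{\max}\norm{a}_2^2$ and scale $b=\norm{a}_\infty/3$, followed by a Chernoff bound and a union bound over the two tails. The only difference is that the paper cites the sub-gamma Bernstein inequality $2\exp(-x^2/(2(v+bx)))$ and then uses $2(v+bx)\le 4\max\{v,bx\}$, whereas you carry out the Chernoff step directly with $\theta=\min\{s/(2v),1/(2b)\}$, which gives the same constants.
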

	
	\begin{proof}
		Since $\log \E[e^{s\xi_i}] = \lambda_i(e^s - s - 1) \le \lambda_i s^2/(2(1 - |s|/3))$ for $|s| < 3$, the sum $\sum_i a_i\xi_i$ and its negative are sub-gamma with variance factor $\lambda_{\max}\norm{a}_2^2$ and scale parameter $\norm{a}_\infty/3$. The claim therefore follows from Bernstein's inequality for sub-gamma random variables \cite[Section~2.4]{boucheron2013concentration}, $\P(|\sum_i a_i\xi_i| \ge x) \le 2\exp\big(-x^2/(2(\lambda_{\max}\norm{a}_2^2 + x\norm{a}_\infty/3))\big)$, applied with $x = \norm{a}_2^2\,t$, together with $2(\lambda_{\max} + t\norm{a}_\infty/3) \le 4\max\{\lambda_{\max}, t\norm{a}_\infty/3\}$.
	\end{proof}

	\begin{lemma}
		\label{lem: weak_concentration_random_hist}
		Suppose that $\norm{\bLambda}_{1} \geq c\,mn \norm{\bLambda}_{\infty}$ for some $c \in (0, 1]$. For $g \in L^\infty([0,1]^2)$ and $\mathbf{A} \in \R^{m\times n}$, let $L_g(\mathbf{A}) := \frac{1}{N}\sum_{i,j}\mathbf{G}_{ij}\mathbf{A}_{ij}$ with $\mathbf{G}_{ij} = mn\int_{A_{ij}} g$, so that $L_g(\mathbf{A}) = \int g\,d\pi_{\mathbf{A}}$ whenever $\norm{\mathbf{A}}_1 = N$. Then for every $s \ge 0$,
		\begin{align}
			\label{eq: independence_model_histogram_concentration}
			\P \big( | L_g(\mathbf{Y}) - L_g(\mathbf{Z}) | \geq s \big) \leq 2\exp \left( - \left( \frac{c N s^2}{4 e^{5\norm{\kappa}_\infty}\norm{g}_{L^2}^2} \wedge \frac{3Ns}{ 4\norm{g}_{L^{\infty}}} \right) \right),
		\end{align}
		and
		\begin{align}
			\label{eq: conditioned_model_histogram_concentration}
			\P \left( \left| \int g \, d(\pi_{\mathbf{X}} - \pi_{\mathbf{Z}}) \right| \geq s \right) \leq \frac{(eN)^{m+n}}{m^mn^n}\cdot 2\exp \left(- \left( \frac{c N s^2}{4 e^{5\norm{\kappa}_\infty}\norm{g}_{L^2}^2} \wedge \frac{3Ns}{ 4\norm{g}_{L^{\infty}}} \right) \right),
		\end{align}
		where $\kappa$ is the cost induced by $\pi_{\bLambda}$ as in \eqref{eq: def_kappa}.
	\end{lemma}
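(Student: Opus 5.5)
The plan is to apply Lemma~\ref{lem: Poisson_Berstein} to the centered entries of $\mathbf{Y}$ and then pass to $\mathbf{X}$ by the transference principle, Lemma~\ref{thm: transference}. Write
\[
L_g(\mathbf{Y}) - L_g(\mathbf{Z}) = \sum_{i,j} a_{ij}\xi_{ij},
\qquad a_{ij} := \mathbf{G}_{ij}/N,\quad \xi_{ij} := \mathbf{Y}_{ij} - \mathbf{Z}_{ij},
\]
where the $\xi_{ij}$ are independent centered Poisson variables with means $\lambda_{ij} = \mathbf{Z}_{ij}$.

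First we bound the ingredients of Lemma~\ref{lem: Poisson_Berstein}.
\begin{itemize}
\item Since $|\mathbf{G}_{ij}| \le \norm{g}_\infty$, we have $\norm{a}_\infty \le \norm{g}_\infty/N$.
\item By Cauchy--Schwarz on each cell, $\mathbf{G}_{ij}^2 \le mn\int_{A_{ij}} g^2$. Hence $\norm{a}_2^2 \le mn\norm{g}_{L^2}^2/N^2$.
\item For $\lambda_{\max} = \max_{ij}\mathbf{Z}_{ij}$, use Lemma~\ref{lem: typical_table_bridge} and Lemma~\ref{lem: density_bounds}. These give $N\mathbf{Z}_{ij}/(\r(i)\c(j)) \le e^{2\osc(\kappa)} \le e^{4\norm{\kappa}_\infty}$.
\end{itemize}

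The margins enter through $\r(i)\c(j)$. We relate this to $\bLambda$ via \eqref{eq: def_kappa}: $\r(i)\c(j) = N^2\bLambda_{ij}e^{\kappa_{ij}}/\norm{\bLambda}_1$. Combined with $\bLambda_{ij} \le \norm{\bLambda}_\infty \le \norm{\bLambda}_1/(c\,mn)$, this gives $\r(i)\c(j) \le N^2 e^{\norm{\kappa}_\infty}/(c\,mn)$. Therefore
\[
\lambda_{\max} \le \frac{N e^{5\norm{\kappa}_\infty}}{c\,mn}.
\]

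Next choose $t$ so that $\norm{a}_2^2\,t = s$, i.e.\ $t = s/\norm{a}_2^2$. Substituting into Lemma~\ref{lem: Poisson_Berstein}, the first exponent is
\[
\frac{\norm{a}_2^2 t^2}{4\lambda_{\max}} = \frac{s^2}{4\norm{a}_2^2\lambda_{\max}} \ge \frac{s^2 N^2\, c\,mn}{4\,mn\norm{g}_{L^2}^2\, N e^{5\norm{\kappa}_\infty}} = \frac{cNs^2}{4e^{5\norm{\kappa}_\infty}\norm{g}_{L^2}^2}.
\]
The second exponent is
\[
\frac{3\norm{a}_2^2 t}{4\norm{a}_\infty} = \frac{3s}{4\norm{a}_\infty} \ge \frac{3Ns}{4\norm{g}_\infty}.
\]
Since the tail bound is monotone in these exponents, this yields \eqref{eq: independence_model_histogram_concentration}. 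One must check that the cases $a = 0$ or $s = 0$ are trivial; they are.

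For \eqref{eq: conditioned_model_histogram_concentration}, note that $\norm{\mathbf{X}}_1 = \norm{\mathbf{Z}}_1 = N$, so $\int g\,d(\pi_{\mathbf{X}} - \pi_{\mathbf{Z}}) = L_g(\mathbf{X}) - L_g(\mathbf{Z})$. Apply Lemma~\ref{thm: transference} with $B = \{\mathbf{A} : |L_g(\mathbf{A}) - L_g(\mathbf{Z})| \ge s\}$, which is Borel since $L_g$ is linear. This gives the extra factor $(eN)^{m+n}m^{-m}n^{-n}$.

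The only delicate step is bounding $\lambda_{\max}$ with exactly the constant $e^{5\norm{\kappa}_\infty}$. This requires combining the density bound of Lemma~\ref{lem: density_bounds}, written in terms of $\osc \le 2\norm{\cdot}_\infty$, with the flatness hypothesis on $\bLambda$. If the exponent bookkeeping came out differently, I would adjust by using $\osc(\kappa)$ more carefully.
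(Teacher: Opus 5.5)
Your proposal is correct and follows the paper's proof essentially step for step. You apply Lemma~\ref{lem: Poisson_Berstein} to the coefficients $\mathbf{G}_{ij}/N$; the paper takes $a=\mathbf{G}$ with $t = Ns/\norm{\mathbf{G}}_2^2$, which is an equivalent scaling. You then use the same bounds $\norm{\mathbf{G}}_2^2 \le mn\norm{g}_{L^2}^2$, $\norm{\mathbf{G}}_\infty \le \norm{g}_\infty$ and $\lambda_{\max} \le e^{5\norm{\kappa}_\infty}N/(c\,mn)$ (from Lemmas~\ref{lem: typical_table_bridge}, \ref{lem: density_bounds} and \eqref{eq: def_kappa}), and finish with the transference principle. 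Your exponent bookkeeping already produces the stated constants, so no adjustment is needed.
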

	
	\begin{proof}
		We have $L_g(\mathbf{Y}) - L_g(\mathbf{Z}) = \frac{1}{N}\sum_{i,j} \mathbf{G}_{ij}\bar{\mathbf{Y}}_{ij}$, where $\bar {\mathbf{Y}} := \mathbf{Y} - \mathbf{Z}$ has independent centered Poisson entries with parameters $\mathbf{Z}_{ij} \le \lambda := \max_{i,j} \mathbf{Z}_{ij}$. Lemma~\ref{lem: Poisson_Berstein} with $a = \mathbf{G}$ and $t = Ns/\norm{\mathbf{G}}_2^2$ gives
		\begin{align}
			\P\big(|L_g(\mathbf{Y}) - L_g(\mathbf{Z})| \ge s\big) = \P \Big( \Big|\sum_{i,j}\mathbf{G}_{ij} \bar{\mathbf{Y}}_{ij}\Big| \geq \norm{\mathbf{G}}_{2}^2 t \Big) \leq 2 \exp \left( - \left( \frac{N^2s^2}{4\lambda\norm{\mathbf{G}}_2^2} \wedge \frac{3Ns}{4 \norm{\mathbf{G}}_{\infty}}\right) \right).
		\end{align}
		By the Cauchy--Schwarz inequality, $\mathbf{G}_{ij}^2 \le mn\int_{A_{ij}}g^2$, so $\norm{\mathbf{G}}_{2}^2 \leq mn\norm{g}_{L^2}^2$; moreover $\norm{\mathbf{G}}_{\infty} \leq \norm{g}_{L^{\infty}}$. Finally, $\frac{N\mathbf{Z}_{ij}}{\r(i)\c(j)} \le e^{2\osc(\kappa)}$ by Lemmas~\ref{lem: typical_table_bridge} and \ref{lem: density_bounds}, and $\frac{\r(i)\c(j)}{N\bLambda_{ij}} = e^{\kappa_{ij}}\frac{N}{\norm{\bLambda}_1}$ by \eqref{eq: def_kappa}, where $\kappa_{ij}$ is the value of $\kappa$ on $A_{ij}$. Since $2\osc(\kappa) + |\kappa_{ij}| \le 5\norm{\kappa}_\infty$,
		\begin{align}
			\lambda = \max_{i,j}\bLambda_{ij}\,\frac{N\mathbf{Z}_{ij}}{\r(i)\c(j)}\,\frac{\r(i)\c(j)}{N\bLambda_{ij}} \leq \norm{\bLambda}_{\infty}\,e^{5\norm{\kappa}_\infty}\frac{N}{\norm{\bLambda}_1} \le \frac{e^{5\norm{\kappa}_\infty}N}{c\,mn}.
		\end{align}
		Substituting these bounds gives \eqref{eq: independence_model_histogram_concentration}. Since $\norm{\mathbf{X}}_1 = N$, \eqref{eq: conditioned_model_histogram_concentration} follows from \eqref{eq: independence_model_histogram_concentration} and the transference principle (Lemma~\ref{thm: transference}) applied to $h(\mathbf{A}) := \1\big( | L_g(\mathbf{A}) - L_g(\mathbf{Z}) | \geq s \big)$.
	\end{proof}

	\subsection{Stability of Schr\"odinger bridges}
	\label{sec: rates_stability}
	
	The deterministic term in \eqref{eq: rate_bound} comes from the following stability estimate for static Schr\"odinger bridges. The sharper bound of \cite[Theorem~2.15]{duan2026scaling} (Section~\ref{sec: bg_EOT}) requires marginal densities bounded away from zero and infinity, which Theorem~\ref{thm: conditioned_model_weak_as_limit} does not assume, so we give a short proof that uses only bounded density ratios $R/(\mu\otimes\nu)$.
	
	For finite measures $P, Q$ on $[0,1]^2$ with densities $p, q$ with respect to $u$, let $H(P,Q) := \big(\int(\sqrt{p}-\sqrt{q})^2\big)^{1/2}$; thus $H(P,Q) = d_H(p,q)$ for probability measures, and $H$ satisfies the triangle inequality. 
	
	In the following proof, as well as in the proof of Lemma~\ref{lem: quadratic_lower_bound}, we use the following duality. Let $R := e^{-h}(\mu\otimes\nu)$ for a bounded function $h$, and let $\pi$ be the Schr\"odinger bridge, i.e., the minimizer of $\KL(\cdot\Vert R)$ over $\Pi(\mu,\nu)$. Then
	\begin{align}
		\label{eq: SSB_duality}
		\KL(\pi\Vert R) = \max_{\tilde a \in L^\infty(\mu),\, \tilde b \in L^\infty(\nu)} \Big( \int \tilde a\,d\mu + \int\tilde b\,d\nu - \int e^{\tilde a\oplus\tilde b}\,dR + 1 \Big),
	\end{align}
	and the maximum is attained at the Schr\"odinger potentials $(a,b)$, for which $d\pi = e^{a\oplus b}\,dR$ by \eqref{eq: schrodinger_form}. Indeed, for bounded $\tilde a, \tilde b$, Fenchel's inequality $\alpha\log\alpha - \beta\alpha \ge \alpha - e^{\beta}$, applied with $\alpha = d\pi/dR$ and $\beta = \tilde a \oplus \tilde b$ and integrated against $R$, gives the inequality ``$\ge$'' in \eqref{eq: SSB_duality}, and equality holds at $(a,b)$ since $\int e^{a\oplus b}\,dR = \pi([0,1]^2) = 1$ and $\KL(\pi\Vert R) = \int (a\oplus b)\,d\pi = \int a\,d\mu + \int b\,d\nu$; see also \cite[Section~2]{leonard2013survey}.
	
	\begin{prop}[Stability of static Schr\"odinger bridges w.r.t. cost]
		\label{prop: hellinger_stability}
		Let $\mu, \nu, \mu', \nu'$ be a.e.\ positive probability densities on $[0,1]$, and $R, R'$ probability densities on $[0,1]^2$ with, for some $M \ge 1$,
		\begin{align}
			M^{-1} \le \frac{R}{\mu\otimes\nu} \le M \quad\text{and}\quad M^{-1} \le \frac{R'}{\mu'\otimes\nu'} \le M \qquad \text{a.e.}
		\end{align}
		Let $\pi$ and $\pi'$ minimize $\KL(\cdot \Vert R)$ over $\Pi(\mu,\nu)$ and $\KL(\cdot\Vert R')$ over $\Pi(\mu',\nu')$. Then
		\begin{align}
			\label{eq: hellinger_stability}
			H(\pi,\pi')^2 \le 40\log M\,\big(\norm{\mu-\mu'}_{L^1} + \norm{\nu - \nu'}_{L^1}\big) + 11M^5\,d_H(R,R').
		\end{align}
	\end{prop}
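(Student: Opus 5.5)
The plan is a dual (strong concavity) argument built on \eqref{eq: SSB_duality}, followed by two triangle inequalities for $H$. Write $d\pi = e^{\phi}\,dR$ and $d\pi' = e^{\phi'}\,dR'$ with $\phi = a\oplus b$ and $\phi' = a'\oplus b'$ the Schr\"odinger potentials. Let $D_R(\tilde a,\tilde b) := \int\tilde a\,d\mu + \int \tilde b\,d\nu - \int e^{\tilde a\oplus\tilde b}\,dR + 1$ be the dual functional of \eqref{eq: SSB_duality}, and define $D_{R'}$ analogously with $(\mu',\nu',R')$.

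\emph{Step 1 (dual gap controls Hellinger distance).} For bounded $(\tilde a,\tilde b)$, set $\tilde\phi := \tilde a\oplus\tilde b$. Because $\pi\in\Pi(\mu,\nu)$, we have $\int(\phi-\tilde\phi)\,d\pi = \int(a-\tilde a)\,d\mu + \int(b-\tilde b)\,d\nu$. This gives the exact identity
\[
D_R(a,b) - D_R(\tilde a,\tilde b) = \int \big(e^{\phi}(\phi-\tilde\phi) - e^{\phi} + e^{\tilde\phi}\big)\,dR .
\]
The integrand is $x\log(x/y) - x + y$ with $x = e^{\phi}$ and $y = e^{\tilde\phi}$, and this quantity is at least $(\sqrt x - \sqrt y)^2$. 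Hence $H(\pi, e^{\tilde\phi}R)^2 \le D_R(a,b) - D_R(\tilde a,\tilde b)$. We apply this with $(\tilde a,\tilde b) = (a',b')$, and symmetrically with the roles of the two problems exchanged.

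\emph{Step 2 (bounding the gap).} We use $D_R(a,b) = \KL(\pi\Vert R) = \int a\,d\mu + \int b\,d\nu$ and $\int e^{\phi}\,dR = 1$, together with the primed analogues. Adding the two gaps, the linear terms telescope, and
\[
\big[D_R(a,b) - D_R(a',b')\big] + \big[D_{R'}(a',b') - D_{R'}(a,b)\big] = \int (a'-a)\,d(\mu'-\mu) + \int (b'-b)\,d(\nu'-\nu) + \int e^{\phi'}\,d(R-R') + \int e^{\phi}\,d(R'-R).
\]
Both gaps are nonnegative, so each is bounded by the right-hand side. To estimate the first two terms, normalize the potentials via $(a,b)\mapsto(a+t,b-t)$. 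Arguing as in Lemma~\ref{lem: density_bounds}, with the cost $-\log(R/(\mu\otimes\nu))$, whose oscillation is at most $2\log M$, we get $\osc(a),\osc(b) \le C\log M$ and $e^{\phi}\le M^{C}$. Then $\int(a'-a)\,d(\mu'-\mu)\le \osc(a'-a)\,\norm{\mu-\mu'}_{L^1}$ contributes $O(\log M)\norm{\mu-\mu'}_{L^1}$, and similarly for $\nu$. The last two terms are bounded by $\sup(e^{\phi}+e^{\phi'})\,\norm{R-R'}_{L^1}$. Writing $R - R' = (\sqrt R-\sqrt{R'})(\sqrt R+\sqrt{R'})$ and applying Cauchy--Schwarz gives $\norm{R-R'}_{L^1}\le 2d_H(R,R')$, so these terms contribute $M^{O(1)}d_H(R,R')$.

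\emph{Step 3 (triangle inequality).} We have
\[
H(\pi,\pi') \le H(\pi, e^{\phi'}R) + H(e^{\phi'}R, e^{\phi'}R'),
\qquad
H(e^{\phi'}R, e^{\phi'}R') \le \sup e^{\phi'/2}\, d_H(R,R').
\]
The first term is controlled by Steps 1 and 2. Since $d_H\le\sqrt2$, we can absorb $d_H^2 \le \sqrt2\, d_H$. Squaring with $(x+y)^2\le 2x^2+2y^2$ then yields the structure of \eqref{eq: hellinger_stability}.

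The main obstacle is the explicit constants. One must choose the normalization of the potentials so that $\osc(a'-a)$ is genuinely $O(\log M)$; a crude bound would give a term like $M\norm{\mu-\mu'}$ instead. One must also track the exponent of $M$ in $\sup e^{\phi}$: the density ratio is at most $M^2$ after Lemma~\ref{lem: density_bounds}, and the factor $M$ from $R/(\mu\otimes\nu)$ enters as well. Getting exactly $40\log M$ and $11M^5$ is bookkeeping, but I would first get the estimate with unspecified absolute constants and then optimize.
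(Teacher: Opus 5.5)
Your proposal is correct and follows the paper's proof essentially step for step. It uses the same duality-gap bound $H(\pi,e^{\tilde a\oplus\tilde b}R)^2\le D_R(a,b)-D_R(\tilde a,\tilde b)$ evaluated at the other problem's potentials, and controls that gap by the same mismatch of the two dual functionals: your telescoped sum of both gaps is exactly the paper's $\cE(a',b')-\cE(a,b)$, which the paper bounds by $|\cE(a,b)|+|\cE(a',b')|$. It then closes with the same triangle inequality through $e^{\phi'}R$.

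The only slip is in your closing bookkeeping. Lemma~\ref{lem: density_bounds} with $\osc(c)\le 2\log M$ bounds $d\pi/d(\mu\otimes\nu)$ by $M^4$, not $M^2$, hence $e^{\phi}\le M^5$. Combined with $\osc(a'-a),\osc(b'-b)\le 4\log M$ (from $\osc(a)\le\osc(c)$; your oscillation form needs no normalization of the potentials), your estimates give the stated inequality, with the $d_H$ term coming to $8M^5+2\sqrt2\,M^5\le 11M^5$.
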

	
	\begin{proof}
		Our assumptions imply that all measures are considered absolutely continuous with respect to the Lebesgue measure with strictly positive densities. Therefore, in the proof, all integrals are taken with respect to the Lebesgue measure and other measures are identified with their densities. We identify Radon-Nykodym derivatives with the ratio of densities.
		
		Let $c := -\log\frac{R}{\mu\otimes\nu}$, so that $\KL(\cdot \Vert R) = \int c\,d(\cdot) + \KL(\cdot\Vert\mu\otimes\nu)$ on $\Pi(\mu,\nu)$ and $\osc(c) \le 2\log M$, and similarly for $c'$. By \eqref{eq: schrodinger_form}, there are bounded potentials with $\pi = e^{a\oplus b}R$ and $\pi' = e^{a'\oplus b'}R'$. By Lemma~\ref{lem: density_bounds}, $e^{a\oplus b} = e^{c}\frac{\pi}{\mu\otimes\nu} \in [M^{-5}, M^5]$ a.e. Adding a constant to $a$ and subtracting it from $b$, we may assume that $\operatorname{ess\,sup} a = 0$. Then $b \le 5\log M$ and $b \ge -5\log M - a \ge -5\log M$, and hence $a \ge -5\log M - \operatorname{ess\,sup} b \ge -10\log M$. Thus $\norm{a}_\infty \le 10\log M$ and $\norm{b}_\infty \le 5\log M$, and the same bounds hold for $(a', b')$ after the analogous normalization.
		
		For bounded $(\tilde a, \tilde b)$, define the dual functionals
		\begin{align}
			\cL(\tilde a,\tilde b) := \int\tilde a\,\mu + \int \tilde b\,\nu - \int e^{\tilde a\oplus\tilde b}R + 1, \qquad \cL'(\tilde a,\tilde b) := \int\tilde a\,\mu' + \int \tilde b\,\nu' - \int e^{\tilde a\oplus\tilde b}R' + 1,
		\end{align}
		and let $V := \KL(\pi\Vert R)$ and $V' := \KL(\pi'\Vert R')$.
		By \eqref{eq: SSB_duality}, 
		$\cL(\tilde a, \tilde b) \leq V$, with equality at the Schr\"odinger potentials $(a, b)$, and similarly for $\mathcal{L}'$ and $V'$. 
		
		Let $\psi(x) = x\log x - x + 1$. Fix bounded $(\tilde a,\tilde b)$ and define $Q := e^{\tilde a\oplus\tilde b}R'$. A direct computation using $\pi' \in \Pi(\mu',\nu')$ gives the duality gap identity
		\begin{align}
			V' - \cL'(\tilde a,\tilde b) &= \int \pi' \log \frac{\pi'}{R'} - \int \pi' \log \frac{Q}{R'} + \int e^{\tilde a \oplus \tilde b}R' + 1\\
			&=  \int \pi' \log \frac{\pi'}{Q} + \int e^{\tilde a \oplus \tilde b}R' - \int e^{a' \oplus b'}R'\\
			&= Q \psi\left(\frac{\pi'}{Q}\right) \geq \int \left(\sqrt{\pi'} - \sqrt{Q} \right)^2 = H(\pi', Q)^2
		\end{align}
		where we used $\psi(x) \ge (\sqrt{x}-1)^2$, which follows from $2\sqrt{x}\psi(\sqrt{x}) \geq 0$.
		
		Let $\cE(\tilde a,\tilde b) := \cL'(\tilde a,\tilde b) - \cL(\tilde a,\tilde b)$. Then $V' = \cL'(a',b') = \cL(a',b') + \cE(a',b') \le V + \cE(a',b')$ and $\cL'(a,b) = V + \cE(a,b)$, so that, 
		\begin{align}
			H(\pi',Q)^2 \le V' - \cL'(a,b) \le |\cE(a,b)| + |\cE(a',b')|.
		\end{align}
		By the bounds on the potentials,
		\begin{align}
			|\cE(a,b)| + |\cE(a',b')| \le 20\log M\big(\norm{\mu-\mu'}_{L^1} + \norm{\nu-\nu'}_{L^1}\big) + 2M^5\norm{R - R'}_{L^1}.
		\end{align}
		Since $\pi = e^{a\oplus b}R$, we have $H(Q,\pi) \le M^{5/2}H(R',R)$. By the triangle inequality,
		\begin{align}
			H(\pi,\pi')^2 &\le 2H(\pi',Q)^2 + 2M^5 H(R,R')^2 \\
			&\le 40\log M\big(\norm{\mu-\mu'}_{L^1} + \norm{\nu-\nu'}_{L^1}\big) + 4M^5\norm{R-R'}_{L^1} + 2M^5H(R,R')^2.
		\end{align}
		Finally, $\norm{R - R'}_{L^1} \le H(R,R')\,\norm{\sqrt{R} + \sqrt{R'}}_{L^2} \le 2H(R,R')$, and $H(R,R')^2 \le \sqrt 2\, H(R,R')$ because $H(R,R') \le \sqrt 2$ for probability densities. Since $8 + 2\sqrt 2 \le 11$, this proves \eqref{eq: hellinger_stability}.
	\end{proof}

	\subsection{Proof of Theorem~\ref{thm: conditioned_model_weak_as_limit}}

	\begin{proof}[\textbf{Proof of Theorem~\ref{thm: conditioned_model_weak_as_limit}}]
		We use the triangle inequality
		\begin{align}
			\left| \int g\, d(\pi_{\mathbf{X}_k} - \pi_\infty)\right| \leq \left| \int g\,d(\pi_{\mathbf{X}_k} - \pi_{\mathbf{Z}_k}) \right| + \left| \int g\,d(\pi_{\mathbf{Z}_k} - \pi_\infty) \right|.
		\end{align}
		
		By Lemma~\ref{lem: typical_table_bridge} and \eqref{eq: EOT_is_SB}, $\pi_{\mathbf{Z}_k}$ is the Schr\"odinger bridge between $\rho_{\r_k}$ and $\rho_{\c_k}$ with reference density $R_k := \varphi_{\bLambda_k}$, and $R_k/(\rho_{\r_k}\otimes\rho_{\c_k}) = e^{-\kappa_k} \in [M^{-1},M]$. As discussed before Theorem~\ref{thm: conditioned_model_weak_as_limit}, $\pi_\infty$ is the Schr\"odinger bridge between $\rho_{\r_\infty}$ and $\rho_{\c_\infty}$ with reference density $\varphi_{\bLambda_\infty}$, and $\varphi_{\bLambda_\infty}/(\rho_{\r_\infty}\otimes\rho_{\c_\infty}) = e^{-\kappa_\infty} \in [M^{-1},M]$. By Proposition~\ref{prop: hellinger_stability} and $\norm{\pi_{\mathbf{Z}_k} - \pi_\infty}_{L^1} \le 2H(\pi_{\mathbf{Z}_k},\pi_\infty)$, the second term is bounded by the second term in \eqref{eq: rate_bound}.
		
		For the first term, let $s_k := \sqrt{(m_k+n_k)^{1+\eps}\log N_k/N_k}$. Since $N_k s_k^2 = (m_k+n_k)^{1+\eps}\log N_k$, $N_ks_k \ge (m_k+n_k)^{1+\eps}\log N_k$ (as $s_k \le 1$), $\norm{\kappa_k}_\infty \le \log M$ and $\norm{g}_{L^2} \le \norm{g}_\infty$, Lemma~\ref{lem: weak_concentration_random_hist} gives
		\begin{align}
			\P\left( \left| \int g\,d(\pi_{\mathbf{X}_k} - \pi_{\mathbf{Z}_k}) \right| \ge s_k\right) \le 2\exp\Big((m_k+n_k)\log(eN_k) - C_0(m_k+n_k)^{1+\eps}\log N_k\Big),
		\end{align}
		with $C_0 := \min\big\{c/(4M^5\norm{g}_\infty^2),\ 3/(4\norm{g}_\infty)\big\}$. For large $k$, the right-hand side is at most $2e^{-CN_ks_k^2}$ with $C := C_0/2$, which proves the theorem.
	\end{proof}

	\section{Proofs of the partition function asymptotics}
	\label{sec: partition_function}

	In this section we prove Theorem~\ref{thm: permuton_partition_det_limit} and Corollary~\ref{cor: enumeration}, together with uniform bounds on the normalized partition function (Proposition~\ref{prop: uniform_partition_bounds}) that are used in the proofs of the fluctuation results in Section~\ref{sec: proof_fluctuations}. Theorem~\ref{thm: permuton_partition_det_limit} extends the main result of \cite{tripathi2026partitionfunctionclassmallows}, and much of the proof is adapted from there. The proof has three steps. An exact identity expresses the normalized partition function as a normalized permanent of a doubly stochastic matrix (Section~\ref{sec: pf_exact}). McCullagh's expansion of such permanents turns it into a determinant, which equals the Fredholm determinant of an integral operator on $L^2[0,1]$ (Section~\ref{sec: pf_permanents}). Finally, these operators converge in Hilbert--Schmidt norm (Section~\ref{sec: pf_proof}).
	
	Throughout this section, except in Section~\ref{sec: pf_proof}, we fix positive integer margins $(\r,\c)$ with $\sum_i \r(i) = \sum_j \c(j) = N$, and we write $\Gamma := \Gamma_{\rho_{\r},\rho_{\c}}$ and $\cD(f)$ for the normalized partition function \eqref{eq: def_normalized_partition} with the index $k$ dropped. For a grid-step function $f$, let $\xi_f := d\pi^f_{\rho_{\r},\rho_{\c}}/d(\rho_{\r}\otimes\rho_{\c})$. If $(a,b)$ are Schr\"odinger potentials for $f$, the first equation in \eqref{eq: schrodinger_system} shows that $e^{-a(x)}$ depends on $x \in \big(\frac{i-1}{m},\frac{i}{m}\big]$ only through $i$, and similarly for $b$. We may therefore choose potentials that are constant on the strips $\big(\frac{i-1}{m},\frac{i}{m}\big]$ and $\big(\frac{j-1}{n},\frac{j}{n}\big]$, and then $\xi_f$ is grid-step; we write $\xi_{f,ij}$ for its value on $A_{ij}$. We define the $N \times N$ matrix
	\begin{align}
		\bD_f := \frac{1}{N}\widetilde{\xi_f}, \qquad \text{i.e.,}\qquad (\bD_f)_{k\ell} = \frac{1}{N}\,\xi_{f, Q_{\r}(k)Q_{\c}(\ell)},
	\end{align}
	and for $n \in \N$ we write $\bJ_n$ for the $n\times n$ matrix with all entries equal to $1/n$.
	
	\subsection{An exact identity for grid-step costs}
	\label{sec: pf_exact}
	
	\begin{lemma}
		\label{lem: grid_step_identity}
		Let $f$ be a bounded grid-step function. Then $\bD_f$ is doubly stochastic, $e^{-2\osc(f)} \le N(\bD_f)_{k\ell} \le e^{2\osc(f)}$ for all $k,\ell$, and
		\begin{align}
			\label{eq: grid_step_identity}
			\cD(f) = \frac{N^N}{N!}\per(\bD_f).
		\end{align}
	\end{lemma}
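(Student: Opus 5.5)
We argue directly from the Schr\"odinger system for grid-step costs, in three steps matching the three claims of Lemma~\ref{lem: grid_step_identity}.

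\textbf{Double stochasticity.} Fix $k$ with $Q_{\r}(k) = i$. Since $\rho_{\c}$ has density $n\c(j)/N$ on the $j$th strip and $\xi_f$ is grid-step, the first equation of \eqref{eq: schrodinger_system} reads
\begin{align}
\sum_j \xi_{f,ij}\,\c(j)/N = 1.
\end{align}
Row $k$ of $\bD_f$ contains exactly $\c(j)$ entries equal to $\xi_{f,ij}/N$ for each $j$, so its sum is this quantity, namely $1$. Columns are handled symmetrically, using the second equation. The entrywise bounds are immediate from Lemma~\ref{lem: density_bounds}, since $N(\bD_f)_{k\ell}$ is a value of $\xi_f$; here a version of $\xi_f$ that is grid-step and bounded everywhere is chosen, which is possible because the cells have positive $\rho_{\r}\otimes\rho_{\c}$ mass.

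\textbf{The permanent identity.} Write $\xi_f = e^{a\oplus b - f}$ with potentials constant on strips, with values $a_i$ and $b_j$. Then
\begin{align}
\per(\bD_f) = N^{-N}\sum_{\sigma}\prod_{k}\exp\big(a_{Q_{\r}(k)} + b_{Q_{\c}(\sigma(k))} - f_{Q_{\r}(k)Q_{\c}(\sigma(k))}\big).
\end{align}
Because $\sigma$ is a bijection, $\sum_k a_{Q_{\r}(k)} = \sum_i \r(i)a_i = N\int a\,d\rho_{\r}$, and similarly $\sum_k b_{Q_{\c}(\sigma(k))} = N\int b\,d\rho_{\c}$. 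By Lemma~\ref{lem: pythagorean}, these two terms add up to $N\Gamma(f)$.

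For the remaining term, Lemma~\ref{lem: quantile_properties} shows that $f\circ T$ equals $f_{Q_{\r}(k)Q_{\c}(\ell)}$ on $B_{k\ell}$. Hence, exactly as in the proof of Lemma~\ref{lem: gibbs_permuton_form},
\begin{align}
\sum_k f_{Q_{\r}(k)Q_{\c}(\sigma(k))} = N\int f\circ T\,d\mu_\sigma.
\end{align}
Combining the pieces gives
\begin{align}
\per(\bD_f) = N^{-N}e^{N\Gamma(f)}\cZ(f),
\end{align}
which rearranges to $\cD(f) = \frac{N^N}{N!}\per(\bD_f)$.

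The only delicate point is bookkeeping: choosing potentials constant on strips, which is justified in the text before the lemma, and checking that the normalization $\Gamma = \int a\,d\rho_{\r} + \int b\,d\rho_{\c}$ is consistent with \eqref{eq: schrodinger_form}. Everything else is direct substitution.
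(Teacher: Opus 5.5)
Your proof is correct and is essentially the paper's argument: double stochasticity from the Schr\"odinger system, entry bounds from Lemma~\ref{lem: density_bounds}, $f\circ T$ identified via Lemma~\ref{lem: quantile_properties}, and $\Gamma(f)=\int a\,d\rho_{\r}+\int b\,d\rho_{\c}$ via Lemma~\ref{lem: pythagorean}. The only difference is cosmetic. You expand $\per(\bD_f)$ term by term over $\sigma$, whereas the paper writes $\bD_f=\frac1N\diag(e^{\tilde a})\,e^{-\widetilde f}\,\diag(e^{\tilde b})$ and pulls the row and column factors out of $\per(e^{-\widetilde f})=\cZ(f)$.
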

	
	\begin{proof}
		For $k$ with $Q_{\r}(k) = i$, we have $\sum_{\ell}(\bD_f)_{k\ell} = \frac{1}{N}\sum_j \c(j)\,\xi_{f,ij} = \int \xi_f(x,y)\,\rho_{\c}(dy) = 1$ for $x \in \big(\frac{i-1}{m},\frac{i}{m}\big]$, by the Schr\"odinger system; the column sums are handled in the same way. The entry bounds follow from Lemma~\ref{lem: density_bounds}. Let $f_{ij}$ denote the value of $f$ on $A_{ij}$. By Lemma~\ref{lem: quantile_properties}, $f\circ T = f_{Q_{\r}(k)Q_{\c}(\ell)} = \widetilde{f}_{k\ell}$ on $B_{k\ell}$. Since $\mu_\sigma$ is uniform on the squares $B_{k\sigma(k)}$, each of mass $1/N$, we get $N\int f\circ T\,d\mu_\sigma = \sum_k \widetilde f_{k\sigma(k)}$, and hence $\cZ(f) = \per(e^{-\widetilde f})$ by \eqref{eq: def_partition_function}, where the exponential is taken entrywise. Let $(a_i)$ and $(b_j)$ be the values of the potentials, which are constant on strips, and let $\tilde a_k := a_{Q_{\r}(k)}$ and $\tilde b_\ell := b_{Q_{\c}(\ell)}$. Then $e^{-\widetilde f_{k\ell}} = \widetilde{\xi_f}_{k\ell}\,e^{-\tilde a_k - \tilde b_\ell}$, that is,
		\begin{align}
			\label{eq: D_is_scaling}
			\bD_f = \frac1N\diag(e^{\tilde a})\,e^{-\widetilde f}\,\diag(e^{\tilde b}).
		\end{align}
		Pulling the row and column factors out of the permanent, and using $\sum_k \tilde a_k = \sum_i \r(i)a_i = N\int a\,d\rho_{\r}$ and $\sum_\ell \tilde b_\ell = N\int b\,d\rho_{\c}$ together with Lemma~\ref{lem: pythagorean}, we obtain
		\begin{align}
			\cZ(f) = \per\big(e^{-\widetilde f}\big) = N^N e^{-\sum_k\tilde a_k - \sum_\ell \tilde b_\ell}\per(\bD_f) = e^{-N\Gamma(f)}N^N\per(\bD_f),
		\end{align}
		which is \eqref{eq: grid_step_identity} by the definition \eqref{eq: def_normalized_partition} of $\cD(f)$.
	\end{proof}
	
	\subsection{Permanents of doubly stochastic matrices}
	\label{sec: pf_permanents}
	
	We use the following asymptotic expansion of permanents of doubly stochastic matrices, due to McCullagh.
	
	\begin{theorem}[\cite{McCullagh2014asymptotic}]
		\label{thm: mccullogh_permanent_expansion}
		For each $n \ge 1$, let $\mathbf{A}_n$ be a doubly stochastic $n\times n$ matrix. Assume that
		\begin{align}
			\sup_{n} \frac{1}{n^2} \sum_{i,j} \big|n(\mathbf{A}_n - \bJ_n)_{ij}\big|^{p} < \infty \quad \text{for all finite } p \geq 1, \qquad
			\sup_{n} \norm{\mathbf{A}_n - \bJ_n}_{\mathrm{op}} < 1.
		\end{align}
		Then
		\begin{align}
			\frac{n^n}{n!}\per(\mathbf{A}_n) = \big(1 + O(n^{-1})\big)\det\big(\mathbf{I} + \bJ_n - \mathbf{A}_n^{\top}\mathbf{A}_n\big)^{-1/2}.
		\end{align}
	\end{theorem}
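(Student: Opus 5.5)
Since Theorem~\ref{thm: mccullogh_permanent_expansion} is quoted from \cite{McCullagh2014asymptotic}, the following only sketches how I would prove it independently. The approach is a saddle-point analysis of a Fourier integral for the permanent.

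\emph{Integral representation.} I would start from the coefficient-extraction formula
\begin{align}
\per(\mathbf{A}_n) = \frac{1}{(2\pi)^n}\int_{[-\pi,\pi]^n} \prod_{i=1}^n\Big(\sum_{j} (\mathbf{A}_n)_{ij}e^{\mathrm{i}\theta_j}\Big)\, e^{-\mathrm{i}\sum_j\theta_j}\,d\theta.
\end{align}
The integrand is invariant under $\theta\mapsto\theta+t\mathbf{1}$, because the rows of $\mathbf{A}_n$ sum to one. I would therefore fix this gauge, for instance by integrating out the mean of $\theta$, and work on $\mathbf{1}^\perp$. The same formula applied to $\bJ_n$ gives $n!/n^n$. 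So it suffices to show that the ratio of the two integrals is $(1+O(n^{-1}))\det(\mathbf{I}+\bJ_n-\mathbf{A}_n^\top\mathbf{A}_n)^{-1/2}$. Working with this ratio removes all Stirling constants.

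\emph{Local expansion near the saddle.} Near $\theta=0$, expand each factor as a cumulant series:
\begin{align}
\log\sum_j (\mathbf{A}_n)_{ij}e^{\mathrm{i}\theta_j} = \mathrm{i}(\mathbf{A}_n\theta)_i - \tfrac12\big[(\mathbf{A}_n\theta^{2})_i - (\mathbf{A}_n\theta)_i^2\big] + O(|\theta|^3\text{-type terms}).
\end{align}
After summing over $i$, the linear terms give $\mathrm{i}\,\mathbf{1}^\top\mathbf{A}_n\theta=\mathrm{i}\sum_j\theta_j$, since the columns also sum to one, and this cancels the phase factor. The quadratic part becomes $-\tfrac12\theta^\top(\mathbf{I}-\mathbf{A}_n^\top\mathbf{A}_n)\theta$, again using the column sums. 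On $\mathbf{1}^\perp$ this form equals $\mathbf{I}+\bJ_n-\mathbf{A}_n^\top\mathbf{A}_n$, because $\mathbf{A}_n\mathbf{1}=\mathbf{1}$ and $\mathbf{A}_n^\top\mathbf{1}=\mathbf{1}$. Moreover $\norm{\mathbf{A}_n-\bJ_n}_{\mathrm{op}}\le 1-\delta$ makes it uniformly positive definite there. A Gaussian integral then yields the determinant factor, and the same computation for $\bJ_n$ gives factor $1$.

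\emph{Error terms and tails.} The cubic and quartic cumulant corrections are to be shown to contribute $1+O(n^{-1})$. Here the natural scale is $\theta_j\sim n^{-1/2}\cdot$ fluctuations of order one, and the moment hypotheses on $n(\mathbf{A}_n-\bJ_n)_{ij}$ control sums such as $\sum_i(\mathbf{A}_n\theta^3)_i$ and mixed cumulants under the Gaussian measure. The first nonvanishing correction is of order $n^{-1}$: odd terms vanish by symmetry, and second-order terms of size $n\cdot n^{-2}$ appear. These moment bounds must hold for all $p$ because high Gaussian moments enter the Edgeworth remainder.

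\emph{Main obstacle.} The hardest step is the tail region. Points where some row factor $|\sum_j(\mathbf{A}_n)_{ij}e^{\mathrm{i}\theta_j}|$ is bounded away from one, as well as the near-saddles coming from the torus periodicity, must be shown to contribute $O(e^{-cn})$ relative to the main term. My first attempt would combine $|\sum_j a_je^{\mathrm{i}\theta_j}|^2 = 1-\sum_{j,k}a_ja_k(1-\cos(\theta_j-\theta_k))$ with a spectral lower bound from $\norm{\mathbf{A}_n-\bJ_n}_{\mathrm{op}}<1$. This should give an integrand bound of the form $\exp(-c\,\theta^\top(\mathbf{I}-\bJ_n)\theta)$, with $\theta$ measured modulo $2\pi$ and away from the constant direction. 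The moment conditions would be used to rule out exceptional rows concentrating their mass on few columns.
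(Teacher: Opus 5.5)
The paper does not prove this theorem. It quotes it from \cite{McCullagh2014asymptotic} and uses it only as a black box in Corollary~\ref{cor: uniform_mccullagh}, so there is no internal argument to compare with. Judged on its own, your sketch has a genuine gap at the scaling step.

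Your Fourier integral has $n$ integration variables and only $n$ factors, so the saddle is not sharp. On $\mathbf{1}^\perp$ the form $Q:=\mathbf{I}+\bJ_n-\mathbf{A}_n^\top\mathbf{A}_n$ has all its eigenvalues in $[1-(1-\delta)^2,1]$. Under the weight $e^{-\frac12\theta^\top Q\theta}$, each $\theta_j$ is therefore of order one (the expected squared norm is at least $n-1$), not of order $n^{-1/2}$.

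At that scale the cumulant expansion of $\log\sum_j(\mathbf{A}_n)_{ij}e^{\mathrm{i}\theta_j}$ is not perturbative. For $\mathbf{A}_n=\bJ_n$, the cubic term summed over the rows is $-\frac{\mathrm{i}}{6}\sum_j(\theta_j-\bar\theta)^3$, which is of size $\sqrt n$. The higher terms are not small either, and the periodicity of the torus matters because $\theta_j$ is comparable to $\pi$.

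The case $\mathbf{A}_n=\bJ_n$ makes this concrete. The Gaussian main term is $\sqrt n\,(2\pi)^{-(n-1)/2}$, the factor $2\pi\sqrt n$ being the length of the gauge orbit. The integral itself equals $n!/n^n=\sqrt{2\pi n}\,e^{-n}(1+O(n^{-1}))$. Your ``corrections'' therefore amount to the factor $(2\pi/e^2)^{n/2}$, not $1+O(n^{-1})$.

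The tail step fails for the same reason. Where your Gaussian main term has its mass, every row factor already has modulus bounded away from one (about $e^{-1/2}$ for $\bJ_n$). So the integral is not carried by a neighbourhood of the gauge line. A bound like $\exp(-c\,\theta^\top(\mathbf{I}-\bJ_n)\theta)$ cannot separate a negligible part from the main term with the relative precision $O(n^{-1})$ you need.

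Taking the ratio with the $\bJ_n$ integral does not repair this by itself. You would have to show that the exponentially large non-Gaussian factors of the two integrals agree up to $1+O(n^{-1})$, which is essentially the whole content of the theorem. Since the integrands are complex and oscillatory, the $\bJ_n$ integrand cannot serve as a reference probability measure for a change-of-measure argument.

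A workable proof needs a different mechanism. For instance, set $B:=\mathbf{A}_n-\bJ_n$ and start from the exact expansion $\frac{n^n}{n!}\per(\bJ_n+B)=\sum_{k=0}^n\frac{n^k(n-k)!}{n!}\sum_{|S|=|T|=k}\per(B_{S,T})$. The vanishing row and column sums of $B$ reduce the leading contributions to index configurations in which rows and columns are paired into alternating cycles. Their generating function is $\exp\big(\sum_{\ell\ge1}\frac{1}{2\ell}\tr((B^\top B)^\ell)\big)=\det(\mathbf{I}-B^\top B)^{-1/2}$, which is the claimed determinant by \eqref{eq: det_rewrite}. The moment hypotheses would then control the remaining coincidence patterns, and the operator-norm hypothesis the sum over cycle lengths.
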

	
	For a doubly stochastic matrix $\mathbf{A}$, we have $\mathbf{A}^\top\bJ_n = \bJ_n\mathbf{A} = \bJ_n$ and $\bJ_n^2 = \bJ_n$. Hence
	\begin{align}
		\label{eq: det_rewrite}
		(\mathbf{A} - \bJ_n)^\top(\mathbf{A} - \bJ_n) = \mathbf{A}^\top \mathbf{A} - \mathbf{A}^\top\bJ_n - \bJ_n\mathbf{A} + \bJ_n = \mathbf{A}^\top \mathbf{A} - \bJ_n,
	\end{align}
	and the determinant in Theorem~\ref{thm: mccullogh_permanent_expansion} equals $\det\big(\mathbf{I} - (\mathbf{A}-\bJ_n)^\top(\mathbf{A}-\bJ_n)\big)$. For $K \ge 0$, let $\mathfrak{D}_n(K)$ denote the set of doubly stochastic $n \times n$ matrices $\mathbf{A}$ with $e^{-K} \le n\mathbf{A}_{ij} \le e^{K}$ for all $i,j$. By Lemma~\ref{lem: grid_step_identity}, $\bD_f \in \mathfrak{D}_N(2\osc(f))$. Since we apply Theorem~\ref{thm: mccullogh_permanent_expansion} along arbitrary sequences of dimensions $N_k$, we need the following uniform version.

	\begin{corollary}
		\label{cor: uniform_mccullagh}
		For every $K \ge 0$ and every $\mathbf{A} \in \mathfrak{D}_n(K)$, $\norm{\mathbf{A} - \bJ_n}_{\mathrm{op}} \le 1 - e^{-K}$. Moreover,
		\begin{align}\label{eq:uniform_mccullagh}
			\lim_{n \to \infty}\ \sup_{\mathbf{A} \in \mathfrak{D}_n(K)} \left| \frac{n^n}{n!}\per(\mathbf{A})\,\det\big(\mathbf{I} - (\mathbf{A}-\bJ_n)^\top(\mathbf{A}-\bJ_n)\big)^{1/2} - 1\right| = 0.
		\end{align}
	\end{corollary}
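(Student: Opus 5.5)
The plan is to prove the operator norm bound directly from the entry bounds, and then to deduce the uniform statement \eqref{eq:uniform_mccullagh} from Theorem~\ref{thm: mccullogh_permanent_expansion} by contradiction along a subsequence.

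\emph{Operator norm bound.} Let $\mathbf{A} \in \mathfrak{D}_n(K)$ and set $\mathbf{B} := \mathbf{A} - e^{-K}\bJ_n$. All entries of $\mathbf{B}$ are nonnegative, since $\mathbf{A}_{ij} \ge e^{-K}/n$. Every row and column sum of $\mathbf{B}$ equals $1 - e^{-K}$, so by Schur's test $\norm{\mathbf{B}}_{\mathrm{op}} \le 1 - e^{-K}$. Now
\begin{align}
	\mathbf{A} - \bJ_n = \mathbf{B} - (1-e^{-K})\bJ_n .
\end{align}
Let $\Pi := \mathbf{I} - \bJ_n$ be the orthogonal projection onto $\mathbf{1}^\perp$. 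Since $\mathbf{A}$ is doubly stochastic, $\mathbf{A} - \bJ_n$ annihilates $\mathbf{1}$ and maps into $\mathbf{1}^\perp$. Hence
\begin{align}
	\mathbf{A} - \bJ_n = \Pi(\mathbf{A}-\bJ_n)\Pi = \Pi \mathbf{B}\Pi ,
\end{align}
because $\Pi\bJ_n = 0$. This gives $\norm{\mathbf{A}-\bJ_n}_{\mathrm{op}} \le \norm{\mathbf{B}}_{\mathrm{op}} \le 1-e^{-K}$.

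\emph{Uniformity.} Suppose \eqref{eq:uniform_mccullagh} fails. Then there are $\eps>0$, dimensions $n_j \to \infty$, and matrices $\mathbf{A}_{n_j} \in \mathfrak{D}_{n_j}(K)$ such that the quantity inside the absolute value exceeds $\eps$ in modulus for every $j$.

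We want to apply Theorem~\ref{thm: mccullogh_permanent_expansion} to this sequence. Two issues arise. First, the theorem is stated for a full sequence indexed by all $n \ge 1$. Second, its conclusion is the $(1+O(n^{-1}))$ form.

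For the first issue, we fill in the missing dimensions with $\bJ_n$, which trivially satisfies the hypotheses. Alternatively, one notes that McCullagh's argument only uses the stated bounds along the given sequence.

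Next we check the hypotheses. Entrywise, $|n(\mathbf{A}-\bJ_n)_{ij}| \le e^K$, so the $p$-moment condition holds with the uniform bound $e^{Kp}$. The operator norm condition holds with the uniform constant $1-e^{-K}<1$ from the first step.

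By \eqref{eq: det_rewrite}, the determinant in the theorem equals the one in \eqref{eq:uniform_mccullagh}. This determinant is positive, because $\norm{\mathbf{A}-\bJ_n}_{\mathrm{op}}<1$. The theorem therefore gives that the product $\frac{n^n}{n!}\per(\mathbf{A})\det(\cdots)^{1/2}$ tends to $1$ along $n_j$. This contradicts the choice of $\eps$.

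\emph{Main obstacle.} The delicate point is whether the $O(n^{-1})$ in Theorem~\ref{thm: mccullogh_permanent_expansion} may depend on the sequence in a way that breaks this argument. The contradiction argument avoids the issue: it only needs convergence to $1$ for each fixed sequence satisfying the hypotheses, and every sequence drawn from $\bigcup_n \mathfrak{D}_n(K)$ satisfies them with constants depending only on $K$. So the only real work is verifying those uniform hypotheses, which the first step does.
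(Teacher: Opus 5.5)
Your proof is correct and follows the paper's route. The uniform statement is obtained exactly as in the paper: argue by contradiction along a subsequence padded with $\bJ_n$, and verify the hypotheses of Theorem~\ref{thm: mccullogh_permanent_expansion} with constants depending only on $K$. The one difference is in the operator-norm bound. You apply Schur's test to $\mathbf{A}-e^{-K}\bJ_n$ and then project onto $\mathbf{1}^\perp$. The paper instead deduces it from its general kernel lemma (Lemma~\ref{lem: EOT_operator_spectral_radius}), which uses the same decomposition with Jensen's inequality on mean-zero functions and is reused later to bound $\norm{\cS}_{\mathrm{op}}$.
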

	
	The following lemma gives the first claim of Corollary~\ref{cor: uniform_mccullagh} and the bound on $\norm{\cS}_{\mathrm{op}}$.
	
	\begin{lemma}
		\label{lem: EOT_operator_spectral_radius}
		Let $(\Omega_1,\mu_1)$ and $(\Omega_2,\mu_2)$ be probability spaces and let $K$ be a measurable function on $\Omega_1\times\Omega_2$ with $K \ge \eta$ almost everywhere for some $\eta \in [0,1]$, $\int K(x,y)\,\mu_2(dy) = 1$ for $\mu_1$-a.e.\ $x$ and $\int K(x,y)\,\mu_1(dx) = 1$ for $\mu_2$-a.e.\ $y$. Then the integral operator $\cK : L^2(\mu_2)\to L^2(\mu_1)$ with kernel $K - 1$ satisfies $\norm{\cK}_{\mathrm{op}} \le 1 - \eta$.
	\end{lemma}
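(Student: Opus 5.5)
Write $K = \eta + (1-\eta)L$ on the set where $\eta<1$; the case $\eta = 1$ is trivial, since then $K\equiv 1$ a.e. (otherwise one marginal integral would exceed $1$), so $\cK = 0$. For $\eta<1$ set $L := (K-\eta)/(1-\eta) \ge 0$. Then $L$ again has unit marginal integrals: $\int L(x,y)\,\mu_2(dy) = (1-\eta)/(1-\eta) = 1$ for $\mu_1$-a.e.\ $x$, and similarly in the other variable. Moreover $K - 1 = (1-\eta)(L - 1)$, so $\cK = (1-\eta)\cL_0$, where $\cL_0$ is the integral operator with kernel $L-1$. It therefore suffices to show $\norm{\cL_0}_{\mathrm{op}} \le 1$ for every nonnegative kernel with unit marginals.

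Let $\cL$ denote the operator with kernel $L$. By Cauchy--Schwarz (Schur's test), for $h \in L^2(\mu_2)$,
\begin{align}
	|(\cL h)(x)|^2 \le \int L(x,y)\,\mu_2(dy)\int L(x,y)h(y)^2\,\mu_2(dy) = \int L(x,y) h(y)^2\,\mu_2(dy).
\end{align}
Integrating in $x$ against $\mu_1$ and using Tonelli together with the column marginal gives $\norm{\cL h}^2 \le \norm{h}^2$, so $\norm{\cL}_{\mathrm{op}}\le 1$ (no integrability issues arise, since $L \ge 0$). Next, $\cL_0 h = \cL h - \langle h, 1\rangle \mathbf 1$. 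Decompose $h = c + h_0$ with $c = \int h\,d\mu_2$ and $\int h_0\,d\mu_2 = 0$. Then $\cL_0 h = \cL h_0 + c\,\mathbf 1 - c\,\mathbf 1 = \cL h_0$, because $\cL \mathbf 1 = \mathbf 1$ by the row marginal. Hence $\norm{\cL_0 h} = \norm{\cL h_0} \le \norm{h_0} \le \norm{h}$, where the last step uses orthogonality of $h_0$ and the constants. This gives $\norm{\cK}_{\mathrm{op}} \le 1-\eta$.

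The only step needing care is justifying that $\cL$ is well defined on $L^2$; the Schur bound above handles this, since it shows $\cL h$ is a.e.\ finite for $h\in L^2(\mu_2)$. For the applications, one takes $\eta = e^{-2\osc(f)}$ from Lemma~\ref{lem: density_bounds} for $\cS$. For Corollary~\ref{cor: uniform_mccullagh}, one takes uniform measure on $[n]$ and the kernel $n\mathbf A_{ij} \ge e^{-K}$; since $\mathbf A - \bJ_n$ acting on $\R^n$ with the Euclidean norm is unitarily the same as this integral operator with normalized counting measure, the bound $\norm{\mathbf A - \bJ_n}_{\mathrm{op}} \le 1 - e^{-K}$ follows.
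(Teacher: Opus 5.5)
Your proof is correct and follows the paper's argument essentially verbatim: the same rescaling $(K-\eta)/(1-\eta)$, the same reduction to the mean-zero part $h_0 = h - \int h\,d\mu_2$ using the unit row marginal, and your Cauchy--Schwarz (Schur) bound with unit row mass is exactly the paper's Jensen step for the probability measure $\bar K(x,y)\,\mu_2(dy)$, followed by Fubini and the column marginal. The only cosmetic difference is that you first bound the operator with kernel $L$ on all of $L^2(\mu_2)$ and then apply it to $h_0$, whereas the paper applies Jensen directly to $h_0$.
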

	
	\begin{proof}
		If $\eta = 1$, then $K \equiv 1$ almost everywhere and $\cK = 0$. Otherwise, $\bar{K} := (K - \eta)/(1 - \eta)$ is again a nonnegative kernel with both marginal integrals equal to one, and $K - 1 = (1 - \eta)(\bar K - 1)$. For $h \in L^2(\mu_2)$, let $h_0 := h - \int h\,d\mu_2$. Since $\int (\bar K(x,y) - 1)\,\mu_2(dy) = 0$,
		\begin{align}
			\cK h(x) = (1-\eta)\int \big(\bar K(x,y) - 1\big)h(y)\,\mu_2(dy) = (1 - \eta)\int \bar K(x,y)h_0(y)\,\mu_2(dy).
		\end{align}
		By Jensen's inequality, applied to the probability measure $\bar K(x,y)\,\mu_2(dy)$, and Fubini's theorem,
		\begin{align}
			\norm{\cK h}_{L^2(\mu_1)}^2 \le (1-\eta)^2\iint \bar K(x,y)h_0(y)^2\,\mu_2(dy)\,\mu_1(dx) = (1-\eta)^2\norm{h_0}_{L^2(\mu_2)}^2 \le (1-\eta)^2\norm{h}_{L^2(\mu_2)}^2.
		\end{align}
	\end{proof}

	\begin{proof}[\textbf{Proof of Corollary~\ref{cor: uniform_mccullagh}}]
		The kernel $(i,j) \mapsto n\mathbf{A}_{ij}$ is doubly stochastic with respect to the uniform probability measure on $[n]$ and bounded below by $e^{-K}$. The associated operator of Lemma~\ref{lem: EOT_operator_spectral_radius} is represented by the matrix $\mathbf{A} - \bJ_n$, and the operator norm with respect to the uniform probability measure on $[n]$ is the Euclidean one. Hence Lemma~\ref{lem: EOT_operator_spectral_radius} gives $\norm{\mathbf{A} - \bJ_n}_{\mathrm{op}} \le 1 - e^{-K}$. Since also $|n(\mathbf{A} - \bJ_n)_{ij}| \le e^K$, every sequence $\mathbf{A}_n \in \mathfrak{D}_n(K)$ satisfies the hypotheses of Theorem~\ref{thm: mccullogh_permanent_expansion}. If the claim failed, there would be $\eps > 0$, integers $n_1 < n_2 < \cdots$ and matrices $\mathbf{A}^{(l)} \in \mathfrak{D}_{n_l}(K)$ for which the absolute value in \eqref{eq:uniform_mccullagh} is at least $\eps$. Setting $\mathbf{A}_n := \mathbf{A}^{(l)}$ if $n = n_l$ and $\mathbf{A}_n := \bJ_n$ otherwise, we would obtain a sequence satisfying the hypotheses of Theorem~\ref{thm: mccullogh_permanent_expansion} for which its conclusion, rewritten with \eqref{eq: det_rewrite}, fails.
	\end{proof}
	
	For $\mathbf{A} = \bD_f$, the determinant in \eqref{eq:uniform_mccullagh} is a Fredholm determinant on $L^2[0,1]$. For a bounded grid-step function $f$, let $w_f := \xi_f\circ T$, which is the density of the optimal permuton for the cost $f\circ T$ by Lemma~\ref{lem: COV_permuton_to_EOT}, and let $\cS_f$ be the integral operator on $L^2[0,1]$ with kernel $w_f - 1$.
	
	\begin{lemma}
		\label{lem: partition_fredholm}
		Let $f$ be a bounded grid-step function. Then the nonzero singular values of $\cS_f$ coincide, with multiplicities, with those of $\bD_f - \bJ_N$, and are at most $1 - e^{-2\osc(f)}$. For every $K \ge 0$,
		\begin{align}
			\lim_{N\to\infty}\ \sup_{(\r,\c),\,f} \Big|\cD(f)\,\detF\big(I - \cS_f^*\cS_f\big)^{1/2} - 1\Big| = 0,
		\end{align}
		where the supremum is over all positive integer margins $(\r,\c)$ with total $N$ and all grid-step functions $f$ with $\osc(f) \le K$.
	\end{lemma}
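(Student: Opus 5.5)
The plan is to identify $\cS_f$ with the matrix $\bD_f - \bJ_N$ through the isometric embedding of $\R^N$ into $L^2[0,1]$ by step functions on the intervals $I_k := ((k-1)/N, k/N]$. By Lemma~\ref{lem: quantile_properties}, $T(B_{k\ell}) \subset A_{Q_{\r}(k)Q_{\c}(\ell)}$. Since $\xi_f$ is grid-step, $w_f = \xi_f\circ T$ equals the constant $\xi_{f,Q_{\r}(k)Q_{\c}(\ell)} = N(\bD_f)_{k\ell}$ on $B_{k\ell}$. Hence the kernel $w_f - 1$ equals $N(\bD_f - \bJ_N)_{k\ell}$ on $B_{k\ell}$, so $\cS_f$ is a finite-rank operator with a piecewise constant kernel.

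Let $E_N$ be the orthogonal projection of $L^2[0,1]$ onto the span of the orthonormal functions $e_k := \sqrt N\,\1_{I_k}$. Then $\cS_f = E_N\cS_fE_N$ and $\langle e_k, \cS_f e_\ell\rangle = N\cdot N^{-1}\cdot N(\bD_f-\bJ_N)_{k\ell}\cdot N^{-1} = (\bD_f - \bJ_N)_{k\ell}$. The matrix of $\cS_f$ in the basis $(e_k)$ is therefore exactly $\bD_f - \bJ_N$, and $\cS_f$ vanishes on the orthogonal complement of this span. It follows that the nonzero singular values agree with multiplicities. The bound $1-e^{-2\osc(f)}$ on the singular values then follows either from Corollary~\ref{cor: uniform_mccullagh} using $\bD_f \in \mathfrak{D}_N(2\osc(f))$ (Lemma~\ref{lem: grid_step_identity}), or directly from Lemma~\ref{lem: EOT_operator_spectral_radius} with kernel $w_f \ge e^{-2\osc(f)}$, which is doubly stochastic because the optimal permuton has uniform marginals.

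Since $\cS_f^*\cS_f$ has finite rank, its Fredholm determinant is the finite product $\prod_i(1-s_i^2)$ over the nonzero singular values. This product equals $\det(\mathbf{I} - (\bD_f-\bJ_N)^\top(\bD_f-\bJ_N))$, because zero singular values contribute factors of one. Combining this with $\cD(f) = \frac{N^N}{N!}\per(\bD_f)$ from Lemma~\ref{lem: grid_step_identity} gives
\begin{align}
\cD(f)\,\detF\big(I-\cS_f^*\cS_f\big)^{1/2} = \frac{N^N}{N!}\per(\bD_f)\det\big(\mathbf{I}-(\bD_f-\bJ_N)^\top(\bD_f-\bJ_N)\big)^{1/2}.
\end{align}
The right-hand side depends on $(\r,\c,f)$ only through $\bD_f$, which lies in $\mathfrak{D}_N(2K)$ whenever $\osc(f)\le K$. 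The supremum over $(\r,\c)$ and $f$ is therefore bounded by the supremum over $\mathfrak{D}_N(2K)$ in \eqref{eq:uniform_mccullagh}, and that supremum tends to zero by Corollary~\ref{cor: uniform_mccullagh}.

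The only delicate step is the first one: checking carefully that the normalization of $w_f$ on $B_{k\ell}$ matches $N(\bD_f)_{k\ell}$, including that $T$ maps each square $B_{k\ell}$ into a single cell, and that the constant kernel $1$ corresponds to $N(\bJ_N)_{k\ell}=1$. After that, the rest is bookkeeping with Lemma~\ref{lem: grid_step_identity} and Corollary~\ref{cor: uniform_mccullagh}.
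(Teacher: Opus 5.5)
Your proposal is correct and follows essentially the same route as the paper: you represent $\cS_f$ on the span of the step functions $\sqrt{N}\,\1_{I_k}$ by the matrix $\bD_f-\bJ_N$, read off the singular values and the finite-rank Fredholm determinant, and then obtain the uniform limit by applying Lemma~\ref{lem: grid_step_identity} and Corollary~\ref{cor: uniform_mccullagh} over $\mathfrak{D}_N(2K)$. Your alternative derivation of the singular value bound, applying Lemma~\ref{lem: EOT_operator_spectral_radius} directly to the doubly stochastic kernel $w_f\ge e^{-2\osc(f)}$, is also valid; the paper instead takes the bound from Corollary~\ref{cor: uniform_mccullagh}, which itself rests on that lemma.
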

	
	\begin{proof}
		By Lemma~\ref{lem: quantile_properties}, $w_f = N(\bD_f)_{k\ell}$ on $B_{k\ell}$. Let $V_N \subset L^2[0,1]$ be the space of functions that are constant on each interval $I^N_k := \big(\frac{k-1}{N},\frac{k}{N}\big]$, with orthonormal basis $\epsilon_k := \sqrt{N}\1_{I^N_k}$, $k \in [N]$. Since the kernel of $\cS_f$ is constant on each square $B_{k\ell} = I^N_k\times I^N_\ell$, $\cS_f$ vanishes on $V_N^\perp$ and maps $L^2[0,1]$ into $V_N$. For $h = \sum_\ell h_\ell\epsilon_\ell$ and $s \in I^N_k$,
		\begin{align}
			(\cS_f h)(s) = \sum_{\ell}\big(N(\bD_f)_{k\ell} - 1\big)\frac{1}{N}\sqrt{N}h_\ell = \sqrt{N}\big((\bD_f - \bJ_N)h\big)_k,
		\end{align}
		so $\langle \cS_f h, \epsilon_k\rangle = ((\bD_f - \bJ_N)h)_k$. Thus $\cS_f$ is represented on $V_N$ by the matrix $\bD_f - \bJ_N$, which proves the first claim; the bound on the singular values follows from $\bD_f \in \mathfrak{D}_N(2\osc(f))$ and Corollary~\ref{cor: uniform_mccullagh}. In particular, $\det\big(\mathbf{I} - (\bD_f - \bJ_N)^\top(\bD_f - \bJ_N)\big) = \detF(I - \cS_f^*\cS_f)$, and the limit follows from Lemma~\ref{lem: grid_step_identity} and Corollary~\ref{cor: uniform_mccullagh} with $2K$ in place of $K$.
	\end{proof}
	
	\begin{prop}
		\label{prop: uniform_partition_bounds}
		For every $K \ge 0$ there exist $C_K < \infty$ and $N_K \in \N$ such that for all positive integer margins $(\r,\c)$ with total $N \ge N_K$ and all grid-step functions $f$ with $\osc(f) \le K$,
		\begin{align}
			1 \le \cD(f) \le C_K.
		\end{align}
	\end{prop}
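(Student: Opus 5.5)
The lower bound comes from the van der Waerden inequality, and the upper bound from the uniform McCullagh expansion of Lemma~\ref{lem: partition_fredholm} together with a uniform bound on the Fredholm determinant.

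\emph{Lower bound.} By Lemma~\ref{lem: grid_step_identity}, $\cD(f) = \frac{N^N}{N!}\per(\bD_f)$ with $\bD_f$ doubly stochastic. The van der Waerden bound \cite{Falikman1981vanderwaerden} gives $\per(\bD_f) \ge N!/N^N$, hence $\cD(f)\ge 1$ for every $N$ and every $f$. No largeness of $N$ is needed here.

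\emph{Upper bound.} By Lemma~\ref{lem: partition_fredholm} applied with this $K$, there is $N_K$ such that for all admissible $(\r,\c)$ with total $N\ge N_K$ and all grid-step $f$ with $\osc(f)\le K$,
\[
\cD(f)\le 2\,\detF\big(I-\cS_f^*\cS_f\big)^{-1/2}.
\]
It therefore suffices to bound $\detF(I-\cS_f^*\cS_f)=\prod_i(1-s_i^2)$ from below, uniformly in $f$ and $N$. Lemma~\ref{lem: partition_fredholm} gives $s_i\le 1-e^{-2K}=:\theta<1$, so that $-\log(1-s_i^2)\le C'_K s_i^2$ with $C'_K := -\log(1-\theta^2)/\theta^2$. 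Consequently
\[
\detF\big(I-\cS_f^*\cS_f\big)\ge \exp\big(-C'_K\norm{\cS_f}_{\mathrm{HS}}^2\big).
\]

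\emph{Hilbert--Schmidt bound.} This is the only step needing care. The kernel of $\cS_f$ is $w_f-1$, where $w_f$ is the density of the optimal permuton, and $w_f\in[e^{-2K},e^{2K}]$ by Lemma~\ref{lem: permuton_coordinates} (equivalently by the entry bounds of Lemma~\ref{lem: grid_step_identity}). Hence
\[
\norm{\cS_f}_{\mathrm{HS}}^2=\int_{[0,1]^2}(w_f-1)^2\,du\le e^{4K},
\]
independently of $N$, $(\r,\c)$ and $f$. Combining the three displays gives $\cD(f)\le 2\exp(C'_K e^{4K}/2)=:C_K$ for $N\ge N_K$, which completes the plan.
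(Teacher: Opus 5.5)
Your proof is correct and follows the paper's argument essentially step for step. The lower bound is van der Waerden applied through Lemma~\ref{lem: grid_step_identity}. The upper bound combines Lemma~\ref{lem: partition_fredholm} with $s_1\le 1-e^{-2K}$ and $\sum_i s_i^2=\norm{\cS_f}_{\mathrm{HS}}^2\le e^{4K}$ to bound $\prod_i(1-s_i^2)$ from below.

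The only difference is cosmetic. The paper uses $-\log(1-x)\le x/(1-x)$ together with $1-s_i^2\ge e^{-2K}$, which gives $C_K=2e^{e^{6K}/2}$. This also avoids the $0/0$ in your constant $C'_K$ when $K=0$, a case where $\cS_f=0$ and the bound is trivial anyway.
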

	
	\begin{proof}
		The lower bound is the van der Waerden bound $\per(\bD_f) \ge N!/N^N$ \cite{Falikman1981vanderwaerden}, by Lemma~\ref{lem: grid_step_identity}. For the upper bound, let $s_1 \ge s_2 \ge \cdots$ be the singular values of $\cS_f$. By Lemma~\ref{lem: partition_fredholm}, $s_1 \le 1 - \eta$ with $\eta := e^{-2K}$, and by Lemma~\ref{lem: density_bounds}, $\sum_i s_i^2 = \norm{\cS_f}^2_{\mathrm{HS}} = \int (\xi_f - 1)^2\,d(\rho_{\r}\otimes\rho_{\c}) \le e^{4K}$. Since $-\log(1-x) \le x/(1-x)$ and $1 - s_i^2 \ge 1 - (1-\eta)^2 \ge \eta$,
		\begin{align}
			\detF\big(I - \cS_f^*\cS_f\big) = \prod_i(1 - s_i^2) \ge \exp\Big(-\frac{1}{\eta}\sum_i s_i^2\Big) \ge \exp\big(-e^{6K}\big).
		\end{align}
		By Lemma~\ref{lem: partition_fredholm}, there is $N_K$ such that $\cD(f) \le 2\detF(I - \cS_f^*\cS_f)^{-1/2} \le 2e^{e^{6K}/2} =: C_K$ for $N \ge N_K$.
	\end{proof}
	
	\subsection{Proof of Theorem~\ref{thm: permuton_partition_det_limit}}
	\label{sec: pf_proof}
	
	It remains to pass to the limit in the Fredholm determinant. We first record a stability estimate for EOT plans in total variation. Here and below, $\norm{\cdot}_{\mathrm{TV}}$ denotes the total variation norm, $\norm{\pi - \pi'}_{\mathrm{TV}} = \int|d\pi - d\pi'|$.
	
	\begin{lemma}
		\label{lem: TV_cost_stability}
		Let $\mu,\nu$ be Borel probability measures on $[0,1]$ and $c, c' \in L^\infty(\mu\otimes\nu)$. Then
		\begin{align}
			\label{eq: TV_cost_stability}
			\norm{\pi^c_{\mu,\nu} - \pi^{c'}_{\mu,\nu}}_{\mathrm{TV}} \le \norm{c - c'}_\infty.
		\end{align}
	\end{lemma}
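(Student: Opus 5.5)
The plan is a variational argument built on the Pythagorean identity (Lemma~\ref{lem: pythagorean}). Set $\pi := \pi^c_{\mu,\nu}$ and $\pi' := \pi^{c'}_{\mu,\nu}$.

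\textbf{Step 1 (two Pythagorean identities).} Apply Lemma~\ref{lem: pythagorean} twice, once for each cost. With cost $c$ and competitor $\pi'$,
\begin{align}
\int c\,d\pi' + \KL(\pi'\Vert\mu\otimes\nu) = \Gamma_{\mu,\nu}(c) + \KL(\pi'\Vert\pi).
\end{align}
With cost $c'$ and competitor $\pi$,
\begin{align}
\int c'\,d\pi + \KL(\pi\Vert\mu\otimes\nu) = \Gamma_{\mu,\nu}(c') + \KL(\pi\Vert\pi').
\end{align}
Both identities are available because $\pi$ and $\pi'$ have the exponential form with bounded potentials (Lemma~\ref{lem: density_bounds}), so every term is finite.

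\textbf{Step 2 (add and cancel).} Add the two identities. Expanding $\Gamma_{\mu,\nu}(c) = \int c\,d\pi + \KL(\pi\Vert\mu\otimes\nu)$, and likewise for $c'$, the entropy terms cancel. This leaves the symmetrized divergence
\begin{align}
\KL(\pi\Vert\pi') + \KL(\pi'\Vert\pi) = \int (c - c')\,d(\pi' - \pi).
\end{align}
Since $\pi - \pi'$ has total mass zero, the right-hand side is at most $\tfrac12\osc(c-c')\,\norm{\pi-\pi'}_{\mathrm{TV}} \le \norm{c-c'}_\infty\,\norm{\pi - \pi'}_{\mathrm{TV}}$. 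The cruder bound $\norm{c-c'}_\infty\norm{\pi-\pi'}_{\mathrm{TV}}$ would also suffice.

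\textbf{Step 3 (Pinsker).} Pinsker's inequality gives $\KL(\pi\Vert\pi') \ge \tfrac12\norm{\pi-\pi'}_{\mathrm{TV}}^2$, and the same holds with the roles of $\pi$ and $\pi'$ exchanged. Summing, the left-hand side of Step~2 is at least $\norm{\pi-\pi'}_{\mathrm{TV}}^2$. Combining with Step~2 yields
\begin{align}
\norm{\pi-\pi'}_{\mathrm{TV}}^2 \le \tfrac12\osc(c-c')\,\norm{\pi-\pi'}_{\mathrm{TV}}.
\end{align}
Dividing by $\norm{\pi-\pi'}_{\mathrm{TV}}$ when it is nonzero gives $\norm{\pi-\pi'}_{\mathrm{TV}} \le \tfrac12\osc(c-c') \le \norm{c-c'}_\infty$.

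\textbf{Where care is needed.} The only delicate points are bookkeeping. First, with $\norm{\cdot}_{\mathrm{TV}} = \int|d\pi-d\pi'|$, Pinsker must be used in its $L^1$ form, $\KL \ge \tfrac12\norm{\cdot}_{\mathrm{TV}}^2$, so that the constant comes out as one. Second, the costs are only defined $\mu\otimes\nu$-a.e. This causes no trouble, because $\pi$ and $\pi'$ are absolutely continuous with respect to $\mu\otimes\nu$, so the integrals in Steps~1 and~2 do not depend on the choice of version.
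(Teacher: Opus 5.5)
Your proposal is correct and follows the paper's proof essentially verbatim: apply the Pythagorean identity with cost $c$ to $\pi'$ and with cost $c'$ to $\pi$, add to get the symmetrized divergence $\int (c-c')\,d(\pi'-\pi)$, and lower-bound each divergence by Pinsker in its $L^1$ form. Your sharpening to $\tfrac12\osc(c-c')$, obtained by centering $c-c'$ against the zero-mass signed measure $\pi'-\pi$, is also valid, though the paper does not use it.
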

	
	\begin{proof}
		Write $\pi := \pi^c_{\mu,\nu}$ and $\pi' := \pi^{c'}_{\mu,\nu}$. Applying \eqref{eq: pythagorean} with the cost $c$ to $\pi'$, and with the cost $c'$ to $\pi$, and adding the two identities gives
		\begin{align}
			\KL(\pi\Vert\pi') + \KL(\pi'\Vert\pi) = \int (c' - c)\,d(\pi - \pi') \le \norm{c - c'}_\infty\norm{\pi - \pi'}_{\mathrm{TV}}.
		\end{align}
		By Pinsker's inequality, each term on the left-hand side is at least $\frac12\norm{\pi - \pi'}_{\mathrm{TV}}^2$; see also \cite[Proposition~3.12]{eckstein2022quantitative}.
	\end{proof}
	
	Now let the margins $(\r_k,\c_k)$ vary with $k$, and suppose that $\rho_{\r_k} \to \rho_{\r_\infty}$ and $\rho_{\c_k} \to \rho_{\c_\infty}$ weakly. For bounded measurable functions $f_k$ that converge uniformly to a continuous function $f_\infty$ on $[0,1]^2$, let $w_k$ and $w_\infty$ be the densities with respect to $u$ of the optimal permutons for the costs $f_k\circ T_k$ and $f_\infty\circ T_\infty$. By Lemma~\ref{lem: COV_permuton_to_EOT}, $w_k = \xi_k\circ T_k$ and $w_\infty = \xi_\infty\circ T_\infty$, where $\xi_k := d\pi_k(f_k)/d(\rho_{\r_k}\otimes\rho_{\c_k})$ and $\xi_\infty$ is as in Theorem~\ref{thm: permuton_partition_det_limit}.
	
	\begin{lemma}
		\label{lem: operator_HS_convergence}
		In this setting, $\norm{w_k - w_\infty}_{L^2(u)} \to 0$. In particular, the optimal permutons for the costs $f_k\circ T_k$ converge in total variation to the optimal permuton for the cost $f_\infty\circ T_\infty$.
	\end{lemma}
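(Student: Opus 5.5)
Since $w_k$ and $w_\infty$ take values in $[e^{-2\osc(f_k)}, e^{2\osc(f_k)}]$ and $[e^{-2\osc(f_\infty)}, e^{2\osc(f_\infty)}]$ (Lemma~\ref{lem: permuton_coordinates}), and $\sup_k\osc(f_k)<\infty$ by uniform convergence, the $w_k$ are uniformly bounded. It therefore suffices to prove $\norm{w_k - w_\infty}_{L^1(u)} \to 0$, since then $\norm{w_k-w_\infty}_{L^2}^2 \le \norm{w_k-w_\infty}_\infty\norm{w_k-w_\infty}_{L^1}$. The $L^1(u)$ statement is exactly total variation convergence of the optimal permutons $\mu_k^*:=w_k\,du$ to $\mu^*_\infty := w_\infty\,du$.

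The plan is to compare both optimal permutons with an intermediate one. Let $\hat\mu_k$ be the optimal permuton for the cost $f_\infty\circ T_k$, which is bounded and measurable. By Lemma~\ref{lem: TV_cost_stability}, applied with uniform marginals on $[0,1]^2$, we get $\norm{\mu^*_k - \hat\mu_k}_{\mathrm{TV}} \le \norm{f_k\circ T_k - f_\infty\circ T_k}_\infty \le \norm{f_k - f_\infty}_\infty \to 0$. It remains to show $\norm{\hat\mu_k - \mu^*_\infty}_{\mathrm{TV}} \to 0$. I would do this with the Pythagorean identity, Lemma~\ref{lem: pythagorean}, applied to the cost $c_\infty := f_\infty\circ T_\infty$ and uniform marginals:
\begin{align}
\KL(\hat\mu_k\Vert\mu^*_\infty) = \Big[\int c_\infty\,d\hat\mu_k + \KL(\hat\mu_k\Vert u)\Big] - \Big[\int c_\infty\,d\mu^*_\infty + \KL(\mu^*_\infty\Vert u)\Big].
\end{align}
Write $c_k := f_\infty\circ T_k$. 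Then
\begin{align}
\int c_\infty\,d\hat\mu_k + \KL(\hat\mu_k\Vert u) = \Gamma^{\mathrm{u}}(c_k) + \int (c_\infty - c_k)\,d\hat\mu_k ,
\end{align}
and $\Gamma^{\mathrm{u}}(c_\infty)$ is the subtracted term. Since $\hat\mu_k$ has density bounded by $e^{2\osc(f_\infty)}$, we have $|\int (c_\infty-c_k)\,d\hat\mu_k| \le e^{2\osc(f_\infty)}\norm{c_k - c_\infty}_{L^1(u)}$. Similarly, $|\Gamma^{\mathrm{u}}(c_k) - \Gamma^{\mathrm{u}}(c_\infty)| \le e^{2\osc(f_\infty)}\norm{c_k-c_\infty}_{L^1(u)}$: test each problem's value at the other's minimizer, whose density is uniformly bounded. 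Hence everything reduces to showing $\norm{f_\infty\circ T_k - f_\infty\circ T_\infty}_{L^1(u)} \to 0$. Pinsker's inequality then gives $\norm{\hat\mu_k - \mu^*_\infty}_{\mathrm{TV}}\to0$.

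This last convergence is the main (though mild) obstacle: the margins converge only weakly and may have atoms. The argument from the proof of Lemma~\ref{lem: convergence_of_pushfowards} handles it. The quantiles satisfy $q_{\rho_{\r_k}} \to q_{\rho_{\r_\infty}}$ at continuity points of the limit, hence Lebesgue-a.e., and likewise for the columns. So $T_k \to T_\infty$ $u$-a.e., and by continuity of $f_\infty$, $f_\infty\circ T_k \to f_\infty\circ T_\infty$ $u$-a.e. Since $f_\infty$ is bounded, dominated convergence gives the $L^1(u)$ convergence. Combining the two TV bounds gives $\norm{w_k - w_\infty}_{L^1(u)}\to 0$, and the $L^2$ claim follows from the uniform bounds.
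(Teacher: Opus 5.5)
Your proof is correct, and its first half matches the paper's. By Lemma~\ref{lem: COV_permuton_to_EOT}, your $\hat\mu_k$ is the pullback $(\bar\xi_k\circ T_k)\,du$ of the paper's intermediate bridge $\bar\pi_k = \pi_k(f_\infty)$, and your Step~1 is the paper's first term written in permuton coordinates.

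The second half takes a genuinely different route. The paper splits the comparison of $\hat\mu_k$ with $\mu^*_\infty$ into two further terms, $(\bar\xi_k - \xi_\infty)\circ T_k$ and $\xi_\infty\circ T_k - \xi_\infty\circ T_\infty$. It controls the first by citing \cite[Proposition~B.1]{nutz2023stability}: the potentials converge uniformly, $\bar a_k\oplus\bar b_k\to a_\infty\oplus b_\infty$, under weak convergence of the marginals. This step relies on the continuity of $f_\infty$, and hence of the limiting potentials and of $\xi_\infty$.

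You instead stay on the fixed uniform marginals, where only the cost moves, from $c_k = f_\infty\circ T_k$ to $c_\infty = f_\infty\circ T_\infty$, and only in $L^1(u)$. The Pythagorean identity and the density bounds then give
\begin{align}
\KL(\hat\mu_k\Vert\mu^*_\infty) \le \int (c_k - c_\infty)\,d(\mu^*_\infty - \hat\mu_k) \le 2e^{2\osc(f_\infty)}\norm{c_k - c_\infty}_{L^1(u)},
\end{align}
which is an $L^1$-cost analogue of Lemma~\ref{lem: TV_cost_stability}.

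What your route buys:
\begin{enumerate}
\item It is self-contained and quantitative, with no external stability theorem for potentials.
\item The intermediate problem is not even needed. The same argument applies directly to the costs $f_k\circ T_k$ and $f_\infty\circ T_\infty$, since $\norm{f_k\circ T_k - f_\infty\circ T_\infty}_{L^1(u)} \le \norm{f_k - f_\infty}_\infty + \norm{c_k - c_\infty}_{L^1(u)}$.
\item It uses only $c_k\to c_\infty$ $u$-a.e., so it would go through for bounded Borel $f_\infty$ that are continuous at $(\rho_{\r_\infty}\otimes\rho_{\c_\infty})$-a.e.\ point.
\end{enumerate}
In exchange, the paper's route yields more than the lemma needs, namely uniform convergence of $\bar\xi_k$ to $\xi_\infty$ on all of $[0,1]^2$.
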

	
	\begin{proof}
		Let $(a_\infty,b_\infty)$ be Schr\"odinger potentials for the limiting problem. Since $f_\infty$ is continuous, we may define $a_\infty$ and $b_\infty$ on all of $[0,1]$ through the Schr\"odinger system \eqref{eq: schrodinger_system}, and then they are continuous. Hence $\xi_\infty = e^{a_\infty\oplus b_\infty - f_\infty}$ is bounded and continuous on $[0,1]^2$. Let $\bar\pi_k := \pi_k(f_\infty)$ be the solution of the intermediate problem with the limiting cost and the discrete margins, and let $\bar\xi_k := d\bar\pi_k/d(\rho_{\r_k}\otimes\rho_{\c_k})$. By the triangle inequality,
		\begin{align}
			\norm{w_k - w_\infty}_{L^2(u)} \le \norm{(\xi_k - \bar\xi_k)\circ T_k}_{L^2(u)} + \norm{(\bar\xi_k - \xi_\infty)\circ T_k}_{L^2(u)} + \norm{\xi_\infty\circ T_k - \xi_\infty\circ T_\infty}_{L^2(u)}.
		\end{align}
		
		For the last term, the quantile functions of $\rho_{\r_k}$ and $\rho_{\c_k}$ converge to those of $\rho_{\r_\infty}$ and $\rho_{\c_\infty}$ at every continuity point of the latter \cite[Lemma~21.2]{vandervaart1998asymptotic}, hence Lebesgue-a.e., so that $T_k \to T_\infty$ $u$-a.e. Since $\xi_\infty$ is bounded and continuous, the last term tends to zero by dominated convergence.
		
		For the second term, let $(\bar a_k, \bar b_k)$ be Schr\"odinger potentials for $\bar\pi_k$, again defined on all of $[0,1]$ through the Schr\"odinger system. Since $(T_k)_{\#}u = \rho_{\r_k}\otimes\rho_{\c_k}$, a change of variables gives
		\begin{align}
			\norm{(\bar\xi_k - \xi_\infty)\circ T_k}_{L^2(u)} \le \norm{e^{\bar a_k\oplus\bar b_k - f_\infty} - e^{a_\infty\oplus b_\infty - f_\infty}}_\infty \le e^{\norm{f_\infty}_\infty}\norm{e^{\bar a_k\oplus\bar b_k} - e^{a_\infty\oplus b_\infty}}_\infty.
		\end{align}
		Since $f_\infty$ is continuous on the compact set $[0,1]^2$ and $\rho_{\r_k}\otimes\rho_{\c_k} \to \rho_{\r_\infty}\otimes\rho_{\c_\infty}$ weakly, \cite[Proposition~B.1]{nutz2023stability} shows that $\bar a_k\oplus\bar b_k \to a_\infty\oplus b_\infty$ uniformly. These functions are uniformly bounded, so the second term tends to zero.
		
		For the first term, again by a change of variables and by Lemma~\ref{lem: TV_cost_stability},
		\begin{align}
			\norm{(\xi_k - \bar\xi_k)\circ T_k}_{L^2(u)}^2 = \int\big(\xi_k - \bar\xi_k\big)^2\,d(\rho_{\r_k}\otimes\rho_{\c_k}) \le C\norm{\pi_k(f_k) - \bar\pi_k}_{\mathrm{TV}} \le C\norm{f_k - f_\infty}_\infty \to 0,
		\end{align}
		where $C := \sup_k\big(\norm{\xi_k}_\infty + \norm{\bar\xi_k}_\infty\big)$ is finite by Lemma~\ref{lem: density_bounds}, since $\sup_k\osc(f_k) < \infty$. The last claim follows since $\norm{\cdot}_{L^1(u)} \le \norm{\cdot}_{L^2(u)}$.
	\end{proof}
	
	\begin{proof}[\textbf{Proof of Theorem~\ref{thm: permuton_partition_det_limit}}]
		Let $K := \sup_k \osc(f_k) < \infty$, and let $w_k$ and $w_\infty$ be as above.
		
		\emph{Step 1: Reduction to a Fredholm determinant.} By Lemma~\ref{lem: partition_fredholm}, $\cD_k(f_k) = (1 + o(1))\detF(I - \cS_k^*\cS_k)^{-1/2}$, where $\cS_k := \cS_{f_k}$ is the integral operator on $L^2[0,1]$ with kernel $w_k - 1$.
		
		\emph{Step 2: Convergence of the operators.} Let $\widetilde\cS$ be the integral operator on $L^2[0,1]$ with kernel $w_\infty - 1$. The Hilbert--Schmidt norm of an integral operator on $L^2[0,1]$ is the $L^2(u)$ norm of its kernel, so Lemma~\ref{lem: operator_HS_convergence} gives $\norm{\cS_k - \widetilde\cS}_{\mathrm{HS}} = \norm{w_k - w_\infty}_{L^2(u)} \to 0$. Hence
		\begin{align}
			\norm{\cS_k^*\cS_k - \widetilde\cS^*\widetilde\cS}_{\mathrm{tr}} \le \big(\norm{\cS_k}_{\mathrm{HS}} + \norm{\widetilde\cS}_{\mathrm{HS}}\big)\norm{\cS_k - \widetilde\cS}_{\mathrm{HS}} \to 0.
		\end{align}
		Since the Fredholm determinant is continuous with respect to the trace norm \cite[Theorem~3.4]{simon2005trace}, $\detF(I - \cS_k^*\cS_k) \to \detF(I - \widetilde\cS^*\widetilde\cS)$.
		
		\emph{Step 3: Identification of the limit.} Let $U_{\r} : L^2(\rho_{\r_\infty}) \to L^2[0,1]$, $h \mapsto h\circ q_{\rho_{\r_\infty}}$, and define $U_{\c}$ analogously. Since $(q_{\rho_{\r_\infty}})_{\#}\Leb = \rho_{\r_\infty}$, these maps are isometries, and $U_{\c}^*h(y) = \E[h(t) \mid q_{\rho_{\c_\infty}}(t) = y]$ for $t$ uniformly distributed on $[0,1]$. Conditioning on $q_{\rho_{\c_\infty}}(t)$ shows that
		\begin{align}
			(\widetilde\cS h)(s) = \int_0^1\big(\xi_\infty(q_{\rho_{\r_\infty}}(s), q_{\rho_{\c_\infty}}(t)) - 1\big)h(t)\,dt = \big(\cS U_{\c}^*h\big)(q_{\rho_{\r_\infty}}(s)),
		\end{align}
		that is, $\widetilde\cS = U_{\r}\cS U_{\c}^*$. Since $U_{\r}^*U_{\r} = U_{\c}^*U_{\c} = I$, the operator $\widetilde\cS^*\widetilde\cS = U_{\c}\cS^*\cS U_{\c}^*$ has the same nonzero eigenvalues as $\cS^*\cS$. Therefore $\detF(I - \widetilde\cS^*\widetilde\cS) = \detF(I - \cS^*\cS) = \prod_{i}(1 - s_i^2)$. Finally, $\norm{\cS}_{\mathrm{HS}}^2 = \int (\xi_\infty - 1)^2\,d(\rho_{\r_\infty}\otimes\rho_{\c_\infty}) < \infty$, and $\norm{\cS}_{\mathrm{op}} \le 1 - e^{-2\osc(f_\infty)}$ by Lemmas~\ref{lem: density_bounds} and \ref{lem: EOT_operator_spectral_radius}. Thus $s_i \le 1 - e^{-2\osc(f_\infty)}$ and $\sum_i s_i^2 < \infty$, so the product $\prod_i (1 - s_i^2)$ converges to a positive number. Combining the three steps proves \eqref{eq: permuton_partition_limit_statement}.
	\end{proof}
	
	\subsection{Proof of Corollary~\ref{cor: enumeration}}
	
	\begin{proof}[\textbf{Proof of Corollary~\ref{cor: enumeration}}]
		We drop the index $k$. Summing \eqref{eq: gibbs_weight_product} over $\sigma \in S_N$ and using \eqref{eq: block_weight} and Barvinok's identity \eqref{eq: barvinok_per_identity}, we obtain
		\begin{align}
			\cZ(\kappa) = \frac{(N^2/\norm{\bLambda}_1)^N}{\prod_{i}\r(i)^{\r(i)}\prod_{j}\c(j)^{\c(j)}}\,\per(\widetilde{\bLambda}) = \Big(\frac{N^2}{\norm{\bLambda}_1}\Big)^N\prod_{i}\frac{\r(i)!}{\r(i)^{\r(i)}}\prod_{j}\frac{\c(j)!}{\c(j)^{\c(j)}}\,\cZ_{\r,\c;\bLambda}.
		\end{align}
		Since $\cZ(\kappa) = N!\,e^{-N\Gamma(\kappa)}\cD(\kappa)$ by \eqref{eq: def_normalized_partition}, this reads $\cZ_{\r,\c;\bLambda} = \cD(\kappa)\,\widehat{\cZ}_{\r,\c;\bLambda}$ for all margins. Under Assumptions~\ref{assumption: bounded_cost} and \ref{assumption: margins_weak_convergence}, $\cD_k(\kappa_k) \to D_\infty$ by Theorem~\ref{thm: permuton_partition_det_limit} applied with $f_k = \kappa_k$, which proves \eqref{eq: enumeration_identity}. Since $\cD(\kappa) \ge 1$ by the van der Waerden bound (see the proof of Proposition~\ref{prop: uniform_partition_bounds}), we also have $\widehat{\cZ}_{\r,\c;\bLambda} \le \cZ_{\r,\c;\bLambda}$ for all margins.
	\end{proof}
	
	\section{Proofs of the fluctuation results}
	\label{sec: proof_fluctuations}
	
	In this section we prove the central limit theorem (Theorem~\ref{thm: CLT}), its consequence for Mallows-type permutations (Corollary~\ref{cor: mallows}), the properties of the limiting process, and the functional limit theorems (Theorem~\ref{thm: functional_limits} and Corollaries~\ref{cor: KS} and \ref{cor: product_case}).
	
	The proof of Theorem~\ref{thm: CLT} rests on three ingredients. The exact Laplace identity of Section~\ref{sec: laplace_section} expresses the moment generating function of $\Phi_k(g)$ through partition functions and the EOT value with a perturbed cost. The ratio of partition functions is controlled by Theorem~\ref{thm: permuton_partition_det_limit} and the uniform bounds of Proposition~\ref{prop: uniform_partition_bounds}, and the EOT term by Proposition~\ref{thm: smooth_cost_perturbations}.

	Throughout this section, except in Section~\ref{sec: cloned}, we assume Assumptions~\ref{assumption: bounded_cost} and \ref{assumption: margins_weak_convergence}; Lemmas~\ref{lem: laplace_identity}, \ref{lem: quadratic_lower_bound} and \ref{lem: variance_continuity} and Proposition~\ref{thm: smooth_cost_perturbations} do not use them. Proposition~\ref{prop: exp_moments} and Corollary~\ref{cor: moments} 
	require only that the constant
	\begin{align}
		\label{eq: def_K1}
		K_1 := \sup_k\osc(\kappa_k) + 2,
	\end{align}
	is finite, which follows by Assumption~\ref{assumption: bounded_cost} (Section~\ref{sec: assumptions}). Since $\kappa_k \to \kappa_\infty$ uniformly and $\kappa_\infty$ is continuous, also $\osc(\kappa_\infty) = \sup\kappa_\infty - \inf\kappa_\infty \le K_1$. We write $|z|_1 := |s| + |t|$ for $z = (s,t) \in \R^2$.
	
	\subsection{An exact formula for the Laplace transform}
	\label{sec: laplace_section}
	
	The identity below involves the cell averages $\Ek_k g$ of Definition~\ref{def: cell_average}. Note that $\Ek$ is the conditional expectation, with respect to $u$, onto the $\sigma$-algebra generated by the cells $A_{ij}$. In particular, $\Ek$ is the $L^2(u)$-orthogonal projection onto grid-step functions, $\norm{\Ek g}_{L^p} \le \norm{g}_{L^p}$ for all $p \in [1,\infty]$, and $\Ek g \to g$ in $L^p$ for every $p \in [1,\infty)$ as $m, n \to \infty$. For $g \in L^\infty([0,1]^2)$ and $t \in \R$, we set $f := -tN_k^{-1/2}\Ek_k g$.
	
	\begin{lemma}[Exact Laplace identity]
		\label{lem: laplace_identity}
		For every $k$, $g \in L^\infty([0,1]^2)$ and $t \in \R$,
		\begin{align}
			\label{eq: laplace_identity}
			\log\E\big[e^{t\Phi_k(g)}\big] = \log\frac{\cD_k(\kappa_k + f)}{\cD_k(\kappa_k)} - N_k\Big[\Gamma_k(\kappa_k + f) - \Gamma_k(\kappa_k) - \int f\,d\pi_{\mathbf{Z}_k}\Big].
		\end{align}
	\end{lemma}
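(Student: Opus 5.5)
We prove the identity by direct computation through the permutation representation; no limits are involved. By Proposition~\ref{prop: permutation_CT_coupling} we realize $\mathbf{X}_k = \mathbf{X}(\sigma)$ with $\sigma \sim \Gibbs_k$, and by part~(ii) of that proposition, $\int g\,d\pi_{\mathbf{X}_k} = \int (\Ek_k g)\circ T_k\,d\mu_\sigma$. With $f = -tN_k^{-1/2}\Ek_k g$ we therefore have
\begin{align}
	t\sqrt{N_k}\int g\,d\pi_{\mathbf{X}_k} = -N_k\int f\circ T_k\,d\mu_\sigma .
\end{align}
Using the permuton form \eqref{eq: permutation_measure_continuous} of the Gibbs measure (Lemma~\ref{lem: gibbs_permuton_form}), we obtain
\begin{align}
	\E\Big[e^{t\sqrt{N_k}\int g\,d\pi_{\mathbf{X}_k}}\Big] = \frac{1}{\cZ_k(\kappa_k)}\sum_{\sigma} e^{-N_k\int(\kappa_k+f)\circ T_k\,d\mu_\sigma} = \frac{\cZ_k(\kappa_k+f)}{\cZ_k(\kappa_k)} .
\end{align}
Here $\kappa_k + f$ is bounded and grid-step, so $\cZ_k(\kappa_k+f)$ and $\cD_k(\kappa_k+f)$ are defined.

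We now insert the normalization \eqref{eq: def_normalized_partition}, $\cZ_k(h) = N_k!\,e^{-N_k\Gamma_k(h)}\cD_k(h)$. The factors $N_k!$ cancel, and
\begin{align}
	\log\frac{\cZ_k(\kappa_k+f)}{\cZ_k(\kappa_k)} = \log\frac{\cD_k(\kappa_k+f)}{\cD_k(\kappa_k)} - N_k\big[\Gamma_k(\kappa_k+f)-\Gamma_k(\kappa_k)\big].
\end{align}

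It remains to subtract the deterministic centering $t\sqrt{N_k}\int g\,d\pi_{\mathbf{Z}_k}$. By \eqref{eq: cell_average_invariance}, $\int g\,d\pi_{\mathbf{Z}_k} = \int \Ek_k g\,d\pi_{\mathbf{Z}_k}$, so $t\sqrt{N_k}\int g\,d\pi_{\mathbf{Z}_k} = -N_k\int f\,d\pi_{\mathbf{Z}_k}$. Subtracting this gives the term $+N_k\int f\,d\pi_{\mathbf{Z}_k}$ inside the bracket, which is exactly \eqref{eq: laplace_identity}.

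The only point requiring care is keeping the cell-average conventions consistent: $\Phi_k(g)$ uses $\mathbf{G}_{ij}$, Proposition~\ref{prop: permutation_CT_coupling}(ii) uses $(\Ek_k g)\circ T_k$, and the centering at $\mathbf{Z}_k$ uses \eqref{eq: cell_average_invariance}. One should also note that $\pi_{\mathbf{Z}_k} = \pi_k(\kappa_k)$ by Lemma~\ref{lem: typical_table_bridge}, which is what makes the bracket the second-order remainder of the EOT value in the cost. That interpretation is not needed for the identity itself. There is no genuine obstacle here.
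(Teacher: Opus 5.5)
Your proposal is correct and follows the paper's proof step for step. You realize $\mathbf{X}_k$ as $\mathbf{X}(\sigma)$ with $\sigma\sim\Gibbs_k$, use Proposition~\ref{prop: permutation_CT_coupling}(ii) and the permuton form of the Gibbs measure to obtain $\cZ_k(\kappa_k+f)/\cZ_k(\kappa_k)$, handle the centering via \eqref{eq: cell_average_invariance}, and substitute $\cZ_k(h)=N_k!\,e^{-N_k\Gamma_k(h)}\cD_k(h)$. One small wording slip: the centering contributes $+N_k\int f\,d\pi_{\mathbf{Z}_k}$ overall, which appears as $-\int f\,d\pi_{\mathbf{Z}_k}$ inside the bracket multiplied by $-N_k$, but your final identity is right.
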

	
	\begin{proof}
		We drop the index $k$. By \eqref{eq:coupling_Gibbs}, $\mathbf{X} \overset{d}{=} \mathbf{X}(\sigma)$ with $\sigma \sim \Gibbs$, and $t\sqrt{N}\int g\,d\pi_{\mathbf{X}(\sigma)} = t\sqrt{N}\int(\Ek g)\circ T\,d\mu_\sigma = -N\int f\circ T\,d\mu_\sigma$ by Proposition~\ref{prop: permutation_CT_coupling}(ii). Hence
		\begin{align}
			\E\Big[\exp\Big(t\sqrt{N}\int g\,d\pi_{\mathbf{X}}\Big)\Big] = \frac{1}{\cZ(\kappa)}\sum_{\sigma\in S_N}e^{-N\int (\kappa + f)\circ T\,d\mu_\sigma} = \frac{\cZ(\kappa + f)}{\cZ(\kappa)}.
		\end{align}
		By \eqref{eq: cell_average_invariance}, $t\sqrt{N}\int g\,d\pi_{\mathbf{Z}} = t\sqrt N\int \Ek g\,d\pi_{\mathbf{Z}} = -N\int f\,d\pi_{\mathbf{Z}}$. Therefore $\log \E[e^{t\Phi(g)}] = \log\cZ(\kappa+f) - \log\cZ(\kappa) + N\int f\,d\pi_{\mathbf{Z}}$, and \eqref{eq: laplace_identity} follows by substituting $\cZ(h) = N!\,e^{-N\Gamma(h)}\cD(h)$ for $h = \kappa$ and $h = \kappa + f$.
	\end{proof}
	
	\begin{lemma}[Quadratic lower bound]
		\label{lem: quadratic_lower_bound}
		For probability measures $\mu,\nu$ on $[0,1]$ and $c, f \in L^\infty(\mu\otimes\nu)$,
		\begin{align}
			\Gamma_{\mu,\nu}(c + f) - \Gamma_{\mu,\nu}(c) - \int f\,d\pi^{c}_{\mu,\nu} \ge -\frac{e^{\norm{f}_\infty}}{2}\int f^2\,d\pi^c_{\mu,\nu}.
		\end{align}
	\end{lemma}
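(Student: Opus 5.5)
The plan is to use the dual representation \eqref{eq: SSB_duality}, evaluated at the unperturbed potentials, to get a lower bound on $\Gamma_{\mu,\nu}(c+f)$. Write $\pi := \pi^c_{\mu,\nu}$ with Schr\"odinger potentials $(a,b)$, so that $d\pi = e^{a\oplus b - c}\,d(\mu\otimes\nu)$ by \eqref{eq: schrodinger_form}. By \eqref{eq: EOT_is_SB}-type reasoning, $\Gamma_{\mu,\nu}(c+f) = \KL(\pi^{c+f}\Vert R')$ with $R' := e^{-(c+f)}(\mu\otimes\nu)$. Note that $R'$ need not be a probability measure, so I will check the constant in the duality.

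\emph{Duality with a non-normalized reference.} For any bounded $(\tilde a,\tilde b)$, Fenchel's inequality integrated against $R'$ gives
\begin{align}
\int c\,d\pi' + \int f\,d\pi' + \KL(\pi'\Vert\mu\otimes\nu) \ge \int\tilde a\,d\mu + \int\tilde b\,d\nu - \int e^{\tilde a\oplus\tilde b - c - f}\,d(\mu\otimes\nu) + 1
\end{align}
for every $\pi'\in\Pi(\mu,\nu)$. Here $\pi'$ has mass one, which is why the $+1$ appears. Taking the infimum over $\pi'$ bounds $\Gamma_{\mu,\nu}(c+f)$ from below by the right-hand side.

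\emph{Choice of test potentials.} Choose $(\tilde a,\tilde b) = (a,b)$. By Lemma~\ref{lem: pythagorean}, $\int a\,d\mu+\int b\,d\nu = \Gamma_{\mu,\nu}(c)$. The exponential term becomes $\int e^{-f}\,d\pi$. Hence
\begin{align}
\Gamma_{\mu,\nu}(c+f) - \Gamma_{\mu,\nu}(c) \ge 1 - \int e^{-f}\,d\pi.
\end{align}

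\emph{Taylor bound.} Next use $e^{-x} \le 1 - x + \tfrac{x^2}{2}e^{|x|}$, which follows from Taylor's theorem with Lagrange remainder. Applied with $x = f$ and $|f|\le\norm{f}_\infty$, it gives
\begin{align}
1 - \int e^{-f}\,d\pi \ge \int f\,d\pi - \frac{e^{\norm{f}_\infty}}{2}\int f^2\,d\pi,
\end{align}
which is exactly the claim.

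The only delicate point is confirming that the duality inequality holds with the unnormalized reference $R'$. This is a pointwise consequence of Fenchel's inequality $\alpha\log\alpha - \beta\alpha\ge\alpha-e^\beta$ and needs no attainment statement. Boundedness of $a$, $b$, $c$ and $f$ makes every integral finite. Nothing else is expected to be an obstacle.
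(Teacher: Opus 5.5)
Your proposal is correct and matches the paper's proof essentially step for step. The paper also applies the weak duality \eqref{eq: SSB_duality} with the unnormalized reference $e^{-c-f}(\mu\otimes\nu)$ and tests it with the Schr\"odinger potentials of $c$, which gives $\Gamma_{\mu,\nu}(c+f) \ge \Gamma_{\mu,\nu}(c) - \int (e^{-f}-1)\,d\pi^c_{\mu,\nu}$, and it concludes with the same elementary bound $e^{-y}-1+y \le \frac{y^2}{2}e^{|y|}$; your re-derivation of the inequality from Fenchel's inequality, with the $+1$ accounting for the unit mass of $\pi'$, is exactly how the paper justifies \eqref{eq: SSB_duality}.
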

	
	\begin{proof}
		Since $\int (c+f)\,d\pi + \KL(\pi\Vert\mu\otimes\nu) = \KL(\pi \Vert e^{-c-f}(\mu\otimes\nu))$ for $\pi \in \Pi(\mu,\nu)$, the duality \eqref{eq: SSB_duality} with $h := c + f$ gives
		\begin{align}
			\Gamma_{\mu,\nu}(c+f) \ge \int a\,d\mu + \int b\,d\nu - \int e^{a\oplus b - c - f}\,d(\mu\otimes\nu) + 1
		\end{align}
		for all bounded $a$ and $b$.
		Choosing for $(a,b)$ the Schr\"odinger potentials of $c$, for which $e^{a\oplus b - c}\,d(\mu\otimes\nu) = d\pi^c_{\mu,\nu}$ and $\int a\,d\mu + \int b\,d\nu = \Gamma_{\mu,\nu}(c)$, gives $\Gamma_{\mu,\nu}(c+f) \ge \Gamma_{\mu,\nu}(c) - \int(e^{-f} - 1)\,d\pi^c_{\mu,\nu}$. The claim follows from $e^{-y} - 1 + y \le \frac{y^2}{2}e^{|y|}$.
	\end{proof}
	
	\subsection{Second-order expansion of the EOT value}
	\label{sec: EOT_expansion}
	
	By Lemma~\ref{lem: laplace_identity}, the logarithmic moment generating function of $\Phi_k(g)$ contains the term $N_k\big[\Gamma_k(\kappa_k + f) - \Gamma_k(\kappa_k) - \int f\,d\pi_{\mathbf{Z}_k}\big]$ with a perturbation $f$ of order $N_k^{-1/2}$, and the Gaussian limit arises from its second-order Taylor expansion. The following proposition provides this expansion: it identifies the first two derivatives of the EOT value along a line of costs, the second being minus the squared distance to the additively separable functions, which is the quadratic form of the limiting variance $V(g)$. The same expansion yields the boundary factor in Corollary~\ref{cor: mallows}(i).
	
	In this subsection and in Section~\ref{sec: smooth_cost}, $\mu$ and $\nu$ are arbitrary Borel probability measures on $[0,1]$, all function spaces refer to $\mu$, $\nu$ or $\mu\otimes\nu$, and we abbreviate $\Gamma(c) := \Gamma_{\mu,\nu}(c)$ and $\pi^c := \pi^c_{\mu,\nu}$. For $f, g \in L^\infty(\mu\otimes\nu)$ and $t \in \R$, we write $\pi_t := \pi^{f+tg}$.
	
	\begin{prop}
		\label{thm: smooth_cost_perturbations}
		Let $f, g \in L^{\infty}(\mu \otimes \nu)$. Then $t \mapsto \Gamma(f + tg)$ is $C^{\infty}$ on $\R$, and for every $t \in \R$,
		\begin{align}
			\label{eq: EOT_derivatives}
			\frac{d}{dt}\Gamma(f+tg) = \int g \, d\pi_t, \qquad \frac{d^2}{dt^2}\Gamma(f+tg) = -\norm{(I - P_{\pi_t})g}_{L^2(\pi_t)}^2.
		\end{align}
	\end{prop}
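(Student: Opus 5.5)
The plan is to treat the Schr\"odinger system as a smooth equation in the potentials and apply the implicit function theorem in a Banach space. Write $F(a,b,t) := \big(\int e^{a\oplus b - f - tg}\,d\nu - 1,\ \int e^{a\oplus b - f - tg}\,d\mu - 1\big)$. To remove the gauge freedom $(a+s,b-s)$, take $a \in L^\infty(\mu)$ and $b \in L^\infty_0(\nu) := \{b : \int b\,d\nu = 0\}$, and view $F$ as a map into $L^\infty(\mu)\times L^\infty(\nu)$. The exponential is a real-analytic Nemytskii operator on $L^\infty$, so $F$ is $C^\infty$ (indeed analytic) jointly in $(a,b,t)$.

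The derivative in $(a,b)$ at a solution $(a_t,b_t)$ is $(\alpha,\beta) \mapsto \big(\E_{\pi_t}[\alpha\oplus\beta \mid X],\ \E_{\pi_t}[\alpha\oplus\beta\mid Y]\big)$, read as functions of $x$ and $y$ since the marginals of $\pi_t$ are $\mu,\nu$. This is the main obstacle: we must show it is an isomorphism from $L^\infty(\mu)\times L^\infty_0(\nu)$ onto the compatible pairs, meaning the pairs $(u,v)$ with $\int u\,d\mu = \int v\,d\nu$. The range of $F$ already lies in this closed subspace, since both components integrate to $\pi([0,1]^2)-1$.

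To prove the isomorphism, write the equation as $\alpha + \cK\beta = u$ and $\cK^*\alpha + \beta = v$, where $\cK$ has the kernel $\xi_t = d\pi_t/d(\mu\otimes\nu)$. By Lemma~\ref{lem: density_bounds}, $\xi_t \ge \eta := e^{-2\osc(f+tg)}$. Eliminating $\alpha$ gives $(I - \cK^*\cK)\beta = v - \cK^*u$. The operator $\cK^*\cK$ is a Markov operator with kernel bounded below by $\eta$, so the Doeblin argument of Lemma~\ref{lem: EOT_operator_spectral_radius} gives that it contracts the oscillation by a factor $1-\eta$, in $L^\infty$ as well as $L^2$. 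Hence $I - \cK^*\cK$ is invertible on $L^\infty_0(\nu)$ via a Neumann series. I must check that the right-hand side has the correct mean, which is exactly the compatibility condition. Existence of a solution for every $t$ comes from \eqref{eq: schrodinger_form}. Uniqueness of the normalized potentials, together with the implicit function theorem, then shows that $t\mapsto(a_t,b_t)$ is $C^\infty$ into $L^\infty$.

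For the derivatives I use Lemma~\ref{lem: pythagorean}, which gives $\Gamma(f+tg) = \int a_t\,d\mu + \int b_t\,d\nu$, so $\Gamma$ is $C^\infty$. Differentiating the constraint $\int e^{a_t\oplus b_t - f - tg}\,d(\mu\otimes\nu) = 1$ gives $\int(\dot a_t\oplus\dot b_t - g)\,d\pi_t = 0$, and therefore $\frac{d}{dt}\Gamma = \int(\dot a_t\oplus \dot b_t)\,d\pi_t = \int g\,d\pi_t$. The envelope theorem gives the same result via the duality \eqref{eq: SSB_duality}.

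Differentiating the Schr\"odinger system itself gives $\E_{\pi_t}[\dot a_t\oplus\dot b_t - g\mid X] = 0$ and the same with $Y$. By the normal equations \eqref{eq: normal_equations} of Lemma~\ref{lem: sum_space} (applicable by the density bounds), this says $\dot a_t\oplus\dot b_t = P_{\pi_t}g$. Next I differentiate $\int g\,d\pi_t = \int g\,e^{a_t\oplus b_t - f - tg}\,d(\mu\otimes\nu)$ under the integral, which is justified because the path is smooth in $L^\infty$. This gives
\begin{align}
\frac{d^2}{dt^2}\Gamma = \int g\,(P_{\pi_t}g - g)\,d\pi_t = -\langle g,(I-P_{\pi_t})g\rangle_{L^2(\pi_t)} = -\norm{(I-P_{\pi_t})g}^2_{L^2(\pi_t)},
\end{align}
where the last equality holds because $I - P_{\pi_t}$ is an orthogonal projection.
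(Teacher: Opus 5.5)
Your proof is correct and is essentially the paper's own argument. Both apply the implicit function theorem to the gauge-fixed Schr\"odinger system on $L^\infty(\mu)\times L^\infty_0(\nu)$, with values in the compatible pairs. Both invert the linearization by a Neumann series for $I-\cK^*\cK$ on mean-zero functions, identify $\dot a_t\oplus\dot b_t = P_{\pi_t}g$ through the normal equations \eqref{eq: normal_equations}, and then differentiate $\int g\,d\pi_t$ along the smooth path of potentials.

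The differences are cosmetic. The paper runs the implicit function theorem with the cost $c$ as the parameter and then restricts to the line $f+tg$. It also gets the first derivative by integrating the first differentiated equation, where you differentiate the total-mass identity. Your lower bound $\eta$ on the kernel of $\cK^*\cK$ is correct and slightly sharper than the paper's $\eta^2$.
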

	
	Here $P_{\pi_t}$ is the projection of Definition~\ref{def: sum_space}, which projects onto $\cA$ by Lemmas~\ref{lem: density_bounds} and \ref{lem: sum_space}. The proof is given in Section~\ref{sec: smooth_cost}.
	
	\subsection{Continuity of the variance}
	
	\begin{lemma}[Continuity of the variance functional]
		\label{lem: variance_continuity}
		Let $\pi_k, \pi$ be probability measures on $[0,1]^2$ with densities $W_k, W$ with respect to $u$ such that $c \le W_k, W \le C$ for constants $0 < c \le C$ and $W_k \to W$ in $L^1(u)$. Let $h_k \to h$ in $L^2(u)$, with $h$ bounded. Then
		\begin{align}
			\norm{(I - P_{\pi_k})h_k}^2_{L^2(\pi_k)} \to \norm{(I-P_\pi)h}^2_{L^2(\pi)}.
		\end{align}
	\end{lemma}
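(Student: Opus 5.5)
We reduce everything to the fixed space $L^2(u)$ and use the variational characterization $\norm{(I-P_\pi)h}^2_{L^2(\pi)} = \min_{a\oplus b\in\cA}\int (h - a\oplus b)^2 W\,du$, where $\cA$ now means $\{a\oplus b : a,b\in L^2(\Leb)\}$. By Lemma~\ref{lem: sum_space} this space is closed in each $L^2(\pi_k)$ and in $L^2(\pi)$, because all these norms are equivalent to the $L^2(u)$ norm with constants $c, C$ uniform in $k$. Set $V_k := \norm{(I-P_{\pi_k})h_k}^2_{L^2(\pi_k)}$ and $V := \norm{(I-P_\pi)h}^2_{L^2(\pi)}$. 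We prove $\limsup_k V_k \le V$ and $\liminf_k V_k \ge V$ separately.

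\emph{Upper bound.} Let $a\oplus b = P_\pi h$ be the minimizer for the limit. Because $h$ is bounded, the minimizer should be bounded as well. To justify this, note that by the normal equations \eqref{eq: normal_equations} the pair $(a,b)$ satisfies $a(x) = \E_\pi[h - b(Y)\mid X=x]$, and similarly for $b$. Conditional expectations under $\pi$ have kernels $W(x,y)/\int W(x,y')\,dy'$ bounded by $C/c$, so $b\in L^2$ implies $\E_\pi[b(Y)\mid X]\in L^\infty$, and hence $a, b\in L^\infty$. If this bootstrap is troublesome, we instead approximate $a\oplus b$ in $L^2(u)$ by truncated bounded functions, losing only $\eps$.

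Now use $a\oplus b$ as a competitor for $V_k$:
$$V_k \le \int (h_k - a\oplus b)^2 W_k\,du.$$
Split this as
$$\int (h - a\oplus b)^2 W_k\,du \;+\; \Big(\int (h_k - a\oplus b)^2 W_k\,du - \int (h - a\oplus b)^2 W_k\,du\Big).$$
For the first term, $(h - a\oplus b)^2$ is bounded and $W_k\to W$ in $L^1$, so it converges to $\int (h-a\oplus b)^2 W\,du = V$. For the second term, the integrand difference is bounded by $C\,\norm{h_k - h}_{L^2}\big(\norm{h_k - a\oplus b}_{L^2} + \norm{h - a\oplus b}_{L^2}\big)\to 0$. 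This gives $\limsup_k V_k\le V$.

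\emph{Lower bound.} This is the main obstacle, since the minimizers $w_k = a_k\oplus b_k$ of $V_k$ change with $k$ and we only have weak control on them. Normalize so that $\int b_k\,d\Leb = 0$. Because $V_k \le C\norm{h_k}^2_{L^2}$ is bounded, and by the equivalence of norms (the orthogonal decomposition of $\cA$ in $L^2(u)$ used in the proof of Lemma~\ref{lem: sum_space}), both $a_k$ and $b_k$ are bounded in $L^2(\Leb)$. Pass to a subsequence along which $V_k\to\liminf_k V_k$ and $a_k\rightharpoonup a$, $b_k\rightharpoonup b$ weakly in $L^2$. Then $r_k := h_k - a_k\oplus b_k$ converges weakly in $L^2(u)$ to $r := h - a\oplus b$.

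The functional $r\mapsto \int r^2 W\,du$ is convex and continuous, hence weakly lower semicontinuous. The difficulty is that the weight $W_k$ varies with $k$. Write
$$\int r_k^2 W_k\,du = \int r_k^2 W\,du + \int r_k^2 (W_k - W)\,du.$$
The second term is the problematic one, because $W_k - W\to 0$ only in $L^1$ while $r_k^2$ is merely bounded in $L^1$.

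We would first try to obtain uniform $L^\infty$ bounds on $a_k, b_k$, using the same bootstrap as in the upper bound: the normal equations under $\pi_k$, with kernels bounded by $C/c$, and $\sup_k\norm{h_k}_\infty<\infty$. That last condition holds in the intended application, where $h_k = \Ek_k g$. If $r_k$ is uniformly bounded, the second term is at most $\norm{r_k}^2_\infty\norm{W_k - W}_{L^1}\to 0$.

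For genuinely unbounded $h_k$ the bootstrap fails, and we would instead use a weight splitting. Since $W_k\ge c$ and $W_k\to W$ in measure, $W_k \ge (1-\delta)W$ off a set $E_k$ with $u(E_k)\to 0$. Then
$$\int r_k^2 W_k\,du \ge (1-\delta)\int_{E_k^c} r_k^2 W\,du,$$
and uniform integrability issues are handled by first replacing $h_k$ with $h$, at cost $O(\norm{h_k - h}_{L^2})$ in the square root of the variance.

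Either way we obtain $\liminf_k V_k \ge \int r^2 W\,du \ge V$, since $a\oplus b\in\cA$. Combined with the upper bound, this proves the lemma.
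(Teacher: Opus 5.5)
Your upper bound is correct, and the bootstrap you propose does work. Under $\pi$ the conditional density of $Y$ given $X=x$ is $W(x,y)/\int_0^1W(x,y')\,dy'\le C/c$, so $|\E_\pi[b(Y)\mid X]|\le (C/c)\norm{b}_{L^1}$, and the normal equations give $P_\pi h\in L^\infty$. The paper instead approximates $P_\pi h$ by bounded elements of $\cA$, which is your fallback.

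The lower bound, however, is not closed for the lemma as stated. Your first route assumes $\sup_k\norm{h_k}_\infty<\infty$, which is not a hypothesis. In the weight-splitting route, the decisive inequality $\liminf_k\int_{E_k^c}r_k^2W\,du\ge\int r^2W\,du$ is only asserted. Replacing $h_k$ by $h$ does not by itself make $r_k^2$ uniformly integrable, because $r_k$ still contains the minimizers $w_k$, which you control only in $L^2$.

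Either of two short observations repairs this.

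\emph{(a)} Since $I-P_{\pi_k}$ is a contraction on $L^2(\pi_k)$ and $W_k\le C$, one has $\bigl|V_k^{1/2}-\norm{(I-P_{\pi_k})h}_{L^2(\pi_k)}\bigr|\le\sqrt C\,\norm{h_k-h}_{L^2(u)}\to0$. So you may take $h_k\equiv h$ from the outset. Your bootstrap is then uniform in $k$: with $\int b_k=0$, $\norm{a_k}_{L^2}^2+\norm{b_k}_{L^2}^2=\norm{P_{\pi_k}h}^2_{L^2(u)}\le (C/c)\norm{h}_\infty^2$. Hence $\sup_k\norm{r_k}_\infty<\infty$ and $\int r_k^2(W_k-W)\,du\to0$.

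\emph{(b)} Alternatively, drop uniform integrability altogether. For $\varphi\in L^2(u)$, $|\int\varphi\,r_k\1_{E_k}\,du|\le\sup_j\norm{r_j}_{L^2}\norm{\varphi\1_{E_k}}_{L^2}\to0$, since $u(E_k)\to0$. So $\1_{E_k^c}r_k\rightharpoonup r$, hence $\sqrt W\,\1_{E_k^c}r_k\rightharpoonup\sqrt W\,r$. Weak lower semicontinuity of the norm, followed by $\delta\downarrow0$, gives the bound.

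Route (b) is essentially the paper's proof. After passing to a subsequence with $W_k\to W$ a.e., the paper shows $\sqrt{W_k}\,r_k\rightharpoonup\sqrt W\,r$, because $\norm{\varphi(\sqrt{W_k}-\sqrt W)}_{L^2}\to0$ by dominated convergence. It then concludes by weak lower semicontinuity. Putting the varying weight inside the weakly convergent sequence, instead of splitting off $\int r_k^2(W_k-W)\,du$ additively, is exactly what makes $L^\infty$ control of the minimizers unnecessary. Route (a) is a genuinely different, more hands-on argument that yields such control as a by-product.
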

	
	\begin{proof}
		The marginal densities of $\pi_k$ and $\pi$ take values in $[c,C]$. Hence the space $\cA$ of additively separable functions is the same closed subspace of $L^2(u)$ for all these measures, and $\norm{(I - P_\pi)h}^2_{L^2(\pi)} = V_\pi(h) := \inf_{w\in\cA}\int (h - w)^2W\,du$ by Lemma~\ref{lem: sum_space}; similarly for $\pi_k$.
		
		\emph{Upper bound.} Given $\eps > 0$, since $L^{\infty}(u) \cap \mathcal{A}$ is dense in $\cA$, we may choose $w = a\oplus b$ with $a, b$ bounded and $\int (h-w)^2W\,du \le V_\pi(h) + \eps$. Then $V_{\pi_k}(h_k) \le \int (h_k - w)^2W_k\,du$ and
		\begin{align}
			\Big|\int (h_k - w)^2W_k\,du - \int (h-w)^2W\,du\Big| &\le C\norm{h_k - h}_{L^2}\norm{h_k + h - 2w}_{L^2} \\
			&\quad + \Big|\int (h-w)^2(W_k - W)\,du\Big| \to 0.
		\end{align}
		The last term is at most $\norm{h-w}_\infty^2\norm{W_k - W}_{L^1(u)}$, since $h$ and $w$ are bounded. Hence $\limsup_k V_{\pi_k}(h_k) \le V_\pi(h) + \eps$ for every $\eps > 0$.
		
		
		
		\emph{Lower bound.} Pass to a subsequence along which $V_{\pi_k}(h_k)$ converges to its $\liminf$  and for which $W_k \to W$ $u$-a.e., and choose $w_k \in \cA$ with $\int(h_k - w_k)^2W_k\,du \le V_{\pi_k}(h_k) + 1/k$. The sequence $v_k := h_k - w_k$ is bounded in $L^2(u)$, so along a further subsequence $v_k$ converges weakly to some $\tilde v$ in $L^2(u)$. Then $w_k = h_k - v_k \rightarrow h - \tilde v$ weakly, and $h - \tilde v \in \cA$, since closed subspaces are weakly closed. Now for $\varphi \in L^2(u)$,
		\begin{align}
			\int\varphi\sqrt{W_k}\,v_k\,du - \int\varphi\sqrt{W}\,\tilde v\,du = \int\varphi\sqrt W\,(v_k - \tilde v)\,du + \int \varphi\big(\sqrt{W_k} - \sqrt{W}\big)v_k\,du \to 0,
		\end{align}
		because $\norm{\varphi(\sqrt{W_k} - \sqrt W)}_{L^2} \to 0$ by dominated convergence. 
		Thus $\sqrt{W_k}\,v_k \rightarrow \sqrt{W}\,\tilde v$ weakly, and by weak lower semicontinuity of the norm, $\liminf_k V_{\pi_k}(h_k) \ge \int \tilde v^2W\,du \ge V_\pi(h)$.
	\end{proof}

	\subsection{Uniform exponential moments}
	The last two claims of Theorem~\ref{thm: CLT} and the functional limit theorems require bounds on $\Phi_k(g)$ that are uniform in $k$ and $g$. These follow from the Laplace identity \eqref{eq: laplace_identity} and the uniform partition function bounds of Proposition~\ref{prop: uniform_partition_bounds}. Let $C_U := C_{K_1}$ and $N_U := N_{K_1}$ be the constants of Proposition~\ref{prop: uniform_partition_bounds} for $K = K_1$.
	
	\begin{prop}
		\label{prop: exp_moments}
		For all $k$ with $N_k \ge N_U$, all $g \in L^\infty([0,1]^2)$, and all $t\in\R$ with $|t|\norm{\Ek_kg}_\infty \le \sqrt{N_k}$,
		\begin{align}
			\E\big[e^{t\Phi_k(g)}\big] \le C_U\exp\Big(\frac{e}{2}\,t^2\norm{\Ek_kg}_{L^2(\pi_{\mathbf{Z}_k})}^2\Big).
		\end{align}
	\end{prop}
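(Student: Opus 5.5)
We start from the exact Laplace identity of Lemma~\ref{lem: laplace_identity}, with $f := -tN_k^{-1/2}\Ek_k g$:
\begin{align}
	\log\E\big[e^{t\Phi_k(g)}\big] = \log\frac{\cD_k(\kappa_k + f)}{\cD_k(\kappa_k)} - N_k\Big[\Gamma_k(\kappa_k + f) - \Gamma_k(\kappa_k) - \int f\,d\pi_{\mathbf{Z}_k}\Big].
\end{align}
We bound the two terms on the right-hand side separately. Both bounds must be uniform in $k$ and $g$, so neither can rely on a limit theorem.

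For the ratio of partition functions we use Proposition~\ref{prop: uniform_partition_bounds} with $K = K_1$. Since $f$ is grid-step and $\norm{f}_\infty = |t|N_k^{-1/2}\norm{\Ek_kg}_\infty \le 1$ by hypothesis, we get
\begin{align}
	\osc(\kappa_k + f) \le \osc(\kappa_k) + 2\norm{f}_\infty \le \sup_k\osc(\kappa_k) + 2 = K_1.
\end{align}
The constant $K_1$ in \eqref{eq: def_K1} contains the "$+2$" exactly to absorb this. Also $\osc(\kappa_k) \le K_1$. Hence, for $N_k \ge N_U$, Proposition~\ref{prop: uniform_partition_bounds} gives $\cD_k(\kappa_k + f) \le C_U$ and $\cD_k(\kappa_k) \ge 1$, the latter being the van der Waerden bound. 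So the first term is at most $\log C_U$.

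For the EOT term we use Lemma~\ref{lem: quadratic_lower_bound} with $\mu = \rho_{\r_k}$, $\nu = \rho_{\c_k}$ and $c = \kappa_k$. Recall that $\pi^{\kappa_k}_{\rho_{\r_k},\rho_{\c_k}} = \pi_{\mathbf{Z}_k}$ by Lemma~\ref{lem: typical_table_bridge}. The lemma gives
\begin{align}
	\Gamma_k(\kappa_k+f) - \Gamma_k(\kappa_k) - \int f\,d\pi_{\mathbf{Z}_k} \ge -\frac{e^{\norm{f}_\infty}}{2}\int f^2\,d\pi_{\mathbf{Z}_k} \ge -\frac{e}{2}\,\frac{t^2}{N_k}\norm{\Ek_kg}^2_{L^2(\pi_{\mathbf{Z}_k})},
\end{align}
using $\norm{f}_\infty \le 1$ again. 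Multiplying by $-N_k$ bounds the second term by $\frac{e}{2}t^2\norm{\Ek_kg}^2_{L^2(\pi_{\mathbf{Z}_k})}$. Adding this to the bound on the first term and exponentiating gives the claim.

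The only delicate point is checking that the perturbed cost stays in the class where the uniform upper bound of Proposition~\ref{prop: uniform_partition_bounds} applies. This is exactly where the restriction $|t|\norm{\Ek_kg}_\infty \le \sqrt{N_k}$ is used: it gives $\norm{f}_\infty \le 1$, which is needed both for the oscillation bound and for the factor $e^{\norm{f}_\infty} \le e$. Everything else is a direct combination of earlier results.
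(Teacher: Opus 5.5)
Your proof is correct and is essentially the paper's argument. You bound the partition-function ratio in the exact Laplace identity by Proposition~\ref{prop: uniform_partition_bounds} with $K = K_1$, using that $\norm{f}_\infty \le 1$ forces $\osc(\kappa_k + f) \le K_1$. You bound the entropic term by Lemma~\ref{lem: quadratic_lower_bound} with $c = \kappa_k$, together with $\pi^{\kappa_k}_{\rho_{\r_k},\rho_{\c_k}} = \pi_{\mathbf{Z}_k}$ from Lemma~\ref{lem: typical_table_bridge}.
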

	
	\begin{proof}
		In \eqref{eq: laplace_identity} we have $\norm{f}_\infty \le 1$, so $\osc(\kappa_k + f) \le K_1$, and Proposition~\ref{prop: uniform_partition_bounds} gives $\cD_k(\kappa_k + f) \le C_U$ and $\cD_k(\kappa_k) \ge 1$. By Lemma~\ref{lem: quadratic_lower_bound} with $c = \kappa_k$ and Lemma~\ref{lem: typical_table_bridge}, the second term in \eqref{eq: laplace_identity} is at most $\frac{N_k}{2}e\int f^2\,d\pi_{\mathbf{Z}_k} = \frac{e}{2}\,t^2\norm{\Ek_kg}^2_{L^2(\pi_{\mathbf{Z}_k})}$.
	\end{proof}
	
	\begin{corollary}
		\label{cor: moments}
		For every integer $p \ge 1$ there is a constant $C_p$, depending only on $p$ and $K_1$, such that for all $k$ with $N_k \ge N_U$ and all $g \in L^\infty([0,1]^2)$,
		\begin{align}
			\E|\Phi_k(g)|^p \le C_p\Big(\norm{\Ek_kg}_{L^2(\pi_{\mathbf{Z}_k})}^p + N_k^{-p/2}\norm{\Ek_kg}_\infty^p\Big) \le C_p\Big(\norm{g}_{L^2(\pi_{\mathbf{Z}_k})}^p + N_k^{-p/2}\norm{g}_\infty^p\Big).
		\end{align}
	\end{corollary}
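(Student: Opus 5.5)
The plan is to derive the moment bounds from the exponential moment bound of Proposition~\ref{prop: exp_moments} by the standard tail-integration argument. The bound there holds only for $|t| \le \sqrt{N_k}/\norm{\Ek_k g}_\infty$, so the tail of $\Phi_k(g)$ will be sub-gamma rather than sub-Gaussian. This is what produces the two terms in the estimate. Write $\sigma := \norm{\Ek_kg}_{L^2(\pi_{\mathbf{Z}_k})}$ and $b := N_k^{-1/2}\norm{\Ek_kg}_\infty$, and assume $g$ is not a.e.\ zero; otherwise $\Phi_k(g) = 0$ and there is nothing to prove.

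\emph{Tail bound.} By Chebyshev's inequality applied to $e^{\pm t\Phi_k(g)}$, for $0 \le t \le 1/b$ we get
\begin{align}
\P(|\Phi_k(g)| \ge x) \le 2C_U\exp\big(-tx + \tfrac{e}{2}t^2\sigma^2\big).
\end{align}
We then choose $t$ in two regimes.
\begin{itemize}
\item If $x \le e\sigma^2/b$, take $t = x/(e\sigma^2)$. This $t$ is admissible and gives the bound $2C_U e^{-x^2/(2e\sigma^2)}$.
\item Otherwise, take $t = 1/b$. Since $\tfrac{e}{2}\sigma^2/b^2 \le x/(2b)$ in this regime, the bound is $2C_U e^{-x/(2b)}$.
\end{itemize}
Combining the two cases gives
\begin{align}
\P(|\Phi_k(g)| \ge x) \le 2C_U\big(e^{-x^2/(2e\sigma^2)} + e^{-x/(2b)}\big)
\end{align}
for all $x \ge 0$.

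\emph{Moment bound.} Integrate the tail: $\E|\Phi_k(g)|^p = \int_0^\infty p x^{p-1}\P(|\Phi_k(g)|\ge x)\,dx$. The Gaussian term contributes $C\,\sigma^p\,\Gamma(p/2+1)$ and the exponential term contributes $C\,b^p\,\Gamma(p+1)$, with constants depending only on $C_U$, hence only on $K_1$, and on $p$. This yields the first inequality. The bound is uniform in $k \ge$ (those with $N_k \ge N_U$) and in $g$ because $C_U$ depends only on $K_1$.

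\emph{Second inequality.} It suffices to show $\norm{\Ek_kg}_{L^2(\pi_{\mathbf{Z}_k})} \le \norm{g}_{L^2(\pi_{\mathbf{Z}_k})}$ and $\norm{\Ek_kg}_\infty \le \norm{g}_\infty$. The second was noted earlier. For the first, $\pi_{\mathbf{Z}_k}$ has a grid-step density $\varphi_{\mathbf{Z}_k}$, so
\begin{align}
\int (\Ek_kg)^2\varphi_{\mathbf{Z}_k}\,du = \int \Ek_k\big[(\Ek_kg)^2\big]\varphi_{\mathbf{Z}_k}\,du \le \int \Ek_k(g^2)\,\varphi_{\mathbf{Z}_k}\,du = \int g^2\varphi_{\mathbf{Z}_k}\,du.
\end{align}
Here the inequality is Jensen's inequality for the conditional expectation $\Ek_k$, and the last equality holds because $\Ek_k$ is self-adjoint in $L^2(u)$ and fixes grid-step functions. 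No step is a serious obstacle; the only care needed is the admissibility of $t$ in the first regime, which is exactly what the split at $x = e\sigma^2/b$ ensures.
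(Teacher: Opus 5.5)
Your argument is correct, but it takes a somewhat different route from the paper. Both proofs start from Proposition~\ref{prop: exp_moments}. You turn it into a two-regime (sub-gamma) Chernoff tail bound and then integrate the tail.

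The paper avoids tails altogether. It takes the single admissible value $t := \min\{\varsigma^{-1}, \sqrt{N_k}/\norm{\Ek_kg}_\infty\}$, where $\varsigma = \norm{\Ek_kg}_{L^2(\pi_{\mathbf{Z}_k})}$. It then uses the pointwise inequality $|y|^p \le p!\,t^{-p}(e^{ty} + e^{-ty})$. This gives $\E|\Phi_k(g)|^p \le 2p!\,C_Ue^{e/2}t^{-p}$ with $t^{-p} \le \varsigma^p + N_k^{-p/2}\norm{\Ek_kg}_\infty^p$, in three lines; the regime split is absorbed into the minimum defining $t$.

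Your version is longer, but it gives better $p$-dependence in the variance term: the prefactor of $\varsigma^p$ is $(2e)^{p/2}\Gamma(p/2+1)$ rather than $p!$. This does not matter here, since only some constant $C_p$ is needed. Your treatment of the second inequality is the same as the paper's: Jensen for $\Ek_k$, plus the fact that $\varphi_{\mathbf{Z}_k}$ is grid-step.

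One small correction concerns the degenerate case. It is not ``$g = 0$ a.e.'' but ``$\Ek_k g = 0$''. If $g$ has zero average on every cell, then $\sigma = b = 0$ even though $g \neq 0$, and your choices $t = x/(e\sigma^2)$ and $t = 1/b$ are undefined. This is harmless, because then $\Phi_k(g) = \Phi_k(\Ek_kg) = 0$ by \eqref{eq: cell_average_invariance}. When $\Ek_kg \neq 0$, you have $b > 0$ and also $\sigma > 0$, since $\varphi_{\mathbf{Z}_k}$ is positive on every cell by \eqref{eq: typical_table_tilting}. The paper makes exactly this case distinction.
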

	
	\begin{proof}
		Let $\varsigma := \norm{\Ek_kg}_{L^2(\pi_{\mathbf{Z}_k})}$. If $\varsigma = 0$, then $\Ek_k g = 0$, since $\varphi_{\mathbf{Z}_k}$ is positive on every cell by \eqref{eq: typical_table_tilting}, and $\Phi_k(g) = \Phi_k(\Ek_k g) = 0$. Otherwise, let
		\begin{align}
			t := \min\big\{\varsigma^{-1},\ \sqrt{N_k}/\norm{\Ek_kg}_\infty\big\}.
		\end{align}
		Using $|y|^p \le p!\,t^{-p}(e^{ty} + e^{-ty})$ and Proposition~\ref{prop: exp_moments} with $\pm t$, we get $\E|\Phi_k(g)|^p \le 2p!\,C_Ue^{e/2}\,t^{-p}$, and $t^{-p} \le \varsigma^p + N_k^{-p/2}\norm{\Ek_kg}_\infty^p$. For the second inequality, note that $\norm{\Ek_kg}_\infty \le \norm{g}_\infty$ and, by Jensen's inequality and \eqref{eq: cell_average_invariance}, $\norm{\Ek_kg}^2_{L^2(\pi_{\mathbf{Z}_k})} \le \int\Ek_k(g^2)\,d\pi_{\mathbf{Z}_k} = \int g^2\,d\pi_{\mathbf{Z}_k}$.
	\end{proof}
	
	\subsection{Warm-up: cloned margins}
	\label{sec: cloned}
	
	Before proving Theorem~\ref{thm: CLT}, we treat cloned margins \cite[Example~1.9]{lyu2024large}, for which Assumption~\ref{assumption: bounded_cost} fails in general. Fix positive integer margins $(\r_0,\c_0) \in \N^{m_0}\times\N^{n_0}$ with total $N_0$ and a prior mean $\bLambda_0 \in \R^{m_0\times n_0}_{>0}$, with Schr\"odinger bridge $\mathbf{Z}_0$, cost function $\kappa_0$ and $T_0 := (q_{\r_0},q_{\c_0})$. For $k \in \N$, the $k$-cloned problem has the margins $\r_k := k\r_0\otimes\mathbf{1}_k$ and $\c_k := k\c_0\otimes\mathbf{1}_k$ and the prior mean $\bLambda_k := \bLambda_0\otimes\mathbf{1}_k\mathbf{1}_k^\top$, where $\mathbf{1}_k \in \R^k$ is the all-ones vector and $\otimes$ is the Kronecker product. Then $m_k = km_0$, $n_k = kn_0$ and $N_k = k^2N_0$, and Definition~\ref{def: histogram} and \eqref{eq: def_kappa} give $\rho_{\r_k} = \rho_{\r_0}$, $\rho_{\c_k} = \rho_{\c_0}$, $T_k = T_0$ and $\kappa_k = \kappa_0$ for all $k$. Consequently, $\pi_{\mathbf{Z}_k} = \pi^{\kappa_0}_{\rho_{\r_0},\rho_{\c_0}} = \pi_{\mathbf{Z}_0}$ for all $k$ by Lemma~\ref{lem: typical_table_bridge} (in fact, $\mathbf{Z}_k = \mathbf{Z}_0\otimes\mathbf{1}_k\mathbf{1}_k^\top$). Let $V$ and $\Xi$ be as in Section~\ref{sec: main_fluctuations} with $\pi_\infty := \pi_{\mathbf{Z}_0}$.
	
	\begin{prop}[Central limit theorem for cloned margins]
		\label{prop: cloned_CLT}
		For all $g_1, \ldots, g_p \in L^\infty([0,1]^2)$, $(\Phi_k(g_1), \ldots, \Phi_k(g_p)) \Rightarrow (\Xi(g_1), \ldots, \Xi(g_p))$.
	\end{prop}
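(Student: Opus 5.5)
By the Cram\'er--Wold device it suffices to show $\Phi_k(g)\Rightarrow\mathcal N(0,V(g))$ for every $g\in L^\infty([0,1]^2)$; we take $g=\sum_l\theta_lg_l$ and use linearity of $\Phi_k$ and of $I-P$. The argument shows that the moment generating function converges pointwise, through the exact Laplace identity (Lemma~\ref{lem: laplace_identity}).

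Fix $t\in\R$ and set $f_k:=-tN_k^{-1/2}\Ek_kg$. Since $\|f_k\|_\infty\to0$ and $\kappa_k=\kappa_0$ for all $k$, we have $\kappa_k+f_k\to\kappa_0$ uniformly. All the margins are fixed, $\rho_{\r_k}=\rho_{\r_0}$ and $\rho_{\c_k}=\rho_{\c_0}$, so the weak convergence hypothesis of Theorem~\ref{thm: permuton_partition_det_limit} holds trivially. The costs $\kappa_k+f_k$ are grid-step on the $m_k\times n_k$ grid. The limit $\kappa_0$ is grid-step on the coarse grid and in general not continuous, so Theorem~\ref{thm: permuton_partition_det_limit} does not apply verbatim. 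Instead we use its proof directly. By Lemma~\ref{lem: partition_fredholm}, $\cD_k(h)=(1+o(1))\detF(I-\cS_h^*\cS_h)^{-1/2}$ uniformly over grid-step $h$ with bounded oscillation. Here $\cS_h$ has kernel $\xi_h\circ T_0-1$, because $T_k=T_0$. By Lemma~\ref{lem: TV_cost_stability} and Lemma~\ref{lem: density_bounds},
\[
\|\xi_{\kappa_0+f_k}-\xi_{\kappa_0}\|^2_{L^2(\rho_{\r_0}\otimes\rho_{\c_0})}\le C\|f_k\|_\infty\to0 .
\]
Hence $\cS_{\kappa_0+f_k}\to\cS_{\kappa_0}$ in Hilbert--Schmidt norm, and the Fredholm determinants converge exactly as in Step~2 of the proof of Theorem~\ref{thm: permuton_partition_det_limit}. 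Therefore $\cD_k(\kappa_0+f_k)/\cD_k(\kappa_0)\to1$, and the first term of \eqref{eq: laplace_identity} vanishes in the limit.

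For the EOT term we apply Proposition~\ref{thm: smooth_cost_perturbations} with $\mu=\rho_{\r_0}$, $\nu=\rho_{\c_0}$, base cost $\kappa_0$ and direction $h_k:=\Ek_kg$. Taylor's theorem with integral remainder gives
\[
N_k\Big[\Gamma(\kappa_0+f_k)-\Gamma(\kappa_0)-\int f_k\,d\pi_{\mathbf Z_0}\Big]
=-t^2\int_0^1(1-s)\,\|(I-P_{\pi_{s,k}})h_k\|^2_{L^2(\pi_{s,k})}\,ds ,
\]
where $\pi_{s,k}:=\pi^{\kappa_0-stN_k^{-1/2}h_k}$. The densities of $\pi_{s,k}$ with respect to $u$ are bounded above and below uniformly, by Lemma~\ref{lem: density_bounds} combined with bounds on the marginal densities $\rho_{\r_0},\rho_{\c_0}$, which are fixed positive step functions. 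By Lemma~\ref{lem: TV_cost_stability} these densities converge in $L^1(u)$ to that of $\pi_{\mathbf Z_0}$, uniformly in $s$. Moreover $h_k\to g$ in $L^2(u)$ with $g$ bounded, since $\Ek_k$ is the conditional expectation onto a refining sequence of grids whose mesh tends to zero. Lemma~\ref{lem: variance_continuity} then shows that the integrand converges to $V(g)$. The integrand is uniformly bounded by $C\|g\|^2_\infty$, so dominated convergence in $s$ yields the limit $-\tfrac{t^2}{2}V(g)$. Substituting into \eqref{eq: laplace_identity}, we get $\log\E e^{t\Phi_k(g)}\to \tfrac{t^2}{2}V(g)$ for every real $t$. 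Convergence of moment generating functions on all of $\R$ then gives $\Phi_k(g)\Rightarrow\mathcal N(0,V(g))$.

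The step I expect to be most delicate is the partition-function ratio, because $\kappa_0$ is discontinuous. The fix is that the quantile maps do not move ($T_k=T_0$), so the continuity of $\kappa_\infty$ used in Lemma~\ref{lem: operator_HS_convergence} is never needed: the total-variation stability estimate is applied at fixed margins. The variance limit $V(g)$ is defined with $\pi_\infty=\pi_{\mathbf Z_0}$, which matches Definition~\ref{def: SB_white_noise}, so the resulting covariance is that of $\Xi$.
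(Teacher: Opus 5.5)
Your proposal is correct and follows the paper's proof essentially step for step. The ingredients are the same: the Laplace identity; Lemma~\ref{lem: partition_fredholm} with the fixed map $T_0$, together with Lemma~\ref{lem: TV_cost_stability}, for the partition-function ratio; Proposition~\ref{thm: smooth_cost_perturbations} with Lemma~\ref{lem: variance_continuity} for the entropic term; and then convergence of moment generating functions and Cram\'er--Wold. The only difference is that you use Taylor's integral remainder with dominated convergence in $s$, where the paper uses the Lagrange form at an intermediate point $\theta_k$.

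One small nit: the grids $km_0\times kn_0$ are not nested, so $\Ek_k g\to g$ in $L^2(u)$ should be justified by the mesh tending to zero. Approximate $g$ by a continuous function and use that $\Ek_k$ is an $L^2$ contraction; a refining-filtration or martingale argument does not apply here.
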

	
	\begin{proof}
		Let $g \in L^\infty([0,1]^2)$ and $t \in \R$, and let $s_k := -tN_k^{-1/2}$ and $h_k := \Ek_kg$, so that $f = s_kh_k$ in Lemma~\ref{lem: laplace_identity}. For bounded costs $c$, we write $\pi(c) := \pi^c_{\rho_{\r_0},\rho_{\c_0}}$ and $\xi_c := \frac{d\pi(c)}{d(\rho_{\r_0}\otimes\rho_{\c_0})}$. We consider only $k$ so large that $|s_k|\norm{g}_\infty \le 1$, so that $\osc(\kappa_0 + \theta h_k) \le K_1 = \osc(\kappa_0) + 2$ for $|\theta| \le |s_k|$.
		
		\emph{The partition function term.} By Lemma~\ref{lem: partition_fredholm}, $\cD_k(c) = (1 + o(1))\detF(I - \cS_c^*\cS_c)^{-1/2}$ for $c \in \{\kappa_0, \kappa_0 + s_kh_k\}$, where $\cS_c$ is the integral operator on $L^2[0,1]$ with kernel $\xi_c\circ T_0 - 1$. By Lemma~\ref{lem: TV_cost_stability}, $\norm{\pi(\kappa_0 + s_kh_k) - \pi(\kappa_0)}_{\mathrm{TV}} \le |s_k|\norm{g}_\infty$. Since $(T_0)_{\#}u = \rho_{\r_0}\otimes\rho_{\c_0}$ and the densities $\xi_c$ take values in $[e^{-2K_1},e^{2K_1}]$ by Lemma~\ref{lem: density_bounds},
		\begin{align}
			\norm{\cS_{\kappa_0 + s_kh_k} - \cS_{\kappa_0}}_{\mathrm{HS}}^2 = \int\big(\xi_{\kappa_0 + s_kh_k} - \xi_{\kappa_0}\big)^2\,d(\rho_{\r_0}\otimes\rho_{\c_0}) \le e^{2K_1}|s_k|\norm{g}_\infty \to 0.
		\end{align}
		Hence $\cS^*_{\kappa_0 + s_kh_k}\cS_{\kappa_0 + s_kh_k} \to \cS^*_{\kappa_0}\cS_{\kappa_0}$ in the trace norm, as in Step~2 of the proof of Theorem~\ref{thm: permuton_partition_det_limit}. Since the Fredholm determinant is continuous in the trace norm, and the nonzero singular values of $\cS_{\kappa_0}$ are at most $1 - e^{-2K_1}$ by Lemma~\ref{lem: partition_fredholm}, $\cD_k(\kappa_0 + s_kh_k)$ and $\cD_k(\kappa_0)$ both converge to $\detF(I - \cS^*_{\kappa_0}\cS_{\kappa_0})^{-1/2} \in [1,\infty)$, and the first term in \eqref{eq: laplace_identity} tends to zero.
		
		\emph{The entropic term.} By Proposition~\ref{thm: smooth_cost_perturbations}, applied with $\mu = \rho_{\r_0}$, $\nu = \rho_{\c_0}$, the cost $\kappa_0$ and the direction $h_k$, Taylor's theorem gives $\theta_k$ between $0$ and $s_k$ such that, with $\pi'_k := \pi(\kappa_0 + \theta_kh_k)$,
		\begin{align}
			\label{eq: taylor_entropic}
			-N_k\Big[\Gamma_k(\kappa_0 + s_kh_k) - \Gamma_k(\kappa_0) - s_k\int h_k\,d\pi_{\mathbf{Z}_0}\Big] = \frac{t^2}{2}\norm{(I - P_{\pi'_k})h_k}^2_{L^2(\pi'_k)}.
		\end{align}
		The densities of $\rho_{\r_0}$ and $\rho_{\c_0}$ are bounded above and below by positive constants, since the margins are positive, and hence, by Lemma~\ref{lem: density_bounds}, so are the densities of $\pi'_k$ and $\pi_\infty = \pi(\kappa_0)$ with respect to $u$, uniformly in $k$. Moreover, $\norm{\pi'_k - \pi_\infty}_{\mathrm{TV}} \le |s_k|\norm{g}_\infty \to 0$ by Lemma~\ref{lem: TV_cost_stability}, and $h_k \to g$ in $L^2(u)$, because the mesh of the grid tends to zero (Section~\ref{sec: laplace_section}). By Lemma~\ref{lem: variance_continuity}, the right-hand side of \eqref{eq: taylor_entropic} converges to $\frac{t^2}{2}V(g)$.
		
		By \eqref{eq: laplace_identity}, it follows that $\E[e^{t\Phi_k(g)}] \to e^{t^2V(g)/2}$ for every $t \in \R$. Convergence of moment generating functions on $\R$ implies convergence in distribution \cite{curtiss1942note}, so $\Phi_k(g) \Rightarrow \mathcal{N}(0,V(g))$. For the joint convergence, note that $\sum_{i}t_i\Phi_k(g_i) = \Phi_k(\sum_i t_ig_i)$ converges in distribution to $\mathcal{N}(0, V(\sum_it_ig_i))$, which is the law of $\sum_i t_i\Xi(g_i)$. The Cram\'er--Wold device completes the proof.
	\end{proof}
	
	For $m_0 = n_0$, $\r_0 = \c_0 = \mathbf{1}_{m_0}$ and $\bLambda_0 = \mathbf{B}$, the matrix $\widetilde{\bLambda}_k$ of the $k$-cloned problem is the block-uniform matrix $\mathbf{B}\otimes\mathbf{1}_{k^2}\mathbf{1}_{k^2}^\top$ of \cite{ottolini2026permanent}, and Proposition~\ref{prop: cloned_CLT} describes the joint Gaussian fluctuations of linear statistics of the corresponding Gibbs random permutations.
	
	\subsection{Proof of Theorem~\ref{thm: CLT}}
	\label{sec: proof_CLT}
	
	Compared with cloned margins, the general case requires two additional ingredients, since the margins now depend on $k$ and converge only weakly. First, we pass to permuton coordinates, in which the densities of the optimal permutons are bounded above and below for arbitrary margins (Lemma~\ref{lem: permuton_coordinates}). Second, these densities converge in $L^2(u)$ under weak convergence of the margins and uniform convergence of the costs to a continuous limit (Lemma~\ref{lem: operator_HS_convergence}). The test functions must then be transported to permuton coordinates, which is done in the following lemma.
	
	\begin{lemma}[Transported cell averages]
		\label{lem: transported_cell_averages}
		Assume \ref{assumption: margins_weak_convergence}.
		\begin{enumerate}[label=(\alph*)]
			\item If $g$ is a bounded Borel function on $[0,1]^2$ that is continuous at $(\rho_{\r_\infty}\otimes\rho_{\c_\infty})$-almost every point, then $(\Ek_kg)\circ T_k \to g\circ T_\infty$ in $L^2(u)$.
			\item Under hypothesis~(ii) of Theorem~\ref{thm: CLT}, $\rho_{\r_\infty}$ and $\rho_{\c_\infty}$ are absolutely continuous, and $(\Ek_kg)\circ T_k \to g\circ T_\infty$ in $L^2(u)$ for every $g \in L^\infty([0,1]^2)$.
		\end{enumerate}
	\end{lemma}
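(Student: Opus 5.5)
We plan to handle (a) and (b) by a common decomposition. Write
\begin{align}
(\Ek_kg)\circ T_k - g\circ T_\infty = \big((\Ek_kg)\circ T_k - g\circ T_k\big) + \big(g\circ T_k - g\circ T_\infty\big).
\end{align}
The $L^2(u)$ norm of the first term equals $\norm{\Ek_kg - g}_{L^2(\rho_{\r_k}\otimes\rho_{\c_k})}$, since $(T_k)_{\#}u = \rho_{\r_k}\otimes\rho_{\c_k}$. Everything is bounded by $2\norm{g}_\infty$, so it suffices to prove convergence in $u$-measure, or $u$-a.e. along subsequences.

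For (a), the second term tends to zero $u$-a.e. by dominated convergence. Indeed, $T_k \to T_\infty$ $u$-a.e. because the quantile functions converge at continuity points of the limits (as in the proof of Lemma~\ref{lem: convergence_of_pushfowards}). Moreover, $g$ is continuous at $T_\infty(s,t)$ for $u$-a.e. $(s,t)$, since $(T_\infty)_{\#}u = \rho_{\r_\infty}\otimes\rho_{\c_\infty}$ and $g$ is continuous off a null set of that measure. For the first term, we avoid integrating against the moving measures and compare pointwise in permuton coordinates. Fix $(s,t)$ with $T_\infty(s,t)=p$ a continuity point of $g$ and $T_k(s,t)\to p$. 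Then $T_k(s,t)$ lies in a cell $A_{ij}$ of diameter at most $\sqrt{m_k^{-2}+n_k^{-2}}\to0$. Every point of that cell is therefore within $o(1)$ of $p$, and continuity of $g$ at $p$ gives $|(\Ek_kg)(T_k(s,t)) - g(T_k(s,t))| \le 2\sup_{|q-p|\le\delta_k}|g(q)-g(p)| \to 0$. Hence both terms tend to zero $u$-a.e., and dominated convergence finishes (a). This uses only Assumption~\ref{assumption: margins_weak_convergence}.

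For (b), absolute continuity of the limits follows from uniform integrability of the densities $\rho_{\r_k}$ via the Dunford--Pettis theorem. A weakly convergent subsequence in $L^1$ exists, and its limit must equal the weak limit measure, so $\rho_{\r_\infty}\ll\Leb$, and similarly for the columns. For the convergence we approximate $g$ by a continuous $\tilde g$ with $\norm{g-\tilde g}_{L^1(u)}<\eps$ and $\norm{\tilde g}_\infty\le\norm{g}_\infty$ (Lusin). We then treat $(\Ek_k\tilde g)\circ T_k \to \tilde g\circ T_\infty$ by part (a). The remaining errors are $\norm{(\Ek_k(g-\tilde g))\circ T_k}_{L^2(u)}^2$ and $\norm{(g-\tilde g)\circ T_\infty}_{L^2(u)}^2$, which are bounded respectively by
\begin{align}
2\norm{g}_\infty\int|\Ek_k(g-\tilde g)|\,\rho_{\r_k}\otimes\rho_{\c_k}\,du, \qquad 2\norm{g}_\infty\int|g-\tilde g|\,\rho_{\r_\infty}\otimes\rho_{\c_\infty}\,du.
\end{align}

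\textbf{Main obstacle: controlling these integrals uniformly in $k$.} We expect this to be the hardest step, because we only know that $\norm{g-\tilde g}_{L^1(u)}$ is small, not its integral against the varying densities. Uniform integrability gives it: the product family $\rho_{\r_k}\otimes\rho_{\c_k}$ is uniformly integrable on $[0,1]^2$. This holds because $\{\rho_{\r_k}\otimes\rho_{\c_k} > L\}$ is contained in $\{\rho_{\r_k}>\sqrt L\}\times[0,1]$ union the symmetric set, up to bounded tails. Consequently, for any $\eta$ there is $\delta$ such that $\int_E \rho_{\r_k}\otimes\rho_{\c_k}<\eta$ whenever $u(E)<\delta$. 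Splitting over the set where $|\Ek_k(g-\tilde g)|$ exceeds $\sqrt\eps$, whose $u$-measure is at most $\sqrt\eps$ by Markov's inequality since $\Ek_k$ is an $L^1$ contraction, then bounds the first integral by $\sqrt\eps + 2\norm{g}_\infty\eta$. The limit term is handled in the same way using the absolute continuity just established. Letting $k\to\infty$ and then $\eps,\eta\to0$ proves (b).
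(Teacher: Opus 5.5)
Your proposal is correct and follows essentially the same route as the paper. In (a) you get pointwise convergence at $u$-a.e.\ $(s,t)$ from $T_k \to T_\infty$ a.e., shrinking cells and continuity of $g$ at $T_\infty(s,t)$, then conclude by dominated convergence. In (b) you use Dunford--Pettis for absolute continuity, a continuous approximant $\tilde g$ handled by (a), and uniform integrability of $\rho_{\r_k}\otimes\rho_{\c_k}$ via the same product-set inclusion.

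The only difference is cosmetic. You control the error term with an $L^1(u)$ approximation, Markov's inequality and uniform absolute continuity. The paper instead uses an $L^2(u)$ approximation, Jensen's inequality (which removes $\Ek_k$ because $\rho_{\r_k}\otimes\rho_{\c_k}$ is constant on cells) and a truncation of the densities at level $M^2$.
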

	
	\begin{proof}
		(a) The set $D_g$ of discontinuity points of $g$ is a Borel set with $(\rho_{\r_\infty}\otimes\rho_{\c_\infty})(D_g) = 0$, so $T_\infty^{-1}(D_g)$ is $u$-null, and $T_k \to T_\infty$ $u$-a.e., as in the proof of Lemma~\ref{lem: operator_HS_convergence}. Let $(s,t) \in (0,1]^2$ be such that $T_k(s,t) \to T_\infty(s,t) \notin D_g$. Since $F_{\r_k}(0) = F_{\c_k}(0) = 0$, the point $T_k(s,t)$ lies in $(0,1]^2$, so $(\Ek_kg)(T_k(s,t))$ is the average of $g$ over the cell of the $m_k\times n_k$ grid that contains $T_k(s,t)$. These cells shrink to the point $T_\infty(s,t)$, at which $g$ is continuous, so $(\Ek_kg)(T_k(s,t)) \to g(T_\infty(s,t))$. Hence $(\Ek_kg)\circ T_k \to g\circ T_\infty$ $u$-a.e., and the claim follows by dominated convergence.
		
		(b) By the Dunford--Pettis theorem, a subsequence of $(\rho_{\r_k})$ converges weakly in $L^1[0,1]$; its limit is a density of $\rho_{\r_\infty}$, so $\rho_{\r_\infty}$, and similarly $\rho_{\c_\infty}$, is absolutely continuous. For $k \in \N\cup\{\infty\}$, let $\varrho_k := \rho_{\r_k}\otimes\rho_{\c_k}$ be the density of $(T_k)_{\#}u$. Since $\{\varrho_k > M^2\} \subset (\{\rho_{\r_k} > M\}\times[0,1])\cup([0,1]\times\{\rho_{\c_k} > M\})$, and a single integrable function is uniformly integrable, $\eps_M := \sup_{k \in \N\cup\{\infty\}}\int_{\{\varrho_k > M^2\}}\varrho_k\,du \to 0$ as $M \to \infty$. Let $g \in L^\infty([0,1]^2)$, represented by a Borel function with $\sup|g| = \norm{g}_\infty$, let $\eta > 0$, and choose $\tilde g \in C([0,1]^2)$ with $\sup|\tilde g| \le \norm{g}_\infty$ and $\norm{g - \tilde g}_{L^2(u)} \le \eta$. For finite $k$, $\varrho_k$ is constant on the cells, so Jensen's inequality gives, with $\varphi := g - \tilde g$,
		\begin{align}
			\norm{(\Ek_k\varphi)\circ T_k}^2_{L^2(u)} = \int(\Ek_k\varphi)^2\varrho_k\,du \le \int \Ek_k(\varphi^2)\,\varrho_k\,du = \int\varphi^2\varrho_k\,du \le M^2\eta^2 + 4\norm{g}_\infty^2\eps_M,
		\end{align}
		and similarly $\norm{\varphi\circ T_\infty}^2_{L^2(u)} \le M^2\eta^2 + 4\norm{g}_\infty^2\eps_M$. Since $(\Ek_k\tilde g)\circ T_k \to \tilde g\circ T_\infty$ in $L^2(u)$ by (a), the $\limsup$ of $\norm{(\Ek_kg)\circ T_k - g\circ T_\infty}_{L^2(u)}$ is at most $2(M^2\eta^2 + 4\norm{g}_\infty^2\eps_M)^{1/2}$. Letting first $\eta \to 0$ and then $M \to \infty$ proves the claim.
	\end{proof}
	
	\begin{proof}[\textbf{Proof of Theorem~\ref{thm: CLT}}]
		By Lemma~\ref{lem: transported_cell_averages}(b), the limiting margins are absolutely continuous in case~(ii), so that every $g \in L^\infty([0,1]^2)$ defines an element of $L^2(\pi_\infty)$. In both cases, the test functions form a vector space; in case~(i), because the discontinuity set of a linear combination is contained in the union of those of its terms. Fix such a test function $g$ and $t \in \R$, let $s_k := -tN_k^{-1/2}$ and $h_k := \Ek_kg$, and take $k$ so large that $|s_k|\norm{g}_\infty \le 1$. We follow the proof of Proposition~\ref{prop: cloned_CLT}.
		
		The functions $\kappa_k$ and $\kappa_k + s_kh_k$ are grid-step and converge uniformly to the continuous function $\kappa_\infty$. By Theorem~\ref{thm: permuton_partition_det_limit}, $\cD_k(\kappa_k + s_kh_k)$ and $\cD_k(\kappa_k)$ both converge to $D_\infty$, and the first term in \eqref{eq: laplace_identity} tends to zero. As in \eqref{eq: taylor_entropic}, with $\kappa_k$ and $\pi_k(\cdot)$ in place of $\kappa_0$ and $\pi(\cdot)$, the second term is equal to $\frac{t^2}{2}\norm{(I - P_{\pi_{k,\theta_k}})h_k}^2_{L^2(\pi_{k,\theta_k})}$ for some $\theta_k$ between $0$ and $s_k$, where $\pi_{k,\theta_k} := \pi_k(\kappa_k + \theta_kh_k)$. Let $\mu_k$ and $\mu_\infty$ be the optimal permutons for the costs $(\kappa_k + \theta_kh_k)\circ T_k$ and $\kappa_\infty\circ T_\infty$. Since $\osc(\kappa_k + \theta_kh_k) \le K_1$ and $\osc(\kappa_\infty) \le K_1$, Lemma~\ref{lem: permuton_coordinates} shows that the densities of $\mu_k$ and $\mu_\infty$ with respect to $u$ take values in $[e^{-2K_1},e^{2K_1}]$ and that
		\begin{align}
			\norm{(I-P_{\pi_{k,\theta_k}})h_k}^2_{L^2(\pi_{k,\theta_k})} = \norm{(I-P_{\mu_k})(h_k\circ T_k)}^2_{L^2(\mu_k)}, \qquad V(g) = \norm{(I-P_{\mu_\infty})(g\circ T_\infty)}^2_{L^2(\mu_\infty)}.
		\end{align}
		Since $\kappa_k + \theta_kh_k \to \kappa_\infty$ uniformly, Lemma~\ref{lem: operator_HS_convergence} shows that the density of $\mu_k$ converges to that of $\mu_\infty$ in $L^2(u)$, hence in $L^1(u)$, and $h_k\circ T_k \to g\circ T_\infty$ in $L^2(u)$ by Lemma~\ref{lem: transported_cell_averages}. By Lemma~\ref{lem: variance_continuity}, applied to $\mu_k$ and $\mu_\infty$, the second term in \eqref{eq: laplace_identity} converges to $\frac{t^2}{2}V(g)$. We conclude as for Proposition~\ref{prop: cloned_CLT}.
		
		It remains to prove the last two claims. By Corollary~\ref{cor: moments}, 
		\begin{align}
			\E|\Phi_k(\tilde g_k)|^p \leq C_{p}(\norm{\tilde g_k}_{L^2(\pi_k)}^{p} + o(1))
		\end{align}
		for every integer $p \ge 1$ whenever $\sup_k\norm{\tilde g_k}_\infty < \infty$. If $\norm{g_{i,k} - g_i}_{\infty} \to 0$, then,  applying this with $p = 2$ shows $\E\,\Phi_k(g_{i,k} - g_i)^2 \to 0$, so that $\Phi_k(g_{i,k}) - \Phi_k(g_i) \to 0$ in $L^2$ and the convergence in distribution persists. 
		Moreover, every linear combination $\sum_i t_i\Phi_k(g_{i,k})$ has moments of all orders bounded uniformly in $k$, hence is uniformly integrable together with all its powers, and its moments converge to those of the Gaussian limit. Joint convergence follows as in the proof of Proposition~\ref{prop: cloned_CLT}.
	\end{proof}

	\subsection{Proof of Corollary~\ref{cor: mallows}}
	\label{sec: proof_mallows}

	\begin{proof}[\textbf{Proof of Corollary~\ref{cor: mallows}}]
		Throughout the proof, all margins are equal to one, that is, $m = n = N$ and $\r = \c = \mathbf{1}$. Then $\rho_{\r} = \rho_{\c} = \Leb$, $T$ is the identity, the cells $A_{ij}$ are the squares $B_{ij}$, and $\Gamma_{\rho_{\r},\rho_{\c}} = \Gamma^{\mathrm{u}}$.
		
		\emph{The assumptions and \eqref{eq: mallows_partition}.} For $\bLambda_N$ as in Corollary~\ref{cor: mallows}, $\lambda_{\mathbf{1},\mathbf{1};\bLambda_N}$ is the law of the permutation matrix $\mathbf{X}_N$ of $\sigma_N \sim \Gibbs^f_N$, and $\pi_{\mathbf{X}_N} = \mu_{\sigma_N}$. By \eqref{eq: def_kappa}, the cost function of $(\mathbf{1},\mathbf{1},\bLambda_N)$ is $\kappa_N = f_N + \log(\norm{\bLambda_N}_1/N^2)$, which converges uniformly to $f + \log\int e^{-f}\,du$. Hence Assumptions~\ref{assumption: bounded_cost} and \ref{assumption: margins_weak_convergence} hold with $\gamma = 1$, $\rho_{\r_\infty} = \rho_{\c_\infty} = \Leb$ and $\pi_\infty = \pi^f_{\Leb,\Leb}$, since adding a constant to the cost does not change the Schr\"odinger bridge. By Lemma~\ref{lem: typical_table_bridge}, $\pi_{\mathbf{Z}_N} = \pi^{\kappa_N}_{\Leb,\Leb} = \pi_N := \pi^{f_N}_{\Leb,\Leb}$. Moreover, $f_N$ is grid-step, converges uniformly to $f$, and is equal to $f(k/N,\ell/N)$ on $B_{k\ell}$. Hence $N\int f_N\,d\mu_\sigma = \sum_{i}f(i/N,\sigma(i)/N)$ and $\cZ(f_N) = N!\,L_N(f)$, so that $\cD(f_N) = e^{N\Gamma^{\mathrm{u}}(f_N)}L_N(f)$. Theorem~\ref{thm: permuton_partition_det_limit}, applied with $f_k := f_N$ and $f_\infty := f$, gives \eqref{eq: mallows_partition}, since then $\xi_\infty = W$.
		
		(i) Let $f \in C^1([0,1]^2)$ and $h := f - f_N$. By Proposition~\ref{thm: smooth_cost_perturbations} with $\mu = \nu = \Leb$, the second derivative of $t\mapsto\Gamma^{\mathrm{u}}(f_N + th)$ lies in $[-\norm{h}_\infty^2, 0]$, since $\norm{(I-P_\pi)h}_{L^2(\pi)} \le \norm{h}_\infty$ for every probability measure $\pi$. By Taylor's formula,
		\begin{align}
			\Big|\Gamma^{\mathrm{u}}(f) - \Gamma^{\mathrm{u}}(f_N) - \int h\,d\pi_N\Big| \le \frac12\norm{h}_\infty^2 = O(N^{-2}),
		\end{align}
		since $\norm{h}_\infty \le (\norm{\partial_x f}_\infty + \norm{\partial_y f}_\infty)/N$. As observed at the beginning of Section~\ref{sec: partition_function}, the density $w_N$ of $\pi_N$ is constant on each square $B_{k\ell}$. For $(x,y) \in B_{k\ell}$, the uniform continuity of $\nabla f$ gives
		\begin{align}
			h(x,y) = \partial_x f\big(\tfrac{k}{N},\tfrac{\ell}{N}\big)\big(x - \tfrac{k}{N}\big) + \partial_y f\big(\tfrac{k}{N},\tfrac{\ell}{N}\big)\big(y - \tfrac{\ell}{N}\big) + o(N^{-1})
		\end{align}
		uniformly in $k$, $\ell$, $x$ and $y$. Since $N^3\int_{B_{k\ell}}(x - \frac{k}{N})\,dx\,dy = N^3\int_{B_{k\ell}}(y - \frac{\ell}{N})\,dx\,dy = -\frac12$ and $\sum_{k,\ell}w_{N,k\ell}/N^2 = 1$, where $w_{N,k\ell}$ is the value of $w_N$ on $B_{k\ell}$,
		\begin{align}
			N\int h\,d\pi_N = N\sum_{k,\ell}w_{N,k\ell}\int_{B_{k\ell}}h\,du = -\frac12\int (\partial_x f + \partial_y f)_N\,d\pi_N + o(1),
		\end{align}
		where $(\partial_x f + \partial_y f)_N$ is the grid-step function equal to $(\partial_x f + \partial_y f)(k/N,\ell/N)$ on $B_{k\ell}$. Lemma~\ref{lem: TV_cost_stability} gives $\norm{\pi_N - \pi^f_{\Leb,\Leb}}_{\mathrm{TV}} \le \norm{f_N - f}_\infty \to 0$. As $(\partial_x f + \partial_y f)_N \to \partial_x f + \partial_y f$ uniformly, we obtain
		\begin{align}
			\lim_{N\to\infty} N\big[\Gamma^{\mathrm{u}}(f) - \Gamma^{\mathrm{u}}(f_N)\big] = -\frac12\int(\partial_x f + \partial_y f)\,d\pi^f_{\Leb,\Leb}.
		\end{align}
		The identity $e^{-a(x)} = \int_0^1 e^{b(y) - f(x,y)}\,dy$ holds for every $x \in [0,1]$ by the choice of $a$. Differentiating it gives $a'(x) = \int_0^1 \partial_x f(x,y)\,W(x,y)\,dy$, and hence $\int \partial_x f\,d\pi^f_{\Leb,\Leb} = a(1) - a(0)$; similarly, $\int\partial_y f\,d\pi^f_{\Leb,\Leb} = b(1) - b(0)$. Together with \eqref{eq: mallows_partition}, this proves \eqref{eq: mallows_pal}.
		
		For \eqref{eq: mallows_tripathi}, let $\tilde f := -\log W = f - a\oplus b$, which is continuously differentiable. Since $W$ has uniform marginals, Lemma~\ref{lem: pythagorean} shows that $\pi^{\tilde f}_{\Leb,\Leb} = \pi^f_{\Leb,\Leb}$, with Schr\"odinger potentials $(0,0)$ and $\Gamma^{\mathrm{u}}(\tilde f) = 0$. Since $\exp(-\sum_i \tilde f(i/N,\sigma(i)/N)) = \prod_i W(i/N,\sigma(i)/N)$, \eqref{eq: mallows_tripathi} is \eqref{eq: mallows_pal} applied to $\tilde f$.

		(ii) Let $\Phi_N$ be the fluctuation process of $\mathbf{X}_N$, so that $\Phi_N(h) = \sqrt{N}\int h\,d(\mu_{\sigma_N} - \pi_N)$ for $h \in L^\infty([0,1]^2)$, and let $g_{\ell,N}(x,y) := g_\ell(\lceil Nx\rceil/N, \lceil Ny\rceil/N)$ for $\ell \in [L]$. Since $g_{\ell,N}$ is grid-step and $\mu_{\sigma_N}$ gives mass $1/N$ to each square $B_{i\sigma_N(i)}$, we have $\Upsilon_N(g_\ell) = N\int g_{\ell,N}\,d\mu_{\sigma_N}$, so that
		\begin{align}
			N^{-1/2}\,\Upsilon_N(g_\ell) - \Phi_N(g_{\ell,N}) = \sqrt{N}\int g_{\ell,N}\,d\pi_N
		\end{align}
		is deterministic. As the $g_\ell$ are uniformly continuous, $\norm{g_{\ell,N} - g_\ell}_\infty \to 0$, so Theorem~\ref{thm: CLT} gives $(\Phi_N(g_{1,N}), \ldots, \Phi_N(g_{L,N})) \Rightarrow (\Xi(g_1), \ldots, \Xi(g_L))$ with convergence of all moments. In particular, $\E\,\Phi_N(g_{\ell,N}) \to 0$ and $\Cov(\Phi_N(g_{\ell,N}), \Phi_N(g_{\ell',N})) \to V(\mathbf{g})_{\ell\ell'}$. Since $N^{-1/2}(\Upsilon_N(\mathbf{g}) - \E\,\Upsilon_N(\mathbf{g}))$ is the centered vector $(\Phi_N(g_{\ell,N}) - \E\,\Phi_N(g_{\ell,N}))_{\ell\in[L]}$, this proves \eqref{eq: mallows_CLT_continuous}.
		
		Finally, if $f$ and $\mathbf{g}$ are Lipschitz, then $|\frac1N\sum_i g_\ell(i/N,\sigma(i)/N) - \int g_\ell\,d\mu_\sigma| \le \sqrt{2}\Lip(g_\ell)/N$ for every $\sigma \in S_N$, since $\mu_\sigma$ is uniform on the squares $B_{i\sigma(i)}$, and $|\int g_\ell\,d\pi_N - \int g_\ell\,d\pi^f_{\Leb,\Leb}| \le \norm{g_\ell}_\infty\norm{f_N - f}_\infty \le \sqrt{2}\norm{g_\ell}_\infty\Lip(f)/N$ by \eqref{eq: TV_cost_stability}. Both errors are $o(N^{-1/2})$, so the left-hand side of \eqref{eq: mallows_CLT} differs from $(\Phi_N(g_\ell))_{\ell\in[L]}$ by $o(1)$, and \eqref{eq: mallows_CLT} follows from Theorem~\ref{thm: CLT}.
	\end{proof}

	\subsection{The limiting process}

	We collect the properties of the limiting objects of Section~\ref{sec: main_fluctuations}. In particular, part~(b) provides the continuous version of the pillow in Theorem~\ref{thm: functional_limits}(ii). Let $R$ and $C$ be the distribution functions of $\rho_{\r_\infty}$ and $\rho_{\c_\infty}$, and $\mathbb{G} := \FF\circ T_\infty$ the pillow in permuton coordinates.
	
	\begin{prop}[The limiting process]
		\label{prop: limiting_field}
		Assume \ref{assumption: bounded_cost} and \ref{assumption: margins_weak_convergence}. 
		\begin{enumerate}[label=(\alph*)]
			\item \emph{(Existence.)} The process $\Xi$ in Definition~\ref{def: SB_white_noise} exists and is almost surely linear.
			\item \emph{(Pillow.)} Suppose that $\rho_{\r_\infty}$ and $\rho_{\c_\infty}$ have no atoms. Then $\mathbb{G}$ has a continuous version $\widetilde{\mathbb{G}}$, and $\widetilde{\mathbb{G}}\circ(R,C)$ is a continuous version of $\FF$ that vanishes on the boundary of $[0,1]^2$. For arbitrary limiting margins, if $\pi_\infty = \rho_{\r_\infty}\otimes\rho_{\c_\infty}$, then
			\begin{align}
				\label{eq: pillow_covariance}
				\Cov\big(\FF(u,v),\FF(u',v')\big) = \big(R(u\wedge u') - R(u)R(u')\big)\big(C(v\wedge v') - C(v)C(v')\big).
			\end{align}
		\end{enumerate}
	\end{prop}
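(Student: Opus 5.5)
For (a), I will realize $\Xi$ concretely instead of invoking Kolmogorov abstractly. By Lemma~\ref{lem: density_bounds} and Lemma~\ref{lem: sum_space}, $\cA$ is closed in $L^2(\pi_\infty)$, so $I-P$ is an orthogonal projection on the separable Hilbert space $L^2(\pi_\infty)$. Choose an isonormal Gaussian process $W$ on $L^2(\pi_\infty)$, given by $W(h)=\sum_i \langle h,e_i\rangle \zeta_i$ for an orthonormal basis $(e_i)$ and i.i.d.\ standard normals $\zeta_i$. Then set $\Xi(g):=W((I-P)g)$. This process has covariance \eqref{eq:covariance_mx_Xi}. For fixed $g,h$ and scalars, linearity holds almost surely because $W$ is an $L^2$-isometry and $I-P$ is linear: the $L^2$ norm of $\Xi(ag+bh)-a\Xi(g)-b\Xi(h)$ equals $\norm{(I-P)(ag+bh)-a(I-P)g-b(I-P)h}=0$.

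For (b), I work in permuton coordinates. Let $\mu_\infty$ be the optimal permuton. Lemma~\ref{lem: permuton_coordinates} gives $\Cov(\mathbb{G}(s,t),\mathbb{G}(s',t'))=\langle (I-P_{\mu_\infty})\1_{[0,q(s)]\times[0,q(t)]}\circ T_\infty,\cdots\rangle_{L^2(\mu_\infty)}$. When the margins are atomless, $\1_{[0,u]}\circ q_{\rho}=\1_{[0,F_\rho(u)]}$ almost everywhere. Hence $\mathbb{G}$ agrees, up to the reparametrization, with the restriction of $\Xi_\mu(\1_{[0,s]\times[0,t]})$ for the permuton process. The increments satisfy
\[
\E|\mathbb{G}(s,t)-\mathbb{G}(s',t')|^2\le \norm{\1_{[0,s]\times[0,t]}-\1_{[0,s']\times[0,t']}}^2_{L^2(\mu_\infty)}\le e^{2\osc(\kappa_\infty)}(|s-s'|+|t-t'|),
\]
because the density of $\mu_\infty$ is bounded. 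The process is Gaussian, so the $2p$-th moments are bounded by $C_p(|s-s'|+|t-t'|)^p$. Taking $p\ge 3$, the Kolmogorov--Chentsov criterion in dimension two gives a continuous version $\widetilde{\mathbb{G}}$. Composing with the continuous map $(R,C)$ yields a continuous process. It is a version of $\FF$ because $\1_{[0,u]\times[0,v]}\circ T_\infty=\1_{[0,R(u)]\times[0,C(v)]}$ holds $u$-a.e., so by the isometry in Lemma~\ref{lem: permuton_coordinates}, $\FF(u,v)=\mathbb{G}(R(u),C(v))$ a.s.

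Boundary vanishing comes from separability. On the boundary, one of $s,t$ is $0$ or $1$. The indicator is then either $0$, or of the form $\1_{[0,s]}(x)$ or $\1_{[0,t]}(y)$, which lies in $\cA$. Thus the variance is $0$, so $\widetilde{\mathbb{G}}=0$ a.s. at each boundary point. By continuity it vanishes on the whole boundary simultaneously, using a countable dense set.

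For the product case, the only point to check is the formula for $P$ when $\pi_\infty=\rho_{\r_\infty}\otimes\rho_{\c_\infty}$. I verify the normal equations \eqref{eq: normal_equations} for $w=\E[h\mid X]+\E[h\mid Y]-\E h$; under independence these hold directly. Applied to $h=\1_{[0,u]}(x)\1_{[0,v]}(y)$, this gives $(I-P)h=(\1_{[0,u]}-R(u))(x)\,(\1_{[0,v]}-C(v))(y)$. Taking inner products factorizes by independence and produces \eqref{eq: pillow_covariance}, and atoms cause no difficulty here.

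I expect the main obstacle to be the identification of versions in the atomless case. This requires $\1_{[0,u]}\circ q_\rho=\1_{[0,F_\rho(u)]}$ Lebesgue-a.e., which I will derive from the equivalence $q_\rho(s)\le u\iff s\le F_\rho(u)$. It also requires the continuity of $R$ and $C$.
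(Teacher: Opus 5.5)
Your proposal is correct and follows essentially the paper's proof. It uses the same realization $\Xi(g)=\zeta((I-P)g)$ of part~(a), the same $L^2$ increment bound for $\mathbb{G}$ upgraded by Gaussian moments to Kolmogorov--Chentsov, the same identification $\FF=\widetilde{\mathbb{G}}\circ(R,C)$, the same boundary argument via membership of the indicators in $\cA$, and the same product-case computation through the normal equations.

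The only difference is cosmetic. You obtain the increment bound and the identification through the isometry of Lemma~\ref{lem: permuton_coordinates}; there, the uniform marginals of $\mu_\infty$ already give constant $1$ without the density bound. The paper instead works directly with the marginals of $\pi_\infty$. Atomlessness enters only through $R\circ q_R=\mathrm{id}$ (and likewise for $C$). The identity $\1_{[0,u]}\circ q_\rho=\1_{[0,F_\rho(u)]}$ holds everywhere for every $\rho$, atoms or not.
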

	
	\begin{proof}
		(a) Let $\zeta$ be white noise with intensity $\pi_\infty$, i.e., the isonormal Gaussian process on $L^2(\pi_\infty)$. Since $L^2(\pi_\infty)$ is separable, we may fix an orthonormal basis $(f_n)$ of it and i.i.d.\ standard Gaussian variables $(\gamma_n)$ on a probability space $(\Omega, \mathcal{F}, \P)$. Since $\sum_{n} |\langle h, f_n \rangle_{L^2(\pi_{\infty})}|^2 = \norm{h}_{L^2(\pi_{\infty})}^2 < \infty$ for $h \in L^2(\pi_{\infty})$,  the series $\zeta(h) := \sum_n \langle h, f_n\rangle_{L^2(\pi_\infty)}\gamma_n$ converges in $L^2(\Omega)$, and $\E[\zeta(h)\zeta(h')] = \langle h, h'\rangle_{L^2(\pi_\infty)}$. Its partial sums are linear in $h$, so $\zeta(ag+bh) = a\zeta(g) + b\zeta(h)$ almost surely for all $g,h \in L^2(\pi_\infty)$ and $a,b \in \R$. Hence $g \mapsto \zeta((I-P)g)$ is a centered Gaussian process with the covariance in Definition~\ref{def: SB_white_noise}, so $\Xi$ exists. It is almost surely linear since $\zeta$ and $I - P$ are linear; as this property depends only on the finite-dimensional distributions, it holds for every process with the covariance \eqref{eq:covariance_mx_Xi}. 
		Since $P$ is an orthogonal projection and $\pi_\infty$ is a probability measure,
		\begin{align}
			\label{eq: Xi_variance_bound}
			\E\,\Xi(h)^2 = \norm{(I-P)h}^2_{L^2(\pi_\infty)} \le \norm{h}^2_{L^2(\pi_\infty)} \le \sup|h|^2 \qquad \text{for bounded Borel functions } h.
		\end{align}
		
		(b) Let $\square_{u,v} := [0,u]\times[0,v]$. The indicators of $\square_{u,v}$ and $\square_{u',v'}$ differ only on $\big((u\wedge u', u\vee u']\times[0,1]\big)\cup\big([0,1]\times(v\wedge v', v\vee v']\big)$. Since $\pi_\infty$ has marginals $\rho_{\r_\infty}$ and $\rho_{\c_\infty}$, (a) and \eqref{eq: Xi_variance_bound} give
		\begin{align}
			\label{eq: pillow_increment}
			\E|\FF(u,v) - \FF(u',v')|^2 \le \norm{\1_{\square_{u,v}} - \1_{\square_{u',v'}}}^2_{L^2(\pi_\infty)} \le |R(u) - R(u')| + |C(v) - C(v')|.
		\end{align}
		Let $q_R := q_{\rho_{\r_\infty}}$ and $q_C := q_{\rho_{\c_\infty}}$. Since $R$ and $C$ are continuous, $R(q_R(s)) = s$ and $C(q_C(t)) = t$ (Section~\ref{sec: permutons_prelim}), so \eqref{eq: pillow_increment} gives $\E|\mathbb{G}(z) - \mathbb{G}(z')|^2 \le |z - z'|_1$ for $z, z' \in [0,1]^2$. By Gaussianity, $\E|\mathbb{G}(z) - \mathbb{G}(z')|^{6} \le 15\,|z - z'|_1^3$. Since the exponent $3$ exceeds the dimension $2$, the Kolmogorov--Chentsov criterion on the two-dimensional index set \cite[Theorem~I.2.1]{revuz1999continuous} yields a continuous version $\widetilde{\mathbb{G}}$ of $\mathbb{G}$. Next, $q_R(R(u)) \le u$ and $\rho_{\r_\infty}\big((q_R(R(u)),u]\big) = R(u) - R(q_R(R(u))) = 0$, and similarly for $C$, so that $\1_{\square_{u,v}} = \1_{\square_{q_R(R(u)),q_C(C(v))}}$ holds $\pi_\infty$-a.e. Hence $\FF(u,v) = \mathbb{G}(R(u),C(v)) = \widetilde{\mathbb{G}}(R(u),C(v))$ almost surely, so that $\widetilde{\mathbb{G}}\circ(R,C)$ is a continuous version of $\FF$. Finally, $\1_{\square_{u,1}} = \1_{[0,u]}(x)$ and $\1_{\square_{1,v}} = \1_{[0,v]}(y)$ belong to $\cA$, and $\1_{\square_{u,0}}$ and $\1_{\square_{0,v}}$ vanish $\pi_\infty$-a.e.\ because $R(0) = C(0) = 0$. Thus $\FF$ vanishes almost surely at every point of the boundary of $[0,1]^2$, and by continuity, so does $\widetilde{\mathbb{G}}\circ(R,C)$ on the whole boundary, after a modification on a null event.
		
		If $\pi_\infty = \rho_{\r_\infty}\otimes\rho_{\c_\infty}$, then $Pg = \E[g\mid x] + \E[g \mid y] - \E[g]$, where the expectations refer to $\pi_\infty$. Indeed, the right-hand side belongs to $\cA$ and satisfies the normal equations \eqref{eq: normal_equations}, by the independence of the coordinates. For $g = \1_{[0,u]\times[0,v]}$, this gives $(I-P)g = (\1_{[0,u]}(x) - R(u))(\1_{[0,v]}(y) - C(v))$, and \eqref{eq: pillow_covariance} follows from Fubini's theorem.
	\end{proof}
	
	\begin{remark}[Conditioning representation]
		\label{rem: conditioning_representation}
		The process $\Xi$ is white noise with intensity $\pi_\infty$ conditioned to have vanishing marginals. Indeed, let $\zeta$ be as in the proof of Proposition~\ref{prop: limiting_field}(a) and $\cG := \sigma(\zeta(a) : a \in \cA)$. Since $\zeta$ is a linear isometry from $L^2(\pi_\infty)$ into $L^2(\Omega)$ and $\cA$ is closed (Lemmas~\ref{lem: density_bounds} and \ref{lem: sum_space}), the closed linear span of $\{\zeta(a) : a \in \cA\}$ is $\zeta(\cA)$. For centered jointly Gaussian families, conditional expectation coincides with orthogonal projection in $L^2(\Omega)$ \cite[Theorem~9.1]{janson1997gaussian}, so $\E[\zeta(g)\mid\cG] = \zeta(Pg)$. Hence $\zeta(g) - \E[\zeta(g)\mid\cG] = \zeta((I-P)g)$, which is independent of $\cG$ and has the law of $\Xi$.
	\end{remark}
	
	\subsection{Proofs of Theorem~\ref{thm: functional_limits} and Corollaries~\ref{cor: KS} and \ref{cor: product_case}}
	\label{sec: proof_functional}
	
	For $s\in\R$, let $H^s$ be the Hilbert space of formal series $f = \sum_j\hat f_je_j$ in the cosine basis of Section~\ref{sec: main_fluctuations} with $\norm{f}_{H^s}^2 := \sum_j(1+|j|^2)^s\hat f_j^2 < \infty$. Then $H^\infty = \bigcap_s H^s$, with the topology given by the norms $\norm{\cdot}_{H^s}$, and $H^{-\infty} = \bigcup_s H^{-s}$. If $s > 1$, then $\sum_j(1+|j|^2)^{-s} < \infty$, so by the Cauchy--Schwarz inequality the cosine series of every $g \in H^s$ converges absolutely and uniformly; as for $H^\infty$, we identify $g$ with its continuous version.
	
	For the tightness in Theorem~\ref{thm: functional_limits}(ii), we bound the increments of the cumulative processes $\FF_k$ of Definition~\ref{def: fluctuation_fields} in permuton coordinates, $\mathbb{G}_k := \FF_k\circ T_k$, using Corollary~\ref{cor: moments}.

	\begin{corollary}[Increments of the cumulative process in permuton coordinates]
		\label{cor: pillow_increments}
		For all $k$ with $N_k \ge N_U$ and all $z = (s,t)$ and $z' = (s',t')$ in $[0,1]^2$,
		\begin{align}
			\E\big|\mathbb{G}_k(z) - \mathbb{G}_k(z')\big|^6 \le 2C_6|z - z'|_1^3.
		\end{align}
	\end{corollary}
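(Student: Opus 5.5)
The plan is to write the increment $\mathbb{G}_k(z) - \mathbb{G}_k(z')$ as a linear statistic $\Phi_k(h)$ for a bounded test function $h$ supported on a set of small $\pi_{\mathbf{Z}_k}$-mass, and then apply Corollary~\ref{cor: moments} with $p = 6$. By definition, $\mathbb{G}_k(z) = \FF_k(T_k(z)) = \Phi_k(\1_{\square_{T_k(z)}})$, where $\square_{u,v} := [0,u]\times[0,v]$. Since $\Phi_k$ is linear, $\mathbb{G}_k(z) - \mathbb{G}_k(z') = \Phi_k(h)$ with $h := \1_{\square_{T_k(z)}} - \1_{\square_{T_k(z')}}$. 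This $h$ satisfies $|h| \le 1$ and is supported in the union of the strip $(q_{\r_k}(s)\wedge q_{\r_k}(s'),\, q_{\r_k}(s)\vee q_{\r_k}(s')]\times[0,1]$ and the analogous horizontal strip in $y$.

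\emph{Step 1 (the $L^2$ term).} Since $\pi_{\mathbf{Z}_k}$ has marginals $\rho_{\r_k}$ and $\rho_{\c_k}$,
\begin{align}
\norm{h}^2_{L^2(\pi_{\mathbf{Z}_k})} \le |F_{\r_k}(q_{\r_k}(s)) - F_{\r_k}(q_{\r_k}(s'))| + |F_{\c_k}(q_{\c_k}(t)) - F_{\c_k}(q_{\c_k}(t'))| = |z - z'|_1 .
\end{align}
The equality uses that $F_{\r_k}$ and $F_{\c_k}$ are continuous bijections with inverses $q_{\r_k}$ and $q_{\c_k}$, since the margins are positive. Hence the first term in Corollary~\ref{cor: moments} is at most $C_6|z-z'|_1^3$.

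\emph{Step 2 (the $L^\infty$ term).} This step is the main obstacle. The bound $N_k^{-3}\norm{h}_\infty^6 \le N_k^{-3}$ is not controlled by $|z - z'|_1^3$ when $z$ and $z'$ are very close, so the cruder second inequality of Corollary~\ref{cor: moments} must be avoided. I would instead use the sharper first inequality, which involves the cell average $\Ek_k h$, and split into two cases.

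If $|z - z'|_1 \ge 1/N_k$, then $N_k^{-3} \le |z - z'|_1^3$, so the second term is also at most $C_6|z-z'|_1^3$. Summing gives $2C_6|z - z'|_1^3$.

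If $|z - z'|_1 < 1/N_k$, I would exploit the discreteness of $\mathbb{G}_k$ in permuton coordinates. $\Phi_k(h)$ depends only on $\Ek_k h$, and $(\Ek_k h)\circ T_k$ is an average over sub-rectangles of the $N_k$-grid. By Proposition~\ref{prop: permutation_CT_coupling}(ii), $\mathbb{G}_k$ is a linear combination of $\mu_\sigma$-masses and $\pi_{\mathbf{Z}_k}$-masses of rectangles in permuton coordinates. Because $T_k^{-1}(A_{ij})$ is a union of squares $B_{k\ell}$, within each square $B_{k\ell}$ the function $(\Ek_k h)\circ T_k$ has the form $\alpha\,|s - s'|N_k$ plus a $t$-analogue. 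In other words, $\mathbb{G}_k$ is bilinear on each square $B_{k\ell}$.

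For $z, z'$ in a common square or in adjacent squares, I would then bound $\norm{\Ek_k h}_\infty \le N_k|z-z'|_1$. The reason is that the cell average of the strip indicator over a cell of width $1/m_k$ equals the strip's fraction of that cell. That fraction is at most $N_k|s-s'|$ after pulling back by $T_k$, since each row block contains at least one unit $1/N_k$ in $s$-coordinates. Then $N_k^{-3}\norm{\Ek_k h}_\infty^6 \le N_k^{3}|z-z'|_1^6 \le |z - z'|_1^3$, and combining with Step~1 yields the constant $2C_6$.

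I expect the careful verification of the estimate $\norm{\Ek_k h}_\infty \lesssim N_k|z-z'|_1$ to require the most care.
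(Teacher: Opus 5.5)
Your proposal is correct and follows the paper's proof. Both write the increment as $\Phi_k(h)$, apply Corollary~\ref{cor: moments} with $p=6$, bound $\norm{h}^2_{L^2(\pi_{\mathbf{Z}_k})}\le|z-z'|_1$ through the marginals, and bound $\norm{\Ek_k h}_\infty\le\min\{1,N_k|z-z'|_1\}$ using that $T_k$ maps $T_k^{-1}(A_{ij})$ affinely onto $A_{ij}$ with side lengths at least $1/N_k$.

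Your strip-fraction estimate holds for all $z,z'$; the paper phrases it as the $N_k$-Lipschitz property of the factors $\alpha_i,\beta_j$ in the cell averages. So the remarks about bilinearity and adjacent squares are unnecessary, and your case split is just the inequality $N_k^{-3}\min\{1,N_k\delta\}^6\le\delta^3$.
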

	
	\begin{proof}
		Let $\delta := |z - z'|_1$ and $\square_z := [0,s]\times[0,t]$. Since $T_k$ is a homeomorphism of $[0,1]^2$ with increasing coordinates, $T_k(\square_z) = [0,q_{\r_k}(s)]\times[0,q_{\c_k}(t)]$, so that $\mathbb{G}_k(z) - \mathbb{G}_k(z') = \Phi_k(g)$ with $g := \1_{T_k(\square_z)} - \1_{T_k(\square_{z'})}$, and $g\circ T_k = \1_{\square_z} - \1_{\square_{z'}}$. By Lemmas~\ref{lem: COV_permuton_to_EOT} and \ref{lem: typical_table_bridge}, $\pi_{\mathbf{Z}_k} = (T_k)_{\#}\mu$ for a permuton $\mu$. Since $\square_z\,\triangle\,\square_{z'}$ is covered by two strips of widths $|s - s'|$ and $|t - t'|$, the proof of Corollary~\ref{cor: moments} gives
		\begin{align}
			\norm{\Ek_kg}^2_{L^2(\pi_{\mathbf{Z}_k})} \le \int g^2\,d\pi_{\mathbf{Z}_k} = \int\big(\1_{\square_z} - \1_{\square_{z'}}\big)^2\,d\mu = \mu\big(\square_z\,\triangle\,\square_{z'}\big) \le \delta.
		\end{align}
		By Lemma~\ref{lem: quantile_properties}, $T_k^{-1}(A_{ij}) = I_i\times J_j$ for intervals $I_i$ and $J_j$ of lengths $\r_k(i)/N_k \ge 1/N_k$ and $\c_k(j)/N_k \ge 1/N_k$, and $T_k$ maps $I_i\times J_j$ onto $A_{ij}$ by an affine map, since $F_{\r_k}$ and $F_{\c_k}$ are linear on the corresponding intervals. Hence the value of $\Ek_kg$ on $A_{ij}$, the average of $g$ over $A_{ij}$, is the average of $\1_{\square_z} - \1_{\square_{z'}}$ over $I_i\times J_j$, that is, $\alpha_i(s)\beta_j(t) - \alpha_i(s')\beta_j(t')$ with $\alpha_i(s) := |[0,s]\cap I_i|/|I_i|$ and $\beta_j(t) := |[0,t]\cap J_j|/|J_j|$. These numbers lie in $[0,1]$, and $\alpha_i$ and $\beta_j$ are $N_k$-Lipschitz. Therefore $\norm{\Ek_kg}_\infty \le \min\{1, N_k\delta\}$ and $N_k^{-3}\norm{\Ek_kg}_\infty^6 \le N_k^{-3}\min\{1,N_k\delta\}^6 \le \delta^3$, and the claim follows from Corollary~\ref{cor: moments} with $p = 6$. 
	\end{proof}

	\begin{proof}[\textbf{Proof of Theorem~\ref{thm: functional_limits}}]
		
		(i) We first construct $\hat\Xi$ in the cosine basis. By \eqref{eq: Xi_variance_bound}, $\E\,\Xi(e_j)^2 \le \norm{e_j}_\infty^2 \le 4$. Fix $s > 1$. Since $\sum_j (1+|j|^2)^{-s}\,\E\,\Xi(e_j)^2 \le 4\sum_j(1+|j|^2)^{-s} < \infty$, the series $\hat\Xi := \sum_j \Xi(e_j)e_j$ converges in $L^2(\Omega; H^{-s})$, so $\hat\Xi \in H^{-s}$ almost surely. Since the spaces $H^{-s}$ increase in $s$ with continuous inclusions, the limit does not depend on $s$. For $g \in H^s$, the series $\langle\hat\Xi, g\rangle = \sum_j \hat g_j\Xi(e_j)$ converges almost surely by the Cauchy--Schwarz inequality, and its partial sums are equal to $\Xi$ applied to the partial sums of the cosine series of $g$. Since $s > 1$, the latter converge to $g$ uniformly, so their images under $\Xi$ converge to $\Xi(g)$ in $L^2(\Omega)$. Hence $\langle\hat\Xi,g\rangle = \Xi(g)$ almost surely, and in particular $\hat\Xi$ is a distributional version of $\Xi$. Conversely, let $\hat\Xi'$ be a distributional version of $\Xi$. Since $\Z_{\ge 0}^2$ is countable, almost surely $\langle\hat\Xi', e_j\rangle = \Xi(e_j) = \langle\hat\Xi, e_j\rangle$ for all $j$. An element of $H^{-\infty}$ is determined by its coefficients $\langle\cdot, e_j\rangle$, so $\hat\Xi' = \hat\Xi$ almost surely.
		
		Next, elements of $H^\infty$ are continuous, so by Theorem~\ref{thm: CLT}, $\E[e^{i\Phi_k(g)}] \to e^{-V(g)/2}$ for every $g \in H^\infty$. The limit is continuous on $H^\infty$, since $V^{1/2}$ is a seminorm with $V(g)^{1/2} \le \sup|g| \le 2\big(\sum_j(1+|j|^2)^{-s}\big)^{1/2}\norm{g}_{H^s}$ for $s > 1$, by the Cauchy--Schwarz inequality and $\norm{e_j}_\infty \le 2$, and it is the characteristic functional of $\hat\Xi$, since $\langle\hat\Xi, g\rangle = \Xi(g)$ is centered Gaussian with variance $V(g)$. Since the embedding $H^{s+s'}\hookrightarrow H^s$, which is diagonal in $\{e_j\}$ with singular values $(1+|j|^2)^{-s'/2}$, is Hilbert--Schmidt for $s'>1$, $H^\infty$ is a nuclear Fr\'echet space, and L\'evy's continuity theorem on its dual \cite{fernique1967processus} (see also \cite{mitoma1983tightness}) yields $\Phi_k \Rightarrow \hat\Xi$ in $H^{-\infty}$.
		
		(ii) By Assumption~\ref{assumption: margins_weak_convergence}, $N_k \ge \max(m_k,n_k) \to \infty$, so Corollary~\ref{cor: pillow_increments} applies for all large $k$. The processes $\mathbb{G}_k = \FF_k\circ T_k$ are continuous and vanish at the origin, since $\FF_k(u,v) = \sqrt{N_k}\int_0^u\int_0^v(\varphi_{\mathbf{X}_k} - \varphi_{\mathbf{Z}_k})$. Since the exponent $3$ in Corollary~\ref{cor: pillow_increments} exceeds the dimension $2$, Kolmogorov's tightness criterion \cite{kallenberg2002foundations}, applied to $\mathbb{G}_k$ composed with the $1$-Lipschitz nearest-point projection of $\R^2$ onto $[0,1]^2$, shows that $(\mathbb{G}_k)$ is tight in $C([0,1]^2)$.
		
		Now suppose that $\rho_{\r_\infty}$ and $\rho_{\c_\infty}$ have no atoms, so that $R$ and $C$ are continuous with $R(0) = C(0) = 0$. By P\'olya's theorem, $\eps_k := \sup_u|F_{\r_k}(u) - R(u)| + \sup_v|F_{\c_k}(v) - C(v)| \to 0$. For $h \in C([0,1]^2)$ and $\delta > 0$, let $\omega(h,\delta) := \sup\{|h(z) - h(z')| : |z - z'|_1 \le \delta\}$, and let $\omega_{R,C}(\delta) := \sup\{|R(u) - R(u')| + |C(v) - C(v')| : |u - u'| + |v - v'| \le \delta\}$. Since $\FF_k = \mathbb{G}_k\circ(F_{\r_k},F_{\c_k})$, we have $\omega(\FF_k,\delta) \le \omega\big(\mathbb{G}_k, 2\eps_k + \omega_{R,C}(\delta)\big)$. By the Arzel\`a--Ascoli theorem, the tightness of $(\mathbb{G}_k)$ gives $\lim_{\delta'\downarrow0}\limsup_k\P(\omega(\mathbb{G}_k,\delta') > \eta) = 0$ for every $\eta > 0$, hence $\lim_{\delta\downarrow0}\limsup_k\P(\omega(\FF_k,\delta) > \eta) = 0$, and $(\FF_k)$ is tight in $C([0,1]^2)$, since $\FF_k(0,0) = 0$. By Theorem~\ref{thm: CLT}, applied to the indicators $\1_{[0,u]\times[0,v]}$, which are continuous outside the $(\rho_{\r_\infty}\otimes\rho_{\c_\infty})$-null set $(\{u\}\times[0,1])\cup([0,1]\times\{v\})$, the finite-dimensional distributions of $\FF_k$ converge to those of $\FF$. Since finite-dimensional distributions determine laws on $C([0,1]^2)$, every subsequential limit of $(\FF_k)$ is the law of the continuous version of $\FF$ from Proposition~\ref{prop: limiting_field}(b), and $\FF_k \Rightarrow \FF$.
	\end{proof}

	\begin{remark}[Regularity of $\hat\Xi$ and convergence in $H^{-s}$]
		\label{rem: Hs_convergence}
		By the proof of Theorem~\ref{thm: functional_limits}(i), for every $s > 1$, $\hat\Xi \in H^{-s}$ almost surely and $\langle\hat\Xi, g\rangle = \Xi(g)$ almost surely for every $g \in H^s$. Moreover, $\Phi_k \Rightarrow \hat\Xi$ in $H^{-s}$. Indeed, fix $s_1 \in (1,s)$. By Corollary~\ref{cor: moments} with $p = 2$ and $\norm{e_j}_\infty \le 2$, for $N_k \ge N_U$,
		\begin{align}
			\E\norm{\Phi_k}_{H^{-s_1}}^2 = \sum_j(1+|j|^2)^{-s_1}\,\E\,\Phi_k(e_j)^2 \le 8C_2\sum_j(1+|j|^2)^{-s_1} < \infty.
		\end{align}
		Balls of $H^{-s_1}$ are compact in $H^{-s}$, so Markov's inequality shows that $(\Phi_k)$ is tight in $H^{-s}$. By Theorem~\ref{thm: CLT}, applied to the continuous functions $e_j$, the finite-dimensional distributions of the coordinates $\langle\Phi_k, e_j\rangle = \Phi_k(e_j)$ converge to those of $\hat\Xi$. These determine laws on the separable Hilbert space $H^{-s}$, so every subsequential limit is the law of $\hat\Xi$.
		
		The range $s > 1$ is sharp: if $\pi_\infty = u$, then $\hat\Xi \notin H^{-1}$ almost surely. Indeed, in this case $P e_j = 0$ for $j_1, j_2 \ge 1$, since $\int_0^1 \mathrm{c}_p = 0$ for $p \ge 1$. Hence $\{\Xi(e_j)\}_{j_1,j_2\ge 1}$ are i.i.d.\ standard Gaussian variables. With $a_j := (1+|j|^2)^{-1}$, the series $\sum_{j_1,j_2\ge 1} a_j(\Xi(e_j)^2 - 1)$ of independent centered terms with variances $2a_j^2$ converges almost surely by Kolmogorov's two-series theorem, since $\sum_j a_j^2 < \infty$, whereas $\sum_{j_1,j_2\ge1} a_j = \infty$. Hence $\sum_j a_j\Xi(e_j)^2 = \infty$ almost surely, that is, $\hat\Xi \notin H^{-1}$.

		This is precisely the regularity of white noise on $[0,1]^2$. A white noise $\mathbb{W}$ with intensity $u$ has i.i.d.\ standard Gaussian coefficients $\langle\mathbb{W}, e_j\rangle$, $j \in \Z_{\ge 0}^2$, so the two arguments above show that $\mathbb{W} \in H^{-s}$ almost surely if and only if $s > 1$, the threshold $d/2$ for white noise in dimension $d = 2$. In fact, if $\pi_\infty = u$, then $\cA$ is the closed span of the $e_j$ with $j_1 = 0$ or $j_2 = 0$, and $\hat\Xi$ has the law of $\mathbb{W}$ with these coefficients set to zero.
	\end{remark}

	\begin{proof}[\textbf{Proof of Corollary~\ref{cor: KS}}]
		For every rectangle $B$, $\sqrt{N_k}(\pi_{\mathbf{X}_k}(B) - \pi_{\mathbf{Z}_k}(B)) = \FF_k(B)$. The maps $F \mapsto \sup_B|F(B)|$ and $F \mapsto \int F^2$ are continuous on $C([0,1]^2)$ (the former is $4$-Lipschitz), so the limit laws follow from Theorem~\ref{thm: functional_limits}(ii) and the continuous mapping theorem. For the first claim, since $T_k$ is a bijection of $[0,1]^2$, $\sqrt{N_k}\sup_B|\pi_{\mathbf{X}_k}(B) - \pi_{\mathbf{Z}_k}(B)| = \sup_B|\FF_k(B)| \le 4\norm{\FF_k}_\infty = 4\norm{\mathbb{G}_k}_\infty$, which is tight under Assumptions~\ref{assumption: bounded_cost} and \ref{assumption: margins_weak_convergence} by the proof of Theorem~\ref{thm: functional_limits}(ii).
	\end{proof}
	
	\begin{proof}[\textbf{Proof of Corollary~\ref{cor: product_case}}]
		If $\kappa_\infty = a\oplus b$, then $a$ and $b$ are bounded, since $\kappa_\infty$ is, and $\rho_{\r_\infty}\otimes\rho_{\c_\infty} = e^{a\oplus b - \kappa_\infty}\rho_{\r_\infty}\otimes\rho_{\c_\infty}$ has the form required in Lemma~\ref{lem: pythagorean}, so $\pi_\infty = \rho_{\r_\infty}\otimes\rho_{\c_\infty}$. Then $\xi_\infty \equiv 1$, so $\cS = 0$ and $D_\infty = 1$. The formula for $V(g)$ follows from the expression for $P$ in the product case obtained in the proof of Proposition~\ref{prop: limiting_field}(b). Since $R$ and $C$ are nondecreasing, $R(u\wedge u') = R(u)\wedge R(u')$, so \eqref{eq: pillow_covariance} coincides with the covariance of $\mathbb{B}(R(u),C(v))$, and these centered Gaussian processes have the same finite-dimensional distributions. If $R$ and $C$ are continuous, then $\mathbb{B}\circ(R,C)$ is continuous, so it has the law of the continuous version of $\FF$.
		
		If $\bLambda_k = \mathbf{u}\mathbf{v}^\top$ has rank one, then by \eqref{eq: def_kappa}, $\kappa_k = a_k\oplus b_k$ is additively separable, and hence $\int \kappa_k\circ T_k\,d\mu_\sigma = \int a_k\,d\rho_{\r_k} + \int b_k\,d\rho_{\c_k}$ does not depend on $\sigma$, so $\Gibbs_k$ is uniform. A function is additively separable if and only if all its rectangle increments $\kappa(x,y) - \kappa(x',y) - \kappa(x,y') + \kappa(x',y')$ vanish, and this property is preserved under uniform limits; hence $\kappa_\infty \in \cA$.
	\end{proof}
	
	\subsection{Proof of Proposition~\ref{thm: smooth_cost_perturbations}}
	\label{sec: smooth_cost}

	We now prove Proposition~\ref{thm: smooth_cost_perturbations}, with the notation of Section~\ref{sec: EOT_expansion}. We show that the normalized Schr\"odinger potentials depend smoothly on the cost by the implicit function theorem, differentiate the Schr\"odinger system, and identify the derivative of the potentials with the projection $P_{\pi_t}g$.
	
	
	Let $L^\infty_0(\nu) := \{ b \in L^\infty(\nu) : \int b\,d\nu = 0\}$ and consider the Banach spaces
	\begin{align}
		\mathfrak{X} := L^{\infty}(\mu) \times L^\infty_0(\nu), \qquad \mathfrak{E} := \Big\{ (\phi, \psi) \in L^{\infty}(\mu) \times L^{\infty}(\nu) : \int \phi\,d\mu = \int \psi \,d\nu \Big\},
	\end{align}
	equipped with the norm $\norm{(a,b)} := \norm{a}_{L^\infty(\mu)} + \norm{b}_{L^\infty(\nu)}$. Define $\Theta : L^{\infty}(\mu \otimes \nu) \times \mathfrak{X} \to \mathfrak{E}$ by
	\begin{align}
		\Theta(c; a, b) := \left(  \int e^{a(\cdot) + b(y)-c(\cdot, y)}\, \nu(dy) - 1 , \ \int e^{a(x) + b(\cdot) - c(x, \cdot)}\,\mu(dx) -1  \right).
	\end{align}
	Integrating the first component against $\mu$ and the second against $\nu$ gives the same number, so $\Theta$ indeed takes values in $\mathfrak{E}$. By \eqref{eq: schrodinger_system}, $\Theta(c; a,b) = 0$ if and only if $(a,b)$ are Schr\"odinger potentials for the cost $c$ normalized by $\int b\,d\nu = 0$. Hence, for every $c \in L^\infty(\mu\otimes\nu)$, the equation $\Theta(c; \cdot) = 0$ has exactly one solution in $\mathfrak{X}$, which we denote by $\Psi(c)$. 
	
	\begin{lemma}
		\label{lem: IPFT_Justification_Phi}
		The map $\Theta$ is $C^\infty$ in the Fr\'echet sense, and its partial derivatives with respect to $c$ and to $(a,b)$ are the bounded linear operators
		\begin{align}
			\label{eq: cost_to_margins_derivative}
			\partial_c\Theta(c; a, b)g &= \Big( -\int e^{a(\cdot) + b(y) - c(\cdot, y)}g(\cdot, y)\,\nu(dy),\ -\int e^{a(x) + b(\cdot) - c(x, \cdot)}g(x, \cdot)\,\mu(dx) \Big), \\
			\label{eq: potentials_to_margins_derivative}
			\partial_{(a,b)}\Theta(c; a, b)(h_0,h_1) &= \Big(\int e^{a(\cdot) + b(y) - c(\cdot, y)}\big(h_{0}(\cdot) + h_{1}(y)\big)\,\nu(dy), \\
			&\qquad\quad \int e^{a(x) + b(\cdot) - c(x, \cdot)}\big(h_{0}(x) + h_{1}(\cdot)\big)\,\mu(dx) \Big).
		\end{align}
		Moreover, if $\Theta(c; a, b) = 0$, then $\partial_{(a,b)}\Theta(c; a, b) : \mathfrak{X} \to \mathfrak{E}$ is a linear isomorphism.
	\end{lemma}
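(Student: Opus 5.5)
The proof has two parts: smoothness of $\Theta$ with the stated derivatives, and invertibility of $\partial_{(a,b)}\Theta$ at a zero of $\Theta$.

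\emph{Smoothness.} I would write $\Theta = \mathcal{I}\circ \mathcal{E}\circ \mathcal{L}$. Here $\mathcal{L}(c;a,b) := a\oplus b - c$ is a bounded affine map into $L^\infty(\mu\otimes\nu)$. Next, $\mathcal{E}(\phi) := e^{\phi}$ is the Nemytskii operator on $L^\infty(\mu\otimes\nu)$. Finally, $\mathcal{I}(\psi) := (\int\psi(\cdot,y)\,\nu(dy) - 1, \int \psi(x,\cdot)\,\mu(dx) - 1)$ is bounded affine into $L^\infty(\mu)\times L^\infty(\nu)$, and by Fubini it takes values in $\mathfrak{E}$.

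The exponential is $C^\infty$ (indeed analytic) on $L^\infty$, with $D^k\mathcal{E}(\phi)[h_1,\dots,h_k] = e^{\phi}h_1\cdots h_k$. This follows from the power series, which converges in the Banach algebra $L^\infty$, or from the bound $|e^{\phi+h} - e^\phi - e^\phi h| \le \tfrac12\norm{h}_\infty^2 e^{\norm{\phi}_\infty + \norm{h}_\infty}$ uniformly. The chain rule then gives \eqref{eq: cost_to_margins_derivative} and \eqref{eq: potentials_to_margins_derivative}.

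\emph{Invertibility.} Let $\Theta(c;a,b)=0$ and put $\xi := e^{a\oplus b - c}$. By Lemma~\ref{lem: density_bounds}, $\xi$ takes values in $[e^{-2\osc(c)}, e^{2\osc(c)}]$, and it is a doubly stochastic kernel. Write the operator as $\mathcal{T}(h_0,h_1) = (h_0 + K h_1,\ h_1 + K^* h_0)$, where $(Kh_1)(x) = \int \xi(x,y)h_1(y)\,\nu(dy)$ and $K^*$ is the adjoint kernel operator; both are Markov operators, so they are contractions on $L^\infty$ and on $L^2$.

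Given $(\phi,\psi)\in\mathfrak{E}$, I eliminate $h_0 = \phi - Kh_1$. This leaves $(I - K^*K)h_1 = \psi - K^*\phi$. The right side has $\nu$-mean zero because $\int\phi\,d\mu = \int\psi\,d\nu$ and $K^*$ preserves means. Solvability therefore reduces to invertibility of $I - K^*K$ on $L^\infty_0(\nu)$.

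\emph{The main obstacle} is this invertibility in the $L^\infty$ setting. On $L^2_0(\nu)$ it is easy: $K$ restricted to mean-zero functions is the operator with kernel $\xi - 1$, whose norm is at most $1-\eta$ with $\eta = e^{-2\osc(c)}$ by Lemma~\ref{lem: EOT_operator_spectral_radius}, and the Neumann series converges.

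For $L^\infty$ I would instead use a Doeblin argument. The operator $K^*K$ has kernel $k(y,y') = \int \xi(x,y)\xi(x,y')\,\mu(dx) \ge \eta^2$, which is doubly stochastic with respect to $\nu$. Writing $k = \eta^2 + (1-\eta^2)\bar k$ with $\bar k$ a Markov kernel, on mean-zero $h$ we get $K^*Kh = (1-\eta^2)\bar K h$. Hence $\osc(K^*Kh) \le (1-\eta^2)\osc(h)$, and $\norm{h}_\infty \le \osc(h)$ for mean-zero $h$. Iterating, $\norm{(K^*K)^n h}_\infty \le (1-\eta^2)^{n}\osc(h) \le 2(1-\eta^2)^n\norm{h}_\infty$. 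So the Neumann series $\sum_n (K^*K)^n$ converges in operator norm on $L^\infty_0(\nu)$, and $I-K^*K$ is invertible there.

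This produces a unique $h_1\in L^\infty_0(\nu)$ and then $h_0 = \phi - Kh_1 \in L^\infty(\mu)$. Boundedness of the inverse follows from the explicit formulas, or alternatively from the open mapping theorem once bijectivity is established.

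For injectivity, $\mathcal{T}(h_0,h_1)=0$ forces $(I-K^*K)h_1 = 0$ with $h_1$ of mean zero, hence $h_1=0$ and then $h_0 = 0$. The normalization $\int h_1\,d\nu = 0$ built into $\mathfrak{X}$ is exactly what removes the kernel $(t,-t)$.
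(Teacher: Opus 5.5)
Your proposal is correct and follows essentially the same route as the paper: smoothness from factoring $\Theta$ through the Banach-algebra exponential on $L^\infty(\mu\otimes\nu)$ and the chain rule, then elimination of $h_0$ and a Neumann series for $I-\mathcal{K}^\top\mathcal{K}$ on $L^\infty_0(\nu)$, using the minorization $k\ge\eta^2$ of its kernel. The only difference is that the paper bounds $\norm{\mathcal{Q}h}_\infty\le(1-\eta^2)\norm{h}_\infty$ directly from $\mathcal{Q}h=\int(k-\eta^2)h\,d\nu$ for mean-zero $h$, which also follows at once from your decomposition since $\bar K$ is Markov, so your detour through $\osc$ and the extra factor $2$ is unnecessary but harmless.
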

	
	\begin{proof}
		All functions involved are bounded: $c \in L^\infty(\mu\otimes\nu)$ by assumption, and $(a,b) \in \mathfrak{X} \subset L^\infty(\mu)\times L^\infty(\nu)$ by the definition of $\mathfrak{X}$. We write $\Theta = \mathcal{I}\circ\exp\circ\,\cL - (1,1)$, where $\cL(c;a,b) := a\oplus b - c$ is a bounded linear map from $L^\infty(\mu\otimes\nu)\times\mathfrak{X}$ to $L^\infty(\mu\otimes\nu)$, $\exp(w) := e^{w}$ is the exponential on $L^\infty(\mu\otimes\nu)$, and $\mathcal{I}(\zeta) := \big(\int \zeta(\cdot,y)\,\nu(dy), \int\zeta(x,\cdot)\,\mu(dx)\big)$ is a bounded linear map from $L^\infty(\mu\otimes\nu)$ to $L^\infty(\mu)\times L^\infty(\nu)$. Since $L^\infty(\mu\otimes\nu)$ is a Banach algebra, the power series $\exp(w) = \sum_{j\ge0}w^j/j!$ converges absolutely in norm, so $\exp$ is $C^\infty$ in the Fr\'echet sense, with derivative $D\exp(w)h = e^{w}h$; indeed, $\norm{e^{w+h} - e^w - e^wh}_\infty \le \frac12 e^{\norm{w}_\infty + \norm{h}_\infty}\norm{h}_\infty^2$. By the chain rule, $\Theta$ is $C^\infty$, and its partial derivatives are obtained by applying $\mathcal{I}$ to $e^{a\oplus b - c}\,\partial\cL(c;a,b)$, which gives \eqref{eq: cost_to_margins_derivative} and \eqref{eq: potentials_to_margins_derivative}; their operator norms are at most $2e^{\norm{a}_\infty + \norm{b}_\infty + \norm{c}_\infty}$.

		Now suppose that $\Theta(c; a,b) = 0$, and let $K := e^{a\oplus b - c} = \frac{d\pi^c}{d(\mu\otimes\nu)}$. By Lemma~\ref{lem: density_bounds}, $K \ge \eps := e^{-2\osc(c)}$. Let $\cK h(x) := \int K(x,y)h(y)\,\nu(dy)$ and $\cK^\top h(y) := \int K(x,y)h(x)\,\mu(dx)$. Then
		\begin{align}
			\partial_{(a,b)}\Theta(c;a,b)(h_0,h_1) = \big(h_0 + \cK h_1,\ \cK^\top h_0 + h_1\big),
		\end{align}
		and for $(\phi,\psi) \in \mathfrak{E}$ the equation $\partial_{(a,b)}\Theta(c;a,b)(h_0,h_1) = (\phi,\psi)$ is equivalent to
		\begin{align}
			h_0 = \phi - \cK h_1, \qquad (I - \cK^\top\cK)h_1 = \psi - \cK^\top\phi.
		\end{align}
		The operator $\cQ := \cK^\top\cK$ has the kernel $q(y,y') := \int K(x,y)K(x,y')\,\mu(dx) \ge \eps^2$ with respect to $\nu$, and $\int q(y,y')\,\nu(dy') = 1$. In particular, $\cQ$ preserves $L^\infty_0(\nu)$, and for $h \in L^\infty_0(\nu)$ we have $\cQ h(y) = \int (q(y,y') - \eps^2)h(y')\,\nu(dy')$, whence $\norm{\cQ h}_\infty \le (1 - \eps^2)\norm{h}_\infty$. Thus the restriction of $\cQ$ to the Banach space $L^\infty_0(\nu)$ has operator norm at most $1 - \eps^2 < 1$. Hence the Neumann series $\sum_{j \ge 0}\cQ^j$ converges in operator norm on $L^\infty_0(\nu)$, its sum is the inverse of $I - \cQ$, and $\norm{(I-\cQ)^{-1}}_{L^\infty_0(\nu)\to L^\infty_0(\nu)} \le \sum_{j\ge0}(1-\eps^2)^j = \eps^{-2}$. Since $\int(\psi - \cK^\top\phi)\,d\nu = \int\psi\,d\nu - \int \phi\,d\mu = 0$, the second equation has a unique solution $h_1 \in L^\infty_0(\nu)$, and then $h_0$ is determined by the first. The solution depends boundedly on $(\phi,\psi)$, which proves the claim; see also \cite[Proposition~3.1]{carlier2020differential}.
	\end{proof}
	
	By Lemma~\ref{lem: IPFT_Justification_Phi} and the implicit function theorem, for every $c_0 \in L^\infty(\mu\otimes\nu)$ there are a neighborhood $\cU$ of $c_0$ and a $C^\infty$ map $\widehat\Psi : \cU \to \mathfrak{X}$ with $\Theta(c; \widehat\Psi(c)) = 0$ for all $c \in \cU$. By uniqueness of the normalized potentials, $\widehat\Psi = \Psi$ on $\cU$. Hence the map $\Psi : L^\infty(\mu\otimes\nu)\to\mathfrak{X}$ is $C^\infty$, with derivative
	\begin{align}
		\label{eq: Psi_jacobain}
		\partial \Psi(c) = - \partial_{(a,b)} \Theta\big(c; \Psi(c)\big)^{-1} \partial_c\Theta\big(c; \Psi(c)\big).
	\end{align}
	
	
	Now let $(a_t, b_t) := \Psi(f + tg)$. By the above, $t \mapsto (a_t,b_t)$ is a $C^\infty$ map from $\R$ to $\mathfrak{X}$; we write $(\dot a_t, \dot b_t)$ for its derivative, and we let $\xi_t := e^{a_t\oplus b_t - f - tg}$, the density of $\pi_t = \pi^{f+tg}$ by Lemma~\ref{lem: pythagorean}.
	
	\begin{proof}[\textbf{Proof of Proposition~\ref{thm: smooth_cost_perturbations}}]
		Differentiating the identity $\Theta(f+tg; a_t,b_t) = 0$ with respect to $t$ gives $\partial_c\Theta\, g + \partial_{(a,b)}\Theta\,(\dot a_t,\dot b_t) = 0$, where the partial derivatives are evaluated at $(f + tg; a_t, b_t)$. By \eqref{eq: cost_to_margins_derivative}, \eqref{eq: potentials_to_margins_derivative} and the identities $\int \xi_t(x,y)\,\nu(dy) = 1$ for $\mu$-a.e.\ $x$ and $\int \xi_t(x,y)\,\mu(dx) = 1$ for $\nu$-a.e.\ $y$, this is
		\begin{align}
			\label{eq: schrodinger_derivative_integral_system}
			\begin{split}
				\dot a_t(x) + \int \xi_t(x,y)\,\dot b_t(y)\,\nu(dy) &= \int \xi_t(x,y)\,g(x,y)\,\nu(dy) \qquad \text{for } \mu\text{-a.e. } x,\\
				\dot b_t(y) + \int \xi_t(x,y)\,\dot a_t(x)\,\mu(dx) &= \int \xi_t(x,y)\,g(x,y)\,\mu(dx) \qquad \text{for } \nu\text{-a.e. } y.
			\end{split}
		\end{align}
		Since $\xi_t$ is the density of $\pi_t$, this says that $\E_{\pi_t}[g - \dot a_t\oplus\dot b_t \mid X] = 0$ and $\E_{\pi_t}[g - \dot a_t\oplus\dot b_t \mid Y] = 0$ for $(X,Y) \sim \pi_t$. As $\xi_t$ is bounded away from $0$ and $\infty$ and $\dot a_t \oplus \dot b_t \in \cA$, Lemma~\ref{lem: sum_space} gives
		\begin{align}
			\label{eq: potential_derivative_projection}
			\dot a_t \oplus \dot b_t = P_{\pi_t} g.
		\end{align}
		By Lemma~\ref{lem: pythagorean}, $\Gamma(f + tg) = \int a_t\,d\mu + \int b_t\,d\nu$, which is $C^\infty$ in $t$ as the composition of a $C^\infty$ map with a bounded linear functional. Integrating the first equation in \eqref{eq: schrodinger_derivative_integral_system} against $\mu$ and using $\int \xi_t(x,y)\,\mu(dx) = 1$, we obtain
		\begin{align}
			\frac{d}{dt}\Gamma(f+tg) = \int \dot a_t\,d\mu + \int \dot b_t\,d\nu = \iint \xi_t\, g\,d(\mu\otimes\nu) = \int g\,d\pi_t.
		\end{align}
		The map $t \mapsto \xi_t$ is $C^1$ into $L^\infty(\mu\otimes\nu)$ with derivative $\xi_t(\dot a_t\oplus\dot b_t - g)$, and integration against $g$ is a bounded linear functional on $L^\infty(\mu\otimes\nu)$. Hence, by \eqref{eq: potential_derivative_projection} and since $P_{\pi_t}$ is an orthogonal projection,
		\begin{align}
			\frac{d^2}{dt^2}\Gamma(f+tg) = \int g\,\big(\dot a_t\oplus\dot b_t - g\big)\,d\pi_t = \big\langle g, (P_{\pi_t} - I)g\big\rangle_{L^2(\pi_t)} = -\norm{(I - P_{\pi_t})g}^2_{L^2(\pi_t)},
		\end{align}
		which proves \eqref{eq: EOT_derivatives}.
	\end{proof}
	
	\section{Proofs of the spectral results}
	\label{sec: proof_ESD}
	
	In this section we prove Theorem~\ref{thm: ESD_limit}. The strategy is to prove the limit for the tilted model $\mathbf{Y} \sim \Poisson(\mathbf{Z})$, whose entries are independent, and to transfer it to the conditioned table $\mathbf{X}$. Since the transference factor $(eN)^{m+n}m^{-m}n^{-n}$ of Lemma~\ref{thm: transference} is only $e^{O((m+n)\log N)}$, the transfer requires linear spectral statistics of the tilted model to concentrate at a faster rate. Section~\ref{sec: proof_ESD_concentration} establishes such concentration (Proposition~\ref{prop: spectral_concentration}), from a modified logarithmic Sobolev inequality for products of Poisson laws and bounds on the discrete gradients of spectral statistics. Section~\ref{sec: proof_ESD_proof} then proves Theorem~\ref{thm: ESD_limit}: the global law of the tilted model follows from the local law for random Gram matrices with a variance profile \cite{alt2017local}, whose deterministic approximation is given by the Dyson equation \eqref{eq: dyson_discrete}, and since the variance profile converges to the density of the limiting Schr\"odinger bridge, the continuity of the Dyson equation in its profile (Lemma~\ref{lem: dyson_continuity}) identifies the limit $\nu_\infty$.

	\subsection{Concentration of spectral statistics}
	\label{sec: proof_ESD_concentration}
	
	We fix margins $(\r,\c)$ and a prior mean $\bLambda$, with $\mathbf{X}$, $\mathbf{Y}$, $\mathbf{A}_{\mathbf{X}}$ and $\mathbf{A}_{\mathbf{Y}}$ as in Section~\ref{sec: main_spectral}. For $f \in C^2([0,\infty))$ such that $f'$ vanishes on $[R^2,\infty)$ for some $R \ge 1$, we set $L_1 := 2R\norm{f'}_\infty$ and $L_2 := 2\norm{f'}_\infty + 4R^2\norm{f''}_\infty$, and with $C_{\mathbf{Z}} := \max_{i,j}\mathbf{Z}_{ij}/\bar s$ we let
	\begin{align}
		\label{eq: def_psi_spectral}
		\psi(t) := \frac{m\,t\sqrt{(m+n)\bar s}}{4L_1}\,\log\left(1 + \frac{2mL_1t}{C_{\mathbf{Z}}\sqrt{(m+n)\bar s}\,\big(4L_1^2 + L_2^2/\bar s\big)}\right), \qquad t \ge 0.
	\end{align}

	\begin{prop}[Exponential concentration of spectral statistics]
		\label{prop: spectral_concentration}
		Let $f$ and $R$ be as above. Then for all $t \ge 0$,
		\begin{align}
			\P\left(\left|\int f\,d\mu_{\mathbf{A}_{\mathbf{Y}}\mathbf{A}_{\mathbf{Y}}^\top} - \E\int f\,d\mu_{\mathbf{A}_{\mathbf{Y}}\mathbf{A}_{\mathbf{Y}}^\top}\right| \ge t\right) &\le 2e^{-\psi(t)}, \\
			\P\left(\left|\int f\,d\mu_{\mathbf{A}_{\mathbf{X}}\mathbf{A}_{\mathbf{X}}^\top} - \E\int f\,d\mu_{\mathbf{A}_{\mathbf{Y}}\mathbf{A}_{\mathbf{Y}}^\top}\right| \ge t\right) &\le 2\exp\Big(m\log\frac{eN}{m} + n\log\frac{eN}{n} - \psi(t)\Big),
		\end{align}
		with $\psi$ as in \eqref{eq: def_psi_spectral}.
	\end{prop}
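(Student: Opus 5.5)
We prove the first inequality from a modified logarithmic Sobolev inequality for products of Poisson laws, and then obtain the second from the transference principle (Lemma~\ref{thm: transference}).

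\emph{Step 1: functional inequality.} Set $F(\mathbf{Y}) := \int f\,d\mu_{\mathbf{A}_{\mathbf{Y}}\mathbf{A}_{\mathbf{Y}}^\top} = \frac1m\tr f(\mathbf{A}_{\mathbf{Y}}\mathbf{A}_{\mathbf{Y}}^\top)$. By tensorization, the product measure $\Poisson(\mathbf{Z})$ satisfies the Bobkov--Ledoux inequality \cite{bobkov1998modified}. This inequality bounds the entropy of $e^{\lambda F}$ by $\sum_{ij}\mathbf{Z}_{ij}\,\E[\,e^{\lambda F}\,\phi(\lambda D_{ij}F)\,]$, where $D_{ij}F(\mathbf{Y}) := F(\mathbf{Y}+\mathbf{E}_{ij}) - F(\mathbf{Y})$ and $\phi(x) = xe^x - e^x + 1$. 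Suppose we have deterministic bounds $|D_{ij}F| \le \beta$ and $\sum_{ij}\mathbf{Z}_{ij}(D_{ij}F)^2 \le \alpha^2$. The Herbst argument then produces a Bennett-type tail $2\exp\big(-\frac{t}{2\beta}\log(1+\frac{\beta t}{\alpha^2})\big)$, up to constants. Matching this with \eqref{eq: def_psi_spectral} determines what to aim for: $\beta \approx 2L_1/(m\sqrt{(m+n)\bar s})$, and $\alpha^2 \approx C_{\mathbf{Z}}(4L_1^2 + L_2^2/\bar s)/m^2$.

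\emph{Step 2: discrete gradients; this is the main obstacle.} A unit change in entry $(i,j)$ perturbs $\mathbf{A}$ by $\delta\mathbf{E}_{ij}$ with $\delta := ((m+n)\bar s)^{-1/2}$. The map $\mathbf{A}\mapsto \tr f(\mathbf{A}\mathbf{A}^\top)$ equals $\tr g(\mathbf{A})$ for the singular values with $g(x) = f(x^2)$. The function $g$ is $L_1$-Lipschitz, since $f'$ vanishes beyond $R^2$; hence $|g'|\le 2R\norm{f'}_\infty = L_1$. Applying the Hoffman--Wielandt/Lidskii inequality to a rank-one perturbation gives $|\tr g(\mathbf{A}+\delta\mathbf{E}) - \tr g(\mathbf{A})| \le 2L_1\delta$, which yields $\beta$. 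For $\alpha$ we will use a second-order Taylor expansion instead of summing the crude bound. The first-order term is $\delta\,[\nabla \tr g(\mathbf{A})]_{ij} = 2\delta\,[f'(\mathbf{A}\mathbf{A}^\top)\mathbf{A}]_{ij}$. Its squared Frobenius norm is $\norm{f'(\mathbf{A}\mathbf{A}^\top)\mathbf{A}}_F^2 \le \min(m,n)\,R^2\norm{f'}_\infty^2$. The second-order remainder is bounded by $\delta^2\sup\norm{\nabla^2\tr g}$, which is controlled by $L_2$; the factor $4R^2\norm{f''}$ comes from differentiating $f'(\mathbf{A}\mathbf{A}^\top)\mathbf{A}$ once more, and the term $2\norm{f'}$ from the product rule. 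Weighting by $\mathbf{Z}_{ij}\le C_{\mathbf{Z}}\bar s$, we get $\sum \mathbf{Z}_{ij}(D_{ij}F)^2 \lesssim m^{-2}C_{\mathbf{Z}}\bar s\,\big(\delta^2 \cdot 4L_1^2 \cdot(\text{first-order mass}) + mn\,\delta^4L_2^2\big)$. Since $\delta^2 mn\bar s = N/(m+n)$ and $\delta^4 mn\bar s = \bar s^{-1}\cdot mn/(m+n)^2\le \bar s^{-1}$, this should land on the stated $\alpha^2$ after routine bookkeeping. Getting the constants to match \eqref{eq: def_psi_spectral} exactly is the delicate part. If the Taylor remainder is awkward, I would instead bound $D_{ij}F$ by integrating the derivative along the segment and using the uniform gradient bound $L_1$ together with a Hessian bound $L_2$ in the same way.

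\emph{Step 3: transfer.} Apply Lemma~\ref{thm: transference} to the indicator $h(\mathbf{A}) = \1\big(|\int f\,d\mu_{\cdot} - \E F(\mathbf{Y})|\ge t\big)$. The factor is $(eN)^{m+n}m^{-m}n^{-n} = \exp\big(m\log\frac{eN}{m} + n\log\frac{eN}{n}\big)$, and multiplying the first bound by it gives the second inequality. Note that the centering stays at $\E F(\mathbf{Y})$, as stated.
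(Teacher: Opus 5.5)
Your proposal is correct and follows the paper's proof. The ingredients match: a tensorized Poisson modified log-Sobolev inequality with a Herbst/Bennett tail; the sup-bound on discrete gradients from Lidskii for a rank-one perturbation; the $\ell^2$-bound from the first-order gradient (Frobenius mass at most $\min(m,n)L_1^2$) plus a remainder controlled by $L_2$; and transference for $\mathbf{X}$. For the remainder bound, the paper uses the Hermitian dilation $\widetilde{\mathbf{B}}$ and the Frobenius-Lipschitz property of $h'$, $h(x)=f(x^2)$, which is cleaner than differentiating $f'(\mathbf{A}\mathbf{A}^\top)\mathbf{A}$, since $\norm{\mathbf{A}}_{\mathrm{op}}$ is unbounded.

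Your Wu-type form with the weighted variance $\sum_{i,j}\mathbf{Z}_{ij}(D_{ij}F)^2$ gives a larger exponent than Lemma~\ref{lem:LSI_concentration} with $c_1=C_{\mathbf{Z}}\bar s$, $c_2=2$, so the constants in $\psi$ do close. In the bookkeeping, the relevant factor is $\delta^2\bar s\min(m,n)\le 1$; also $\delta^2 mn\bar s = mn/(m+n)$, not $N/(m+n)$.
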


	The proof of Proposition~\ref{prop: spectral_concentration} has two ingredients. The first is a modified logarithmic Sobolev inequality for products of Poisson laws. For a probability measure $\nu$ and a positive function $f$, let
	\begin{align}
		\Ent_{\nu}(f) := \int f\log f \, d\nu - \int f \, d\nu\,\log \int f \, d\nu
	\end{align}
	denote the entropy of $f$ with respect to $\nu$. For $g : \Z_{\ge 0} \to \R$, let $Dg(x) := g(x+1) - g(x)$, and for $g : \Z_{\ge 0}^{d} \to \R$ and $1 \le i \le d$, let $D_ig(x) := g(x + e_i) - g(x)$, where $e_i$ is the $i$th standard basis vector. The following lemma is a slight generalization of \cite[Proposition~11]{bobkov1998modified}, in which the coordinates may have different laws.
	
	\begin{lemma}
		\label{lem:LSI_concentration}
		Let $\nu_1, \ldots, \nu_d$ be probability measures on $\Z_{\ge 0}$. Assume that there are $c_1, c_2 > 0$ such that for every $\tau > 0$ and every $g : \Z_{\ge 0}\to\R$ with $\sup_{x}|Dg(x)|\le \tau$,
		\begin{align}
			\Ent_{\nu_i}(e^g)\le c_1e^{c_2\tau}\,\E_{\nu_i}\big[|Dg|^2e^g\big], \qquad i = 1, \ldots, d.
		\end{align}
		Let $\nu := \nu_1\otimes\cdots\otimes\nu_d$. If $f:\Z_{\ge 0}^{d}\to\R$ satisfies, for some $\alpha,\beta > 0$ and all $x\in \Z_{\ge 0}^{d}$,
		\begin{align}
			\sum_{i=1}^{d}|D_if(x)|^2\le \alpha^2 \quad\text{and}\quad \max_{1\le i\le d}|D_if(x)|\le \beta,
		\end{align}
		then for every $t\ge 0$,
		\begin{align}
			\nu\big(f\ge \E_{\nu}[f]+t\big)\le \exp\left(-\frac{t}{2c_2\beta}\log\left(1+\frac{\beta c_2t}{4c_1\alpha^2}\right)\right).
		\end{align}
	\end{lemma}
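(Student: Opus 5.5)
The plan is the Herbst argument of \cite[Proposition~11]{bobkov1998modified}, run for the product measure. The first step is to tensorize the one-dimensional modified log-Sobolev inequalities. By the subadditivity of entropy for product measures,
\begin{align}
\Ent_\nu(e^{g}) \le \sum_{i=1}^d \E_\nu\big[\Ent_{\nu_i}(e^{g})\big],
\end{align}
where $\Ent_{\nu_i}$ acts on the $i$th coordinate with the others frozen. Apply this to $g = \lambda f$ with $\lambda \in (0, 1/\beta]$ to be optimized later. Each coordinate section of $\lambda f$ has discrete increments bounded by $\tau = \lambda\beta$, so the hypothesis applies with the same constants for every $i$. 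Summing and using $\sum_i |D_i f|^2 \le \alpha^2$ pointwise gives
\begin{align}
\Ent_\nu(e^{\lambda f}) \le c_1 e^{c_2\lambda\beta}\lambda^2\alpha^2\,\E_\nu[e^{\lambda f}].
\end{align}
If $f$ is unbounded, first truncate it, for instance to $\min(f,M)$. Truncation preserves both gradient bounds and lets the expectations exist; then let $M\to\infty$ by monotone convergence.

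The second step is the Herbst argument. Set $H(\lambda) := \E_\nu[e^{\lambda f}]$ and $K(\lambda) := \lambda^{-1}\log H(\lambda)$. The entropy bound reads
\begin{align}
K'(\lambda) = \frac{\Ent_\nu(e^{\lambda f})}{\lambda^2 H(\lambda)} \le c_1\alpha^2 e^{c_2\beta\lambda}.
\end{align}
Since $K(0^+) = \E_\nu f$, integrating gives
\begin{align}
\log \E_\nu\big[e^{\lambda(f - \E_\nu f)}\big] \le \frac{c_1\alpha^2\lambda}{c_2\beta}\big(e^{c_2\beta\lambda}-1\big).
\end{align}
This holds for all $\lambda$ in the allowed range. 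The restriction $\lambda\beta \le 1$ can in fact be dropped, because the hypothesis is assumed for every $\tau > 0$, so I would allow all $\lambda>0$.

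The third step is Chebyshev's inequality and the choice of $\lambda$. Chebyshev gives
\begin{align}
\nu(f \ge \E f + t) \le \exp\Big(-\lambda t + \frac{c_1\alpha^2\lambda}{c_2\beta}\big(e^{c_2\beta\lambda}-1\big)\Big).
\end{align}
Choose
\begin{align}
\lambda := \frac{1}{c_2\beta}\log\Big(1 + \frac{c_2\beta t}{2c_1\alpha^2}\Big),
\end{align}
so that $\frac{c_1\alpha^2}{c_2\beta}(e^{c_2\beta\lambda}-1) = t/2$. The exponent then becomes $-\lambda t/2$, which equals
\begin{align}
-\frac{t}{2c_2\beta}\log\Big(1+\frac{c_2\beta t}{2c_1\alpha^2}\Big).
\end{align}
This is at least as strong as the claimed bound, since the denominator $4c_1\alpha^2$ in the statement gives a smaller logarithm.

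The main obstacle is the first step, justifying tensorization for discrete, possibly unbounded functions on $\Z_{\ge 0}^d$. Two points need care. The uniform increment bound must hold for every coordinate section; this is immediate from $\max_i|D_if|\le\beta$ holding at all points. The expectations must be finite; for this I would handle the truncation carefully, checking that $\min(f,M)$ keeps $|D_i|\le\beta$ and the sum-of-squares bound. Everything else is routine calculus.
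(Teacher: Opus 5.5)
Your proposal is correct and follows the same route as the paper: tensorize the one-dimensional inequalities by subadditivity of entropy to get $\Ent_\nu(e^{\lambda f}) \le c_1 e^{c_2\lambda\beta}\lambda^2\alpha^2\,\E_\nu[e^{\lambda f}]$ for every $\lambda > 0$, then run the Herbst argument of \cite[Proposition~11]{bobkov1998modified}. The only difference is that the paper cites that argument, while you carry out the optimization yourself and obtain $2c_1\alpha^2$ in place of $4c_1\alpha^2$ inside the logarithm, which implies the stated bound.
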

	
	\begin{proof}
		It suffices to establish the inequality obtained from the tensorization step, namely $\Ent_{\nu}(e^f)\le c_1e^{c_2\tau}\E_{\nu}[|Df|^2e^f]$ with $|Df|^2 := \sum_{i}|D_if|^2$, for every $f$ with $\max_i\sup|D_if| \le \tau$. By the subadditivity of entropy \cite[Proposition~1.4.1]{ane2000inegalites},
		\begin{align}
			\Ent_{\nu}(e^f) \le \sum_{i=1}^{d}\E_{\nu}\big[\Ent_{\nu_i}(e^f)\big] \le \sum_{i=1}^{d}\E_{\nu}\Big[c_1e^{c_2\tau}\,\E_{\nu_i}\big[|D_if|^2e^f\big]\Big] = c_1e^{c_2\tau}\,\E_{\nu}\big[|Df|^2e^f\big],
		\end{align}
		where $\Ent_{\nu_i}$ and $\E_{\nu_i}$ act on the $i$th coordinate. The rest of the argument is identical to the proof of \cite[Proposition~11]{bobkov1998modified}.
	\end{proof}
	
	By the modified logarithmic Sobolev inequality for Poisson distributions \cite{bobkov1998modified}, $\Poisson(\lambda)$ with $\lambda \le \bar\lambda$ satisfies
	\begin{align}
		\label{eq: poisson_MLSI}
		\Ent_{\Poisson(\lambda)}(e^g)\le \bar\lambda\, e^{2\tau}\,\E_{\Poisson(\lambda)}\big[|Dg|^2e^g\big] \qquad\text{whenever } \sup|Dg| \le \tau.
	\end{align}

	The second ingredient is a bound on the discrete gradient of spectral statistics. For $\mathbf{B} \in \R^{m\times n}$, let $\widetilde{\mathbf{B}} := \begin{psmallmatrix} 0 & \mathbf{B}^\top \\ \mathbf{B} & 0\end{psmallmatrix}$ be its Hermitian dilation, and let $\mathbf{E}_{ij}$ denote the $m\times n$ matrix units. For $h \in C^2(\R)$ and $\eta > 0$, let $F(\mathbf{B}) := \tr h(\eta\widetilde{\mathbf{B}})$ for $\mathbf{B} \in \R^{m\times n}$.
	
	\begin{lemma}
		\label{lem: spectral_gradient}
		Let $h \in C^2(\R)$ with $\norm{h'}_\infty \le L_1$ and $\norm{h''}_\infty \le L_2$, and let $\eta > 0$. Then for every $\mathbf{B} \in \R^{m\times n}$,
		\begin{align}
			\max_{i,j}\big|F(\mathbf{B} + \mathbf{E}_{ij}) - F(\mathbf{B})\big| \le 2L_1\eta \qquad\text{and}\qquad \sum_{i,j}\big|F(\mathbf{B} + \mathbf{E}_{ij}) - F(\mathbf{B})\big|^2 \le 4(m+n)L_1^2\eta^2 + 4mnL_2^2\eta^4.
		\end{align}
	\end{lemma}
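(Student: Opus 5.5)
We view $F(\mathbf{B}+\mathbf{E}_{ij}) - F(\mathbf{B})$ as a function of a one-parameter family and expand it with trace calculus. Set $\mathbf{H}(s) := \eta\widetilde{(\mathbf{B}+s\mathbf{E}_{ij})} = \eta\widetilde{\mathbf{B}} + s\eta\widetilde{\mathbf{E}_{ij}}$ for $s\in[0,1]$, and $\phi(s) := \tr h(\mathbf{H}(s))$. The dilation $\widetilde{\mathbf{E}_{ij}}$ is the symmetric rank-two matrix $e_{m+?}\ldots$, more precisely $u v^\top + v u^\top$ for two standard basis vectors $u,v$ of $\R^{m+n}$ (one in the column block, one in the row block). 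It has eigenvalues $\pm 1$ and nuclear norm $2$.

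\textbf{First bound.} By the standard formula $\phi'(s) = \tr\big(h'(\mathbf{H}(s))\,\eta\widetilde{\mathbf{E}_{ij}}\big)$, we get $|\phi'(s)| \le \norm{h'(\mathbf{H}(s))}_{\mathrm{op}}\,\eta\norm{\widetilde{\mathbf{E}_{ij}}}_{1} \le 2L_1\eta$. Integrating over $s\in[0,1]$ gives the first claim.

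\textbf{Second bound.} Here we write $\phi(1)-\phi(0) = \phi'(0) + \int_0^1(1-s)\phi''(s)\,ds$ and use $|a+b|^2\le 2a^2+2b^2$.

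For the first-order term, $\phi'(0) = 2\eta\,h'(\eta\widetilde{\mathbf{B}})_{uv} = 2\eta\,\mathbf{M}_{m+i,j}$, where $\mathbf{M} := h'(\eta\widetilde{\mathbf{B}})$ is symmetric with $\norm{\mathbf{M}}_{\mathrm{op}}\le L_1$ (the exact block indexing is immaterial). Summing over $(i,j)$ collects the squares of the entries of one off-diagonal block of $\mathbf{M}$. This is bounded by $\norm{\mathbf{M}}_{\mathrm{HS}}^2 \le (m+n)L_1^2$. Hence $\sum_{i,j}2|\phi'(0)|^2 \le 8\eta^2(m+n)L_1^2$, and to reach the stated constant $4$ we instead use that the block is only a quarter-ish portion: $\sum_{i,j}|\mathbf{M}_{m+i,j}|^2 \le \frac12\norm{\mathbf{M}}_{\mathrm{HS}}^2$ by symmetry, since the two off-diagonal blocks contribute equally.

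For the second-order term, we need $|\phi''(s)| \le 2L_2\eta^2$ uniformly. This is the main obstacle, since the second derivative of a trace function involves the divided-difference (Daleckii--Krein) formula
\begin{align}
\phi''(s) = \eta^2\sum_{p,q} h^{[1]}\big(\lambda_p,\lambda_q\big)'\ldots ,
\end{align}
which is awkward to handle directly. We avoid this as follows. For a rank-two perturbation $\mathbf{E} = uv^\top+vu^\top = ww^\top - zz^\top$, with $w=(u+v)/\sqrt2$ and $z=(u-v)/\sqrt2$, we split $\phi(1)-\phi(0)-\phi'(0)$ into two successive rank-one updates. For a rank-one update $t\,ww^\top$, the second derivative of $\tr h$ is $\sum_{p,q}h^{[2]}$-type terms whose absolute sum is at most $\norm{h''}_\infty$, because by interlacing the map $t\mapsto\tr h(\mathbf{A}+t ww^\top)$ has second derivative bounded by $\sup|h''|$; concretely, its derivative $w^\top h'(\mathbf{A}+tww^\top)w$ is $L_2$-Lipschitz in $t$. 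This gives a remainder of at most $2\cdot\tfrac12 L_2\eta^2\cdot$(norm factor), hence at most $2L_2\eta^2$ after accounting for the cross term between the two rank-one steps via the first-derivative bound. Each of the $mn$ entries then contributes at most $2\cdot(L_2\eta^2)^2$, giving the term $4mnL_2^2\eta^4$ after we fix the constants with $(a+b)^2\le 2a^2+2b^2$.

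Combining the first-order and second-order contributions yields the stated bound. The constants may need minor adjustment, but the structure — derivative bound via nuclear norm, first-order Hilbert--Schmidt summation, and uniform second-order remainder — is the plan.
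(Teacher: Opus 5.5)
Your first bound is correct; you get it from $|\tr(XY)|\le\norm{X}_{\mathrm{op}}\norm{Y}_{S_1}$ along the segment, where the paper uses Lidskii's theorem. Your first-order term is exactly the paper's: $\phi'(0)=2\eta\,h'(\eta\widetilde{\mathbf B})_{n+i,j}$, symmetry halves the Hilbert--Schmidt norm, and the factor $2$ from $(a+b)^2\le 2a^2+2b^2$ gives $4(m+n)L_1^2\eta^2$.

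\textbf{The gap is in the second-order remainder.} You correctly note that $|\phi''(s)|\le 2L_2\eta^2$ would suffice, since it gives $|\phi(1)-\phi(0)-\phi'(0)|\le L_2\eta^2$. The rank-one splitting does not prove this. Carry it out with $\mathbf A:=\eta\widetilde{\mathbf B}$:
\begin{align}
\phi(1)-\phi(0)-\phi'(0) = R_1+R_2-\eta\,z^\top\big(h'(\mathbf A+\eta ww^\top)-h'(\mathbf A)\big)z, \qquad |R_1|,|R_2|\le \tfrac12 L_2\eta^2 .
\end{align}
The cross term cannot be handled by the first-derivative bound. $\norm{h'}_\infty\le L_1$ only gives $O(L_1\eta)$, which destroys the $\eta^4$ scaling. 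It needs Lipschitz continuity of $h'$ at the matrix level, and then it is at most $L_2\eta^2$. So the splitting yields a remainder of $2L_2\eta^2$, a per-entry contribution of $2(2L_2\eta^2)^2$, and a total of $8mnL_2^2\eta^4$, twice the stated constant.

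Your final line instead uses a remainder of $L_2\eta^2$, which the splitting did not deliver. Also, $mn\cdot 2(L_2\eta^2)^2$ equals $2mnL_2^2\eta^4$, not $4mnL_2^2\eta^4$. Finally, interlacing is not what makes $t\mapsto w^\top h'(\mathbf A+tww^\top)w$ $L_2$-Lipschitz; that too is the matrix-level Lipschitz property of $h'$.

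\textbf{The missing ingredient is the one the paper uses.} For symmetric $\mathbf H,\mathbf H'$,
\begin{align}
\norm{g(\mathbf H)-g(\mathbf H')}_F\le\Lip(g)\norm{\mathbf H-\mathbf H'}_F,
\end{align}
proved by expanding in the two eigenbases: $\norm{g(\mathbf H)-g(\mathbf H')}_F^2=\sum_{\ell,\ell'}(g(\lambda_\ell)-g(\lambda'_{\ell'}))^2\langle u_\ell,u'_{\ell'}\rangle^2$. Apply it with $g=h'$:
\begin{align}
|\phi'(s)-\phi'(s')|\le\norm{h'(\mathbf H(s))-h'(\mathbf H(s'))}_F\,\norm{\eta\widetilde{\mathbf E}_{ij}}_F\le L_2\norm{\eta\widetilde{\mathbf E}_{ij}}_F^2\,|s-s'| = 2L_2\eta^2|s-s'|.
\end{align}
This is exactly the uniform bound you wanted, with no splitting. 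It gives a remainder of at most $L_2\eta^2$ and hence a total of at most $4(m+n)L_1^2\eta^2+2mnL_2^2\eta^4$.

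The Daleckii--Krein formula you set aside is just as short. In the eigenbasis of $\mathbf H(s)$, $\phi''(s)$ is the sum over $p,q$ of the divided differences of $h'$ at $(\lambda_p,\lambda_q)$ times the squared entries of $\eta\widetilde{\mathbf E}_{ij}$. These divided differences are bounded by $L_2$, so $|\phi''(s)|\le L_2\norm{\eta\widetilde{\mathbf E}_{ij}}_F^2$. The paper gets the slightly weaker remainder $\sqrt2L_2\eta^2$ by bounding $|\tr(\Delta\,\widetilde{\mathbf E}_{ij})|$ by $2\norm{\Delta}_F$, which is where its constant $4mn$ comes from.
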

	
	\begin{proof}
		For symmetric matrices $\mathbf{H}, \mathbf{H}'$ with eigenvalues $\lambda_1 \ge \lambda_2 \ge \cdots$ and $\lambda'_1 \ge \lambda'_2 \ge \cdots$, Lidskii's theorem gives $\sum_\ell|\lambda_\ell - \lambda'_\ell| \le \norm{\mathbf{H} - \mathbf{H}'}_{S_1}$, where $\norm{\cdot}_{S_1}$ is the trace norm \cite[Chapter~III]{bhatia1997matrix}. Hence $|\tr h(\mathbf{H}) - \tr h(\mathbf{H}')| \le L_1\norm{\mathbf{H} - \mathbf{H}'}_{S_1}$. The matrix $\widetilde{\mathbf{E}}_{ij}$ has eigenvalues $\pm 1$ and $0$, so $\norm{\eta\widetilde{\mathbf{E}}_{ij}}_{S_1} = 2\eta$, which proves the first bound.
		
		For the second bound, recall that $\frac{d}{d\eps}\tr h(\mathbf{H} + \eps \mathbf{H}')|_{\eps=0} = \tr(h'(\mathbf{H})\mathbf{H}')$. With $\mathbf{G}(\mathbf{B}) := h'(\eta\widetilde{\mathbf{B}})$, the partial derivatives of $F$ are
		\begin{align}
			\partial_{ij}F(\mathbf{B}) = \eta\tr\big(\mathbf{G}(\mathbf{B})\,\widetilde{\mathbf{E}}_{ij}\big) = 2\eta\,\mathbf{G}(\mathbf{B})_{n+i,\,j}.
		\end{align}
		Since $\mathbf{G}(\mathbf{B})$ is symmetric, each entry $\mathbf{G}(\mathbf{B})_{n+i,j}$ appears twice in $\mathbf{G}(\mathbf{B})$, and therefore
		\begin{align}
			\sum_{i,j}\partial_{ij}F(\mathbf{B})^2 \le 2\eta^2\norm{\mathbf{G}(\mathbf{B})}_F^2 \le 2(m+n)L_1^2\eta^2.
		\end{align}
		Next, for symmetric $\mathbf{H}, \mathbf{H}'$ with spectral decompositions $\mathbf{H} = \sum_\ell\lambda_\ell u_\ell u_\ell^\top$ and $\mathbf{H}' = \sum_{\ell}\lambda'_{\ell}u'_{\ell}u'^\top_{\ell}$, and a Lipschitz function $g$,
		\begin{align}
			\norm{g(\mathbf{H}) - g(\mathbf{H}')}_F^2 &= \sum_{\ell,\ell'}\big(g(\lambda_\ell) - g(\lambda'_{\ell'})\big)^2\langle u_\ell,u'_{\ell'}\rangle^2 \\
			&\le \Lip(g)^2\sum_{\ell,\ell'}(\lambda_\ell - \lambda'_{\ell'})^2\langle u_\ell,u'_{\ell'}\rangle^2 = \Lip(g)^2\norm{\mathbf{H} - \mathbf{H}'}_F^2.
		\end{align}
		Applying this with $g = h'$ and using $\norm{\widetilde{\mathbf{E}}_{ij}}_F = \sqrt{2}$, we obtain $|\partial_{ij}F(\mathbf{B} + \vartheta \mathbf{E}_{ij}) - \partial_{ij}F(\mathbf{B})| \le 2\eta\norm{\mathbf{G}(\mathbf{B}+\vartheta \mathbf{E}_{ij}) - \mathbf{G}(\mathbf{B})}_F \le 2\sqrt{2}\,\vartheta L_2\eta^2$ for $\vartheta \in [0,1]$. Since $F(\mathbf{B} + \mathbf{E}_{ij}) - F(\mathbf{B}) = \int_0^1\partial_{ij}F(\mathbf{B} + \vartheta \mathbf{E}_{ij})\,d\vartheta$, it follows that $|F(\mathbf{B} + \mathbf{E}_{ij}) - F(\mathbf{B}) - \partial_{ij}F(\mathbf{B})| \le \sqrt 2L_2\eta^2$, and hence
		\begin{align}
			\sum_{i,j}|F(\mathbf{B}+\mathbf{E}_{ij}) - F(\mathbf{B})|^2 \le 2\sum_{i,j}\partial_{ij}F(\mathbf{B})^2 + 4mnL_2^2\eta^4 \le 4(m+n)L_1^2\eta^2 + 4mnL_2^2\eta^4.
		\end{align}
	\end{proof}

	\begin{proof}[\textbf{Proof of Proposition~\ref{prop: spectral_concentration}}] 
		Let $\eta := ((m+n)\bar s)^{-1/2}$ and $h(x) := f(x^2)$ for $x \in \R$. Then $h \in C^2(\R)$, $h'(x) = 2xf'(x^2)$ and $h''(x) = 2f'(x^2) + 4x^2f''(x^2)$, and both vanish for $|x| \ge R$; hence $\norm{h'}_\infty \le L_1$ and $\norm{h''}_\infty \le L_2$. For $\mathbf{B} \in \R^{m\times n}$, the eigenvalues of $\eta\widetilde{\mathbf{B}}$ are $\pm\eta\sigma_\ell(\mathbf{B})$, $\ell \le m\wedge n$, together with $|m-n|$ zeros, while the eigenvalues of $\eta^2\mathbf{B}\mathbf{B}^\top$ are $\eta^2\sigma_\ell(\mathbf{B})^2$, $\ell \le m\wedge n$, together with $(m-n)^+$ zeros. Hence, with $F(\mathbf{B}) := \tr h(\eta\widetilde{\mathbf{B}})$,
		\begin{align}
			\label{eq: gram_vs_dilation}
			\sum_{i=1}^m f\big(\lambda_i(\eta^2\mathbf{B}\mathbf{B}^\top)\big) = \frac12F(\mathbf{B}) + \frac12(m-n)f(0).
		\end{align}
		In particular, $m\int f\,d\mu_{\mathbf{A}_{\mathbf{Y}}\mathbf{A}_{\mathbf{Y}}^\top} = \frac12F(\mathbf{Y} - \mathbf{Z}) + \frac12(m-n)f(0)$, and similarly for $\mathbf{X}$.
		
		The entries $\mathbf{Y}_{ij}$ are independent with $\mathbf{Y}_{ij} \sim \Poisson(\mathbf{Z}_{ij})$ and $\mathbf{Z}_{ij} \le C_{\mathbf{Z}}\bar s$, so by \eqref{eq: poisson_MLSI} their laws satisfy the hypothesis of Lemma~\ref{lem:LSI_concentration} with $c_1 := C_{\mathbf{Z}}\bar s$ and $c_2 := 2$. By Lemma~\ref{lem: spectral_gradient}, the functions $\mathbf{B} \mapsto \pm F(\mathbf{B} - \mathbf{Z})$ satisfy the hypotheses of Lemma~\ref{lem:LSI_concentration} with $\beta := 2L_1\eta$ and $\alpha^2 := 4(m+n)L_1^2\eta^2 + 4mnL_2^2\eta^4 \le (4L_1^2 + L_2^2/\bar s)/\bar s$, where we used $mn \le (m+n)^2/4$. Lemma~\ref{lem:LSI_concentration}, applied to both functions with deviation $2mt$, gives
		\begin{align}
			\P\big(|F(\mathbf{Y} - \mathbf{Z}) - \E F(\mathbf{Y}-\mathbf{Z})| \ge 2mt\big) \le 2\exp\left(-\frac{2mt}{4\beta}\log\left(1 + \frac{2mt\,\beta}{2c_1\alpha^2}\right)\right) \le 2e^{-\psi(t)},
		\end{align}
		since $\frac{2mt}{4\beta} = \frac{mt\sqrt{(m+n)\bar s}}{4L_1}$ and $\frac{2mt\,\beta}{2c_1\alpha^2} \ge \frac{2mL_1t}{C_{\mathbf{Z}}\sqrt{(m+n)\bar s}\,(4L_1^2 + L_2^2/\bar s)}$. By \eqref{eq: gram_vs_dilation}, this is the first claim. The second claim follows from the first and Lemma~\ref{thm: transference}, applied to the indicator of the event in the first claim, viewed as a function of $\mathbf{Y}$, since $\log\big((eN)^{m+n}m^{-m}n^{-n}\big) = m\log(eN/m) + n\log(eN/n)$.
	\end{proof}

	\subsection{Proof of Theorem~\ref{thm: ESD_limit}}
	\label{sec: proof_ESD_proof}
	
	The proof uses two lemmas: the stability of the continuum Dyson equation \eqref{eq: dyson_continuum} under perturbations of its profile, and the convergence of the profiles $\varphi_{\mathbf{Z}_k}$. For the first, fix $C \ge 0$ and $z \in \mathbb{H}$ with $\operatorname{Im}z \ge 2C + 1$, and let
	\begin{align}
		\mathfrak{B}_z := \big\{G \in L^\infty([0,1];\mathbb{C}) : \operatorname{Im}G \ge 0 \text{ and } |G| \le (\operatorname{Im}z)^{-1} \text{ a.e.}\big\}.
	\end{align}
	For $\theta \in (0,1)$ and a measurable $W : [0,1]^2 \to [0,C]$, define the map $\mathcal{T}_{W,\theta}$ on $\mathfrak{B}_z$ by
	\begin{align}
		\mathcal{T}_{W,\theta}(G)(x) := -\left(z - (1-\theta)\int_0^1\frac{W(x,y)\,dy}{1 + \theta\int_0^1 W(x',y)G(x')\,dx'}\right)^{-1},
	\end{align}
	so that the solutions of \eqref{eq: dyson_continuum} in $\mathfrak{B}_z$ are the fixed points of $\mathcal{T}_{W,\theta}$.
	
	\begin{lemma}[Stability of the Dyson equation]
		\label{lem: dyson_continuity}
		Let $C$, $z$, $\theta$ and $W$ be as above.
		\begin{enumerate}[label=(\alph*)]
			\item $\mathcal{T}_{W,\theta}$ maps $\mathfrak{B}_z$ into itself, and $\norm{\mathcal{T}_{W,\theta}(G) - \mathcal{T}_{W,\theta}(G')}_{L^\infty} \le \frac14\norm{G - G'}_{L^1}$ for all $G, G' \in \mathfrak{B}_z$. In particular, \eqref{eq: dyson_continuum} has a unique solution $G_{W,\theta}(\cdot,z) \in \mathfrak{B}_z$.
			\item If $W$ is grid-step with respect to an $m\times n$ grid, with values $w_{ij}$, and $\theta = m/(m+n)$, and if $(G_i)_{i \in [m]}$ is the solution of \eqref{eq: dyson_discrete} with $\mathbf{S}_{ij} = w_{ij}/(m+n)$, then $G_{W,\theta}(x,z) = G_{\lceil mx \rceil}(z)$ for a.e.\ $x$. In particular, $\int_0^1 G_{W,\theta}(x,z)\,dx$ is the Stieltjes transform of $\nu_{\mathbf{S}}$ at $z$.
			\item If $\theta_k \in (0,1)$ with $\theta_k \to \theta$, and $W_k : [0,1]^2 \to [0,C]$ are measurable functions with $W_k \to W$ in $L^1([0,1]^2)$, then $G_{W_k,\theta_k}(\cdot,z) \to G_{W,\theta}(\cdot,z)$ in $L^1[0,1]$.
		\end{enumerate}
	\end{lemma}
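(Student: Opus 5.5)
We prove (a) by a direct contraction estimate, (b) by checking that the grid-step ansatz turns \eqref{eq: dyson_continuum} into \eqref{eq: dyson_discrete}, and (c) by a perturbation argument based on the contraction in (a).

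For (a), fix $G \in \mathfrak{B}_z$ and set $M(y) := \int_0^1 W(x',y)G(x')\,dx'$. Since $\operatorname{Im}G \ge 0$ and $W \ge 0$, we have $\operatorname{Im}M \ge 0$. Hence $|1+\theta M(y)| \ge \operatorname{Re}(1+\theta M)$. Moreover $|M| \le C(\operatorname{Im}z)^{-1} \le C/(2C+1) < 1/2$, so $|1+\theta M| \ge 1/2$. The term $1/(1+\theta M)$ has nonpositive imaginary part, so the integral $(1-\theta)\int W/(1+\theta M)\,dy$ also has nonpositive imaginary part. Therefore the bracket $z - (1-\theta)\int\cdots$ has imaginary part at least $\operatorname{Im}z$. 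Its negative reciprocal then has nonnegative imaginary part and modulus at most $(\operatorname{Im}z)^{-1}$, so $\mathcal{T}_{W,\theta}(G)\in\mathfrak{B}_z$.

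For the Lipschitz bound, use $|{-1/u} + 1/u'| = |u-u'|/(|u||u'|) \le |u-u'|(\operatorname{Im}z)^{-2}$. The difference of the brackets is at most $(1-\theta)\int W\,\theta|M-M'|/(|1+\theta M||1+\theta M'|)\,dy \le 4C^2\norm{G-G'}_{L^1}$. Altogether the Lipschitz constant is at most $4C^2/(2C+1)^2 \le 1$.

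This gives only $1$, not the required $\frac14$, so the constants must be tracked more carefully. Using $\theta(1-\theta)\le\frac14$, the bound becomes $4\cdot\frac14\cdot C^2/(2C+1)^2 \le \frac14$, as required. Since $\norm{\cdot}_{L^1}\le\norm{\cdot}_{L^\infty}$, the map $\mathcal{T}_{W,\theta}$ is a strict contraction on the complete metric space $\mathfrak{B}_z$ with the $L^\infty$ metric. The Banach fixed point theorem then gives existence and uniqueness of the solution.

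For (b), plug a grid-step $W$ and an ansatz $G$ constant on the strips $((i-1)/m,i/m]$ into \eqref{eq: dyson_continuum}. Then $\theta\int W(x',y)G(x')\,dx' = \frac{\theta}{m}\sum_j w_{jl}G_j = \sum_j \mathbf{S}_{jl}G_j$ for $y$ in column $l$, since $\theta/m = 1/(m+n)$. Similarly, $(1-\theta)\int dy = \frac{1}{m+n}\sum_l$. So the ansatz solves \eqref{eq: dyson_continuum} exactly when $(G_i)$ solves \eqref{eq: dyson_discrete}. By uniqueness in (a), $G_{W,\theta}$ coincides with this ansatz. The statement about the Stieltjes transform then follows from the cited fact about $\frac1m\sum_iG_i$.

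For (c), write $G_k := G_{W_k,\theta_k}$ and $G := G_{W,\theta}$. Then
\[
G_k - G = \mathcal{T}_{W_k,\theta_k}(G_k) - \mathcal{T}_{W_k,\theta_k}(G) + \mathcal{T}_{W_k,\theta_k}(G) - \mathcal{T}_{W,\theta}(G).
\]
The first difference is at most $\frac14\norm{G_k-G}_{L^1}$ by (a). This gives
\[
\tfrac34\norm{G_k-G}_{L^1} \le \norm{\mathcal{T}_{W_k,\theta_k}(G)-\mathcal{T}_{W,\theta}(G)}_{L^1}.
\]
It remains to show the right-hand side tends to zero, which is the main step. Using the same reciprocal bound, the difference at $x$ is controlled by $|\theta_k-\theta|$ times a constant, plus $\int|W_k(x,y)-W(x,y)|\,dy$, plus $C\int|M_k(y)-M(y)|\,dy$. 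Here $|M_k(y)-M(y)| \le (\operatorname{Im}z)^{-1}\int|W_k(x',y)-W(x',y)|\,dx'$. After integrating in $x$, every term is bounded by a constant times $\norm{W_k-W}_{L^1} + |\theta_k-\theta|$, which tends to zero. Boundedness of all quantities by $C$ and $(\operatorname{Im}z)^{-1}$ keeps the constants uniform in $k$.
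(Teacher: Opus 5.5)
Your parts (a) and (c) follow the paper's proof essentially line by line. In (a) you use the same sign argument for the self-map property and reach the same Lipschitz bound $4\theta(1-\theta)C^2/(\operatorname{Im}z)^2\le\frac14$. In (c) you use the same splitting $G_k-G=[\mathcal{T}_{W_k,\theta_k}(G_k)-\mathcal{T}_{W_k,\theta_k}(G)]+[\mathcal{T}_{W_k,\theta_k}(G)-\mathcal{T}_{W,\theta}(G)]$. Applying Banach's theorem in the $L^\infty$ metric rather than the paper's $L^1$ metric is fine. You should, however, state why $\mathfrak{B}_z$ is complete: its defining constraints are preserved under a.e.\ convergence.

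There is a missing step in (b). Uniqueness in (a) holds only within $\mathfrak{B}_z$. So before concluding that $G_{W,\theta}$ coincides with the ansatz, you must check that $x\mapsto G_{\lceil mx\rceil}(z)$ lies in $\mathfrak{B}_z$. The condition $\operatorname{Im}G\ge0$ follows because the solution of \eqref{eq: dyson_discrete} takes values in $\mathbb{H}^m$. The bound $|G_i(z)|\le(\operatorname{Im}z)^{-1}$ is not given, and you cannot simply quote (a): there you used $|G|\le(\operatorname{Im}z)^{-1}$ to obtain $|1+\theta M|\ge\frac12$.

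The fix, which is what the paper does, is to note that the sign part of your argument uses only $\operatorname{Im}G\ge0$. Then $\operatorname{Im}(1+\theta M)\ge0$ and $1+\theta M\neq0$, since the denominators in \eqref{eq: dyson_discrete} are nonzero for a solution. Hence $\operatorname{Im}(1+\theta M)^{-1}\le0$, and the bracket has imaginary part at least $\operatorname{Im}z$. Any such solution $G=-(\text{bracket})^{-1}$ therefore satisfies $|G|\le(\operatorname{Im}z)^{-1}$ automatically. With this added, your argument for (b) is complete.
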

	
	\begin{proof}
		(a) For $G \in \mathfrak{B}_z$, let $a(y) := \theta\int_0^1 W(x',y)G(x')\,dx'$. Then $\operatorname{Im}a \ge 0$ and $|a| \le \theta C/\operatorname{Im}z \le \frac12$, so $|1 + a| \ge \frac12$ and $\operatorname{Im}(1+a)^{-1} = -\operatorname{Im}a/|1+a|^2 \le 0$. Consequently, $u(x) := (1-\theta)\int_0^1 W(x,y)(1 + a(y))^{-1}\,dy$ satisfies $\operatorname{Im}u \le 0$, so that $\operatorname{Im}(z - u) \ge \operatorname{Im}z$. Hence $\mathcal{T}_{W,\theta}(G) = -(z-u)^{-1}$ satisfies $|\mathcal{T}_{W,\theta}(G)| \le (\operatorname{Im}z)^{-1}$ and $\operatorname{Im}\mathcal{T}_{W,\theta}(G) = \operatorname{Im}(z - u)/|z-u|^2 \ge 0$. If $G' \in \mathfrak{B}_z$ with corresponding $a'$ and $u'$, then $|(1+a)^{-1} - (1+a')^{-1}| \le 4|a - a'| \le 4\theta C\norm{G - G'}_{L^1}$, so $|u - u'| \le 4\theta(1-\theta)C^2\norm{G - G'}_{L^1} \le C^2\norm{G - G'}_{L^1}$ and
		\begin{align}
			\big|\mathcal{T}_{W,\theta}(G) - \mathcal{T}_{W,\theta}(G')\big| = \frac{|u - u'|}{|z - u|\,|z-u'|} \le \frac{C^2}{(\operatorname{Im}z)^2}\norm{G - G'}_{L^1} \le \frac14\norm{G - G'}_{L^1}.
		\end{align}
		Since $\mathfrak{B}_z$ is complete for the $L^1$ norm (it is closed in $L^1$, as its defining constraints are preserved by almost everywhere convergence) and $\norm{\cdot}_{L^1} \le \norm{\cdot}_{L^\infty}$, Banach's fixed point theorem applies.
		
		(b) Let $G(x) := G_{\lceil mx\rceil}(z)$, so that $\operatorname{Im}G > 0$. For $x \in \big(\frac{i-1}{m},\frac{i}{m}\big]$, a direct computation gives
		\begin{align}
			(1-\theta)\int_0^1\frac{W(x,y)\,dy}{1 + \theta\int_0^1 W(x',y)G(x')\,dx'} = \sum_{l=1}^n\frac{w_{il}/(m+n)}{1 + \sum_{j=1}^m \frac{w_{jl}}{m+n}G_j(z)} = \sum_{l=1}^n\frac{\mathbf{S}_{il}}{1 + \sum_{j=1}^m \mathbf{S}_{jl}G_j(z)},
		\end{align}
		so $G$ solves \eqref{eq: dyson_continuum}. By the argument in (a), any solution with $\operatorname{Im}G > 0$ satisfies $|G| \le (\operatorname{Im}z)^{-1}$, since the bound $\operatorname{Im}u \le 0$ only uses $\operatorname{Im}G \ge 0$. Hence $G \in \mathfrak{B}_z$, and $G = G_{W,\theta}$ by uniqueness.
		
		(c) Write $G := G_{W,\theta}$, $G_k := G_{W_k,\theta_k}$, $\mathcal{T} := \mathcal{T}_{W,\theta}$ and $\mathcal{T}_k := \mathcal{T}_{W_k,\theta_k}$. By (a),
		\begin{align}
			\norm{G_k - G}_{L^1} \le \norm{\mathcal{T}_k(G_k) - \mathcal{T}_k(G)}_{L^1} + \norm{\mathcal{T}_k(G) - \mathcal{T}(G)}_{L^1} \le \frac14\norm{G_k - G}_{L^1} + \norm{\mathcal{T}_k(G) - \mathcal{T}(G)}_{L^1},
		\end{align}
		so it suffices to show that $\norm{\mathcal{T}_k(G) - \mathcal{T}(G)}_{L^1} \to 0$. Let $a_k, u_k$ and $a, u$ be defined as in (a), with $(W_k,\theta_k)$ and $(W,\theta)$, respectively, and with the same $G$. Then $|\mathcal{T}_k(G) - \mathcal{T}(G)| \le |u_k - u|/(\operatorname{Im}z)^2$ and
		\begin{align}
			|a_k - a|(y) \le \frac{C|\theta_k - \theta|}{\operatorname{Im}z} + \frac{1}{\operatorname{Im}z}\int_0^1|W_k - W|(x',y)\,dx'.
		\end{align}
		Using $|1 + a_k|, |1+a| \ge \frac12$, we obtain
		\begin{align}
			|u_k - u|(x) \le 2C|\theta_k - \theta| + 2\int_0^1|W_k - W|(x,y)\,dy + 4C\int_0^1|a_k - a|(y)\,dy.
		\end{align}
		Integrating in $x$ shows that $\norm{u_k - u}_{L^1} \to 0$, which completes the proof.
	\end{proof}

	For the rest of this subsection, we assume the hypotheses of Theorem~\ref{thm: ESD_limit}. The limiting margins $\rho_{\r_\infty}$ and $\rho_{\c_\infty}$ then also have densities with values in $[\delta,\delta^{-1}]$, since they are $L^1$ limits of such densities. With $K_1$ as in \eqref{eq: def_K1}, we set
	\begin{align}
		\label{eq: def_cW}
		c_W := \delta^2 e^{-2K_1}, \qquad C_W := \delta^{-2}e^{2K_1}.
	\end{align}
	
	\begin{lemma}[Stability of the bridge densities]
		\label{lem: bridge_density_stability}
		\leavevmode
		\begin{enumerate}[label=(\alph*)]
			\item For every $k$, $c_W \le \varphi_{\mathbf{Z}_k} \le C_W$, and $c_W \le W_\infty \le C_W$.
			\item $\varphi_{\mathbf{Z}_k} \to W_\infty$ in $L^1(u)$.
		\end{enumerate}
	\end{lemma}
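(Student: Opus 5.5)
For part (a), I will start from Lemma~\ref{lem: typical_table_bridge}. It gives
\begin{align}
\varphi_{\mathbf{Z}_k} = \frac{d\pi_{\mathbf{Z}_k}}{d(\rho_{\r_k}\otimes\rho_{\c_k})}\,\rho_{\r_k}\otimes\rho_{\c_k}.
\end{align}
The ratio lies in $[e^{-2\osc(\kappa_k)},e^{2\osc(\kappa_k)}]$ by Lemma~\ref{lem: density_bounds}, and $\osc(\kappa_k)\le K_1$. The product of the margin densities lies in $[\delta^2,\delta^{-2}]$ by hypothesis. Multiplying gives $c_W\le\varphi_{\mathbf{Z}_k}\le C_W$. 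For $W_\infty = d\pi_\infty/du$, I use the same factorization $W_\infty = \xi_\infty\,\rho_{\r_\infty}\otimes\rho_{\c_\infty}$. The limiting margin densities lie in $[\delta,\delta^{-1}]$, as already noted before the lemma, and $\osc(\kappa_\infty)\le K_1$.

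For part (b), I will pass to permuton coordinates, where Lemma~\ref{lem: operator_HS_convergence} already provides convergence. Take $f_k=\kappa_k$ and $f_\infty=\kappa_\infty$; these satisfy the hypotheses under Assumptions~\ref{assumption: bounded_cost} and \ref{assumption: margins_weak_convergence}. That lemma gives $w_k:=\xi_k\circ T_k\to w_\infty:=\xi_\infty\circ T_\infty$ in $L^2(u)$, hence in $L^1(u)$, where $\xi_k = d\pi_{\mathbf{Z}_k}/d(\rho_{\r_k}\otimes\rho_{\c_k})$. I then transport back. Because the margin densities are bounded below, $F_{\r_k}$, $F_{\c_k}$, $R$ and $C$ are bi-Lipschitz homeomorphisms with constants depending only on $\delta$. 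So $T_k$ and $T_\infty$ are invertible with inverses $(F_{\r_k},F_{\c_k})$ and $(R,C)$, and by change of variables
\begin{align}
\varphi_{\mathbf{Z}_k}(x,y) = w_k(F_{\r_k}(x),F_{\c_k}(y))\,\rho_{\r_k}(x)\rho_{\c_k}(y),
\end{align}
and similarly for $W_\infty$.

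The remaining step is to show $\norm{w_k\circ(F_{\r_k},F_{\c_k}) - w_\infty\circ(R,C)}_{L^1(u)}\to0$; convergence of the margin-density products in $L^1$ then finishes, since all factors are bounded. I split this difference into two terms:
\begin{align}
(w_k - w_\infty)\circ(F_{\r_k},F_{\c_k}) \quad\text{and}\quad w_\infty\circ(F_{\r_k},F_{\c_k}) - w_\infty\circ(R,C).
\end{align}
For the first term, the Jacobian bound $\delta^{-2}$ gives
\begin{align}
\norm{(w_k - w_\infty)\circ(F_{\r_k},F_{\c_k})}_{L^1(u)} \le \delta^{-2}\norm{w_k-w_\infty}_{L^1(u)}\to 0.
\end{align}
The second term is the main obstacle, because $w_\infty$ is only known to be bounded. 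I will approximate $w_\infty$ in $L^1(u)$ by a continuous function $\tilde w$. The error terms $\norm{(w_\infty-\tilde w)\circ(F_{\r_k},F_{\c_k})}_{L^1}$ and $\norm{(w_\infty-\tilde w)\circ(R,C)}_{L^1}$ are both controlled by $\delta^{-2}\norm{w_\infty-\tilde w}_{L^1}$, again by the Jacobian bound. By P\'olya's theorem, $F_{\r_k}\to R$ and $F_{\c_k}\to C$ uniformly, so $\tilde w\circ(F_{\r_k},F_{\c_k})\to\tilde w\circ(R,C)$ uniformly. Combining these three pieces proves (b).

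Alternatively, one could avoid permutons and apply Proposition~\ref{prop: hellinger_stability} with the $L^1$-convergent margins. That route would, however, require $L^1$ convergence of $\varphi_{\bLambda_k}$, which Theorem~\ref{thm: ESD_limit} does not assume, so I prefer the permuton argument.
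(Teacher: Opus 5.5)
Your part (a) is the paper's argument. Your part (b) is correct but takes a different route.

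The paper does not pass through permuton coordinates. It observes that $\pi_{\mathbf{Z}_k}$ and $\pi_\infty$ are Schr\"odinger bridges with reference densities $R_k = e^{-\kappa_k}\rho_{\r_k}\otimes\rho_{\c_k}$ and $R_\infty = e^{-\kappa_\infty}\rho_{\r_\infty}\otimes\rho_{\c_\infty}$. It then notes that $R_k \to R_\infty$ in $L^1(u)$, because $\kappa_k \to \kappa_\infty$ uniformly and the margins converge in $L^1$. Applying Proposition~\ref{prop: hellinger_stability} gives $H(\pi_{\mathbf{Z}_k},\pi_\infty)\to 0$, and hence $\norm{\varphi_{\mathbf{Z}_k}-W_\infty}_{L^1(u)}\le 2H(\pi_{\mathbf{Z}_k},\pi_\infty)\to 0$.

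Your closing remark is therefore mistaken. By \eqref{eq: def_kappa}, $R_k$ is exactly $\varphi_{\bLambda_k}$. Its $L^1$ convergence is not an extra hypothesis but follows from those of Theorem~\ref{thm: ESD_limit}, so the route you set aside is precisely the paper's proof.

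As for what each route buys:
\begin{itemize}
\item The paper's argument is a few lines and quantitative: it bounds $H(\pi_{\mathbf{Z}_k},\pi_\infty)^2$ by the $L^1$ distances of the margins and $d_H(R_k,R_\infty)$. The proposition it invokes needs only a.e.\ positive margin densities, not two-sided bounds.
\item Your argument is qualitative and more roundabout. It imports Lemma~\ref{lem: operator_HS_convergence}, which rests on stability of the potentials under weak convergence of the marginals and on continuity of $\kappa_\infty$. It then uses the bounds $\delta\le\rho\le\delta^{-1}$ to undo the change of variables.
\item In exchange, your argument makes visible that convergence already holds in permuton coordinates under weak convergence of the margins alone. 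The $L^1$ convergence and the density bounds are needed only to transport back to the original coordinates.
\end{itemize}

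A small simplification: approximating $w_\infty$ by a continuous $\tilde w$ is valid but unnecessary. The proof of that lemma shows $\xi_\infty$ is continuous, and $T_\infty\circ(F_{\r_k},F_{\c_k}) = (q_R\circ F_{\r_k},\,q_C\circ F_{\c_k})\to\mathrm{id}$ uniformly, since $q_R$ and $q_C$ are Lipschitz. So $w_\infty\circ(F_{\r_k},F_{\c_k})\to\xi_\infty$ uniformly.
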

	
	\begin{proof}
		(a) By Lemmas~\ref{lem: typical_table_bridge} and \ref{lem: density_bounds}, $\frac{d\pi_{\mathbf{Z}_k}}{d(\rho_{\r_k}\otimes\rho_{\c_k})} \in [e^{-2K_1},e^{2K_1}]$, and the claim follows from $\delta \le \rho_{\r_k},\rho_{\c_k}\le\delta^{-1}$. The same argument applies to $\pi_\infty$, since $\osc(\kappa_\infty) \le K_1$.
		
		(b) As in the proof of Theorem~\ref{thm: conditioned_model_weak_as_limit}, $\pi_{\mathbf{Z}_k}$ is the Schr\"odinger bridge between $\rho_{\r_k}$ and $\rho_{\c_k}$ with reference density $R_k := e^{-\kappa_k}\rho_{\r_k}\otimes\rho_{\c_k}$, and similarly $\pi_\infty$ is the Schr\"odinger bridge between $\rho_{\r_\infty}$ and $\rho_{\c_\infty}$ with reference $R_\infty := e^{-\kappa_\infty}\rho_{\r_\infty}\otimes\rho_{\c_\infty}$, since $\int \kappa_\infty\,d\pi + \KL(\pi\Vert\rho_{\r_\infty}\otimes\rho_{\c_\infty}) = \KL(\pi\Vert R_\infty)$ for $\pi \in \Pi(\rho_{\r_\infty},\rho_{\c_\infty})$. Since $\kappa_k \to \kappa_\infty$ uniformly and the margin densities are bounded and converge in $L^1$, $R_k \to R_\infty$ in $L^1(u)$; in particular, $R_\infty$ is a probability density and $d_H(R_k,R_\infty)^2 \le \norm{R_k - R_\infty}_{L^1(u)} \to 0$. Moreover, $R_k/(\rho_{\r_k}\otimes\rho_{\c_k})$ and $R_\infty/(\rho_{\r_\infty}\otimes\rho_{\c_\infty})$ take values in $[M^{-1},M]$ with $M = \exp(\sup_k\norm{\kappa_k}_\infty)$ as before Theorem~\ref{thm: conditioned_model_weak_as_limit}, and the margin densities are bounded below by $\delta$. Hence $H(\pi_{\mathbf{Z}_k},\pi_\infty) \to 0$ by Proposition~\ref{prop: hellinger_stability}, and $\norm{\varphi_{\mathbf{Z}_k} - W_\infty}_{L^1(u)} \le 2H(\pi_{\mathbf{Z}_k},\pi_\infty)$ by the Cauchy--Schwarz inequality.
	\end{proof}

	\begin{proof}[\textbf{Proof of Theorem~\ref{thm: ESD_limit}}]
		By Lemma~\ref{lem: bridge_density_stability}(a), the values $w_{ij}$ of $\varphi_{\mathbf{Z}_k}$ in \eqref{eq: variance_profile} satisfy $c_W \le w_{ij} \le C_W$, so that $C_{\mathbf{Z}_k} \le C_W$; moreover $\norm{W_\infty}_\infty \le C_W$. Let $\theta_k := m_k/(m_k+n_k) \to \theta$ and write $\nu_k := \nu_{\mathbf{S}_k}$.
		
		\emph{Step 1: concentration.} Fix $f$ as in Proposition~\ref{prop: spectral_concentration} and $t > 0$, and let $\tau_k := m\log(eN/m) + n\log(eN/n) = m\log(e\bar s n) + n\log(e\bar s m)$, where we suppress the index $k$. Since $\bar s \ge c_0$ and $m/n \to \gamma$, we have $\tau_k = O\big((m+n)(1 + |\log\bar s| + \log(m+n))\big)$. Let $a := \sqrt{(m+n)\bar s}$ and $\Lambda_f := 4L_1^2 + L_2^2/c_0$. Since $C_{\mathbf{Z}_k} \le C_W$ and $4L_1^2 + L_2^2/\bar s \le \Lambda_f$, we have $\psi(t) \ge \frac{mta}{4L_1}\log(1 + x)$ with $x := \frac{2mL_1t}{C_Wa\Lambda_f}$. Since $\log(1+x) \ge \frac12\min\{x,1\}$ and $a \ge \sqrt{m\bar s}$, we get $\psi(t) \ge \min\big\{\frac{mt\sqrt{m\bar s}}{8L_1}, \frac{m^2t^2}{4C_W\Lambda_f}\big\}$. Both terms are of larger order than $\tau_k$: the first because $\sqrt{m\bar s} \gg 1 + |\log\bar s| + \log m$ (as $\bar s \ge c_0$), and the second because $\log\bar s = o(m)$. Hence $\tau_k - \psi(t) \to -\infty$, so the right-hand sides in Proposition~\ref{prop: spectral_concentration} tend to zero.
		
		\emph{Step 2: the tilted model.} The matrix $\mathbf{A}_{\mathbf{Y}_k}$ has independent centered entries with variances $\mathbf{S}_{ij} = w_{ij}/(m+n) \in \big[\frac{c_W}{m+n},\frac{C_W}{m+n}\big]$. We verify the assumptions of the local law for random Gram matrices \cite{alt2017local}, as in the proof of \cite[Theorem~2.17]{duan2026scaling}. The upper bound on $\mathbf{S}_{ij}$ is the flatness assumption (A), and the lower bound implies assumption (B) \cite[Remark~2.8]{alt2017local}. The ratio $m/n$ stays bounded away from $0$ and $\infty$, which is assumption (D). For the moment assumption (C), recall that the $2p$th central moment of $\Poisson(\lambda)$ is a polynomial in $\lambda$ of degree $p$ with nonnegative coefficients and no constant term, so $\E(\mathbf{Y}_{ij} - \mathbf{Z}_{ij})^{2p} \le A_p(\mathbf{Z}_{ij} + \mathbf{Z}_{ij}^p)$ for a constant $A_p$ depending only on $p$. Since $\mathbf{Z}_{ij} \le C_W\bar s$ and $\bar s \ge c_0$, this gives $\E\big|(\mathbf{Y}_{ij} - \mathbf{Z}_{ij})/\sqrt{\bar s}\big|^{2p} \le A_p\big(C_Wc_0^{1-p} + C_W^p\big)$ for every $p \ge 1$. Hence, by the local law and rigidity estimates of \cite[Theorems~2.7 and 2.9]{alt2017local}, $d_{\mathrm{BL}}\big(\mu_{\mathbf{A}_{\mathbf{Y}_k}\mathbf{A}_{\mathbf{Y}_k}^\top}, \nu_k\big) \to 0$ in probability. Moreover, by \cite[Theorem~2.1]{alt2017local} and scaling, $\nu_k$ is supported in $[0,4C_W]$.
		
		\emph{Step 3: the limit of $\nu_k$.} Let $z \in \mathbb{H}$ with $\operatorname{Im}z \ge 2C_W + 1$. By Lemma~\ref{lem: dyson_continuity}(b), applied with $W = \varphi_{\mathbf{Z}_k}$ and $C = C_W$, the Stieltjes transform of $\nu_k$ at $z$ equals $\int_0^1 G_{\varphi_{\mathbf{Z}_k},\theta_k}(x,z)\,dx$. By Lemma~\ref{lem: bridge_density_stability}(b), $\varphi_{\mathbf{Z}_k} \to W_\infty$ in $L^1(u)$, and hence, by Lemma~\ref{lem: dyson_continuity}(c), the Stieltjes transform of $\nu_k$ at $z$ converges to $\int_0^1 G_{W_\infty,\theta}(x,z)\,dx$. Since the measures $\nu_k$ are supported in $[0,4C_W]$, every subsequence has a further subsequence converging weakly to a probability measure $\nu'$ on $[0,4C_W]$, and the Stieltjes transform of $\nu'$ equals $\int_0^1 G_{W_\infty,\theta}(x,z)\,dx$ for all $z$ with $\operatorname{Im}z \ge 2C_W + 1$. Since the Stieltjes transform of a finite measure is analytic on $\mathbb{H}$ and determines the measure, $\nu'$ does not depend on the subsequence. Hence $\nu_k \Rightarrow \nu_\infty$, where $\nu_\infty$ is the unique probability measure whose Stieltjes transform coincides with $\int_0^1 G_{W_\infty,\theta}(x,z)\,dx$ for $\operatorname{Im}z \ge 2C_W + 1$. By Lemma~\ref{lem: dyson_continuity}(a) with $C = \norm{W_\infty}_\infty$, $G_{W_\infty,\theta}(\cdot,z)$ is the solution of \eqref{eq: dyson_continuum} described before the theorem for every $z$ with $\operatorname{Im}z \ge 2\norm{W_\infty}_\infty + 1$. It is the limit of the iterates $\mathcal{T}^{j}_{W_\infty,\theta}(0)$, $j \to \infty$, uniformly in $x$ and $z$, and by induction each iterate is analytic in $z$. Hence $z \mapsto \int_0^1 G_{W_\infty,\theta}(x,z)\,dx$ is analytic on $\{\operatorname{Im}z > 2\norm{W_\infty}_\infty + 1\}$, and by the identity theorem and continuity it coincides with the Stieltjes transform of $\nu_\infty$ on $\{\operatorname{Im}z \ge 2\norm{W_\infty}_\infty + 1\}$.
		
		\emph{Step 4: conclusion.} Let $f \in C^2_c([0,\infty))$, so that $f'$ vanishes on $[R^2,\infty)$ for some $R \ge 1$. Then
		\begin{align}
			\Big|\int f\,d\mu_{\mathbf{A}_{\mathbf{X}}\mathbf{A}_{\mathbf{X}}^\top} - \int f\,d\nu_\infty\Big| &\le \Big|\int f\,d\mu_{\mathbf{A}_{\mathbf{X}}\mathbf{A}_{\mathbf{X}}^\top} - \E\int f\,d\mu_{\mathbf{A}_{\mathbf{Y}}\mathbf{A}_{\mathbf{Y}}^\top}\Big| + \E\Big|\int f\,d\mu_{\mathbf{A}_{\mathbf{Y}}\mathbf{A}_{\mathbf{Y}}^\top} - \int f\,d\nu_k\Big| \\
			&\quad + \Big|\int f\,d\nu_k - \int f\,d\nu_\infty\Big|,
		\end{align}
		where $\mathbf{X} = \mathbf{X}_k$ and $\mathbf{Y} = \mathbf{Y}_k$. The first term tends to zero in probability by Step~1. The second term tends to zero by Step~2 and bounded convergence, since $f$ is bounded and Lipschitz. The third tends to zero by Step~3. The same argument without the first term applies to $\mathbf{Y}$. Since $\nu_\infty$ is a probability measure and $C^2_c([0,\infty))$ is dense in $C_c([0,\infty))$, a standard argument shows that $d_{\mathrm{BL}}(\mu_{\mathbf{A}_{\mathbf{X}_k}\mathbf{A}_{\mathbf{X}_k}^\top},\nu_\infty) \to 0$ and $d_{\mathrm{BL}}(\mu_{\mathbf{A}_{\mathbf{Y}_k}\mathbf{A}_{\mathbf{Y}_k}^\top},\nu_\infty) \to 0$ in probability.
	\end{proof}
	
	\section*{Acknowledgments}
	
	The authors thank Sumit Mukherjee and Soumik Pal for helpful discussions and comments. HL was partially supported by NSF CAREER grant DMS-2541878.
	
	{\small
		\bibliographystyle{amsalpha}
		\bibliography{refs}
	}

\end{document}